\newcommand{\R}{\mathbb{R}} 
\newcommand{\Q}{\mathbb{Q}} 
\newcommand{\uhp}{\mathbb{H}} 
\newcommand{\C}{\mathbb{C}} 
\newcommand{\Z}{\mathbb{Z}} 
\newcommand{\Zhat}{\widehat{\mathbb{Z}}}
\newcommand{\Lhat}{\widehat{L}}
\newcommand{\adeles}{\mathbb{A}}
\newcommand*{\domain}{\mathbb{D}}
\newcommand*{\reg}{\mathrm{reg}}
\newcommand*{\Pet}{\mathrm{Pet}}
\newcommand{\Gt}{\widetilde{\Gamma}}
\newcommand{\G}{\Gamma}
\newcommand{\bs}{\backslash}
\newcommand{\bbA}{\mathbb A}
\newcommand{\calA}{\mathcal{A}}
\newcommand{\calC}{\mathcal{C}}
\newcommand{\calD}{\mathcal{D}}
\newcommand{\calE}{\mathcal{E}}
\newcommand{\calF}{\mathcal{F}}
\newcommand{\calG}{\mathcal{G}}
\newcommand{\calI}{\mathcal{I}}
\newcommand{\calL}{\mathcal{L}}
\newcommand{\calM}{\mathcal{M}}
\newcommand{\calO}{\mathcal{O}}
\newcommand{\calR}{\mathcal{R}}
\newcommand{\calS}{\mathcal{S}}
\newcommand{\calV}{\mathcal{V}}
\newcommand{\calX}{\mathcal{X}}
\newcommand{\calY}{\mathcal{Y}}
\newcommand{\calZ}{\mathcal{Z}}
\newcommand{\diffinv}[1]{\partial_#1^{-1}} 
\newcommand{\fraka}{\mathfrak{a}}
\newcommand{\frakg}{\mathfrak{g}}
\newcommand{\abcd}{\begin{pmatrix}a & b \\ c & d\end{pmatrix}}
\newcommand{\abs}[1]{\left\vert#1\right\vert}
\DeclareMathOperator{\End}{End}
\DeclareMathOperator{\Lie}{Lie}
\DeclareMathOperator{\Aut}{Aut}
\DeclareMathOperator{\SL}{SL}
\DeclareMathOperator{\Mp}{Mp}
\DeclareMathOperator{\divisor}{div}
\DeclareMathOperator{\vol}{vol}
\DeclareMathOperator{\Hom}{Hom}
\DeclareMathOperator*{\CT}{CT}
\DeclareMathOperator{\ddc}{dd^c}
\renewcommand{\Re}{\mathrm{Re}}
\renewcommand{\Im}{\mathrm{Im}}
\newtheorem{theorem}{Theorem}[section]
\newtheorem{proposition}[theorem]{Proposition}
\newtheorem{lemma}[theorem]{Lemma}
\newtheorem{corollary}[theorem]{Corollary}
\newtheorem{conjecture}[theorem]{Conjecture}
\theoremstyle{definition}
\newtheorem{definition}[theorem]{Definition}
\newtheorem{hypothesis}[theorem]{Hypothesis}
\declaretheorem[style=definition,qed=$\diamond$, sibling=theorem]{remark}
\DeclareMathOperator{\Spec}{Spec}
\DeclareMathOperator{\Div}{Div}
\newcommand{\divhat}{\widehat{\divisor}}
\newcommand{\deghat}{\widehat{\deg}}
\newcommand{\proj}{\mathsf{pr}} 
\newcommand{\Zed}{\mathcal{Z}}
\newcommand{\M}{\mathcal{M}}
\newcommand{\ZedVKud}{\widehat\Zed_{\calV}^{\, \mathsf{K}}}
\newcommand{\ZedVBru}{\widehat\Zed_{\calV}^{\, \mathsf{B}}}
\newcommand{\ThetaVKud}{\widehat\Theta_{\calV}^{\mathsf{K}}}
\newcommand{\ThetaVBru}{\widehat\Theta_{\calV}^{\mathsf{B}}}
\DeclareMathOperator{\D}{\mathcal{D}}
\DeclareMathOperator\ChowHatC{\widehat{\mathsf{CH}}{}^1_{\C}}
\newcommand\Kra{\text{Kr\"a}}
\newcommand{\otauthat}{ \widehat{\omega}^{\mathsf{taut}}}
\newcommand{\KGr}{\mathsf{Gr}^{\mathsf{K}}}
\newcommand{\KGrO}{\KGr_o}
\newcommand{\BGr}[1]{\mathsf{Gr}^{\mathsf{B}}({#1})}
\newcommand{\BGrO}[1]{\mathsf{Gr}_o^{\mathsf{B}}({#1})}
\newcommand{\kb}{{\bm{k}}}
\newcommand{\la}{\langle}
\newcommand{\ra}{\rangle}
\newcommand{\lie}[1]{\mathfrak{#1}}
\newcommand{\isomto}{\overset{\sim}{\longrightarrow}}
\newcommand{\ov}{\overline}
\newcommand{\sm}[1]{ \begin{smallmatrix} #1 \end{smallmatrix} }
\newcommand{\ALmod}[1]{A_{#1}^{\mathtt{mod}}}
\newcommand{\AkappaLexp}{A_{\kappa}^{!}}
\newcommand{\ALexp}[1]{A_{#1}^!}
\newcommand{\Low}{\mathbf{L}}
\newcommand{\Lsharp}{\Low^{\sharp}}
\numberwithin{equation}{section}
\subjclass[2010]{11F12, 11F27, 11G18, 11G15}
\begin{document}
\title{On two arithmetic theta lifts}
 
\author{Stephan Ehlen}
\email{stephan.ehlen@math.uni-koeln.de}
\address{Mathematisches Institut, University of Cologne, Weyertal 86-90, 50931 Cologne, Germany}
\author{Siddarth Sankaran}
\email{siddarth.sankaran@umanitoba.ca}
\address{Department of Mathematics, University of Manitoba, 420 Machray Hall, Winnipeg, Canada}

\begin{abstract}
Our aim is to clarify the relationship between Kudla's and Bruinier's Green functions
attached to special cycles on Shimura varieties of orthogonal and unitary type, which play a key role in the arithmetic geometry of these cycles in the context of Kudla's program.
In particular, we show that the generating series obtained by taking the differences of the two families of Green functions is  a non-holomorphic modular  form and has trivial (cuspidal) holomorphic projection.

Along the way, we construct a section of the Maa\ss{} lowering operator for moderate growth  forms valued in the Weil representation using a regularized theta lift, 
which may be of independent interest, as it in particular has applications to mock modular forms.

We also consider arithmetic-geometric applications to integral models of $U(n,1)$ Shimura varieties. 
Each family of Green functions gives rise to a formal arithmetic theta function, valued in an arithmetic Chow group, that is conjectured to be modular; our main result is the modularity of the difference of the two arithmetic theta functions.

Finally, we relate the arithmetic heights of the special cycles to special derivatives of Eisenstein series,
as predicted by Kudla's conjecture, and describe a refinement of a theorem of Bruinier-Howard-Yang on arithmetic intersections against CM points.
\end{abstract}
\maketitle

\setcounter{tocdepth}{2}
\tableofcontents

\section{Introduction}

The aim of this paper is to clarify the relationship between two families of Green functions, attached to special divisors on Shimura varieties of type $O(n,2)$ and $U(n,1)$, that appear frequently in the literature in the context of Kudla's programme. The first family, introduced by Kudla, is closely related to the theta series considered by Kudla and Millson \cite{kudlamillsonintersection}, while the second family was constructed by Bruinier \cite{brhabil} following ideas of Borcherds. 

The first step, which we expect to be of independent interest, is the study of a family of \emph{truncated Poincar\'e series}; we show that these series can be used to construct a  section of the Maa\ss \ lowering operator on forms with certain growth conditions, and give a concrete characterization of the image of this section. 

We then show that Kudla's Green functions can be obtained by integrating (in a regularized sense, as in Borcherds \cite{boautgra})  truncated Poincar\'e series against  Siegel theta functions. As a corollary, the generating series obtained by taking the differences of the two families of Green functions is a non-holomorphic modular form with trivial cuspidal holomorphic projection; this result can be viewed as a lifting of (a special case of)  a theorem of Bruinier-Funke \cite{brfugeom} to the level of Green functions.

Finally, we explore consequences of this discussion to arithmetic divisors on ${U(n-1,1)}$ Shimura varieties: each of the two families of Green functions gives rise to a formal arithmetic theta function, and we prove that their difference is modular. We also prove a version of Kudla's conjecture relating arithmetic heights to the derivative of an Eisenstein series; here, the Siegel-Weil formula and the characterization of our section of the lowering operator combine to give an heuristic explanation for the connection of Kudla's Green functions to the derivative of the Eisenstein series in this context. As a final application, we discuss a refinement of a formula due to Bruinier-Howard-Yang \cite{bruinier-howard-yang-unitary} on intersection numbers with small CM cycles.

We now describe these results in more detail. Suppose $V$ is a quadratic space over $\Q$ and, for the purposes of the introduction, assume that $\dim V$ is even. 
Let $L \subset V$ be an even lattice with quadratic form $Q$ and dual lattice $L'$. For each coset $\mu \in L' / L$, define an element $\varphi_{\mu} \in S(V(\adeles_f))$ in the space of Schwartz-Bruhat functions on $V(\adeles_f):= V \otimes_{\Q} \bbA_f$ by setting
\[ 
  \varphi_{\mu} \ := \ \text{characteristic function of } \mu  +  L \otimes_{\Z} \widehat \Z. 
\]
The finite-dimensional space
\[ 
  S(L) \ := \ \mathrm{span}_{\C} \left\{ \varphi_{\mu} , \, \mu \in L'/L  \right\}  \ \subset \ S(V(\bbA_f))
\]
admits an action of $\SL_2(\Z)$, via the Weil representation, that we denote by $\rho_L$.

Fix any $k \in \Z$. The point of departure is the introduction of the $m$'th \emph{truncated Poincar\'e} series $P_{m, w, \mu}$ of weight $k$
which is an $S(L)$-valued (discontinuous) function that transforms as a modular form of weight $k$, and depends on additional parameters  $w \in \R_{>0}$ and $\mu \in S(L)$; 
 it is defined by the formula
	\[	
		P_{m, w, \mu}(\tau) \ := \ \frac12 \sum_{\gamma \in \Gamma_{\infty} \backslash \SL_2(\Z) }  \left( \sigma_w( \tau) \, q^{-m} \, \varphi_{\mu} \right)\mid_{k}[\gamma],
	\]
where $q = e^{2 \pi i \tau}$ and $\sigma_w( \tau)$ is the cutoff function
	\[ \sigma_w( \tau) \ = \ \begin{cases}
										1 , 	& \text{ if } \Im(\tau) \geq w \\
										0, 		& \text{ otherwise. }
									\end{cases}
	\]
This definition can be extended, by linearity, to produce a  function
\[ P_{m,w} \colon \uhp \ \to \ S(L)^{\vee} \otimes_{\C} S(L) \]
for every $m \in \Q$ and $w \in \R_{>0}$, cf.\ \Cref{rmk:intrinsicPoincare}. Note that there is a positive integer $N$, the level of $L$, such that $P_{m,w} = 0$ whenever $m \notin N^{-1} \Z$. 

Similarly, in \Cref{sec:relat-non-holom},  we construct a family of harmonic weak Maa\ss \ forms
\[ F_{m} \colon \uhp \to S(L)^{\vee} \otimes_{\C} S(L) \]
of weight $k$, indexed by $m \in \Q$. When $k < 0$ and $m> 0$, this form is an  $S(L)^{\vee}\otimes_{\C} S(L)$-valued version of the Hejhal-Poincar\'e series
as considered in \cite{brhabil}.

In \Cref{sec:invert-xi}, we consider a space $\ALmod{\kappa}(\rho_L^{\vee})$ of  $S(L)^{\vee}$-valued functions that transform as modular forms of weight $\kappa$, that are $C^{\infty}$,  have at worst moderate (polynomial) growth towards $\infty$, and whose constant terms satisfy a certain technical condition, see \Cref{def:modgrowthforms}. 
 
\begin{theorem} \label{thm:introLsec}
Suppose $f\in \ALmod{\kappa}(\rho_L^{\vee})$ and, for simplicity, that $\kappa>0$ (in the main text, we treat arbitrary weights $\kappa \in \frac12 \Z$, see Theorem \ref{thm:MaassSection}). 

For $\tau = u + iv \in \uhp$ and $q = e^{2 \pi i \tau}$,  define the generating series
\[ F(\tau) \ :=  \  \sum_{m} \langle P_{m, v}  - F_m, \, f  \rangle^{\reg} \ q^m \]
whose terms are  integrals of the form
\[ \langle G,  \, f  \rangle^{\reg} \ := \ \int_{\SL_2(\Z) \bs \uhp}^{\reg} \, G(\tau') \, f(\tau') \ d\mu(\tau') \]
that are regularized as in \cite{boautgra}, see also \Cref{def:regpairing} below, and involve Poincar\'e series of weight $k = - \kappa$. 

Then $F(\tau)$ converges to a smooth function on $\uhp$  that can be characterized uniquely by the following properties:
\begin{enumerate}
\item $F$ is smooth and transforms as a modular form of weight $\kappa+2$;
\item $F$ has at worst exponential growth at $\infty$;
\item $\Low(F) = -f$, where $\Low$ is the Maa\ss\, lowering operator; and
\item $F$ has trivial {principal part} and trivial 
      {cuspidal holomorphic projection}\footnote{By the (cuspidal) holomorphic projection of $F$, we mean the cusp form representing the linear functional $g \mapsto \langle g,\, v^{p/2+1} \bar F\rangle^\reg$ on the space of cusp forms $S_{p/2+1}(\rho_L)$. See also \Cref{sec:holom-proj}. For the definition of the principal part, see \Cref{prop:uniquepreimage}.}.
\end{enumerate}
\end{theorem}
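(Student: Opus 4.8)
The plan is to analyze the generating series coefficient-by-coefficient via unfolding, and to obtain its modularity by recognizing it as a regularized pairing against a two-variable kernel, in the spirit of the regularized theta lift studied in the main text. Writing $q = \e{\tau}$, $v = \Im(\tau)$ and $c(m,v) := \langle P_{m,v} - F_m,\, f\rangle^{\reg}$, there are four things to check: that each $c(m,v)$ is a well-defined smooth function of $v$ in spite of the discontinuity of $P_{m,v}$; that $\sum_m c(m,v)\,q^m$ converges locally uniformly on $\uhp$, together with all its derivatives, so that the limit $F$ is smooth; that $F$ has properties (1)--(4); and that these properties determine $F$ uniquely.

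For the first three points and for property (3) the key tool is unfolding. Since $P_{m,v}$ is a sum over $\Gamma_{\infty}\backslash\SL_2(\Z)$ of $|_k$-translates of a seed that, through $\sigma_v$, is supported on $\Im(\tau') \geq v$, and since $f$ has the complementary weight $\kappa = -k$, the regularized pairing $\langle P_{m,v},\,f\rangle^{\reg}$ should collapse to a one-dimensional integral of the form $\int_v^{\infty}(\text{the }m\text{-th Fourier datum of }f\text{ at height }v')\cdot(\text{an elementary kernel})\,dv'$. Making this rigorous for the Borcherds-regularized integral — as opposed to a naively convergent one — is where the moderate-growth hypothesis and the technical condition on the constant term of $f$ from \Cref{def:modgrowthforms} enter, and where the subtracted term $\langle F_m,f\rangle^{\reg}$ (a $v$-independent constant whose unfolded pairing supplies the correct holomorphic part) is used to tame the behaviour near the cusp. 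Differentiating the resulting integral in $v$ yields the single-Fourier-coefficient form of $\Low(F) = -f$, with $\partial_v$ hitting the endpoint $v' = v$ of the truncation; this is precisely the mechanism by which Kudla-type truncated series invert the lowering operator. The same integral representation, combined with the decay of the Fourier coefficients of $f$ forced by moderate growth, furnishes the bounds needed for local uniform convergence and for property (2), the at-worst-exponential growth of $F$ at $\infty$. One should also justify here the interchange of summation over $m$ with the regularized integral, so that $F(\tau) = \langle K(\tau, \cdot),\, f\rangle^{\reg}$ with $K(\tau,\tau') := \sum_m (P_{m,v} - F_m)(\tau')\,q^m$.

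Property (1) is cleanest to establish on the level of this kernel: as a function of the two variables, $K(\tau,\tau')$ transforms with weight $\kappa+2$ in $\tau$ and weight $k$ in $\tau'$. For the smooth family $\{F_m\}$ this is the standard fact that the generating series of the weight-$k$ Poincar\'e family indexed by the principal parts is modular of the dual weight $2-k = \kappa+2$ in $\tau$ — this is the kernel underlying the regularized theta lift of \Cref{thm:MaassSection} — and for the truncated family $\{P_{m,v}\}$ one checks that the $\sigma_v$-cutoff, with $v = \Im(\tau)$, is compatible with the $\SL_2(\Z)$-action on $\tau$, as in Borcherds' truncated theta integrals. Since the regularization is equivariant for this action, $F(\tau) = \langle K(\tau,\cdot), f\rangle^{\reg}$ is modular of weight $\kappa+2$. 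The triviality of the principal part (property (4), in the sense of \Cref{prop:uniquepreimage}) is built into the construction: $F_m$ is chosen to be the harmonic Maass form with the same principal part as $P_{m,v}$, so the holomorphic polar contributions cancel in the difference. The triviality of the cuspidal holomorphic projection is the substantive assertion: for $g \in S_{\kappa+2}(\rho_L)$ one must show $\langle g,\, v^{\kappa+2}\,\overline{F}\rangle^{\reg} = 0$; the plan is to transfer the pairing to the kernel $K$ and then use $\Low(F) = -f$ together with the adjointness of the regularized integral to rewrite it as a pairing of $g$ against $f$ in a degree where it vanishes, the cuspidality of $g$ annihilating the boundary terms produced by the regularization. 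I expect this last step to be the main obstacle, both because it demands the most careful bookkeeping of the Borcherds regularization and because the vanishing here is exactly the Bruinier--Funke phenomenon quoted in the introduction, so it encodes genuine analytic content rather than formal manipulation.

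Uniqueness is then formal. If $F_1$ and $F_2$ both satisfy (1)--(4), their difference $G$ is smooth, modular of weight $\kappa+2$, of at most exponential growth, and annihilated by $\Low$; hence $G$ is weakly holomorphic of weight $\kappa+2$, its trivial principal part makes it holomorphic at $\infty$ with trivial constant term (see \Cref{prop:uniquepreimage}) and therefore a cusp form, and since the cuspidal holomorphic projection of a cusp form is the form itself, triviality of that projection forces $G = 0$.
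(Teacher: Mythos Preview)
Your approach inverts the paper's logic, and the inversion breaks at property (1). The paper does \emph{not} establish modularity of the generating series directly. Instead it first constructs, by a soft argument (Dolbeault resolution with $\log$-singular coefficients, \Cref{prop:Lexactseq} and \Cref{prop:uniquepreimage}), an abstract preimage $F=\Lsharp(f)\in\AkappaLexp$ satisfying (1)--(4); then it shows, via unfolding plus Stokes' theorem with the already-known $F$ as a primitive (\Cref{thm:MaassSection}), that $\langle P_{m,v}-F_m,f\rangle^{\reg}=-c_F(m,v)$. Modularity, trivial principal part, and trivial cuspidal projection are thus inherited from $F$, not proved for the generating series ab initio. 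In particular, the vanishing $\langle F,\xi(F_{m,\mu})\rangle_{\Pet}^{\reg}=0$ is an \emph{input} to the Stokes computation, not an output.

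Your kernel argument for modularity does not go through. The formal series $K(\tau,\tau')=\sum_m P_{m,v}(\tau')\,q^m$ is not a function: summing $e^{2\pi i m(\tau-\gamma\tau')}$ over $m$ in a lattice gives a periodic delta distribution, not something whose modularity in $\tau$ can be checked and then paired with $f$. The assertion that ``the $\sigma_v$-cutoff, with $v=\Im(\tau)$, is compatible with the $\SL_2(\Z)$-action on $\tau$'' is false as stated --- $\Im(\tau)$ is not $\SL_2(\Z)$-invariant, and there is no analogue of Borcherds' truncation that makes this work here. Likewise, the $F_m$-part of your kernel is the Niebur--Poincar\'e kernel whose modularity in the spectral parameter is a nontrivial fact, not a formality. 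Your plan for (3) by differentiating the unfolded integral at the endpoint $v'=v$ is correct and indeed recovers the Fourier-coefficient relation $c_F(m,v)=\kappa_F(m)+\int_1^v c_f(m,t)\,t^{-2}\,dt$; but this only tells you that the coefficients satisfy the right ODE in $v$, not that they assemble into a modular object. To close the gap you would need an independent source of modularity --- and that is exactly what the paper's abstract existence of $\Lsharp(f)$ provides.
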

When  $\kappa  \leq 0$  an additional normalization is required, on account of the presence of holomorphic modular forms of weight $-\kappa$. 

As a special case, suppose $f_0 \in M_k(\rho_L)$ is a holomorphic modular form, so that
\[ f := - v^k \overline{f_0}  \ \in \ \ALmod{-k}(\rho_L^{\vee}). \]
Then the function $F\in \ALexp{2-k}(\rho_L^{\vee})$ determined by applying \Cref{thm:introLsec} to $f$ satisfies $\xi(F) = f_0$, where $\xi$ is the Bruinier-Funke operator \cite{brfugeom}, see also \Cref{sec:modforms}. It follows immediately that $F$ is a harmonic weak Maa\ss\ form in the terminology of \emph{loc.\ cit.} 

In this setting, Theorem \ref{thm:introLsec} 
implies that the generating series 
\[
	\sum_{m \gg - \infty} \langle F_m, f \rangle \ q^m
\]
is a \emph{mock modular form}, see \cite{Zwegers:mocktheta, DIT-annals, DMZ-black-holes}, whose shadow is $f_0$.

Turning to the geometric applications of this theorem, we now assume that the signature of $V$ is $(p,2)$. Let $\mathbb D^o(V)$ denote the locally symmetric space attached to $O(V)$, which we realize as the space of oriented negative-definite planes in $V\otimes_{\Q} \R$, and is a complex manifold of dimension $p$. If $x \in V$ has positive norm, then
\[ Z(x) \ := \ \{ z \in \domain^o(V) \ | \ x \perp z \} \ \subset \ \domain^o(V) \]
is a complex-codimension-one submanifold. For any $m \in \Q$, define an $S(L)^{\vee}$-valued cycle  $Z(m)$ by the formula
\[ Z(m)(\varphi) \ = \ \sum_{ \substack{x \in V \\ Q(x) = m}}  \, \varphi(x) \, Z(x), \qquad \qquad \varphi \in S(L). \]
This sum is locally finite, in the sense that for a given compact subset $K \subset \domain^o(V)$, there will only be finitely many $x$'s appearing in the sum with $\varphi(x) \neq0 $ and $K \cap Z(x) \neq \emptyset$. Moreover, the condition that $V$ has signature $(p,2)$ implies that $Z(m) = 0$ whenever $m \leq 0$.

In this context, we say that a current 
\[ \lie g \in D^{(0,0)}(\domain^o(V))\otimes_{\C} S(L)^{\vee} \]
 is a $\log$-singular \emph{Green function} for $Z(m)$ if for every $\varphi \in S(L)$, it satisfies (i) $\ddc \lie{g} (\varphi)$ is dual to $Z(m)(\varphi)$ under Poincar\'e duality for each $\varphi\in S(L)$, and (ii) $\lie{g}(\varphi)$ is smooth on the complement $\domain^o(V) - \abs{Z(m,\varphi)}$ with logarithmic singularities along $Z(m,\varphi)$.

In \cite{kudla-annals-central-der}, Kudla  constructs a family of  {Green functions} $\KGrO(m,v)$ for the cycle $Z(m)$, depending on a real parameter $v \in \R_{>0}$, and with the property that $\ddc \KGrO(m,v)$ is the $m$'th Fourier coefficient of the Kudla-Millson theta function \cite{kudlamillsonintersection}.

Our next result relates $\KGrO(m,v)$  to the \emph{Siegel theta function}
\[ \Theta_L( \tau, z) \colon \mathbb H \times \mathbb D^o(V) \ \to \ S(L)^{\vee} \]
whose definition is recalled in \Cref{sec:arch}; for a fixed $z\in \domain^o(V)$, the Siegel theta function is a non-holomorphic 
modular form $\Theta(\cdot, z) \in \ALmod{p/2-1}(\rho_L^{\vee})$ of moderate growth.
\begin{theorem} \label{thm:introXiRegLift}
For any $m \neq 0$ and  $z \notin  \abs{Z(m)}$,
	\[
         \KGrO(m,v)(z) \ = \ \langle  P_{m,v},  \ \Theta_L(\cdot, z) \rangle^{\reg} , 
        \]
	and
	\[ 
          \KGrO(0,v)(z) \ = \ \langle  P_{0,v},  \ \Theta_L(\cdot, z) \rangle^{\reg} \ + \ \log(v) \, \varphi_0^{\vee} 
        \]
	where $\varphi_0^{\vee} \in S(L)^{\vee}$ is the linear functional $\varphi \mapsto \varphi(0)$.
	
	Moreover, the right hand sides of these formulas are defined for all $z \in \domain^o(V)$,
        and hence give a discontinuous extension of $\KGrO(m,v)$. 
\end{theorem}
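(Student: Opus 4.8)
The plan is to unfold the Borcherds-regularized theta integral on the right-hand side against the truncated Poincar\'e series, collapse it to an elementary one-variable integral, and match the result with the explicit shape of $\KGrO(m,v)$ recalled in \Cref{sec:arch}. Throughout, write $R(x,z)\ge 0$ for the invariant of \Cref{sec:arch} measuring the distance of $z$ to $Z(x)$, so that $R(x,z)=0$ exactly when $z\in Z(x)$, and recall the Fourier expansion of the Siegel theta function,
\[ \Theta_L(\tau,z) = y\sum_{\lambda\in L'} q^{Q(\lambda)}\, e^{-2\pi y R(\lambda,z)}\, \varphi_\lambda^\vee, \qquad \tau = u+iy,\ q = e^{2\pi i\tau}, \]
where $\varphi_\lambda^\vee\in S(L)^\vee$ is the dual basis vector attached to the coset $\lambda+L$ (so $\varphi_0^\vee$ agrees with the notation of the statement). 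Since $\Theta_L(\cdot,z)\in\ALmod{p/2-1}(\rho_L^\vee)$ for every $z\in\domain^o(V)$, the right-hand side $\langle P_{m,v},\Theta_L(\cdot,z)\rangle^{\reg}$ --- a regularized theta integral in the sense of \cite{boautgra} --- is defined for all $z$; the substance of the theorem is its explicit identification, after which the last assertion (a discontinuous extension for all $z$) is automatic.

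\emph{Unfolding.} First I would unfold. Since $P_{m,v}$ transforms with weight $k=-\kappa$ under $\rho_L$ and $\Theta_L(\cdot,z)$ with weight $\kappa = p/2-1$ under $\rho_L^\vee$, the function $\tau\mapsto\langle P_{m,v}(\tau),\Theta_L(\tau,z)\rangle$ is $\SL_2(\Z)$-invariant of weight $0$; a direct computation with the slash action (using $\rho_L$-equivariance of the pairing $S(L)\otimes S(L)^\vee\to\C$ and $k+\kappa=0$) shows that each term of the defining sum of $P_{m,v}$ satisfies $\langle (\sigma_v q^{-m}\varphi_\mu)\mid_k[\gamma](\tau),\ \Theta_L(\tau,z)\rangle = H(\gamma\tau)$, where
\[ H(\tau) := \sigma_v(\tau)\, q^{-m}\, \langle\varphi_\mu,\Theta_L(\tau,z)\rangle \]
is $\Gamma_\infty$-invariant (this invariance being exactly the congruence condition on $m$ that makes $P_{m,v}$ nonzero). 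Hence the integrand is the sum over $\Gamma_\infty\backslash\SL_2(\Z)$ of the $\Gamma_\infty$-translates of $H$, and the Rankin--Selberg unfolding for the regularized integral (as in \cite{boautgra}, and as performed for these truncated Poincar\'e series in the earlier sections) gives $\langle P_{m,v},\Theta_L(\cdot,z)\rangle^{\reg} = \int^{\reg}_{\Gamma_\infty\backslash\uhp} H(\tau)\, d\mu(\tau)$, an integral over the strip $\{0\le u<1,\ y>0\}$ still regularized via the auxiliary parameter $s$. Inserting the Fourier expansion and integrating over $u\in[0,1]$, orthogonality of the additive characters annihilates every $\lambda$ with $Q(\lambda)\ne m$ while the residual $y$-exponentials cancel ($q^{-m}q^{Q(\lambda)}=1$ on $Q(\lambda)=m$), leaving
\[ \langle P_{m,v},\Theta_L(\cdot,z)\rangle^{\reg} = \mathrm{CT}_{s=0}\,\sum_{\substack{\lambda\in L'\\ Q(\lambda)=m}} \Big(\int_v^\infty e^{-2\pi t R(\lambda,z)}\, t^{-1-s}\, dt\Big)\, \varphi_\lambda^\vee, \]
with $\mathrm{CT}_{s=0}$ the constant term at $s=0$.

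\emph{Evaluation.} Then I would evaluate term by term. For $R(\lambda,z)>0$ the inner integral is holomorphic at $s=0$ with value $\int_v^\infty e^{-2\pi t R(\lambda,z)}\, dt/t = \Gamma(0, 2\pi v R(\lambda,z))$; for $R(\lambda,z)=0$ it equals $\int_v^\infty t^{-1-s}\, dt = v^{-s}/s$, whose constant term at $s=0$ is $-\log v$. When $m\ne 0$ and $z\notin\abs{Z(m)}$, no norm-$m$ vector is perpendicular to $z$, so every term is of the first kind and the sum reproduces the $m$-th coefficient of $\KGrO(m,v)(z)$ --- precisely the archimedean formula recorded in \Cref{sec:arch}. (The case $m<0$ is especially transparent: then $\abs{Z(m)}=\emptyset$ and every norm-$m$ vector has $R(\lambda,z)>0$, so no regularization intervenes.) When $m=0$, the unique isotropic vector perpendicular to $z$ is $\lambda=0$ --- a nonzero isotropic vector cannot be perpendicular to a negative-definite $2$-plane, since its orthogonal complement contains no such plane --- and its term contributes exactly $-\log v$ to the $\varphi_0^\vee$-component; comparison with the definition of $\KGrO(0,v)$ then gives the second identity, with the stated $\log(v)\,\varphi_0^\vee$ correction. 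Finally, the same computation furnishes the last assertion: for $z\in\abs{Z(m)}$ the finitely many norm-$m$ vectors $\lambda\perp z$ each contribute $-\log v$ in place of the divergent value $\Gamma(0,0)$, the remaining terms converge by local finiteness of $\{\lambda:Q(\lambda)=m\}$ together with the exponential decay $\Gamma(0,t)\sim e^{-t}/t$, and the resulting function of $z$ is the asserted discontinuous extension of $\KGrO(m,v)$ across $\abs{Z(m)}$.

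\emph{Main obstacle.} The real work is in bookkeeping the regularization through the unfolding: one must verify that the Borcherds-regularized integral over $\SL_2(\Z)\backslash\uhp$ genuinely unfolds to the regularized strip integral above --- this rests on the growth and convergence properties of the truncated Poincar\'e series established earlier, together with Borcherds' analysis of the regularized Rankin--Selberg unfolding --- and that $\mathrm{CT}_{s=0}$ may be pulled inside the sum over $\lambda$ and past the $u$-integration. Away from $\abs{Z(m)}$ and for $m\ne 0$ the unfolded integrand decays exponentially in $y$ and the relevant series converge absolutely, so these interchanges are routine; all of the genuine content sits at the isotropic (resp.\ perpendicular) vectors, where the $s$-regularization does something nontrivial and produces both the $\log v$ terms and the discontinuity.
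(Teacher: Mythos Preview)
Your proposal is correct and takes essentially the same route as the paper: unfold the truncated Poincar\'e series against the Siegel theta function, reduce to the rectangle $\{|u|\le 1/2,\ y\ge v\}$, integrate out $u$ via the Fourier expansion of $\Theta_L$, and identify the remaining $y$-integral $\int_v^\infty e^{-2\pi t R(\lambda,z)}\,dt/t$ with $\beta(2\pi v R(\lambda,z))$ (your $\Gamma(0,\cdot)$). The only difference is organizational: the paper first isolates the existence of the regularized pairing and its alternative form $\lim_{T\to\infty}\big(\int_{\calF_T}P_{m,v}\,\Theta_L\,d\mu - S_m(z)\log T\big)$ in a separate lemma (carrying out the $\CT_{s=0}$ explicitly there), and then unfolds the truncated fundamental domain directly without the $s$-parameter, exploiting that for $T>\max(v,1/v)$ the $\Gamma_\infty$-translates of $\calF_T$ meet $\{\Im\tau\ge v\}$ in exactly the rectangle; you instead keep $s$ through the unfolding and evaluate $\CT_{s=0}$ term by term at the end. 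Both lead to the same computation, and your treatment of the $R(\lambda,z)=0$ terms via $\CT_{s=0}(v^{-s}/s)=-\log v$ correctly reproduces the $\log T$ contribution the paper tracks. Your justification that no nonzero isotropic $\lambda$ has $R(\lambda,z)=0$ (equivalently $\lambda\in z^\perp$, which is positive definite) is also what the paper uses implicitly.
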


Bruinier \cite{brhabil} showed that for $m>0$,
\[ 
  \BGrO{m}(z) \ := \ \langle F_m, \, \Theta_L(\cdot,z) \rangle^{\reg} 
\]
is also a Green function for $Z(m)$. 
Combining these facts with \Cref{thm:introLsec}, we obtain
\begin{theorem} \label{thm:introArchGenSer}  
	For each fixed $z \in \domain^o(V)$, the generating series
\[ - \log v \, \varphi_0^{\vee} \ + \  \sum_{m} \, \left( \KGrO(m,v) - \BGrO{m} \right)(z) \ q^m \]
	is a non-holomorphic modular form of weight $p/2+1$ with trivial cuspidal holomorphic projection.
\end{theorem}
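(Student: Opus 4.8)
The plan is to recognize the generating series in the statement as the function $F(\tau)$ produced by \Cref{thm:introLsec} when the input is the Siegel theta function. Fix $z \in \domain^o(V)$ and put $f := \Theta_L(\cdot, z)$; as recalled just before \Cref{thm:introXiRegLift}, this is a moderate-growth form $f \in \ALmod{p/2-1}(\rho_L^{\vee})$ of weight $\kappa := p/2-1$, and hence a legitimate input for \Cref{thm:introLsec} (using the general-weight statement \Cref{thm:MaassSection}, together with the additional normalization indicated there, in the low-dimensional cases where $\kappa \le 0$). Since this holds for every $z$ --- including $z$ lying on a special divisor --- the regularized pairings $\langle P_{m,v} - F_m, f\rangle^{\reg}$ are defined for all $z$, and by \Cref{thm:introLsec} the series $F(\tau) = \sum_m \langle P_{m,v} - F_m, f\rangle^{\reg}\, q^m$ converges to a smooth modular form of weight $\kappa + 2 = p/2 + 1$ with at worst exponential growth, trivial principal part, and trivial cuspidal holomorphic projection.

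It then remains to identify the generating series in the statement with this $F(\tau)$. By \Cref{thm:introXiRegLift}, interpreting the Green functions through the discontinuous extensions furnished there, we have $\KGrO(m,v)(z) = \langle P_{m,v}, f\rangle^{\reg}$ for $m \neq 0$ and $\KGrO(0,v)(z) = \langle P_{0,v}, f\rangle^{\reg} + \log(v)\,\varphi_0^{\vee}$. On Bruinier's side, the defining formula $\BGrO{m}(z) = \langle F_m, f\rangle^{\reg}$, established for $m > 0$ in \cite{brhabil}, extends verbatim to all $m \in \Q$ once one uses the full family $F_m$ constructed in \Cref{sec:relat-non-holom}; I would take this as the definition of $\BGrO{m}$ for the remaining $m$ (matching the convention implicit in the statement). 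Subtracting term by term, the contribution $\log(v)\,\varphi_0^{\vee}$ of the $m = 0$ term is exactly cancelled by the $-\log v\,\varphi_0^{\vee}$ prepended in the statement, and one obtains
\[
  -\log v\,\varphi_0^{\vee} \ + \ \sum_{m} \left( \KGrO(m,v) - \BGrO{m} \right)(z)\, q^m \ = \ \sum_m \langle P_{m,v} - F_m,\, f\rangle^{\reg}\, q^m \ = \ F(\tau),
\]
which by the previous paragraph is a non-holomorphic modular form of weight $p/2 + 1$ with trivial cuspidal holomorphic projection, as claimed.

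With \Cref{thm:introLsec} and \Cref{thm:introXiRegLift} in hand, the argument is essentially formal; the only points requiring care are the bookkeeping of the $\log v$ term and of the constant ($m=0$) Fourier coefficient, and the verification that $\Theta_L(\cdot, z)$ is uniformly (in $z$) a valid input to \Cref{thm:introLsec} --- in particular that, for $z$ on a special divisor, the difference of the two discontinuous extensions of the Green functions really is the regularized pairing against $P_{m,v} - F_m$, and that these pairings may be interchanged with the summation over $m$ that defines the generating series. The genuine analytic content --- the convergence, modularity, and vanishing of the holomorphic projection of $F(\tau)$, and the regularized-theta-lift identity for Kudla's Green functions --- has already been absorbed into the proofs of \Cref{thm:introLsec} and \Cref{thm:introXiRegLift}, so no new obstacle arises here.
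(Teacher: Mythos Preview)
Your proposal is correct and follows essentially the same approach as the paper: identify the generating series with $\sum_m \langle P_{m,v} - F_m,\, \Theta_L(\cdot,z)\rangle^{\reg}\, q^m$ via \Cref{thm:introXiRegLift} and the definition of $\BGrO{m}$, then invoke \Cref{thm:introLsec} (in the body, \Cref{thm:MaassSection} and its \Cref{cor:vectorValLSec}) with input $f = \Theta_L(\cdot,z) \in \ALmod{p/2-1}(\rho_L^\vee)$. Your treatment is in fact slightly more explicit than the paper's about the $m=0$ bookkeeping, the extension of $\BGrO{m}$ to $m\le 0$, and the trivial-holomorphic-projection claim (which the paper leaves implicit in the $\Lsharp$ construction).
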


The main result of \cite{brfugeom}, applied to the case at hand, implies the modularity of the generating series
	\[ 
		  \sum_{m} \, \ddc \left( \KGrO(m,v) - \BGrO{m} \right) \, q^m  
	\]
 valued in differential $(1,1)$-forms,  with trivial cuspidal holomorphic projection. 
We might view \Cref{thm:introArchGenSer} as a `lifting' of their result to the level of Green functions; 
the title of the present paper reflects the role that their work played in motivating our investigation. 

Finally, we turn to an arithmetic-geometric version of this result. Fix an imaginary quadratic field $\kb$ of odd discriminant, let $o_\kb$ be the ring of integers, and suppose that $\calV$ be a Hermitian vector space over $\kb$ of signature $(n-1,1)$ that contains a self-dual lattice $\calL$. 

Following \cite{KRunn2,howard-unitary-2}, we define a stack $\calM_{\calV}$ over $\Spec(o_\kb)$ via a moduli problem that parametrizes $n$-dimensional abelian varieties with an $o_\kb$-action, and a compatible principal polarization, see \Cref{sec:arithUnitary}. There is a family of \emph{special divisors}
\[ 
  \{ \Zed_{\calV}(m) \}_{m  \in \Q_{\neq 0}}
\]
on $\M_{\calV}$, also defined via a moduli problem, and whose complex points can be described in terms of the cycles $Z(m)$ considered previously. In particular, the Green functions $\KGrO(m,v)$ and $\BGrO{m}$ considered above descend to functions $\KGr(m,v)$ and $\BGr{m}$ on $\M_{\calV}(\C)$ with logarithmic singularities along $\Zed_{\calV}(m)(\C)$. 

Moreover, by the results \cite{howard-unitary-2}, the stack $\M_{\calV}$ admits a canonical toroidal compactification $\M_{\calV}^*$ whose boundary $\M_{\calV}^* - \M_{\calV}$ is a divisor. Howard also studies the boundary behaviour of $\KGr(m,v)$, proving that after adding an explicit linear combination of boundary components, the pair
\[ \ZedVKud(m,v) \ : = \ \big( \Zed_{\calV}(m) \ + \ \text{boundary components}  , \ \KGr(m,v) \ \big) \]
defines an element
\[ \ZedVKud(m,v) \ \in \ \ChowHatC(\M_{\calV}^*) \otimes_{\C} \calS^{\vee}, \]
where $\calS \subset S(\calL)$ is a certain $\SL_2(\Z)$-stable subspace of $S(\calL)$, and $\ChowHatC(\calM_{\calV}^*)$ is an \emph{arithmetic Chow group}; here we employ the $\log\text{-}\log$ singular version of this latter group, whose construction is detailed in \cite{bkk} and generalizes the arithmetic Chow groups of \cite{gillet-soule}.

Similarly, work of Bruinier-Howard-Yang \cite{bruinier-howard-yang-unitary} shows that taking Bruinier's Green functions and making a (different) modification at the boundary yields classes
\[ \ZedVBru(m) \ : = \ \big(  \Zed_{\calV}(m) \ + \ \text{boundary components}  , \ \BGr{m} \ \big)  \ \in \ \ChowHatC(\M_{\calV}^*) \otimes_{\C} \calS^{\vee} \]
for $m \neq 0$. 

For appropriate definitions in the case $m=0$, both constructions give rise to formal $q$-expansions
\[ \ThetaVKud(\tau) := \sum_{m} \ZedVKud(m,v) \, q^m \qquad \text{and} \qquad \ThetaVBru(\tau) := \sum_{m} \ZedVBru(m) \, q^m \]
whose coefficients\footnote{If $n > 2$, then $\ZedVBru(m)$ vanishes for all $m < 0$. If $n = 2$, then $\ZedVBru(m)$ may be non-zero for at most finitely many $m<0$.}
are valued in $\ChowHatC(\M_{\calV}^*) \otimes_{\C} \calS^{\vee}$. 

\begin{theorem} The difference $\ThetaVKud(\tau) - \ThetaVBru(\tau)$ is a non-holomorphic modular form of weight $n$ with values in $\ChowHatC(\M_{\calV}) \otimes_{\C} \calS^{\vee}$, and has trivial cuspidal holomorphic projection. \qed
\end{theorem}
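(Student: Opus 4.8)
To prove this, I would reduce it to the archimedean result \Cref{thm:introArchGenSer}, exploiting that after applying the restriction map $\ChowHatC(\M_\calV^*)\to\ChowHatC(\M_\calV)$ coefficientwise --- i.e. passing from $\M_\calV^*$ to the open stack $\M_\calV$ --- the difference of the two arithmetic generating series becomes ``purely archimedean.''

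\emph{First (reduction to a Green-function series).} By construction $\ZedVKud(m,v)$ and $\ZedVBru(m)$ carry the \emph{same} interior divisor $\Zed_\calV(m)$ of \cite{KRunn2,howard-unitary-2}, differing only in their boundary components and in their Green functions $\KGr(m,v)$ versus $\BGr{m}$. Restricting to $\M_\calV$ kills the boundary components, and since $\KGr(m,v)$ and $\BGr{m}$ are both Green functions for $\Zed_\calV(m)(\C)$ with logarithmic singularities along the same locus, their difference is a \emph{smooth} $\calS^\vee$-valued function on all of $\M_\calV(\C)$. Letting $a\colon g\mapsto(0,g)$ be the natural $\C$-linear map from smooth $\calS^\vee$-valued functions on $\M_\calV(\C)$ into $\ChowHatC(\M_\calV)\otimes_\C\calS^\vee$, one obtains, for $m\neq 0$,
\[ \big(\ZedVKud(m,v)-\ZedVBru(m)\big)\big|_{\M_\calV} \;=\; a\big(\KGr(m,v)-\BGr{m}\big) \;=\; a\big(\langle P_{m,v}-F_m,\,\Theta_L(\cdot,z)\rangle^\reg\big), \]
the last equality by \Cref{thm:introXiRegLift} and Bruinier's formula for $\BGrO{m}$ (reading the right side, as a function of $z$, after descent to $\M_\calV(\C)$). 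A short bookkeeping check with the definitions of $\ZedVKud(0,v)$ and $\ZedVBru(0)$, using the second formula of \Cref{thm:introXiRegLift}, shows that the $m=0$ coefficient of the difference equals $a(\KGr(0,v)-\BGr{0})-\log v\cdot a(\varphi_0^\vee)$, matching the $\log v$ term in \Cref{thm:introArchGenSer}.

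\emph{Second (transfer of modularity).} Modularity of a $\ChowHatC(\M_\calV)\otimes_\C\calS^\vee$-valued formal $q$-series is tested against linear functionals $\lambda$, and every functional of the relevant type (arising from arithmetic intersection against classes of complementary degree) restricts, on the image of $a$, to an integration current $g\mapsto\int_{\M_\calV(\C)}g(z)\,\eta(z)$ against a fixed smooth form $\eta$ (plus at most a ``constant'' contribution) --- this being precisely the archimedean part of an arithmetic intersection number in the log-log formalism of \cite{bkk,gillet-soule}. For such a $\lambda$, interchanging the integration over $z$ with the (locally uniformly convergent) sum over $m$ gives
\[ \lambda\big(\ThetaVKud(\tau)-\ThetaVBru(\tau)\big) \;=\; \int_{\M_\calV(\C)}\Big(-\log v\cdot\varphi_0^\vee+\sum_m\big(\KGr(m,v)-\BGr{m}\big)(z)\,q^m\Big)\eta(z) \]
up to the constant contribution. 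By \Cref{thm:introArchGenSer} the $z$-integrand is, for each fixed $z$, a non-holomorphic modular form of weight $n$ with trivial cuspidal holomorphic projection; since the weight-$n$ transformation law in $\tau$ is linear and uniform in $z$, it passes through $\int_{\M_\calV(\C)}(-)\,\eta(z)$, and the same interchange applied to the regularized Petersson pairing against cusp forms of weight $n$ shows that the cuspidal holomorphic projection of the left side vanishes. As this holds for all relevant $\lambda$, the difference $\ThetaVKud-\ThetaVBru$ is a non-holomorphic modular form of weight $n$ with values in $\ChowHatC(\M_\calV)\otimes_\C\calS^\vee$ and trivial cuspidal holomorphic projection.

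\emph{Third (the case $n=2$, and the main obstacle).} For $n>2$ one has $\kappa=p/2-1=n-2>0$, so \Cref{thm:introLsec}, \Cref{thm:introXiRegLift} and \Cref{thm:introArchGenSer} apply verbatim; for $n=2$ the relevant weight is $\kappa=0$ and one must instead invoke the versions of these statements carrying the extra normalization term (present because of holomorphic forms of weight $-\kappa$), after which the argument is unchanged. I expect the main obstacle to be the second step: making precise the notion of modularity for an arithmetic-Chow-group-valued $q$-series and checking that it is detected by functionals which restrict to honest integration currents on the archimedean part, so that the interchange of $\int_z$ with the modular transformation in $\tau$ is legitimate; this is where the log-log arithmetic intersection theory of \cite{bkk} is genuinely used. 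A secondary, bookkeeping difficulty is verifying that the $m=0$ terms in the two constructions are normalized so that their difference is exactly $-\log v\cdot a(\varphi_0^\vee)$ modulo the $m=0$ Green functions. Granting these points, modularity and the vanishing of the cuspidal holomorphic projection descend coefficientwise from \Cref{thm:introArchGenSer}.
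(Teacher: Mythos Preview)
Your reduction is correct: on the open stack $\M_\calV$ the difference is indeed purely archimedean, coefficient by coefficient equal to $(0,\KGr(m,v)-\BGr{m})$ (with the $-\log v$ correction at $m=0$), and pointwise-in-$z$ modularity then follows from \Cref{thm:introArchGenSer}. You also correctly identify the main obstacle. However, you do not resolve it, and your proposed resolution --- ``interchange the integration over $z$ with the (locally uniformly convergent) sum over $m$'' --- is precisely what cannot be done directly. The pointwise result comes from the abstract characterization of $\Lsharp$ and says nothing about how the resulting function varies with $z$; there is no a priori reason the series $\sum_m(\KGr(m,v)-\BGr{m})(z)\,q^m$ converges to a smooth function of $z$, nor that pairing with a test form $\eta$ can be done termwise. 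The paper flags this explicitly: the methods behind \Cref{thm:introLsec} give no control in the $z$-aspect, so an indirect argument is required.

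The missing idea is the Kudla--Millson theta function $\Theta_{KM}(\tau)$, a closed-$(1,1)$-form-valued non-holomorphic modular form satisfying $\Low(\Theta_{KM})=\ddc\Theta_L$. Subtracting a holomorphic correction and the cuspidal projection from $\Theta_{KM}$ identifies $\Lsharp(\ddc\Theta_L)$, whose $m$th Fourier coefficient is $\ddc(\KGr(m,v)-\BGr{m})$; the point is that this object is \emph{a priori} smooth and modular jointly in $(\tau,z)$. One then checks it is exact as a current on $\M_\calV^*(\C)$, applies the $\ddc$-lemma to produce a normalized smooth $S_0(\tau,z)$ with $\ddc S_0$ equal to it, and shows $[S_0(\tau,z)](\eta)$ is modular. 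Finally one proves that each $g_m(v,z)-c_{S_0}(m,v)$ is locally constant in $z$ and that the resulting series in $\tau$ is modular; only this last step uses the ``easy'' interchange for compactly supported $\eta$ that you gesture at. A secondary point: the paper's precise theorem and its \Cref{def:ModularityDef} are formulated on the compactification $\M_\calV^*$, where the boundary coefficients $\tfrac{\mu_B(m)}{4\pi v}-\eta_B(m)$ must themselves be shown to be the coefficients of $-\tfrac{1}{4\pi}\Lsharp(\Theta_{\Lambda_B})$; your restriction to $\M_\calV$ sidesteps this, matching the informal introductory statement but not the version proved in the body.
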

Some caution is required in interpreting this statement, since the coefficients of $\ThetaVKud(\tau)$ and $\ThetaVBru(\tau)$ lie in the very large space $\ChowHatC(\calM_{\calV}^*) \otimes_{\C} \calS^{\vee}$. Roughly speaking, the assertion is that
\begin{equation} \label{eqn:introArithThetaDiff}
 \ThetaVKud(\tau) - \ThetaVBru(\tau) \ = \ \sum_{i=1}^r \, f_i(\tau) \, \widehat\Zed_i \ + \ (0, g(\tau,z))   
\end{equation}
 for some $\calS^{\vee}$-valued modular forms $ f_1(\tau)\, \dots f_r(\tau)$ and classes $\hat\Zed_1, \dots \hat \Zed_r \in \ChowHatC(\M_{\calV}^*)$ , together with a family of `smooth classes'
\[ \tau \ \mapsto \ (0, g(\tau, \cdot) )  \ \in \  \ChowHatC(\M_{\calV}^*) \otimes_{\C} \calS^{\vee} \]
that transforms as a modular form in $\tau$; the equality \eqref{eqn:introArithThetaDiff} is to be interpreted as an equality of Fourier coefficients, with an appropriate notion of convergence for such Fourier series.  We discuss this point in \Cref{def:ModularityDef} below. 

We also remark that an analogous modularity result was obtained by Berndt and K\"{u}hn \cite{kuehn-berndt-split} 
for Hecke correspondences on the self-product $X(1) \times X(1)$ of the full level modular curve.
 
Finally, in \Cref{sec:arithHeights} and \Cref{sec:refBHY}, we put our results in the context of two conjectures on arithmetic intersections for unitary Shimura varieties. The first is Kudla's conjecture relating the \emph{arithmetic heights} of the special divisors to derivatives of Eisenstein series. Let
\[ \widehat\omega \ \in \ \ChowHatC(\M_{\calV}^*) \]
denote the tautological bundle on $\M_{\calV}^*$, equipped with an appropriate metric as in \cite{bruinier-howard-yang-unitary}; assuming that there is a reasonable arithmetic intersection theory\footnote{The $\log\text{-}\log$ theory of \cite{burgos-kramer-kuhn} is, strictly speaking, valid for arithmetic schemes; the present authors are unaware of a general theory that simultaneously allows for $\log\text{-}\log$ singularities, and is valid for Deligne-Mumford stacks such as $\calM_{\calV}^*$. See \Cref{sec:arithHeights} for more detail on this point.} for $\calM_{\calV}^*$, we can consider the `arithmetic height'
\[ [ \widehat \Zed : \widehat{\omega}^{n-1} ] \ \in \C, \qquad \text{ for } \widehat\Zed \in \ChowHatC(\calM_{\calV}^*). \]

Applying this to the coefficients of $\ThetaVKud(\tau)$,  Kudla's conjecture in this setting predicts 
\[ [ \ThetaVKud(\tau) : \, \widehat\omega ]  \ \approx \ 2 \, \kappa \, E'_n(\tau, n-1) \ + \ (\text{correction terms}) \]
up to some correction terms involving vertical and boundary components; here $\kappa$ is an explicit constant, and 
	\[ E'_n(\tau, n-1) \ = \ \left[ \frac{\partial}{\partial s} \, E_n(\tau,s) \right]_{s = n-1} \]
	 is a special value of the derivative of the standard Eisenstein series $E_n(\tau,s)$ of weight $n$ attached to $\calL$.
	 
We prove that the integral of the archimedean component $g(\tau,z)$ in the difference of the theta functions \eqref{eqn:introArithThetaDiff} gives the main term in Kudla's conjecture. 
\begin{theorem} Suppose $n>2$. Then
	\begin{equation} \label{eqn:introGreenInt}
		\int_{\calM_{\calV}^*(\C)}  g(\tau,z)\ c_1(\widehat\omega)^{n-1} \ = \ 2 \kappa \ E'_n(\tau, n-1) .
	\end{equation}
	Consequently, given the existence of a `reasonable' arithmetic intersection pairing on $\calM_{\calV}^*$, see \Cref{sec:arithHeights},
	\[ 
		[\ThetaVKud(\tau) - \ThetaVBru(\tau) : \widehat{\omega}^{n-1}] \ = \ 2 \, \kappa \,  E_n'(\tau, n-1) \ + \ (\text{explicit boundary terms}).
	\]
\end{theorem}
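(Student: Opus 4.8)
The plan is to integrate the archimedean generating series of \Cref{thm:introArchGenSer} --- realized via the regularized theta lift of \Cref{thm:introXiRegLift} --- term by term against $c_1(\widehat\omega)^{n-1}$, to reduce the resulting integral over the Shimura variety to a Siegel--Weil computation, and then to recognize the outcome by means of the uniqueness statement in \Cref{thm:introLsec}. Write $\Omega := c_1(\widehat\omega)$. As the metric on $\widehat\omega$ is the natural invariant one, $\Omega$ is an invariant $(1,1)$-form, so $\Omega^{n-1}$ is a fixed constant multiple of the invariant volume form; the $\log$-singular behaviour of this metric at the toroidal boundary is mild enough that $\Omega^{n-1}$ remains integrable and the boundary is negligible. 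Hence, writing $p = 2(n-1)$ and recalling that $\Theta_{\calL}(\cdot, z) \in \ALmod{p/2-1}(\rho_L^\vee)$ has weight $n-2$,
\[ I(\tau) \ := \ \int_{\calM_{\calV}^*(\C)} \Theta_{\calL}(\tau, z)\, \Omega^{n-1} \]
is a fixed constant times the integral of $\Theta_{\calL}(\tau, z)$ over $\calM_{\calV}(\C)$ against the invariant measure; since $n>2$ this theta integral lies in the convergent range of the Siegel--Weil formula, and it therefore equals an explicit $\calS^\vee$-valued \emph{coherent} Eisenstein series of weight $n-2$ attached to $\calL$. In particular $I \in \ALmod{n-2}(\rho_L^\vee)$, the constant-term condition of \Cref{def:modgrowthforms} being satisfied by this Eisenstein series.

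The next step is to integrate $g(\tau, z)$ --- more precisely, its log-log singular extension to $\calM_{\calV}^*(\C)$ --- against $\Omega^{n-1}$. By construction, $g(\tau, z)$ is, up to an explicit $\log v\, \varphi_0^\vee$ contribution in its constant term, the generating series $\sum_m \big(\KGr(m,v) - \BGr{m}\big)(z)\, q^m$, whose $m$-th coefficient equals $\langle P_{m,v} - F_m,\, \Theta_{\calL}(\cdot, z)\rangle^{\reg}$ by \Cref{thm:introXiRegLift} together with Bruinier's formula $\BGrO{m}(z) = \langle F_m, \Theta_{\calL}(\cdot, z)\rangle^{\reg}$. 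Interchanging the $z$-integration with the Borcherds-regularized $\tau'$-integral --- justified by the standard growth estimates for the truncated Poincar\'e series $P_{m,v}$ and the Hejhal--Poincar\'e series $F_m$ set against the moderate growth of $\Theta_{\calL}(\cdot, z)$, uniformly on compacta in $z$, together with integrability at the boundary --- yields
\[ \int_{\calM_{\calV}^*(\C)} g(\tau, z)\, \Omega^{n-1} \ = \ c'\, \log v\, \varphi_0^\vee \ + \ \sum_m \big\langle P_{m,v} - F_m,\ I \big\rangle^{\reg}\, q^m \]
for an explicit constant $c'$. Since $n>2$ we have $\kappa := n-2 > 0$, so \Cref{thm:introLsec} applies with $f = I$ and identifies the Poincar\'e-series sum on the right as the unique smooth weight-$n$ modular form of at most exponential growth whose lowering-operator image is $-I$ and which has trivial principal part and trivial cuspidal holomorphic projection.

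It remains to check that $2\kappa\, E'_n(\tau, n-1)$, after subtracting the same explicit $\log v\,\varphi_0^\vee$ term, enjoys this characterization, which then forces \eqref{eqn:introGreenInt}. For this I would invoke the standard description of the Maa\ss{} lowering operator on the Eisenstein family $E_n(\tau, s)$: since the relevant Eisenstein series is \emph{incoherent} it vanishes at the near-central point $s = n-1$, and differentiating the relation $\Low E_n(\tau, s) = c(s)\, E_{n-2}(\tau, s')$ in $s$ presents $\Low E'_n(\tau, n-1)$ --- up to the elementary $v$- and $\log v$-terms coming from the vanishing constant term, which match $c'\log v\,\varphi_0^\vee$ --- as an explicit multiple of the coherent weight-$(n-2)$ Eisenstein series, hence, by the first step, of $I(\tau)$; comparing normalizations determines the constant $\kappa$ and explains its appearance in the Eisenstein normalization in the first place. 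The remaining properties --- at most exponential growth, trivial principal part, and orthogonality to cusp forms, so trivial cuspidal holomorphic projection --- follow for $E'_n(\tau, n-1)$ from general facts about derivatives of Eisenstein series; uniqueness in \Cref{thm:introLsec} then yields \eqref{eqn:introGreenInt}. For the arithmetic consequence I use linearity of the (hypothetical) height pairing on $\calM_{\calV}^*$ together with the decomposition \eqref{eqn:introArithThetaDiff}: the family $\tau \mapsto (0, g(\tau, z))$ pairs with $\widehat\omega^{n-1}$ precisely as the integral \eqref{eqn:introGreenInt} just evaluated, while each term $f_i(\tau)\,\widehat\Zed_i$ contributes $f_i(\tau)\,[\widehat\Zed_i : \widehat\omega^{n-1}]$; since the $\widehat\Zed_i$ are the boundary classes and the $f_i$ the corresponding boundary generating series of Howard and of Bruinier--Howard--Yang, these contributions --- together with any vertical terms entering a reasonable height pairing --- constitute exactly the ``explicit boundary terms'', and summing gives the stated formula.

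The step I expect to be the main obstacle is the Siegel--Weil computation together with the accompanying lowering-operator identity: one must pin down $I(\tau)$ as a specific normalized coherent Eisenstein series of weight $n-2$, relate it cleanly to $\Low E'_n(\tau, n-1)$ through the incoherence of $E_n$ at $s = n-1$, and extract the exact constant $\kappa$ --- this forces one to reconcile several normalizations at once (of $\widehat\omega$ and its metric, of $\Omega^{n-1}$ versus the invariant volume form, of the regularized pairing, and of the two Eisenstein series), to treat the $m=0$ Fourier coefficient and the $\log v$ terms with care, and to justify interchanging integration with the regularized theta lift near the toroidal boundary of $\calM_{\calV}^*$.
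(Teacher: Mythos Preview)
Your strategy is exactly the paper's: interchange the $z$-integral with the regularized $\tau'$-integral, apply Siegel--Weil to identify $I(\Theta_{\calL})$, and then invoke the uniqueness in \Cref{thm:introLsec} to recognize the result as $2\kappa\, E'_n(\tau,n-1)$. One step of your explanation, however, is factually wrong and introduces a spurious complication.

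The Eisenstein series $E_n(\tau,s)$ here is \emph{coherent}, attached to the actual lattice $\calL\subset\calV$; it does \emph{not} vanish at $s=n-1$, where it is the ordinary holomorphic weight-$n$ Eisenstein series. The mechanism is rather that
\[
\Low E_n(\tau,s)\ =\ -\tfrac12\,(s-(n-1))\,E_{n-2}(\tau,s),
\]
so the \emph{coefficient} vanishes at $s=n-1$; differentiating in $s$ gives $\Low E'_n(\tau,n-1)=-\tfrac12\,E_{n-2}(\tau,n-1)$ directly, with no extra $\log v$ terms. The $-\log v\,\varphi_0^\vee$ you are tracking is already absorbed by the identity $\KGr(0,v)-\BGr{0}-\log v\,\varphi_0^\vee=\langle P_{0,v}-F_0,\Theta_{\calL}\rangle^{\reg}$ (this is \Cref{thm:introXiRegLift} at $m=0$), so after integrating over $z$ and using that the boundary Green functions $\lie g_B$ were normalized to have zero $\Omega^{n-1}$-integral, the generating series is exactly $-\Lsharp(I(\Theta_{\calL}))$ with nothing left over. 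Finally, $\kappa$ is not a normalization to be solved for: it is the volume $\vol(\calM^*_{\calV}(\C),\Omega^{n-1})$ and enters directly from the Siegel--Weil identity $I(\Theta_{\calL})(\tau)=\kappa\,E_{n-2}(\tau,n-1)$. With these corrections your argument coincides with the paper's.
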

It is perhaps worth mentioning that after justifying the interchange of integration over $\calM_{\calV}^*(\C)$ with the regularized integral defining the Green functions, the proof of the theorem  follows very easily from the Siegel-Weil formula and the uniqueness statement in \Cref{thm:introLsec}, see \Cref{thm:GreenInt} in the text.

As a final application, in \Cref{sec:refBHY} we give a refined version of a theorem of Bruinier-Howard-Yang \cite{bruinier-howard-yang-unitary} concerning intersection numbers between the arithmetic cycles $\ZedVBru(m)$ and a certain \emph{small CM cycle} $\mathcal Y$ attached to a positive definite Hermitian lattice $\Lambda$ of signature $(n-1,0)$. Their main result states that a  prescribed linear combinations of these intersection numbers can be expressed as the central derivative of a convolution $L$-function involving the theta function $\Theta_{\Lambda}$, cf. \Cref{thm:BHY}.

By combining \Cref{thm:introXiRegLift} and the Siegel-Weil formula, we prove the following refinement, see \Cref{thm:refBHY} of the text:
 \begin{theorem}
\[ [\ThetaVKud(\tau) : \calY ] \ = \ -  \deg(\calY(\C)) \   \calE_1'(\tau) \otimes \Theta_{\Lambda}(\tau) \]
where $\calE_1'(\tau)$ is the central derivative of an incoherent Eisenstein series of weight 1 associated with $\calY$.  \qed
\end{theorem}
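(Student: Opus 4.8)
The plan is to leverage the theorem of Bruinier--Howard--Yang for the Bruinier cycles together with the archimedean comparison of Green functions from \Cref{thm:introXiRegLift}. First I would split
\[ [\ThetaVKud(\tau):\calY] \ = \ [\ThetaVBru(\tau):\calY] \ + \ [(\ThetaVKud - \ThetaVBru)(\tau):\calY]. \]
For the first summand I would invoke \Cref{thm:BHY} as a black box: it expresses a prescribed combination of the numbers $[\ZedVBru(m):\calY]$ as the central derivative of a Rankin--Selberg convolution $L$-function involving $\Theta_{\Lambda}$, and a standard unfolding identifies this $L$-value with the Petersson pairing of the relevant input against $-\deg(\calY(\C))\,\calE_1'(\tau)\otimes\Theta_{\Lambda}(\tau)$, where $\calE_1(\tau,s)$ is the incoherent weight-one Eisenstein series attached to the negative line orthogonal to $\Lambda$ inside $\calV$; this series vanishes at its center of symmetry, and $\calE_1'(\tau)$ denotes the leading (first-order) term there. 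Consequently $[\ThetaVBru(\tau):\calY]$ agrees with $-\deg(\calY(\C))\,\calE_1'(\tau)\otimes\Theta_{\Lambda}(\tau)$ up to a possible cuspidal holomorphic modular form.

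For the difference term I would use the decomposition \eqref{eqn:introArithThetaDiff}. The finite-type classes $f_i(\tau)\widehat\Zed_i$ pair with the one-dimensional cycle $\calY$ to give an honest modular form $\sum_i f_i(\tau)\,[\widehat\Zed_i:\calY]$, while the purely archimedean family $(0,g(\tau,z))$ pairs with the zero-cycle $\calY(\C)$ by evaluation, producing $\tfrac12 \sum_{y\in\calY(\C)} (\deg y)\, g(\tau,y)$ once one has arranged (as in \cite{bruinier-howard-yang-unitary}) that the CM points avoid the divisors $\Zed_{\calV}(m)$, or passed to a suitable limit. By \Cref{thm:introXiRegLift} together with Bruinier's analogous formula for $\BGr{m}$, the $m$-th Fourier coefficient of $g(\tau,z)$ equals $\langle P_{m,v}-F_m,\Theta_{\calL}(\cdot,z)\rangle^{\reg}$, up to the explicit $\log v$ term; hence $\sum_y g(\tau,y)$ is precisely $\sum_{y\in\calY(\C)} F_y(\tau)$, where $F_y$ is the function supplied by \Cref{thm:introLsec} applied to $f_y:=\Theta_{\calL}(\cdot,y)\in\ALmod{n-2}(\rho_{\calL}^{\vee})$ (assuming $n>2$, so that $\kappa=n-2>0$; the case $n=2$ uses the normalized form of that theorem). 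By linearity this is the function $F$ of \Cref{thm:introLsec} attached to $f:=\sum_y\Theta_{\calL}(\cdot,y)$, and the see-saw factorization of Siegel theta functions at CM points writes each $\Theta_{\calL}(\cdot,y)$ as $\Theta_{\Lambda}(\tau)$ tensored with the Siegel theta function of the negative line $W_y$; summing over the CM cycle and applying the Siegel--Weil formula expresses $f$ through $\Theta_{\Lambda}(\tau)$ and a value of the weight-one Eisenstein series. Finally I would appeal to the uniqueness clause of \Cref{thm:introLsec}: $F$ is determined by its weight, its exponential growth, its lowering $\Low(F)=-f$, and its trivial principal part and trivial cuspidal holomorphic projection. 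Checking these four properties for $-\deg(\calY(\C))\,\calE_1'(\tau)\otimes\Theta_{\Lambda}(\tau)$ --- whose image under $\Low$ is computed by the $s$-derivative of the incoherent Siegel--Weil identity, and which is non-holomorphic precisely because the coherent piece is a genuine theta series while the missing archimedean place contributes the derivative --- forces the two to coincide. Assembling the two steps, the cuspidal ambiguity in \Cref{thm:BHY} is eliminated and one obtains $[\ThetaVKud(\tau):\calY]=-\deg(\calY(\C))\,\calE_1'(\tau)\otimes\Theta_{\Lambda}(\tau)$.

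The step I expect to be the main obstacle is the interface between the regularized theta integral and the arithmetic geometry. One must justify interchanging the finite sum over $\calY(\C)$ with the regularized pairing $\langle P_{m,v},-\rangle^{\reg}$ and control the resulting $q$-series (convergence of Fourier coefficients, as in \Cref{thm:introArchGenSer}); one must handle the improper intersections occurring when CM points meet some $\Zed_{\calV}(m)$, presumably via a deformation or limiting argument as in \cite{bruinier-howard-yang-unitary}; and, most delicately, one must make the $s$-derivative of the Siegel--Weil formula precise in this incoherent setting with the correct normalization. It is exactly at this last point that the truncated Poincar\'e series $P_{m,v}$ --- in contrast to Bruinier's Hejhal--Poincar\'e series $F_m$ --- does the essential work, ensuring that the Eisenstein \emph{derivative} $\calE_1'(\tau)$, rather than some less canonical antiderivative, appears on the nose.
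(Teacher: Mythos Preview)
Your overall architecture --- split off $[\ThetaVBru:\calY]$ and handle the archimedean difference via $\Lsharp$ --- is not how the paper proceeds, and it has a genuine gap at the constant term.

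First, the assertion that \Cref{thm:BHY} yields ``$[\ThetaVBru(\tau):\calY]$ agrees with $-\deg(\calY(\C))\,\calE_1'(\tau)\otimes\Theta_{\Lambda}(\tau)$ up to a cusp form'' is false as stated: the left side is holomorphic (the coefficients $[\ZedVBru(m):\calY]$ are constants), the right side is not. What \Cref{thm:BHY} actually gives, via the pairing formula $\sum_{m} c_f^+(-m)c_H(m)=\langle H,\xi(f)\rangle_{\Pet}$ for holomorphic $H$, is only the pairing of $[\ThetaVBru:\calY]$ against every cusp form $\xi(f)\in S_n(\calS^{\vee})$; this determines the cuspidal part of $[\ThetaVBru:\calY]$ and leaves the Eisenstein/constant part undetermined. (In the paper, the implication in fact runs the other way: the remark after \Cref{thm:refBHY} deduces \Cref{thm:BHY} from the refined statement.)

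Second, your identification of the archimedean difference with $-\deg(\calY(\C))\,\calE_1'\otimes\Theta_{\Lambda}$ via the uniqueness clause of \Cref{thm:introLsec} fails at condition~(iv). By construction $\Lsharp(f)$ has $P(\Lsharp(f))=0$ (since $n>2$), i.e.\ $\kappa_{\Lsharp(f)}(0)=0$; but $\calE_1'\otimes\Theta_{\Lambda}$ has a nonzero constant $\kappa(0)$ (coming from the constant term of the derivative of the incoherent Eisenstein series). So $-\deg\,\calE_1'\otimes\Theta_{\Lambda}$ is \emph{not} $\Lsharp$ of anything; it differs from the $\Lsharp$-preimage by a holomorphic form whose constant term encodes exactly the Faltings height $[\widehat\omega:\calY]$, which enters the paper's proof via the Chowla--Selberg formula (the $m=0$ computation). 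Your sketch never touches this ingredient, so the two pieces you assemble cannot sum to the claimed answer.

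The paper's proof is structurally different: it does not pass through $\ThetaVBru$ or through the section $\Lsharp$. Instead it computes, Fourier coefficient by Fourier coefficient, $\KGr(m,v)(\calY(\C))$ directly by unfolding $P_{m,v}$ against the Siegel theta function at the CM points, applies the see--saw and Siegel--Weil to obtain $E_{-1,\Lambda_0}(\tau',0)\otimes\Theta_{\Lambda}$, and then integrates by parts (Stokes) against $E_{1,\Lambda_0}'\otimes\Theta_{\Lambda}$. This produces $-c_{E'\otimes\Theta_{\Lambda}}(m,v)$ plus a limit term; the latter is cancelled not by \Cref{thm:BHY} but by the underlying coefficient-wise computation of the finite intersection $[\Zed(m):\calY]_{\mathsf{fin}}$ carried out in \cite{bruinier-howard-yang-unitary}. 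The $m=0$ term is then matched separately using $[\widehat\omega:\calY]$ and Chowla--Selberg.
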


\subsection*{Acknowledgements} The authors thank J. Bruinier, G. Freixas and S. Kudla for helpful conversations, and especially J. Bruinier for suggesting the use of truncated Poincar\'e series in this context. The bulk of the work on this paper was conducted while the authors were 
at McGill University and the CRM; we thank both institutions for their hospitality.
S.S. acknowledges financial support from NSERC.
We would also like to thank the anonymous referees for their helpful comments.

\section{Truncated Poincar\'e series and the Maa\ss \ lowering operator}

\subsection{The Weil representation} \label{sec:WeilRep}
The aim of this section is to fix notation; see \cite[\S 1.1]{brhabil} for a reference for the facts mentioned here, and \cite{gelbart-weilrep, fredrik-weilfqm, Weil} for a more general discussion of the Weil representation. 

Let $(V,Q)$ be a quadratic space over $\Q$ of signature $(p,q)$, and denote by $S(V(\bbA_f))$ the space of \emph{Schwartz-Bruhat functions} on $V(\bbA_f)$, i.e.\ the space of complex valued functions on $V(\bbA_f)$ that are locally constant and compactly supported. The \emph{Weil representation} is an action 
\[ 
  \rho = \rho_f \colon \Mp_2(\bbA_f)  \ \longrightarrow \  \Aut \, S(V(\bbA_f)), 
\]
where $\Mp_2$ is the  \emph{metaplectic extension} of $\SL_2$.

Elements of $\Mp_2(\R)$ are represented by pairs $\tilde \gamma = (\gamma, \phi)$ where $\gamma = ( \begin{smallmatrix}a & b \\ c & d \end{smallmatrix})\in \SL_2(\R)$ and $\phi \colon \uhp \to \C$ is a holomorphic function 
satisfying $\phi(\tau)^2 = c \tau + d$. The product of two elements is given by
\[ 
  (\gamma_1, \phi_1) \cdot (\gamma_2, \phi_2) \ = \ \left(\gamma_1 \cdot \gamma_2, \, \phi_1(\gamma_2 \tau) \,  \phi_2(\tau) \right). 
\]
The group $\Gt:=\Mp_2(\Z)$, defined as the inverse image of $\G:=\SL_2(\Z)$ under the covering map, is generated by the elements
\[ 
  \mathbf{S} \ := \ \left( \begin{pmatrix} & -1 \\ 1 & \end{pmatrix}, \, \sqrt{\tau} \right) \qquad 
  \text{and} \qquad  \mathbf{T} \ := \ \left( \begin{pmatrix} 1 & 1 \\  & 1 \end{pmatrix}, \, 1 \right).
\]

Suppose $L\subset V$ is an even integral lattice (i.e. $L$ is a free $\Z$-submodule for which $Q(x) \in  \Z$ for all $x \in L$) and let $L'$ be the dual lattice. 

We let $\Zhat = \prod_p \Z_p$, where the product is over all rational primes, and define $\Lhat = L \otimes_\Z \Zhat$.
For a coset $\mu \in L'/L \cong (\Lhat)'/\Lhat$, let
\[ 
  \varphi_{\mu}  \ := \ \text{ characteristic function of } \mu  + \Lhat  \ \in \ S(V(\bbA_f)) ; 
\] 
the finite-dimensional space 
\[ 
  S(L) \ := \ \mathrm{span}_{\C} \left( \varphi_{\mu}, \ \mu \in L'/L \right)  \ \simeq \ \C[L'/L] 
\]
 is stable under $\Gt$. 
The restriction is denoted
\[ 
  \rho_L \colon \Gt \ \to \ \Aut S(L). 
\]
It can be seen, for example via the explicit formulas in \cite[\S 1.1]{brhabil}, that the image of $\rho_L$ is in fact a finite group.  

Moreover, when $\dim (V)$ is even,  $\rho_L$ factors through the map $\Gt \to \SL_2(\Z)$, and we denote the resulting action of $\SL_2(\Z)$ on $S(L)$ by the same symbol $\rho_L$. 

For $v \in S(L)^\vee$ and $w \in S(L)$, we frequently simply write $v \cdot w$ or $vw$ for the image of an element
 $v \otimes w \in S(L)^\vee \otimes S(L)$ under the canonical contraction map $S(L)^{\vee} \otimes_{\C} S(L) \to \C$.

Finally, it will be useful to introduce the Hermitian pairing (conjugate-linear in the second argument)
\[ 
  \langle \cdot, \cdot \rangle_{S(L)} \colon S(L) \times S(L) \to \C, \qquad \text{determined by } \langle \varphi_{\mu}, \, \varphi_{\nu} \rangle_{S(L)} \ = \ \delta_{\mu,\nu}. 
\]
It is easily checked, for example by using the explicit formulas in \cite[\S 1]{brhabil} that the induced $\C$-linear isomorphism
\begin{equation} \label{eqn:WeilRepDualConj}
 \overline{S(L)} \isomto S(L)^{\vee} , \qquad w \mapsto \langle \cdot , \, w \rangle_{S(L)}
\end{equation}
identifies the conjugate representation $\overline{\rho_L}$ with the dual representation $\rho_L^{\vee}$. 

This discussion is intended to justify the following abuse of notation: if $\varphi \in S(L)$, let $\overline \varphi \in S(L)^{\vee}$ denote the linear functional $\langle \cdot, \varphi \rangle_{S(L)}$.

\subsection{Some spaces of non-holomorphic modular forms} \label{sec:modforms}
Suppose $(W,\rho)$ is a finite dimensional representation of $\Gt = \Mp_2(\Z)$. 
If $k \in \frac12 \Z$ and $\tilde \gamma = (\gamma, \phi) \in \Gt$,
 define the \emph{slash operator}\footnote{For brevity, we do not include the representation $\rho$ in the notation but of course the slash action does depend on $\rho$.} on functions $f \colon \uhp \to W$ by the formula
\[ 
  f\mid_{k}[\tilde \gamma] \, (\tau) \ := \ \phi(\tau)^{-2k} \cdot \rho(\tilde \gamma)^{-1}  \, f \left( \frac{a \tau + b }{c\tau + d} \right),
\]
where $\gamma = \abcd$.

We say that a function $f \colon \uhp \to W$ \emph{transforms as a modular form} of weight $k$ and representation $\rho$ if 
\begin{equation}
  \label{eq:modtrafo}
   f\mid_{k}[\tilde\gamma] \, (\tau) \ = \ f(\tau) \qquad \text{for all } \tilde\gamma \in \Gt. 
\end{equation}

In this paper, we will primarily be interested in the case where $(W, \rho)$ is either $(S(L), \rho_L)$ or its dual.

Suppose that $(W, \rho) = (S(L), \rho_L)$, and consider the element $ \mathbf Z = (  - \mathtt{Id}, i)$, so that $\mathbf Z^2 = ( \mathtt{Id}, -1)$. Then 
\[ 
  f\mid_{k}[ \mathbf Z^2] \ = \ (-1)^{-2k + q - p} \, f, 
\]
for any function $f \colon \uhp \to S(L)$,
so there exist no non-zero functions satisfying \eqref{eq:modtrafo} unless $2k \equiv q-p \bmod 2$, in which case $\mid_{k}[\mathbf Z^2]$ acts trivially.
Moreover, such functions then take values in the subspace of $S(L)$ spanned by the vectors
\begin{equation}
  \label{eq:tildephi}
  \tilde\varphi_\mu \ :=  \ \frac12(\varphi_{\mu} + (-1)^{-k+\frac{q-p}{2}}\varphi_{-\mu})
\end{equation}
for $\mu \in L'/L$.

Note also that if $f \colon \uhp \to S(L)$ transforms as a modular form of weight $k$, then 
\[ 
  g(\tau) := v^{k} \overline{f(\tau)} 
\]
transforms as a modular form of weight $-k$ for the dual representation $\rho_L^{\vee}$.

\begin{definition} \label{def:regpairing}
Suppose $f \colon \uhp \to S(L)$ transforms as a modular form of weight $k$ with representation $\rho_L$, and $g \colon \uhp \to S(L)^{\vee}$ transforms with weight $-k$ and representation $\rho_L^{\vee}$, so that the product
\[ 
  f(\tau) \cdot g(\tau) 
\]
is $\Gt$-invariant. Following \cite{boautgra}, we define the \emph{regularized pairing} of $f$ and $g$, when it exists, as
\[
  \langle f,g \rangle^\reg  \ := \   \CT_{s=0}\ \left( \lim_{T \to \infty} \ \int_{\calF_T}  f(\tau)  \ g(\tau) \ v^{-s} \ d\mu(\tau) \right),
\]
where $s \in \C$ is a complex parameter, $\CT_{s=0}$ stands for ``the constant term in the Laurent expansion at $s=0$", and for $T \in \R_{>0}$,
\[ 
  \calF_T \ := \ \left\{ \tau = u+ iv \in \uhp\ | \  \abs{\tau} \geq 1, \  u \in \left[-\frac12, \frac12\right] \ \text{ and } v \leq T \right\} 
\]
is a truncation of the standard fundamental domain for $\SL_2(\Z) \bs \uhp$. We also write $d \mu(\tau) =  du\cdot dv / v^2$ for the usual hyperbolic measure.

We say the \emph{regularized pairing exists} if for $\Re(s)$ sufficiently large, the limit $T \to \infty$ defines a holomorphic function in $s$  that has a meromorphic continuation to $\Re(s)>-\epsilon$ for some $\epsilon>0$, and so the above definition makes sense.
 
Similarly, if $f\colon \uhp \to S(L)$ and $g\colon \uhp \to S(L)$ both transform as modular forms of weight $k$ with representation $\rho_L$, we define the \emph{regularized Petersson pairing}
\[ 
  \langle f,\, g\rangle_{\Pet}^{\reg} \ := \ \langle f, \,  v^{k}  \, \overline g \rangle^{\reg}
\]
when it exists.
If  $f, g \in M_k(\rho_L)$ are holomorphic modular forms, and at least one of them is cuspidal, 
then $\langle f,\, g\rangle_{\Pet}^{\reg} = \langle f,\, g\rangle_{\Pet}$ agrees with the usual Petersson inner product.

\end{definition}
\begin{definition}[\cite{brfugeom}] Suppose $k \in \frac12 \Z$, and let $H_k(\rho_L)$  be the space of (twice continuously differentiable) functions $f \colon \uhp \to S(L)$ such that
  \begin{enumerate}
  \item
    $f \mid_{k} [\tilde\gamma] = f$ for all $\gamma \in \tilde\Gamma$;
  \item
    there is a $C>0$ such that\footnote{Throughout, when we employ the asymptotic notation $f(\tau) = O(g(\tau))$ for a vector-valued function $f$, we simply mean that the components of $f$ with respect to any basis satisfy the given asymptotic.} $f(u + iv)=O(e^{C v})$ as $v\to \infty$ uniformly in $u$;
  \item
    $\Delta_k f = 0$, where
    \begin{align*}
      \Delta_k := -v^2\left( \frac{\partial^2}{\partial u^2}+
        \frac{\partial^2}{\partial v^2}\right) + ikv\left(
        \frac{\partial}{\partial u}+i \frac{\partial}{\partial v}\right)
    \end{align*}
    is the hyperbolic Laplace operator in weight $k$;
    \item	
    and	finally, that 
    		\[ 
                   \xi_k(f)  \ := \ v^{k-2} \overline{\Low(f)} 
                \]
    		is a holomorphic cusp form of weight $2 - k$ valued in $\rho_L^{\vee}$. Here
    		\[     
                    \Low = - 2 i v^2 \frac{\partial}{\partial \overline \tau} 
                \]
    	is the Maa\ss\, lowering operator, and satisfies $ \Low(g \mid_{k} [\tilde \gamma]) \ = \ \Low(g)\mid_{k-2} [\widetilde \gamma]$ for any function $g \colon \uhp \to S(L)$; in particular, it lowers the weight by two.
  \end{enumerate}
Functions satisfying $(i)$ -- $(iii)$ are called \emph{harmonic weak Maa\ss\ forms} of weight $k$, valued in $S(L)$.
\end{definition}
Contained in $H_k(\rho_L)$ is the space 
\[
	M^!_k(\rho_L) \ := \ \ker(\xi_k) 
\]
of \emph{weakly holomorphic forms}; these are precisely the forms of weight $k$ that are holomorphic on $\uhp$ and meromorphic at the cusp $\infty$. In turn, the space $M^!_k(\rho_L)$ contains the space of classical holomorphic modular forms $M_k(\rho_L)$, namely those forms that are holomorphic at $\infty$, and the space of cusp forms $S_k(\rho_L)$. 

We record some consequences of the definitions that will prove useful in the sequel, cf.\ \cite[\S 3]{brfugeom}. 
First, the Fourier expansion of  $f \in H_k(\rho_L)$  admits a decomposition
\begin{equation} \label{eqn:weakMaassDecomp}
 f(\tau) \ =\  f^+(\tau) \ + \ f^-(\tau) 
\end{equation}
into its \emph{holomorphic} and \emph{non-holomorphic parts}, where the holomorphic part
\[ 
	f^+(\tau) \ = \ \sum_{m \gg - \infty} c_f^+(m) \, q^m 
\]
has only finitely many negative terms. The non-holomorphic part
\[
	f^-(\tau) \ = \ \sum_{m < 0 } c_f^-(m) \  \Gamma(1-k, 4 \pi m v)\, q^m 
\]
has non-zero Fourier coefficients only for negative indices; here
\[
  \Gamma(1-k, a ) = \int_{a}^\infty e^{-t}t^{k-2}\, dt .
\]
Moreover, a form $f$ is in $M^!_k(\rho_L)$ if and only if $f^- = 0$.
Finally, we note that
\begin{equation} \label{eqn:WHMPrincAsymp} 
  f \ -  \ \sum_{n\leq 0} c^+_f(n) \, q^n \ = \ O(e^{-Cv}) \qquad \text{ as } v \to \infty
\end{equation}
for some constant $C>0$. 

\subsection{A family of harmonic weak Maa\ss \ forms}
\label{sec:relat-non-holom}
In this section, we fix a special family $\{F_{m,\mu}\} \subset H_k(\rho_L)$ that will play an important role throughout.

Suppose for the moment that $k <0$. Following \cite{brhabil}, if $m \in \Q_{> 0}$ and $\mu \in L'/L$ with $m \in Q(\mu) + \Z$, define
\begin{equation}
  \label{eq:nhp-br}
  F_{m, \mu}(\tau) = \frac{(4 \pi m)^{1-k}}{4\cdot \Gamma(2-k)} \sum_{\tilde \gamma \in \Gt_\infty \bs \Gt} \left( v^{1-k} \, M(1, 2- k , 4 \pi m v) \,   e^{- 2 \pi i m u} \, \varphi_{\mu} \right) \Big\vert_{k} [\tilde \gamma]
\end{equation}
where $M(a,b,c)$ is Kummer's confluent hypergeometric function (see \cite{abst}, 13.1.2, 13.1.32); 
the observant reader will note that $F_{m,\mu}(\tau)$ differs from its namesake in \cite{brhabil} by a factor of $1/2$. 
The forms $F_{m, \mu}$ span $H_k(\rho_L)$ if $k<0$ (see Proposition 1.12 in \cite{brhabil}).

The holomorphic part $F_{m,\mu}^+(\tau)$ is of the form
	\begin{equation}	\label{eqn:FmHolPart}
		F_{m, \mu}^+(\tau) \ = \ q^{-m}\tilde\varphi_\mu \ + \ \sum_{ n \geq 0} \, c_{F_{m,\mu}}^+(n) \, e^{2 \pi i n \tau} 
	\end{equation} 
for some coefficients $c_{F_{m,\mu}}^+(n)$ and where $\tilde\varphi_\mu = \frac12(\varphi_{\mu} + (-1)^{k+\frac{q-p}{2}}\varphi_{-\mu})$; in fact, $F_{m,\mu}$ is the unique harmonic weak Maa\ss\ form whose holomorphic part has this form \cite[Proposition 3.11]{brfugeom} and moreover $\xi(F_{m,\mu})$ is a (holomorphic, cuspidal) Poincar\'e series (cf. Remark 3.10 in \cite{brfugeom}).

This observation suggests the following approach to defining an analogous function $F_{m,\mu}$ in the case that $k \geq 0$.

We first recall the following \emph{modularity criterion}, due to Borcherds \cite{bogkz}. Let $\mathsf{Sing}_{2-k}(\rho_L)$ be the space of formal Fourier polynomials with negative indices that are invariant under the action of the elements $\mathbf Z$ and $\mathbf T$ of $\Gt$:
\[
	 \mathsf{Sing}_{2-k}(\rho_L^{\vee}) \ \ :=  \ \left\{ P \ =\ \sum_{ m \leq 0} a_P(m) \, q^m \ \Big| \ a_P(m) \in S_L^{\vee},\  P\mid_{2-k}[\mathbf Z] \ = \ P\mid_{2-k}[\mathbf T] = P \right\}
\]
where $q = e^{2 \pi i \tau}$. We have a map
\[
 	\psi\colon	\mathsf{Sing}_{2-k}(\rho_L^{\vee}) \ \to \ M_{k}(\rho_L)^{\vee}, \qquad  \psi(P)(g) \ = \    \sum_{m \leq 0} a_P(m) \cdot c_g(-m)  ,
\]
and also the \emph{principal part map}
\[
	 P \colon H_{2-k}(\rho_L^{\vee}) \ \to \ \mathsf{Sing}_{2-k}(\rho_L^{\vee}), \ \qquad \ \ P(f) \ =   \ \sum_{m \leq 0} c_f(m) \, q^m.
\]
Note that this latter map factors through the quotient by the space of cusp forms $S_{2-k}(\rho_L^{\vee})$.

\begin{theorem}[{\cite[Theorem 3.1]{bogkz}}] \label{thm:serreduality}
The sequence induced by the above maps
\[
	\begin{CD}  0 @>>> M^!_{2-k}(\rho_L^{\vee}) / S_{2-k}(\rho_L^{\vee}) @>P>> \mathsf{Sing}_{2-k}(\rho_L^{\vee}) @>\psi>> M_{k}(\rho_L)^{\vee} @>>> 0
	\end{CD}
\]
is exact. \qed
\end{theorem}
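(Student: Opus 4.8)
The natural framework for this is Serre duality on the modular curve. Since $\rho_L$ has finite image (as noted in \Cref{sec:WeilRep}), after passing to the metaplectic double cover to accommodate half-integral weights and, if one wishes to avoid orbifold points, to an auxiliary level $\Gamma(N)$, the representation $\rho_L$ defines a vector bundle $\mathcal V_{\rho_L}$ on the compactified (stacky) modular curve $X$. Writing $\omega$ for the Hodge line bundle of weight-one forms, one has the dictionary $H^0(X, \omega^{\otimes j} \otimes \mathcal V_\rho) = M_j(\rho)$ for every weight $j$ and representation $\rho$, together with the Kodaira--Spencer isomorphism $\omega^{\otimes 2} \cong \Omega^1_X([\infty])$, where $[\infty]$ is the cusp. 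The plan is to run the entire argument at the level of coherent cohomology on $X$; the half-integral-weight and elliptic-point subtleties get absorbed into this dictionary once and for all.

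Two directions are easy. Injectivity of $P$ is immediate: a weakly holomorphic form with vanishing principal part has all Fourier coefficients of index $\leq 0$ equal to zero, hence is a cusp form, hence zero in $M^!_{2-k}(\rho_L^\vee)/S_{2-k}(\rho_L^\vee)$. For $\psi \circ P = 0$, take $f \in M^!_{2-k}(\rho_L^\vee)$ and $g \in M_k(\rho_L)$: the contraction $f(\tau)g(\tau)$ is a scalar weakly holomorphic modular form of weight $2$, holomorphic on $\uhp$ with a pole only at the cusp, so $f(\tau)g(\tau)\,d\tau$ descends to a meromorphic differential on $X$ whose only pole is at $\infty$. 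By the residue theorem its residue there vanishes; up to the factor $2\pi i$ this residue is the $q^0$-coefficient of $fg$, namely $\sum_m c_f(m)c_g(-m) = \sum_{m\leq 0} a_{P(f)}(m)\cdot c_g(-m) = \psi(P(f))(g)$, using that $g$ is holomorphic.

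For the substantive part, set $\mathcal E := \omega^{2-k}(-[\infty]) \otimes \mathcal V_{\rho_L}^\vee$. Then $H^0(\mathcal E) = S_{2-k}(\rho_L^\vee)$, and by Serre duality combined with Kodaira--Spencer, $H^1(\mathcal E) \cong H^0(\mathcal E^\vee \otimes \Omega^1_X)^\vee = H^0(\omega^k \otimes \mathcal V_{\rho_L})^\vee = M_k(\rho_L)^\vee$. For each $D$ the short exact sequence $0 \to \mathcal E \to \mathcal E(D[\infty]) \to \mathcal E(D[\infty])/\mathcal E \to 0$ yields a long exact cohomology sequence; passing to the direct limit $D \to \infty$ identifies $\varinjlim H^0(\mathcal E(D[\infty])) = M^!_{2-k}(\rho_L^\vee)$, identifies the limiting skyscraper sections with $\mathsf{Sing}_{2-k}(\rho_L^\vee)$ (the invariance under $\mathbf T$ and $\mathbf Z$ built into the definition of $\mathsf{Sing}$ being exactly the condition to be a section over the cusp of $X$), and uses the vanishing $\varinjlim H^1(\mathcal E(D[\infty])) = 0$ — which holds because Serre duality presents $H^1(\mathcal E(D[\infty]))$ as the dual of a space of weight-$k$ forms for $\rho_L$ required to vanish to order $\geq D$ at the cusp, hence zero for $D$ large. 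This produces the exact sequence
\[
  0 \longrightarrow S_{2-k}(\rho_L^\vee) \longrightarrow M^!_{2-k}(\rho_L^\vee) \xrightarrow{\ P\ } \mathsf{Sing}_{2-k}(\rho_L^\vee) \xrightarrow{\ \delta\ } M_k(\rho_L)^\vee \longrightarrow 0,
\]
in which the second map is the principal-part map (sections of $\mathcal E$ already vanish at the cusp, so the induced map to the skyscraper records exactly the coefficients of index $\leq 0$), giving $M^!_{2-k}/S_{2-k} \hookrightarrow \mathsf{Sing}$ as desired.

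It then remains to identify the connecting map $\delta$ with $\psi$. Unwinding the Serre-duality pairing, $\delta(P)(g)$ is computed by choosing any weakly holomorphic lift $\tilde f$ with $P(\tilde f) = P$, forming the weight-$2$ differential $\tilde f(\tau)g(\tau)\,d\tau$, and taking its residue at the cusp, which is the $q^0$-coefficient $\sum_{m\leq 0} a_P(m)\cdot c_g(-m) = \psi(P)(g)$ — the same computation as for $\psi\circ P = 0$, which also shows independence of the lift. The step I expect to be the main obstacle is precisely this orbifold bookkeeping: making the modular-forms$/$line-bundles dictionary and the Kodaira--Spencer and Serre-duality isomorphisms rigorous with the correct $[\infty]$-twist over the stacky curve and its metaplectic cover, and checking that the elementary pairing $\psi$ really is the abstract connecting homomorphism. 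Everything else is formal homological algebra together with the residue theorem. (Alternatively one can avoid sheaves entirely, truncating at pole order $\leq D$ throughout and comparing dimensions via the valence formula$/$Riemann--Roch at each finite stage, but the Serre-duality argument is cleaner.)
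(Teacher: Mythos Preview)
The paper does not supply its own proof of this statement: it is quoted as \cite[Theorem 3.1]{bogkz} and closed with a \qed. Your Serre-duality argument on the (stacky, metaplectic) modular curve is the standard proof and is essentially what Borcherds does; it is correct.

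One small point of presentation: in your final paragraph you compute $\delta(P)(g)$ by ``choosing any weakly holomorphic lift $\tilde f$ with $P(\tilde f)=P$''. Taken literally this is circular, since the existence of a global holomorphic lift is precisely the statement $\delta(P)=0$. What one actually does is lift $P$ to a $C^\infty$ section (or a meromorphic section on a punctured neighborhood of the cusp), apply $\bar\partial$, and then pair with $g$ via the residue; the residue-theorem computation you wrote down then shows this agrees with $\psi(P)(g)$ and is independent of the lift. This is a cosmetic fix, not a gap.
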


This sequence will be used to normalize the forms $F_{m, \mu}$: for each $k$,  fix a splitting
\begin{equation}\label{eqn:etaSplitting}
	\begin{tikzcd}
	 			0 \ar{r} &  M^!_{2-k}(\rho_L^{\vee}) / S_{2-k}(\rho_L^{\vee}) \ar{r} &  \mathsf{Sing}_{2-k}(\rho_L^{\vee})  \ar{r} & M_k(\rho_L)^{\vee} \ar{r} \ar[bend right, above]{l}{}[swap]{\eta}  & 0.
		\end{tikzcd}
\end{equation}
The map $\psi^{\vee} \colon M_{k}(\rho_L) \to \mathsf{Sing}_{2-k}(\rho_L^{\vee})^{\vee}$ may be extended to $H_k(\rho_L)$, by setting
\[
	\psi^{\vee}(F)(P) \ := \ \sum_{m \leq 0} a_P(m) \cdot c_F^+(-m) , \qquad F \in H_k(\rho_L)
\]
where $c_F^+(-m)$ are Fourier coefficients of the holomorphic part $F^+$ of $F$. 

\begin{lemma} \label{lem:uniqueHPSeries}
Suppose $m \in \Q$ and $\mu \in L'/L$ with $m \equiv Q(\mu) \bmod \Z$. 
For each $k \in \frac12\Z$, there is a unique element $F_{m,\mu} \in H_k(\rho_L)$ such that 
	\begin{enumerate} 
		\item The holomorphic part of $F_{m, \mu}$ has the form
			\[ 
                           F^+_{m,\mu} (\tau) \ = \ q^{-m}\tilde\varphi_\mu \ + \ \sum_{n \geq 0} c(n) \, q^n \ \qquad \text{ for some } c(n) \in S(L); 
                        \]
		\item and $\psi^{\vee}(F_{m,\mu})(P)  = a_P(m)(\varphi_{\mu})$ for every $P \in \mathrm{im}(\eta)$. 
	\end{enumerate}  
\end{lemma}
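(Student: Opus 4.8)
The plan is to prove uniqueness and existence separately. Throughout, the Serre-duality sequence of \Cref{thm:serreduality} governs the ambiguity of a harmonic weak Maa\ss\ form modulo holomorphic modular forms of weight $k$, the Bruinier-Funke pairing controls which such forms can occur, and --- for small weights --- a regularized Hejhal-Poincar\'e series provides a form with the prescribed singularity. The role of the splitting $\eta$ is precisely to rigidify the choice against this holomorphic ambiguity.

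For uniqueness, suppose $F,F'\in H_k(\rho_L)$ both satisfy (i) and (ii) and put $G:=F-F'$. By (i), the holomorphic part $G^+$ has no negative Fourier coefficients. The Bruinier-Funke pairing formula \cite[\S 3]{brfugeom} expresses $\langle \xi_k(G),g\rangle_{\Pet}$, for $g\in S_{2-k}(\rho_L^\vee)$, solely through the principal part of $G^+$; since that is trivial, $\xi_k(G)$ is Petersson-orthogonal to every cusp form, and as $\xi_k(G)$ is itself such a cusp form, $\xi_k(G)=0$. Hence $G\in M^!_k(\rho_L)$, and having no negative coefficients, $G\in M_k(\rho_L)$. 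For holomorphic $G$ one has $\psi^\vee(G)(P)=\psi(P)(G)$, so (ii), applied to both $F$ and $F'$, forces $\psi(P)(G)=0$ for all $P\in\mathrm{im}(\eta)$. Since $\eta$ splits $\psi$, exactness in \Cref{thm:serreduality} shows $\psi$ restricts to an isomorphism $\mathrm{im}(\eta)\isomto M_k(\rho_L)^\vee$; thus every functional in $M_k(\rho_L)^\vee$ annihilates $G$, and $G=0$.

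For existence, it suffices to exhibit one $F_0\in H_k(\rho_L)$ whose holomorphic part has the shape required in (i): the assignment $P\mapsto \psi^\vee(F_0)(P)-a_P(m)(\varphi_\mu)$ is then a linear functional on $\mathrm{im}(\eta)$, so by the dual of the isomorphism just used there is a unique $g_0\in M_k(\rho_L)$ with $\psi^\vee(g_0)(P)=\psi^\vee(F_0)(P)-a_P(m)(\varphi_\mu)$ for all $P\in\mathrm{im}(\eta)$, and then $F_{m,\mu}:=F_0-g_0$ satisfies both (i) and (ii). To build $F_0$ when $m>0$: for $k<0$ take Bruinier's series \eqref{eq:nhp-br}, whose holomorphic part is \eqref{eqn:FmHolPart}; for $k\ge 2$ one has $S_{2-k}(\rho_L^\vee)=0$, hence $H_k(\rho_L)=M^!_k(\rho_L)$, and the obstruction to realizing $q^{-m}\tilde\varphi_\mu$ as the principal part of a weakly holomorphic form --- an element of $M_{2-k}(\rho_L^\vee)^\vee$ --- vanishes, since it reads off the $m$-th coefficient of a form of non-positive weight holomorphic at $\infty$; so the analogue of \Cref{thm:serreduality} with $k$ and $2-k$ interchanged supplies $F_0\in M^!_k(\rho_L)$ with principal part $q^{-m}\tilde\varphi_\mu$. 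For $0\le k<2$ one defines $F_0$ as the corresponding Hejhal-Poincar\'e series, now only conditionally convergent, by a Hecke-type regularization (insert $v^{s}$, establish convergence for $\Re(s)\gg 0$, and continue holomorphically to $s=0$). Finally, for $m\le 0$ condition (i) only demands that $F^+$ be holomorphic at $\infty$, so one may take $F_0=0$ and the correction step alone produces $F_{m,\mu}\in M_k(\rho_L)$.

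The main obstacle I expect is the construction of $F_0$ in the critical range $0\le k<2$: showing that the regularized Hejhal-Poincar\'e series converges for $\Re(s)$ large and continues holomorphically to $s=0$, and that its value there is a genuine harmonic weak Maa\ss\ form of weight $k$ with \emph{exactly} the principal part $q^{-m}\tilde\varphi_\mu$ and with \emph{cuspidal} $\xi_k$-image --- all uniformly in $k\in\tfrac12\Z$ and compatibly with the normalization conventions (the factor $\tfrac12$, the vectors $\tilde\varphi_\mu$, and the chosen Weil-representation conventions). Once the Bruinier-Funke pairing and \Cref{thm:serreduality} are in hand, by contrast, the uniqueness argument and the correction by $M_k(\rho_L)$ are purely formal.
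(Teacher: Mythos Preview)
Your uniqueness argument is essentially identical to the paper's: both take the difference $G$, observe its principal part is trivial (or constant), use the Bruinier--Funke pairing to kill $\xi_k(G)$, conclude $G\in M_k(\rho_L)$, and then use that $\psi$ restricted to $\mathrm{im}(\eta)$ is an isomorphism onto $M_k(\rho_L)^\vee$ to force $G=0$. The correction step in your existence proof --- subtracting a suitable $g_0\in M_k(\rho_L)$ to achieve condition (ii) --- is likewise the same as the paper's, just phrased slightly differently (the paper writes $g=\eta^\vee(\psi^\vee(F)-\phi_{m,\mu})$ directly).

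The genuine difference is in how the initial $F_0$ satisfying (i) is produced. The paper simply cites \cite[Proposition~3.11]{brfugeom}, which furnishes a harmonic weak Maa\ss\ form with any prescribed principal part, for all weights $k$ at once. You instead attempt an explicit case-by-case construction: the Hejhal--Poincar\'e series for $k<0$, Serre duality for $k\ge 2$, a Hecke-regularized Poincar\'e series in the range $0\le k<2$, and $F_0=0$ for $m\le 0$. Your analysis for $k<0$, $k\ge 2$, and $m\le 0$ is correct. In the critical range $0\le k<2$, the regularization you describe does work and is known (it is essentially what underlies the proof of the cited proposition), but carrying it out rigorously --- convergence, continuation, and verifying the exact principal part and that $\xi_k(F_0)$ is cuspidal --- is nontrivial, as you yourself flag. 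The paper's approach buys economy by outsourcing this; yours buys explicitness at the cost of having to redo that analysis.
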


\begin{remark}\label{rem:eta}\ \\
  \begin{itemize}
  \item Note that if $k<0$ then $M_k(\rho_L) =\{ 0\}$ and $\eta  = 0$, and condition {\it (ii)} is superfluous; in this case the construction of $F_{m,\mu}$ as in the lemma coincides with  \eqref{eq:nhp-br} for $m>0$, and is identically zero for $m \leq 0$.
  \item In general,  $F_{m,\mu} = 0$ for almost all negative $m$.
  \item The form $F_{m, \mu}$ will depend on the choice of $\eta$, but as this choice will not play a significant role in our applications, we omit this dependence from our notation. 
  \item A natural choice of $\eta$ can be constructed as follows. For $m \in \Q$ and $\mu \in L'/L$, let $\phi_{m,\mu} \in M_{k}(\rho_L)^{\vee}$ denote the functional
  \[
 	\phi_{m,\mu}\colon f \mapsto  a_{f}(m)(\mu), \qquad \text{for all } f(\tau) = \sum_{n} a_f(n) q^n \in M_k(\rho_L) .
 \]
The collection of all $\{ \phi_{m,\mu}\}$ with $m \in \Q$ and $\mu \in L'/L$ spans the finite-dimensional space $M_k(\rho_L)^{\vee}$, and so there are positive integers $n_1 \leq \ldots \leq n_d$ and $\mu_1, \ldots, \mu_d \in L'/L$ such that $\{\phi_{n_i,\mu_i}\}$
  is a basis of $M_k(\rho_L)^\vee$. Then a possible choice for $\eta$ is given by setting
  \[ \eta(\phi_{n_i, \mu_i}) = q^{-n_i} \tilde\varphi_{\mu_i}^{\vee} ,\]
  where 
  \[ \tilde\varphi_{\mu_i}^{\vee} \ =  \ \varphi_{\mu_i}^{\vee}  \ +  \ (-1)^{-k + \frac{q-p}{2}} \varphi_{-\mu_i}^{\vee}  \ \in \ S(L)^{\vee}\] 
  is the vector dual to $\tilde\varphi_{\mu_i}$ with respect to the basis $\{ \varphi_{\mu}\}$ of $S(L)$.
  Note that with this choice, we have $F_{m,\mu} = 0$ for $m \leq 0$ unless $m = -n_i$ for some $i \in \{1, \ldots, d\}$ and $\mu = \pm \mu_i$. Moreover, the family of non-zero $F_{m,\mu}$, over all $m \in \Q$ and $\mu \in L'/L$, form a basis of $H_k(\rho_L)$. If $k>2$ and $m<0$, the forms $F_{m,\mu}$ are  (holomorphic) Poincar\'{e} series.
  \end{itemize}
\end{remark}

\begin{proof}
The existence of a form $F \in H_{k}(\rho_L)$ satisfying $(i)$ follows from \cite[Proposition 3.11]{brfugeom}. 
Recall we had fixed a splitting morphism $\eta$ as in \eqref{eqn:etaSplitting}. Let 
\[ 
  \phi_{m, \mu} \in \mathsf{Sing}_{2-k}(\rho_L^{\vee})^{\vee}, \qquad \phi_{m,\mu}(P) = a_P(m)(\varphi_{\mu}) 
\]
and set
\[ 
  g  \ = \  \eta^{\vee} \left( \psi^{\vee}(F) \ - \ \phi_{m,\mu} \right) \in M_k(\rho_L);
\]
then the difference $F - g$ satisfies the properties in the lemma, proving existence.

To show uniqueness, suppose $F_1$ and $F_2$ satisfy both properties, and consider $F := F_1 - F_2$. Note that $P(F)$ is constant. We first claim that $F \in M_k(\rho_L)$. 
Indeed, by \cite[Proposition 3.5]{brfugeom},
\[
	\la \xi(F), h \ra_{\Pet} \ = \ \sum_{m \leq 0}  \ c_F^+(m) \cdot c_h(-m) \ \qquad \text{ for all } h \in M_{2-k}(\rho_L^{\vee}),
\]
where the left hand side is the usual Petersson pairing between $S_{2-k}(\rho_L^{\vee})$
and $M_{2-k}(\rho_L^\vee)$; note here that $\xi(F)$ is cuspidal. On the other hand,
since $P(F)$ is constant this quantity vanishes if $h$ is cuspidal. 
Since the Petersson pairing is non-degenerate when restricted to  cusp forms, it follows that $\xi(F) = 0$.
Thus, $F \in M_k^!(\rho_L)$ and has constant principal part, which implies the stronger statement $F \in M_k(\rho_L)$.

By the second property in the lemma, 
\[
	 0 \ = \ (\eta^{\vee} \circ \psi^{\vee})(F) \ = \ F,
\] 
proving uniqueness.
\end{proof}

When  $m \notin Q(\mu) + \Z$, set $F_{m,\mu} = 0$. We may define an $S(L)^{\vee} \otimes_{\C} S(L)$-valued version
\[ 
	F_{m} \colon \mathbb H \ \to \ S(L)^{\vee} \otimes_{\C} S(L) 
\]
 by setting 
\[
	F_{m}(\tau)(\varphi) \ = \ \sum_{\substack{\mu \in L'/L\\Q(\mu) \equiv m \bmod \Z}} a_{\mu} \, F_{m,\mu}(\tau)
\]
whenever $\varphi = \sum_{\mu} a_{\mu} \varphi_{\mu} \in S(L)$. 

\subsection{Truncated Poincar\'{e} series}
\label{sec:trunc-poinc}
\newcommand{\Rp}{\R_{>0}}
For a parameter $w \in \Rp$ and $\tau = u+iv \in \uhp$, consider the cutoff function 
\[
  \sigma_{w}(\tau) =
  \begin{cases}
     1, & \text{ if } v \geq w,\\
     0,               & \text{ if } v < w.
  \end{cases}
\]
Fix $w \in \Rp$ and a half-integer $k \in \frac12 \Z$ such that $2 k \equiv p \bmod{2}$. For a coset $\mu \in L'/L$ and a rational number $m \in Q(\mu) + \Z$, define the \emph{truncated Poincar\'{e} series} $P_{m, w, \mu} \colon \uhp \to S(L)$ by the formula
\begin{equation}  \label{eqn:TPSdef}
    P_{m, w, \mu}(\tau)\ =  \ \frac{1}{4}  \,
                                    \sum_{ \tilde \gamma \in \Gt_\infty \bs \Gt} \left( \sigma_{w}(\tau) \, e^{-2 \pi i m \tau }\,  \varphi_{\mu} \right) \big\vert_{k}[\tilde \gamma], 
\end{equation}
where $\Gt_\infty$ is the subgroup of $\Gt$ generated by $\mathbf T$. 

For a fixed $\tau$, there are only finitely many elements $\gamma \in \Gamma_{\infty} \backslash \SL_2(\Z)$ such that $\sigma_w(\gamma \tau) \neq 0$; thus the sum \eqref{eqn:TPSdef} is locally finite, and in particular, is absolutely convergent. By construction, $P_{m, w, \mu}(\tau)$ is evidently invariant under the weight $k$ slash operator, i.e.\ it is a (discontinuous!) ``modular form'' of weight $k$. 

\begin{remark}  \label{rmk:intrinsicPoincare}
It will be useful to give  a more intrinsic definition for a truncated Poincar\'e series $P_{m,w}(\tau)(\varphi)$  valid for any $m \in \Q$ and $\varphi \in S(V(\bbA_f))$. Fix an integer $N \gg 0$ such that $m \in N^{-1}\Z$ and $\rho_L(\Gt_{\infty,N})$ acts trivially on $\varphi$, where $\Gt_{\infty,N} = \langle \mathbf T^N \rangle$. 
Then, for any fixed parameter $w \in \Rp$, set 
\[  
  P_{m,w}(\tau)(\varphi)\ :=  \ \frac{1}{4N}  \,
	                                    \sum_{ \tilde \gamma \in \Gt_{\infty,N} \bs \Gt} \left( \sigma_{w}(\tau) \, e^{-2 \pi i m \tau }\,  \varphi \right) \big\vert_{k} [\tilde \gamma], 
\]
which defines a map                           
\[ 
  P_{m,w}(\tau) \colon \uhp \ \to \ S(V(\bbA_f))^{\vee}  \otimes_{\C}  S(V(\bbA_f)). 
\]
Since $S(L)$ is $\Gt$-invariant, restricting to $S(L)$ yields a map
\[  
  P_{m,w}(\tau) \colon \uhp \ \to \ S(L)^{\vee}  \otimes_{\C}  S(L), 
\]
denoted by the same symbol. 
We may thereby view $P_{m,w}(\tau)$ as  valued in $S(L)^{\vee}  \otimes_{\C}  S(L)$, where $\Gt$ acts by $\rho^{\vee} \otimes 1$. 

Alternatively, if we define $P_{m, w, \mu}(\tau) = 0$ whenever $m \notin Q(\mu) + \Z$, then for any 
\[ 
  \varphi  \ = \ \sum_{\mu} \, a_{\mu} \, \varphi_{\mu} \ \in S(L), 
\]
it is easy to check that 
	\begin{equation} \label{eqn:PmwAltDef}
	 P_{m,w}(\tau)(\varphi)\   = \  \sum_{\mu \in L'/L} \, a_{\mu} \, P_{m, w, \mu}(\tau) \ = \ \sum_{\substack {\mu \in L'/L \\ Q(\mu) \equiv m \bmod{\Z}}} a_{\mu} \, P_{m, w, \mu}(\tau) ;  
	\end{equation}
this provides an alternative definition of $P_{m,w}(\tau)(\varphi)$. 
\end{remark}

\subsection{A section of the Maa\ss \ lowering operator}
\label{sec:invert-xi}
In this section, we show that for a form $f$ satisfying certain mild analytic conditions, our  Poincar\'e series can be used to generate the Fourier coefficients of a distinguished preimage $F \in \mathbf{L}^{-1} (f)$ of $f$ under the Maa\ss \ lowering operator.

We begin by fixing some notation. Suppose $L \subset V$ is an even integral lattice, and $\kappa \in \frac12 \Z$ with $2\kappa \equiv p-q \bmod{2}$. 
\begin{definition} \label{def:modgrowthforms}
Let $\ALmod{\kappa}(\rho_L^{\vee})$ be the space of $\calC^{\infty}$ functions 
\[ f \colon \uhp \ \to \ S(L)^{\vee} \]
such that 
	\begin{enumerate}
	\item $f\mid_{\kappa} [\tilde \gamma] (\tau)  = f(\tau)$ for all $\tilde \gamma \in \Gt$;
	\item $f$ has at most `moderate growth at $\infty$', \\
             i.e. for all $\alpha, \beta$, there is some $\ell  \in \Z$  (possibly depending on $\alpha$ and $\beta$), such that 
              $\frac{\partial^\alpha}{\partial u^\alpha}\frac{\partial^\beta}{\partial v^\beta}f(\tau) = O(v^{\ell})$ 
              as $v \to \infty$.
	\item writing the Fourier expansion of $f$ as
		\[ f(\tau) \ = \ \sum_{m \in \Q} c(m, v) \, e^{2 \pi i m \tau} , \qquad \text{ with } c(m, v) \colon \R_{>0} \to S(L)^{\vee}, \]
		we require that the constant term $c(0, v)$ has the form
		\[ c(0, v) \ = \ \sum_{\mu \in L'/ L} \ \sum_{i=1}^r \, \alpha_{\mu,i} \, v^{\beta_{\mu,i}} \, \varphi_{\mu}^{\vee} \ + \ \tilde c(0,v) \]
		for some $\alpha_{i,\mu} \in \C$, $\beta_{\mu,i} \in \Q$, and a smooth function $\tilde c(0, v) \colon \uhp \to S(L)^{\vee}$ satisfying $\tilde c(0,v) = O(e^{-Cv})$ as $v \to \infty$ for some $C>0$.
	\end{enumerate}
\end{definition}

\begin{definition}
Define another space   $\AkappaLexp(\rho_L^{\vee})$, consisting of $\calC^{\infty}$ functions $F \colon \uhp \to S(L)^{\vee}$ such that
	\begin{enumerate}
		\item $F(\tau)\mid_{\kappa} [\tilde \gamma] = F(\tau) $ for all $\widetilde \gamma \in \Gt$;
		\item $F$ has at worst `exponential growth at $\infty$', i.e.\ $F(\tau) = O(e^{Cv})$ as $v \to \infty$ for some constant $C>0$; and
		\item $\Low(F) \in \ALmod{\kappa-2}(\rho_L^{\vee})$.
	\end{enumerate}
	Note that $\AkappaLexp(\rho_L^{\vee})$ contains $H_{\kappa}(\rho_L^{\vee})$ and therefore also $M^!_{\kappa}(\rho_L^{\vee})$. 
\end{definition}

The following Proposition generalizes Theorem 3.7 in \cite{brfugeom}.
\begin{proposition} \label{prop:Lexactseq}
  The following sequence is exact:
  \begin{equation}
    \label{eq:Lsurj}
      \xymatrix@1{ 0 \ar[r] & M_{\kappa}^!(\rho_L^{\vee}) \ar[r] & \AkappaLexp(\rho_L^{\vee}) \ar[r]^-{\Low} &\ALmod{\kappa-2}(\rho_L^{\vee})\ar[r] & 0}.
  \end{equation}
\end{proposition}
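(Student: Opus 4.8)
The plan is to verify exactness at each of the three non-trivial spots. Injectivity of $M^!_\kappa(\rho_L^\vee) \hookrightarrow \AkappaLexp(\rho_L^\vee)$ is clear. For exactness in the middle, the kernel of $\Low$ on functions transforming with weight $\kappa$ consists precisely of the holomorphic functions $\uhp \to S(L)^\vee$ that transform modularly; the exponential-growth condition (ii) then forces the Fourier expansion to have only finitely many principal terms, so that such a function lies in $M^!_\kappa(\rho_L^\vee)$. Conversely $\Low$ annihilates weakly holomorphic forms. So the content is entirely in the \emph{surjectivity} of $\Low$ onto $\ALmod{\kappa-2}(\rho_L^\vee)$, and that is where I expect to spend all the effort.

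For surjectivity, let $g \in \ALmod{\kappa-2}(\rho_L^\vee)$ be given; I want to produce $F \in \AkappaLexp(\rho_L^\vee)$ with $\Low(F) = g$. I would attack this Fourier coefficient by Fourier coefficient. Write $g(\tau) = \sum_m c_g(m,v)\, e^{2\pi i m\tau}$; since $\Low = -2iv^2 \partial_{\bar\tau}$ acts on the term $c(m,v) e^{2\pi i m \tau}$ by sending it (after the weight bookkeeping) to an expression involving $v^2\big(\partial_v c(m,v) + 2\pi m\, c(m,v)\big) e^{2\pi i m \tau}$ up to normalization, solving $\Low(F) = g$ amounts to solving, for each $m$, a first-order linear ODE in $v$ for the coefficient $c_F(m,v)$ of $F$. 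This ODE always has solutions; the issue is to choose the solution (i.e. the constant of integration, or the particular solution) so that (a) the resulting $F$ still transforms modularly, and (b) $F$ has at worst exponential growth. The modularity is the delicate point: solving coefficient-wise does not obviously respect the $\SL_2(\Z)$-action, so I would instead realize the solution more invariantly. The natural device, following the strategy of \cite[Theorem 3.7]{brfugeom}, is to use a Poincar\'e-series / spectral construction: decompose $g$ into pieces adapted to the cusp, handle the rapidly-decaying part $\tilde c(0,v)$ and the non-constant Fourier modes by an explicit locally-convergent integral operator that is manifestly $\Gt$-equivariant, and handle the finitely many terms $\alpha_{\mu,i} v^{\beta_{\mu,i}} \varphi_\mu^\vee$ in the constant term by exhibiting explicit modular preimages (these are essentially non-holomorphic Eisenstein-type series, or their constant-term-only analogues, whose existence is what condition (iii) in \Cref{def:modgrowthforms} is engineered to guarantee).

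Concretely, the key steps in order: (1) reduce to surjectivity of $\Low$; (2) for the constant term, observe that applying $\Low$ to a candidate $\sum_i a_i v^{b_i}\varphi_\mu^\vee$ produces $\sum_i a_i b_i' v^{b_i'} \varphi_\mu^\vee$ for suitable shifted exponents, so each monomial $v^{\beta_{\mu,i}}$ has a preimage monomial unless the exponent hits the ``resonant'' value coming from weakly holomorphic forms of weight $\kappa$ — and in that resonant case one picks up a $\log v$ or must subtract off a weakly holomorphic form, which is exactly the indeterminacy modulo $M^!_\kappa(\rho_L^\vee)$; the technical condition (iii) on constant terms is precisely what makes this bookkeeping finite and solvable; (3) for the remaining part of $g$ (all non-constant modes plus $\tilde c(0,v)$), which decays like $O(e^{-Cv})$ up the cusp, build the preimage as a regularized integral against a weight-$\kappa$ Poincar\'e kernel — equivalently, solve the ODEs coefficient-wise with the decaying choice of solution and check via the local convergence that the sum reassembles into a $\Gt$-invariant, at-most-exponentially-growing $F$; (4) assemble $F$ from these pieces and verify $\Low(F) = g$, membership in $\AkappaLexp(\rho_L^\vee)$, and consistency of the growth bounds. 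The main obstacle is step (3): ensuring that the coefficient-wise solution of the lowering-operator ODE globalizes to a genuine modular form with controlled growth — this is exactly the place where one needs a convergent (or regularizable) Poincar\'e-series representation rather than a naive Fourier-coefficient manipulation, and it is the heart of the generalization of \cite[Theorem 3.7]{brfugeom} to the moderate-growth setting.
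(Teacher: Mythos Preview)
You correctly isolate surjectivity of $\Low$ as the content. The paper's route, however, is quite different from yours: it does not build a preimage by Fourier analysis or Poincar\'e kernels. Instead it runs a cohomological argument on the compactified modular curve $X = \Gamma_0 \backslash \uhp^*$ (for a torsion-free finite-index $\Gamma_0 \subset \Gt$), replacing the $C^\infty$ sheaves in \cite[Theorem 3.7]{brfugeom} by the sheaves $\mathscr{E}^{p,q}\langle D\rangle$ of log-singular forms from \cite{bkk}. The Dolbeault lemma for these sheaves \cite[Lemma 2.44]{bkk} together with Serre duality (giving $H^1(X, \calL_{\kappa,L}\otimes\calO_{nD})=0$ for $n\gg 0$) makes $\bar\partial$ surjective on global sections. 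The moderate-growth condition (ii) alone ensures, via \cite[Theorem 2.13]{bkk}, that $\eta = v^{-2} f\, d\bar\tau$ defines such a global section; its $\bar\partial$-preimage $F$ is then automatically modular (it lives on $X$) and has at worst exponential growth near the cusps. Condition (iii) on the constant term plays no role in this existence proof.

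Your approach, by contrast, has a genuine gap at step (3): you assert the preimage can be built ``as a regularized integral against a weight-$\kappa$ Poincar\'e kernel,'' but you never specify the kernel or prove convergence and modularity of the resulting sum --- and that is precisely the content of surjectivity. The natural candidate kernel is the family $P_{m,v} - F_m$ introduced later in the paper, but the proof that the associated generating series lies in $\AkappaLexp(\rho_L^\vee)$ (\Cref{thm:MaassSection} and \Cref{cor:vectorValLSec}) \emph{uses} the existence of $\Lsharp(f)$, which rests on the present Proposition; invoking it here would be circular. Step (2) is also weaker than you suggest: a non-holomorphic Eisenstein series has a constant term with two paired exponents (and possible poles in $s$), so it does not realize a single prescribed monomial $v^{\beta_{\mu,i}}$ on demand. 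The cohomological argument sidesteps all of these difficulties at once.
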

\begin{proof}[Sketch of proof:]
The proof proceeds along the same lines as \cite[Theorem 3.7]{brfugeom}, except we replace the sheaves of  $\calC^{\infty}$ forms appearing in \emph{op.~cit.} with the sheaves of $\log$-singular forms considered in \cite{bkk}. 

We sketch the argument here: fix a normal subgroup $\Gamma_0 \subset \Gt = \Mp_2(\Z)$ of finite index, such that $\Gamma_0$ acts freely on $\uhp$. Write $\calO$ for the sheaf of holomorphic functions on the compactified curve $X = \Gamma_0 \bs\uhp^*$, and $\mathscr{E}^{p,q} = \mathscr{E}^{p,q}\langle D \rangle$ for the sheaf of $\calC^\infty$ differential forms of  Hodge type $(p,q)$ and with `logarithmic growth along the cuspidal divisor $D$', as in \cite[Definition 2.2]{bkk}; using Theorem 2.13 of \emph{op.~cit.}, these sheaves can be identified with sheaves of differential forms generated by differentials of functions with moderate growth at the cusps.

Let $\calL_{\kappa,L}$ be the $\calO$-module sheaf of modular forms of weight $\kappa$ and representation $\rho_L$ on $X$. The sections
in $\calL_{\kappa,L}(U)$ are given by holomorphic $S(L)$-valued functions on the preimage of $U$ under the canonical projection from $\uhp$ to $X$ that satisfy the usual transformation property with respect to $\Gamma_0$ and are holomorphic at the cusps.
  Let  $n > 0$ be a positive integer. By the Dolbeault Lemma for the sheaves $\mathscr{E}^{p,q}$, see \cite[Lemma 2.44]{bkk}, the sequence of sheaves
  \begin{equation}
    \label{eq:resolution}
      \xymatrix{ 0 \ar[r] & \calL_{\kappa,L} \otimes_{\calO} \calO_{nD} \ar[r] & \mathscr{E}^{0,0} \otimes_{\calO} \calL_{\kappa,L} \otimes_{\calO} \calO_{nD} \ar[r]^{\overline\partial} & \mathscr{E}^{0,1} \otimes_{\calO} \calL_{\kappa,L} \otimes_{\calO} \calO_{nD} \ar[r]& 0},
  \end{equation}
is exact. Moreover, since $\mathscr{E}^{p,q} \otimes_{\calO} \calL_{k,L} \otimes_{\calO} \calO_{nD}$ is a fine sheaf (as a module over the sheaf of $\calC^\infty$ functions on $X$), taking global sections gives an exact sequence in cohomology
  \begin{equation}
    \label{eq:dolbeault}
      \xymatrix{ 0 \ar[r] & (\calL_{k,L} \otimes \calO_{nD})(X) \ar[r] & \mathscr{E}^{0,0}(X,\calL_{k,L} \otimes \calO_{nD}) \ar[r]^{\overline\partial} &  \mathscr{E}^{0,1}(X,\calL_{k,L} \otimes \calO_{nD}) \ar[d]&\\
       & & & H^1(X,\calL_{k,L} \otimes \calO_{nD}) \ar[r] & 0}
  \end{equation}
For fixed $\kappa$, an application of Serre duality, as in the proof of \cite[Theorem 3.7]{brfu06}, implies that $H^1(X,\calL_{\kappa,L} \otimes \calO_{nD})$ vanishes for $n$ sufficiently large.

We now show that $\Low$ is surjective in the sequence \eqref{eq:Lsurj}. 
Let $n$ be a large enough positive integer such that $H^1(X,\calL_{\kappa,L} \otimes \calO_{nD}) = 0$ in the above discussion, and suppose $f \in \ALmod{\kappa-2}(\rho_L^{\vee})$.  
Since $f$ has moderate growth, \cite[Theorem 2.13]{bkk} implies that pullback of the differential form 
	\[ \eta \ := \ v^{-2} \ f \ d \overline\tau \]
along the covering map 
\[ X \to  \Gt \backslash \uhp^*\] 
is a $ \Gt / \Gamma_0$-invariant global section in $\mathscr{E}^{0,1}(X, \calL_{\kappa,L} \otimes \calO_{n D})$. Therefore, taking $\Gt/\Gamma_0$-invariants in the exact sequence \eqref{eq:dolbeault}, there exists 
\[ F \ \in \ \mathscr{E}^{0,0}(X, \calL_{\kappa,L} \otimes \calO_{n D})^{\Gt/\Gamma_0} 
\]
with $\overline \partial F = \eta$, or equivalently, $\Low(F) = f$. Moreover, by construction $F$ may be written, in a neighbourhood of each cusp, as a sum of products of moderate growth and meromorphic forms, and so has at worst exponential growth approaching the cusps. Since $F$ is furthermore $\Gt/\Gamma_0$-invariant, it descends to  $\Gt \backslash \uhp^*$, i.e.\ it is an element of $\AkappaLexp(\rho^{\vee}_L)$.
\end{proof}

Suppose $F \in \AkappaLexp(\rho_L^{\vee})$ and $f = \Low(F) \in \ALmod{\kappa-2}(\rho_L^{\vee})$ with Fourier expansions
	\[	F(\tau) \ = \ \sum_m c_F(m,v) \, e^{2 \pi i m \tau} 
			\qquad \text{and} \qquad
		f(\tau) \ = \ \sum_m c_f(m,v) \, e^{2 \pi i m \tau} 
	\]
where $\tau = u + i v$. 

The relation $\Low(F) = f$ implies that for each $m$,
	\begin{equation} \label{eqn:cFlowering} c_F(m,v) \ = \ c_F(m,1)  \ + \ \int_{1}^v c_f(m,t) \, t^{-2} \, dt  .
	\end{equation}
Since $f(u+it) = O(t^{l})$ as $t \to \infty$ by assumption, 
	\begin{equation}  \label{eqn:cfbound}
	| c_f(m,t)  | \ = \  e^{2 \pi m t} \cdot \left| \int_{-1/2}^{1/2} \, f(u+it) \, e^{-2 \pi i m u} \,  du \right| \ \ll \ e^{2 \pi m t} \cdot t^{l} 
	\end{equation}
for all $t>1$, where the implied constant is independent of $m$. 
In particular, for all $m<0$, substituting this bound in \eqref{eqn:cFlowering} gives 
	\[ \kappa_F(m) \ := \ \lim_{v \to \infty} \, c_F(m,v) \ < \ \infty. \]
	Moreover, it follows from the assumption that $F$ has exponential growth that 
		\[ \kappa_F(m) \ = \ 0 \qquad \text{ for all but finitely many } m <0. \]
		and
	\begin{equation} \label{eqn:cFstrong}
		c_F(m,v) - \kappa_F(m) \ = \ e^{-2 \pi (|m| - \epsilon) v} \cdot O(1) 
	\end{equation}
for any fixed $\epsilon > 0$, where the implied constant depends only on $\epsilon$ and $f$. 

Finally, we define $\kappa_F(0)\in S(L)^{\vee}$ by  noting that \Cref{def:modgrowthforms}(iii) implies that $c_F(0,v)$ can be written in the form
\begin{equation}
  \label{eq:cF0}
  c_F(0,v) \  = \ \kappa_F(0) \ + \  \sum_{\mu}  \left[ \sum_{\substack{j \\ \beta_{j,\mu} \neq 1}} \frac{\alpha_{j,\mu}}{\beta_{j, \mu}-1} \, v^{\beta_{j,\mu}-1} \ + \ \gamma_{\mu}\log(v) \right] \varphi_{\mu}^{\vee} \ + \ O(e^{-cv})
\end{equation}
for some $\gamma_{\mu} \in \C$ .
In other words, $\kappa_F(0)$ is the constant part of the constant term.

Recall that we had defined the regularized pairing $\langle f, g \rangle^{\reg}$ for functions $f$ and $g$ transforming of weight $k$ and $-k$ respectively, as in \Cref{def:regpairing}. Our next goal is to show this pairing exists in two particular situations. 

\begin{lemma} \label{lem:regPet}
Suppose  $F \in \AkappaLexp(\rho_L^{\vee})$ and that $G \in M_{\kappa}(\rho_L^{\vee})$ is a holomorphic modular form. Then the \emph{regularized Petersson pairing}
$\langle F,\, G\rangle_{\mathrm{Pet}}^{\reg}$ (see \Cref{def:regpairing}) exists.
\begin{proof}  
For $\Re(s)$ sufficiently large, we write
	\begin{align*}
		\lim_{T \to \infty} \, \int_{\calF_T} \, F(\tau) \,&\overline{G(\tau)} \, v^{\kappa- s} \, d\mu(\tau) \\
				&= \ \int\limits_{\calF_1}  F(\tau) \, \overline{G(\tau)} \, v^{\kappa- s} \, d\mu(\tau) \ + \ \lim_{T \to \infty} \, \int\limits_{\calF_T - \calF_1} F(\tau) \, \overline{G(\tau)} \, v^{\kappa-s} \, d \mu(\tau) .
	\end{align*}
As $\calF_1$ is compact, the first integral defines a holomorphic function for $s \in \C$. As for the second integral, we have
	\begin{align*} 
		\lim_{T \to \infty} \, \int\limits_{\calF_T - \calF_1} F(\tau) \, \overline{G(\tau)} \, v^{\kappa-s} \, d \mu(\tau) \ =& \ \lim_{T \to \infty} \int_1^{T} \int_{-\frac12}^{\frac12} \, F(\tau) \, \overline{G(\tau)} \, v^{\kappa - s-2} \, du \, dv \\
		=& \ \lim_{T \to \infty} \ \int_1^{T} \ \sum_{m \geq 0} c_F(m, v) \, \overline{c_G(m)} \,  e^{-4 \pi m v} \, v^{\kappa-s-2} \, dv
	\end{align*}
where in the second line, we  inserted the Fourier expansions of $F$ and $G$, and took the integral over $u$.

For $v > 1$, use \eqref{eqn:cFlowering} to write
\[ 
	c_F(m,v) \ =\ c_F(m,1) \  + \ O(e^{2 \pi m v} v^{\ell}) \qquad \text{ as } v \to \infty
\]
for some $\ell$, where the implied constant is independent of $m$. Note also that
\[ \sum_{m > 0}| c_F(m,1) \, e^{- 2 \pi m v }|^2  \ < \  \sum_{m \in \Q} |c_F(m,1) \, e^{-2 \pi m  }|^2 \ = \ \int_{-1/2}^{1/2} |F(u + i)|^2 du 
\]
so $c_F(m,1) \cdot e^{- 2 \pi m v}$ is bounded by an overall constant independent of $v$ and $m$.

Using the estimate $c_G(m) = O(m^{\kappa-1})$ for a holomorphic modular form, it is easy to verify that 
	\[ 
		\lim_{T \to \infty} \ \int_1^{T} \ \sum_{m > 0} c_F(m, v) \, \overline{c_G(m)} \, e^{-4 \pi m v} \, v^{\kappa-s-2} \, dv	
	\]
converges uniformly for all $s \in \C$ and thereby defines a holomorphic function in $s$. 

For the zeroth Fourier coefficient, writing $c_F(0,v)$ as in \eqref{eq:cF0} we see that
	\[ \lim_{T \to \infty}  \ \int_1^{T} \, c_F(0,v) \ \overline{c_G(0)} \ v^{\kappa - s - 2} \ dv \]
extends to a meromorphic function in $s \in \C$, as required.
\end{proof}
\end{lemma}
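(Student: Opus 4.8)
The plan is to analyse $\int_{\calF_T} F(\tau)\,\overline{G(\tau)}\, v^{\kappa-s}\, d\mu(\tau)$ by splitting the truncated fundamental domain as $\calF_T = \calF_1 \cup (\calF_T \setminus \calF_1)$, where $\calF_T \setminus \calF_1 = \{\tau = u+iv : |u|\le \tfrac12,\ 1\le v\le T\}$ is a neighbourhood of the cusp. Over the compact region $\calF_1$ the integrand is continuous in $\tau$ and entire in $s$, so that contribution is an entire function of $s$ with no convergence issue, and all of the analysis takes place at the cusp. There I would insert the Fourier expansions of $F$ and of $G$ and integrate over $u\in[-\tfrac12,\tfrac12]$; since $G$ is a \emph{holomorphic} modular form only its non-negative Fourier coefficients occur, the cross terms vanish, and one is left with $\int_1^T \sum_{m\ge 0} c_F(m,v)\,\overline{c_G(m)}\, e^{-4\pi m v}\, v^{\kappa-s-2}\, dv$, to be handled by separating $m>0$ from $m=0$.

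For $m>0$ the factor $e^{-4\pi m v}$ supplies exponential decay, so it suffices to bound $c_F(m,v)$. From $\Low(F)=f\in\ALmod{\kappa-2}(\rho_L^\vee)$, the recursion \eqref{eqn:cFlowering} and the moderate-growth estimate \eqref{eqn:cfbound} give $c_F(m,v)\ll c_F(m,1)+e^{2\pi m v}v^{\ell}$ with implied constant independent of $m$; together with the boundedness of $c_F(m,1)e^{-2\pi m}$ (Parseval for $F$ on the horocycle $v=1$) and the classical bound $c_G(m)=O(m^{\kappa-1})$, the series $\sum_{m>0}\bigl|c_F(m,v)\,\overline{c_G(m)}\bigr|\,e^{-4\pi m v}\,v^{\kappa-\Re(s)-2}$ is dominated, uniformly for $s$ in compact subsets of $\C$, by an integrable function of $v$ on $[1,\infty)$. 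Hence the $m>0$ contribution converges for every $s$ and is entire. For $m=0$ I would insert the explicit shape of $c_F(0,v)$ from \eqref{eq:cF0}: a constant $\kappa_F(0)$, finitely many terms $v^{\beta}$ and $v^{\beta}\log v$, and a remainder that is $O(e^{-cv})$. The remainder integrates to an entire function of $s$, and each term of the form $\int_1^\infty v^{a-s}\,dv$ or $\int_1^\infty v^{a-s}\log v\,dv$ converges for $\Re(s)$ large and continues explicitly to a meromorphic function of $s$ on all of $\C$ with at most finitely many poles. Summing the three contributions shows $\lim_{T\to\infty}\int_{\calF_T}(\cdots)$ is, for $\Re(s)\gg 0$, holomorphic in $s$ and extends meromorphically past $s=0$, so $\CT_{s=0}$ is defined and the regularized Petersson pairing exists.

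The step I expect to be the main obstacle is the uniformity in $m$: one has to check that the constant produced by the moderate-growth bound on $f$, once pushed through the integral in \eqref{eqn:cFlowering}, is genuinely independent of $m$, and that the $L^2$ bound on the family $\{c_F(m,1)e^{-2\pi m}\}_m$ is uniform as well. This is precisely what licenses interchanging $\sum_m$ with $\lim_{T\to\infty}\int_1^T$ and then continuing term by term; granting it, the rest is routine bookkeeping.
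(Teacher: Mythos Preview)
Your proposal is correct and follows essentially the same route as the paper's own proof: the same decomposition $\calF_T=\calF_1\cup(\calF_T\setminus\calF_1)$, the same Fourier-expansion-and-integrate-over-$u$ reduction to $\sum_{m\ge 0}$, the same use of \eqref{eqn:cFlowering} and Parseval on the horocycle $v=1$ to control the $m>0$ terms uniformly, and the same appeal to \eqref{eq:cF0} for the $m=0$ term. The only cosmetic difference is that you allow for $v^\beta\log v$ terms in the constant-term analysis, whereas \eqref{eq:cF0} only produces pure $\log v$ and pure $v^\beta$ terms, but this over-generality is harmless.
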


\begin{remark}
The proof of the proposition implies that if $G \in S_{\kappa}(\rho_L^{\vee})$, then
\[ 
 \langle F, G \rangle_{\mathrm{Pet}}^{\reg} \ = \ \lim_{T \to \infty} \, \int_{\calF_T} F(\tau) \, v^{\kappa} \, \overline{ G(\tau)} \, d \mu(\tau). 
\]   
\end{remark}

Recall the fixed splitting
\begin{equation} \label{eqn:splittingRedux}
\begin{tikzcd}
	 			0 \ar{r} &  M^!_{\kappa}(\rho_L^{\vee}) / S_{\kappa}(\rho_L^{\vee}) \ar{r}{}{P} &  \mathsf{Sing}_{\kappa}(\rho_L^{\vee})  \ar{r} & M_{2 - \kappa}(\rho_L)^{\vee} \ar{r} \ar[bend right, above]{l}{}[swap]{\eta}  & 0.
		\end{tikzcd}
\end{equation} 
of the exact sequence in \Cref{thm:serreduality}; note that we have relabelled the subscripts here. Define an extension of the map $P$ to $\AkappaLexp(\rho_L^{\vee})$ by setting
\[ 
	P \colon \AkappaLexp(\rho_L^{\vee}) \ \to \ \mathsf{Sing}_{\kappa}(\rho_L^{\vee}), \ \qquad P(F) \ := \ \sum_{m \leq 0} \kappa_F(m) \cdot q^m .
\]
\begin{proposition}  \label{prop:uniquepreimage}
For any $f \in \ALmod{\kappa-2}(\rho_L^{\vee})$, there exists a unique $F \in \AkappaLexp(\rho_L^{\vee}) $ such that
	\begin{enumerate}
		\item $\Low(F) = f$;
		\item $P(F) \in \mathrm{im}(\eta)$; and
		\item $F$ has `trivial cuspidal holomorphic projection', i.e., $\langle F, G\rangle^{\reg}_{\mathrm{Pet}} = 0$ for every $G \in S_{\kappa}(\rho_L^{\vee})$.
	\end{enumerate}
We denote this unique preimage as $F = \Lsharp(f)$. 
\end{proposition}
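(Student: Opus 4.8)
The plan is to assemble the statement from three ingredients already in hand: the exactness of the lowering sequence \eqref{eq:Lsurj} in \Cref{prop:Lexactseq}, the Borcherds--Serre duality sequence of \Cref{thm:serreduality} together with the fixed splitting \eqref{eqn:splittingRedux}, and the existence of the regularized Petersson pairing from \Cref{lem:regPet}. Since $\Low$ is surjective in \eqref{eq:Lsurj}, we begin by picking any $F_0 \in \AkappaLexp(\rho_L^\vee)$ with $\Low(F_0) = f$; two such differ by an element of $\ker(\Low) = M_\kappa^!(\rho_L^\vee)$.

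Next I would arrange condition (ii). One first checks that the extended principal part map $P \colon \AkappaLexp(\rho_L^\vee) \to \mathsf{Sing}_\kappa(\rho_L^\vee)$ is additive: for the coefficients of index $m<0$ this is immediate since $\kappa_F(m)$ is a limit, and for the constant coefficient $\kappa_F(0)$ one uses the precise shape \eqref{eq:cF0}, noting that the auxiliary terms $v^{\beta_{j,\mu}-1}$ (with $\beta_{j,\mu}\neq 1$) and $\log v$ contribute no constant, so the ``constant part of the constant term'' of a sum is the sum of the constant parts. One also checks that $P$ restricted to $M_\kappa^!(\rho_L^\vee)$ is the principal part map of \Cref{thm:serreduality}, whose image is $\ker(\psi)$. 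By the splitting \eqref{eqn:splittingRedux} we have $\mathsf{Sing}_\kappa(\rho_L^\vee) = \ker(\psi) \oplus \mathrm{im}(\eta)$, so write $P(F_0) = P(g_0) + p$ with $g_0 \in M_\kappa^!(\rho_L^\vee)$ (well-defined modulo $S_\kappa(\rho_L^\vee)$) and $p \in \mathrm{im}(\eta)$, and set $F_1 := F_0 - g_0$, which satisfies (i) and (ii). Conversely, if $F$ satisfies (i) and (ii) then $F - F_1 \in M_\kappa^!(\rho_L^\vee)$ with $P(F - F_1) \in \ker(\psi) \cap \mathrm{im}(\eta) = 0$, so $F - F_1 \in S_\kappa(\rho_L^\vee)$ by injectivity of $P$ on $M_\kappa^!(\rho_L^\vee)/S_\kappa(\rho_L^\vee)$. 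Hence the set of $F$ satisfying (i) and (ii) is exactly the coset $F_1 + S_\kappa(\rho_L^\vee)$.

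It remains to fix the cusp form. By \Cref{lem:regPet}, $\langle F_1, G\rangle^{\reg}_{\mathrm{Pet}}$ exists for all $G \in S_\kappa(\rho_L^\vee)$, and $G \mapsto \langle F_1, G\rangle^{\reg}_{\mathrm{Pet}}$ is a conjugate-linear functional on the finite-dimensional Hilbert space $(S_\kappa(\rho_L^\vee), \langle\cdot,\cdot\rangle_{\mathrm{Pet}})$; by nondegeneracy (indeed positivity) of the Petersson product there is a unique $s \in S_\kappa(\rho_L^\vee)$ with $\langle s, G\rangle_{\mathrm{Pet}} = -\langle F_1, G\rangle^{\reg}_{\mathrm{Pet}}$ for all $G$. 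Since $s$ and $G$ are both cuspidal, $\langle F_1 + s, G\rangle^{\reg}_{\mathrm{Pet}} = \langle F_1, G\rangle^{\reg}_{\mathrm{Pet}} + \langle s, G\rangle_{\mathrm{Pet}} = 0$, so $F := F_1 + s$ satisfies (i)--(iii). For uniqueness, if $F, F'$ both satisfy (i)--(iii) then by the previous paragraph $F - F' \in S_\kappa(\rho_L^\vee)$, and $\langle F - F', G\rangle^{\reg}_{\mathrm{Pet}} = \langle F - F', G\rangle_{\mathrm{Pet}} = 0$ for all cuspidal $G$; taking $G = F - F'$ forces $F = F'$.

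Once \Cref{prop:Lexactseq}, \Cref{thm:serreduality}, and \Cref{lem:regPet} are granted, the argument is essentially a diagram chase followed by a Riesz representation step; I expect the only genuine subtlety to be the additivity of the extended map $P$, that is, verifying that extracting the ``constant part of the constant term'' is linear despite the polynomial-and-log contributions allowed by \Cref{def:modgrowthforms}(iii). This is precisely the place where one must invoke the explicit form \eqref{eq:cF0} rather than a naive limit as $v\to\infty$.
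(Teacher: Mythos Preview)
Your proof is correct and follows essentially the same approach as the paper's: pick a preimage under $\Low$ via \Cref{prop:Lexactseq}, adjust by a weakly holomorphic form to land the principal part in $\mathrm{im}(\eta)$ using the splitting \eqref{eqn:splittingRedux}, then subtract the cuspidal holomorphic projection. The only differences are organizational---you phrase the cusp-form correction via Riesz representation where the paper writes out an orthonormal basis expansion, and you are more explicit about the additivity of the extended map $P$ (a point the paper uses silently in its uniqueness argument).
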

Note that when $\kappa > 2$, the space $M_{2-\kappa}(\rho_L) = \{0\}$ and so $\eta$ is identically zero. In this case, condition $(ii)$ asserts that $P(\Lsharp(f)) = 0$, i.e.\ $\Lsharp(f)$ has ``trivial principal part".
\begin{proof}
	To prove the uniqueness statement, suppose $F_1$ and $F_2$ are as above, and set $F = F_1 - F_2$. Then $\Low(F)=0$, and so $F$ is a weakly holomorphic form, i.e.\  $F \in M_{\kappa}^!(\rho_L^{\vee})$. Moreover $P(F) \in \mathrm{im}(\eta)$, which implies that $P(F) = 0$ by the exactness of \eqref{eqn:splittingRedux}, and hence $F \in S_{\kappa}(\rho_L^{\vee})$ is a cusp form. But $F$ is orthogonal to cusp forms, so $F = 0$. 
	
	To show existence, we start by choosing any preimage $F_0$ of $f$, which exists by virtue of \Cref{prop:Lexactseq}. Appealing again to the exact sequence \eqref{eqn:splittingRedux}, there exists a form $G \in M^!_{\kappa}(\rho_L^{\vee})$ such that $P(F_0 - G) \in \mathrm{im}(\eta)$. Then, choosing an orthonormal basis $h_1, \dots, h_r \in S_{\kappa}(\rho_L^{\vee})$,  the function
	\[ 
		F(\tau)  \ := \ F_0(\tau) \ -  \ G(\tau) \ - \ \sum_i \langle F_0 - G, \ h_i\rangle^{\reg}_{\mathrm{Pet}}  \ h_i(\tau) \ \in \ \AkappaLexp(\rho_L^{\vee}) 
	\]
	satisfies the hypotheses in the Proposition.
\end{proof}

Our next aim to calculate regularized pairings against the harmonic Maa\ss{} forms of weight $ k  = 2 - \kappa$ introduced previously. 
Fix $\mu \in L'/L$ and $m \in Q(\mu) + \Z$. For convenience, write the Fourier expansion of the weight $k$ Poincar\'e series $F_{m, \mu}(\tau)$, defined for all $m$ and all weights as in \Cref{lem:uniqueHPSeries}, as
	\begin{equation} \label{eqn:FmFourierExp} 
		F_{m,\mu}(\tau) \ = \  \sum_{n} \, b_{m,\mu}(n,v) \, q^n.
	\end{equation}
Recall from \Cref{sec:relat-non-holom} that $b_{m,\mu}(n,v)$ is independent of $v$ when $n \geq 0$. 
 
\begin{proposition} \label{prop:poincarereg} Let  $f \in \ALmod{{\kappa-2} }(\rho_L^{\vee})$, and fix
  $\mu \in L'/L$ and  $m \in Q(\mu) + \Z$. Then
 \begin{align*}
     \langle  P_{m, w, \mu} - F_{m,\mu}, \, f\rangle^\reg \ &=  \ \lim_{T \to \infty}\Big[ \int_{\calF_T} \left(  P_{m, w, \mu}(\tau) -  F_{m,\mu}(\tau)\right)  f(\tau) \, d\mu(\tau) \\
      &\phantom{= \lim_{T\to\infty}\Big(}+  \int_{1}^T   \left( b_{m,\mu}(0)  - \delta_{m,0} \, \tilde\varphi_{\mu}  \right) \cdot c_f(0,v) \, v^{-2}\,dv \Big] \\
      &\phantom{= \lim_{T\to\infty}} - \CT_{s=0} \, \lim_{T \to \infty}  \int_{1}^T  \left( b_{m,\mu}(0) - \delta_{m,0} \,  \tilde\varphi_{\mu} \right) \cdot c_f(0,v) \, v^{-s-2}\,dv.
   \end{align*}
   Here, 
  \[
     \tilde\varphi_{\mu} = \frac12 ( \varphi_{\mu}+ (-1)^{\kappa + (q-p)/2} \varphi_{-\mu}).
  \]
   
In particular, the regularized integral exists. 

\begin{proof}
 Recalling the definition of  the truncated Poincar\'e series
 		\[ P_{m, w, \mu}(\tau)  \ := \ \frac{1}{4}  \,
 		                                    \sum_{ \tilde \gamma \in \Gt_\infty \bs \Gt} \left( \sigma_{w}(\tau) \, e^{-2 \pi i m \tau }\,  \varphi_{\mu} \right) \big\vert_{k} [\tilde \gamma], \]
 	   note that if $\Im(\tau) > w_0 := \max(w, 1/w)$, then
 		\[ P_{m, w, \mu}(\tau) \ =  \ e^{-2 \pi i m \tau} \, \tilde\varphi_\mu, \]
 		
 		 In particular, if $v = \Im(\tau) > w_0$ then
 		\begin{equation} \label{eqn:QExpDiffPoincare} 
 			P_{m, w, \mu}(\tau) \  -  F_{m,\mu}(\tau) \ + \  b_{m,\mu}(0)\  - \  \delta_{m,0}\, \tilde\varphi_\mu \ =  \ O(e^{-Cv})
 		\end{equation}
 		for some $C>0$; indeed, when $\kappa>2$ and $m >0$, this bound follows from \eqref{eqn:WHMPrincAsymp}, since $	P_{m, w, \mu}(\tau) +  b_{m,\mu}(0)$ is precisely the principal part of $F_{m,\mu}(\tau)$, and the bound is trivial to verify in all other cases. For convenience, set
 		\[ \tilde b_{m,\mu}(0) \ := \ b_{m,\mu}(0) \ - \ \delta_{m,0}\, \tilde\varphi_\mu. \]
 		
 For $\Re(s)$ sufficiently large, 
 		 \begin{align} 
 			  \lim_{T \to \infty} \int_{\calF_T} & (P_{m, w, \mu}(\tau) - F_{m,\mu}(\tau))\,  f(\tau) \ v^{-s} \ d\mu(\tau)  \notag \\ 
 			  &=  \ \int_{\calF_{w_0}} ( P_{m, w, \mu}(\tau) - F_{m,\mu}(\tau))  \, f(\tau) \ v^{-s} \ d\mu(\tau) \notag \\ 
 			& \qquad   \qquad \ + \lim_{T \to \infty}  \int\limits_{\calF_T  - \calF_{w_0}}  ( P_{m, w, \mu}(\tau) - F_{m,\mu}(\tau)) f(\tau) \ v^{-s} d \mu(\tau). \notag
 			 \end{align}
 As before, the integral over $\calF_{w_0}$ is holomorphic in $s$, and contributes its value 
 \[ I_0 := \int_{\calF_{w_0}} ( P_{m, w, \mu}(\tau) - F_{m,\mu}(\tau))  \, f(\tau)  \ d\mu(\tau) \]
 at $s=0$ to the regularized integral. For the other integral, inserting the Fourier expansion \eqref{eqn:QExpDiffPoincare} and carrying out the integral over $u=\Re(\tau)$ gives
 	\begin{align*}
 		\lim_{T \to \infty}  \int\limits_{\calF_T  - \calF_{w_0}}  ( P_{m, w, \mu}(\tau) - F_{m,\mu}(\tau)) f(\tau) \ v^{-s} d \mu(\tau) \ =  I_1(s) \ + \ I_2(s) 
 	\end{align*}
 	where 
\[	I_1(s) \ = \ \lim_{T \to \infty} \int\limits_{\calF_T  - \calF_{w_0}}  \left( P_{m, w, \mu}(\tau) - F_{m,\mu}(\tau) + \tilde b_{m,\mu}(0) \right) \cdot f(\tau) \ v^{-s} d \mu(\tau) \]
 	 and
  	\[ I_2(s) \ = \ - \lim_{T \to \infty} \int\limits_{\calF_T  - \calF_{w_0}} \tilde  b_{m,\mu}(0) \cdot f(\tau) v^{-s} d \mu(\tau) \ = -  \lim_{T \to \infty} \ \int_{w_0}^{T}  \ \tilde  b_{m,\mu}(0) \, c_f(0,v) \, v^{-s-2} dv.  \]
 It follows from \eqref{eqn:QExpDiffPoincare} that $I_1(s)$ defines a holomorphic function for $s \in \C$, and so contributes its value at $s=0$ to the regularized integral. On the other hand, the assumption in \Cref{def:modgrowthforms} for the shape of $c_f(0,v)$ immediately implies that $I_2(s)$ admits a meromorphic continuation to $s \in \C$. Therefore, the regularized pairing exists:
	\begin{equation}	\label{eqn:regPoincareIntFinal}
		\langle P_{m, w, \mu} - F_{m,\mu}, \ f \rangle^{\reg} \ = \ I_0 \ + \ I_1(0) \ + \ \CT_{s=0} I_2(s). 
	\end{equation}

To conclude the proof, write  
	\begin{align*}   I_1(0) \ 
	&= \ \lim_{T \to \infty}  \ \int\limits_{\calF_T - \calF_{w_0}}  \left(  P_{m, w, \mu}(\tau) - F_{m,\mu}(\tau) \right) \, f(\tau) \ d\mu(\tau) \ + \ \int_{w_0}^T  \tilde b_{m,\mu}(0) \, c_f(0,v) \, v^{-2} \, dv,
	\end{align*}
and combining the first integral with $I_0$ gives 
\[ I_0 \ + \ I_1(0) \ = \ \lim_{T \to \infty}  \ \int\limits_{\calF_T}  \left(  P_{m, w, \mu}(\tau) - F_{m,\mu}(\tau) \right) \, f(\tau) \ d\mu(\tau) \ + \ \int_{w_0}^T \tilde b_{m,\mu}(0) \, c_f(0,v) \, v^{-2} \, dv \]
Furthermore, observe that 
\[ \int_{w_0}^1 \tilde b_{m,\mu}(0) \, c_f(0,v) \, v^{-2} \, dv \ = \ \CT_{s=0} \int_{w_0}^1 \tilde b_{m,\mu}(0) \, c_f(0,v) \, v^{-s-2} \, dv ;\]
adding and subtracting this quantity in \eqref{eqn:regPoincareIntFinal} gives the expression in the proposition.
  	\end{proof}
\end{proposition}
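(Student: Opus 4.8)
The plan is to take advantage of the fact that the truncated Poincar\'e series becomes elementary near the cusp. By construction $P_{m, w, \mu}(\tau) = e^{-2 \pi i m \tau}\,\tilde\varphi_\mu$ whenever $\Im(\tau) > w_0 := \max(w, 1/w)$. Writing the Fourier expansion $F_{m,\mu}(\tau) = \sum_n b_{m,\mu}(n,v)\,q^n$, recall that $b_{m,\mu}(n,v)$ is independent of $v$ for $n \ge 0$; set $\tilde b_{m,\mu}(0) := b_{m,\mu}(0) - \delta_{m,0}\,\tilde\varphi_\mu$. The first step is to show that on the region $\Im(\tau) > w_0$,
\[ P_{m, w, \mu}(\tau) - F_{m,\mu}(\tau) + \tilde b_{m,\mu}(0) \ = \ O(e^{-Cv}) \]
for some $C > 0$. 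When $\kappa > 2$ and $m > 0$ this is precisely the statement that $P_{m, w, \mu}(\tau) + b_{m,\mu}(0)$ is the principal part of $F_{m,\mu}$, which follows from the normalization \eqref{eqn:FmHolPart} of its holomorphic part together with the decay \eqref{eqn:WHMPrincAsymp} of a weakly holomorphic (here harmonic Maa\ss) form past its principal part; in all remaining cases ($m \le 0$, or $\kappa \le 0$, etc.) the relevant principal parts are supported in the constant term and the estimate is immediate.

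With this decay established, I would run Borcherds's regularization argument. Split $\calF_T = \calF_{w_0} \cup (\calF_T - \calF_{w_0})$. The integral of $(P_{m, w, \mu} - F_{m,\mu})\,f\,v^{-s}$ over the compact piece $\calF_{w_0}$ is entire in $s$ and contributes its value $I_0 := \int_{\calF_{w_0}}(P_{m, w, \mu} - F_{m,\mu})\,f\,d\mu(\tau)$ at $s = 0$. On the tail, insert the Fourier expansions, integrate over $u = \Re(\tau)$, and split the integrand as
\[ \bigl( P_{m, w, \mu} - F_{m,\mu} + \tilde b_{m,\mu}(0) \bigr)\,f \ - \ \tilde b_{m,\mu}(0)\cdot c_f(0,v)\,v^{-2}. \]
The first part integrates to a function $I_1(s)$ holomorphic on all of $\C$, by the exponential decay above, so it merely contributes $I_1(0)$; the second gives $I_2(s) = -\lim_{T}\int_{w_0}^{T} \tilde b_{m,\mu}(0)\cdot c_f(0,v)\,v^{-s-2}\,dv$, and since \Cref{def:modgrowthforms}(iii) expresses $c_f(0,v)$ as a finite sum of monomials $v^{\beta}$ and a $\log v$-term up to exponential decay, $I_2(s)$ admits a meromorphic continuation. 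This already proves that the regularized pairing exists and equals $I_0 + I_1(0) + \CT_{s=0}I_2(s)$.

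It remains to massage this into the stated form. Expanding $I_1(0)$ and recombining its non-decaying part with $I_0$ collapses the two compact-domain contributions into
\[ \lim_{T\to\infty}\Bigl[ \int_{\calF_T}(P_{m, w, \mu} - F_{m,\mu})\,f\,d\mu(\tau) \ + \ \int_{w_0}^{T}\tilde b_{m,\mu}(0)\,c_f(0,v)\,v^{-2}\,dv \Bigr]. \]
Writing $\int_{w_0}^{T} = \int_{w_0}^{1} + \int_{1}^{T}$ here and inside $I_2(s)$, and using that the integral over the compact range $[w_0,1]$ is holomorphic in $s$ (hence equal to its own constant term at $s = 0$), the two $\int_{w_0}^{1}$ contributions cancel, leaving exactly the formula asserted in the proposition.

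The only genuinely non-formal step, and the one I expect to be the main obstacle, is the uniform exponential-decay estimate of the first paragraph: it amounts to matching the principal part of the discontinuous truncated Poincar\'e series with that of the harmonic Maa\ss\ form $F_{m,\mu}$, and this is where the precise normalization from \Cref{lem:uniqueHPSeries} and the asymptotic \eqref{eqn:WHMPrincAsymp} are needed. A secondary technical point is verifying, from the explicit profile in \Cref{def:modgrowthforms}(iii), that $I_2(s)$ continues past $s = 0$ with at worst a pole there feeding into $\CT_{s=0}$.
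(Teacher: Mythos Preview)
Your proposal is correct and follows essentially the same approach as the paper's proof: the same cutoff at $w_0 = \max(w,1/w)$, the same exponential-decay estimate for $P_{m,w,\mu} - F_{m,\mu} + \tilde b_{m,\mu}(0)$, the same decomposition $I_0 + I_1(s) + I_2(s)$, and the same final manipulation shifting the lower limit from $w_0$ to $1$. One tiny inaccuracy: \Cref{def:modgrowthforms}(iii) only guarantees that $c_f(0,v)$ is a finite sum of monomials $v^{\beta}$ plus an exponentially decaying remainder---there is no $\log v$ term---but this does not affect your argument.
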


We arrive at the \emph{raison d'\^etre} of this section: 

\begin{theorem} \label{thm:MaassSection} 
 Suppose  $f \in \ALmod{\kappa-2}(\rho_L^{\vee})$, and 
	\[F \ = \  \Lsharp(f)  \ \in \  \AkappaLexp(\rho_L^{\vee}) \] 
the preimage under $L$ as specified in \Cref{prop:uniquepreimage}.

Then for any $\mu \in L'/L$ and $m \in \Q$,
\begin{equation} \label{eqn:PRegmPos} \langle P_{m, w, \mu}-  F_{m,\mu} , \  f \rangle^{\reg} \ = \  -  c_F(m, w)(\varphi_{\mu})  \end{equation}

\begin{proof} 
If $m \notin Q(\mu) + \Z$, then both sides of \eqref{eqn:PRegmPos} are easily seen to vanish; we assume that $m \in Q(\mu) + \Z$ from now on.

We  proceed by examining the various pieces appearing in \Cref{prop:poincarereg}. Starting with the truncated Poincar\'e series
	\[ P_{m, w, \mu}(\tau)  \ := \ \frac{1}{4}  \,
	 		                                    \sum_{ \tilde \gamma \in \Gt_\infty \bs \Gt} \left( \sigma_{w}(\tau) \, e^{-2 \pi i m \tau }\,  \varphi_{\mu} \right) \big\vert_{k} [\tilde \gamma], \]
	note that for any fixed $T$, the restriction of $P_{m, w, \mu}(\tau)$ to the compact set $\calF_T$ is a finite sum, and so
	\begin{align*}
		\int_{\calF_T} \, P_{m, w, \mu}(\tau) \, f(\tau) \, d \mu(\tau) \ =& \ \frac14 \,	\int_{\calF_T}
			 		                                    \sum_{ \tilde \gamma \in \Gt_\infty \bs \Gt} \left( \sigma_{w}(\tau) \, e^{-2 \pi i m \tau }\,  \varphi_{\mu} \right) \big\vert_{k} [\tilde \gamma] \, f(\tau) \ d \mu(\tau)  \\
	 		                 =& \ \frac14 \,  \sum_{ \tilde \gamma \in \Gt_\infty \bs \Gt}  \int_{\gamma \cdot \calF_T} \,  \sigma_{w}(\tau) \, e^{-2 \pi i m \tau } \, f(\tau) (\varphi_{\mu}) \, d \mu(\tau). \\
	 		                 =& \ \int_{\calF_T}  \sigma_{w}(\tau) \, e^{-2 \pi i m \tau } \, f(\tau)(\varphi_{\mu}) \, d \mu(\tau) \\
	 		                 &\qquad \qquad + \ \frac14  \sum_{ \substack{ \tilde \gamma \in \Gt_\infty \bs \Gt \\ \tilde\gamma  \notin A}}  \int_{\gamma \cdot \calF_T} \,  \sigma_{w}(\tau) \, e^{-2 \pi i m \tau } \, f(\tau) (\varphi_{\mu}) \, d \mu(\tau),
	\end{align*}
	where in the last line, we separate out the contributions from the subgroup 
	\[ \calA :=  \{ (\mathtt{Id}, 1) , \ (\mathtt{Id}, -1),  \, (-\mathtt{Id}, i), \, ( - \mathtt{Id}, -i) \},  \]
	which acts trivially on $\uhp$, from the rest. 
	
	For any coset $\tilde\gamma \in \Gt_\infty \bs \Gt$ with $\tilde \gamma \notin \calA$ and $\tau \in \uhp$,
	\[ \Im(\gamma \tau) \ = \ \frac{\Im(\tau)}{|c\tau + d|^2} \leq \frac{1}{\Im(\tau)}.\]
	It follows that when $T>\max(w,1/w)$, the intersection of the translates of $\calF_T$ with the region where the cutoff function $\sigma_{w}(\tau)$ is nonzero fills out a rectangle:
        \begin{equation*}
          \left(\bigcup_{\gamma \in \Gt_\infty \bs \Gt} \gamma(\calF_T) \right) \ \bigcap \ \{ \Im(\tau) \geq w \} \ = \ \{ \Re(\tau) \in \left[ -1/2 , 1/2 \right], \ \Im(\tau) \in [w,T] \} \ =: \calR_w^T.
        \end{equation*}
	Therefore, for all $T>\max(w,1/w)$, 
	\begin{equation}  \label{eqn:intPFT}
		 \int_{\calF_T} \, P_{m, w, \mu}(\tau) \, f(\tau) \, d \mu(\tau)  \ =  \ \int_{\calR_w^T} e^{-2 \pi i m \tau} \, f(\tau)(\varphi_{\mu}) \, d\mu(\tau).
	\end{equation}
We calculate the right hand side using a standard Stokes' theorem argument: since $\Low(F) = f$, 
	\begin{align*} d( e^{-2 \pi i m \tau}  F \, d\tau) \ = \ -  e^{-2 \pi i m \tau} \cdot \frac{\partial}{\partial \overline \tau} \, F \, d \tau \wedge d \overline\tau \ =& \ -   e^{-2 \pi i m \tau} \, \Low(F) \, d \mu(\tau) \\
	 =& \  -   e^{-2 \pi i m \tau} \, f \, d \mu(\tau).  
	\end{align*} 
	Insert this expression into \eqref{eqn:intPFT} and use Stokes' theorem to integrate over the boundary $\partial \calR_w^T$: the `vertical' segments cancel on account of the invariance of the integrand under $\tau \mapsto \tau+1$, while the `horizontal' segments give the Fourier coefficients of $F$, i.e.
	\begin{align*}
	 \int_{\calF_T} \, P_{m, w, \mu}(\tau) \, f(\tau) \, d \mu(\tau) \ =& \ -   \int_{\partial \calR_w^T} e^{-2 \pi i m \tau} \,   F(\tau)(\varphi_{\mu})  \, d\tau   \\
		=& \ c_F(m,T)(\varphi_{\mu}) \ - \ c_F(m,w) (\varphi_{\mu}) . \label{eqn:TPunfoldfinal}
	\end{align*}
Similarly,
 	\begin{align}
	 	\int_{\calF_T} F_{m,\mu}(\tau) \, f(\tau) \, d \mu(\tau) \ =& \ -  \int_{\calF_T}   F_{m,\mu}(\tau) \, d\left( F(\tau) \,d \tau \right)  \notag \\
	 		=&  \ \int_{\calF_T} \, F \, d \left( F_{m,\mu}(\tau) \, d \tau \right) \ -  \int_{\partial \calF_T} F_{m, \mu}(\tau) \, F(\tau) \, d \tau.
 	\end{align}
We show the limit as $T \to \infty$ of the first integral vanishes: since $F_{m,\mu} \in H_{2 - \kappa}(\rho_L)$, it follows that $\xi(F_{m,\mu}) \in S_{\kappa}(\rho_L^{\vee})$, cf.\ \Cref{sec:relat-non-holom}.
Therefore 
	\begin{align*} \lim_{T \to \infty}  \int_{\calF_T} \, F \, d \left( F_{m,\mu}(\tau) \, d \tau \right) \ =& \ \lim_{T \to \infty} \int_{\calF_T} F \,  \overline{\xi(F_{m,\mu})}  \, v^{\kappa} \, d \mu(\tau) \ 	= \ \langle F, \, \xi(F_{m,\mu}) \rangle_{\mathrm{Pet}}^{\reg}  \ = \ 0
	\end{align*}
since $F$ was taken to be orthogonal to cusp forms. 

For the second integral, the modularity of the integrand implies that only the upper line segment of $\partial \calF_{T}$  contributes; 
inserting the Fourier expansions of $F$ and of  $F_{m,\mu}$, as in \eqref{eqn:FmFourierExp}, yields
\[
	 -  \int_{\partial \calF_T} F_{m, \mu}(\tau) \, F(\tau) \, d \tau \  = \ \sum_{n \in \Q} b_{m,\mu}(n,T) \cdot c_F(-n, T) ,
\]
which, by \Cref{lem:sumneq0coeffs} below, satisfies the asymptotic
\[
	\sum_{n} b_{m,\mu}(n,T) \cdot c_F(-n, T)   \ = \  \sum_{n \leq 0} b^+_{m,\mu}(n) \cdot c_F(-n,T) \ +  \ \sum_{n>0} b^+_{m,\mu}(n) \cdot \kappa_F(-n) \ + \ o(1) 
\]
as $T \to \infty$.

Putting these pieces back into \Cref{prop:poincarereg} gives
\begin{align*}
 \langle P_{m, w, \mu}-  F_{m,\mu} ,  \  f \rangle^{\reg} \ &= - c_F(m,w)(\varphi_{\mu}) \   - \ \sum_{n>0} b^+_{m,\mu}(n) \cdot \kappa_{F}(-n)  \\ 
 									&\phantom{=} + \lim_{T \to \infty} \bigg[ c_F(m,T)(\varphi_{\mu}) - \sum_{n\leq 0} b^+_{m,\mu}(n) \cdot c_F(-n,T) \\
 									&\phantom{= + \lim_{T \to \infty}} + \tilde{b}_{m,\mu}(0) \cdot  \left(  \ \int_1^T c_f(0,v) \, v^{-2} dv \right) \bigg] \\
 									&\phantom{=} -  \CT_{s=0} \, \tilde b_{m,\mu}(0) \int_1^{\infty}  \,  c_f(0,v) \, v^{-2-s}\, dv,
\end{align*}
where 
\begin{align*} 
	\tilde b_{m,\mu}(0)  \cdot c_f(0,v) \ =& \ \left(  b_{m,\mu}(0) \ - \ \delta_{m,0}\, \tilde\varphi_\mu \right) \cdot c_f(0,v) \\
        =& \ b_{m,\mu}(0) \cdot c_f(0,v) \ - \ \delta_{m,0} \, c_f(0,v)(\varphi_{\mu}).
\end{align*}
Furthermore, one may easily verify, via \eqref{eqn:cFlowering} and \Cref{def:modgrowthforms}(iii), that the relation $\Low(F) = f$ implies
\begin{align*} c_F(0,T) \ - \ \int_1^T c_f(0,v) \, v^{-2} dv & \ + \   \CT_{s=0} \int_1^{\infty}  \,  c_f(0,v) \, v^{-2-s} \ dv \ \\
 =&  \ c_F(0,1) \ + \ \CT_{s=0} \int_1^{\infty}  \,  c_f(0,v) \, v^{-2-s} \ dv \\ =& \ \kappa_F(0)
\end{align*}
and so 
 \begin{align}
  \langle P_{m, w, \mu} - F_{m,\mu} , \  f \rangle^{\reg} \ =   &- c_F(m,w)(\varphi_{\mu}) - \sum_{n \geq 0} b^+_{m,\mu}(n)  \cdot \kappa_F(-n) \notag \\
  &+  \lim_{T \to \infty}  \bigg[ c_F(m,T)(\varphi_{\mu}) - \sum_{n<0} b^+_{m,\mu}(n) \cdot c_F(-n,T)  \label{eqn:LSharpFourierComp1}\\
  &\phantom{+  \lim_{T \to \infty}  \bigg[} -  \delta_{m,0}\, (c_F(m,T)(\varphi_{\mu}) - \kappa_F(0)(\varphi_{\mu}))\bigg]. \notag
  \end{align}
Recall that the choice of $F$ in \Cref{prop:uniquepreimage} required 
\[ 
  P(F) = \sum_{n\leq 0} \kappa_F(n) q^n  \in \mathrm{im}(\eta) 
\]
where $\eta$ was a choice of a section in \eqref{eqn:etaSplitting}. Thus the normalization imposed on $F_{m,\mu}$ in \Cref{lem:uniqueHPSeries} (ii) implies that
\[
	\sum_{n \geq 0} b^+_{m,\mu}(n) \cdot \kappa_F(-n) \ = \ \kappa_F(m)(\varphi_{\mu});
\] 
note that this is simply $0$ if $m> 0$.  

On the othe hand, the shape of the principal part of $F_{m,\mu}$ imposed in  \Cref{lem:uniqueHPSeries}(i)
and the invariance of  $F$  under the action of $\mathbf Z \in \Gt$ imply that
\[ 
	\sum_{n<0}  \, b^+_{m,\mu}(n) \, c_F(-n,T) \ = \ 
        \begin{cases} 
          c_F(m,T)(\varphi_{\mu} ), & \text{if } m > 0, \\  
          0, & \text{if } m \leq 0.
        \end{cases}
\]
Therefore, when $m>0$
\[
 \lim_{T \to \infty} \left[ c_F(m,T)(\varphi_{\mu}) - \sum_{n<0} b^+_{m,\mu}(n) \cdot c_F(-n,T) \right]   \ = \ \lim_{T \to \infty} \left[  c_F(m,T)(\varphi_{\mu}) -  c_F(m,T)(\varphi_{\mu}) \right] \ = \ 0  
\]
 and when $m \leq 0$, the limit in \eqref{eqn:LSharpFourierComp1} equals
\begin{align*}
    \lim_{T \to \infty} \Big[ c_F(m,T)(\varphi_{\mu}) - \delta_{m,0}\, (c_F(m,T)(\varphi_{\mu}) - \kappa_F(0)(\varphi_{\mu})) \Big] &=
    \lim_{T \to \infty} 
    \begin{cases} 
      c_F(m,T)(\varphi_{\mu}), & \text{if }  m < 0, \\ 
      \kappa_F(0)(\varphi_{\mu}) , & \text{ if } m=0
    \end{cases} \\ 
 &= \kappa_F(m)(\varphi_{\mu});
\end{align*}
Thus, for all $m$, the limit in \eqref{eqn:LSharpFourierComp1} is equal to
\[
  \sum_{n \geq 0} b^+_{m,\mu}(n) \cdot \kappa_F(-n)
\]
and so
\[
  \langle P_{m, w, \mu} - F_{m,\mu} , \  f \rangle^{\reg} = - c_F(m,w)(\varphi_{\mu}),
\]
concluding the proof.
\end{proof}
 \end{theorem}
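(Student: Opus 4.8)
The plan is to reduce the identity to the integral formula already isolated in \Cref{prop:poincarereg}, and then to evaluate the two integrals appearing there by unfolding and Stokes' theorem. We may assume $m \in Q(\mu)+\Z$, since otherwise $P_{m,w,\mu}$, $F_{m,\mu}$ and $c_F(m,\cdot)$ all vanish and there is nothing to prove. By \Cref{prop:poincarereg}, $\langle P_{m,w,\mu}-F_{m,\mu},f\rangle^{\reg}$ is a limit over $T\to\infty$ of $\int_{\calF_T}(P_{m,w,\mu}-F_{m,\mu})\,f\,d\mu$, corrected by explicit integrals of the constant-term data $\tilde b_{m,\mu}(0)\cdot c_f(0,v)$ over $v$. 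So it suffices to evaluate $\int_{\calF_T}P_{m,w,\mu}\,f\,d\mu$ and $\int_{\calF_T}F_{m,\mu}\,f\,d\mu$ asymptotically as $T\to\infty$, and then to collect the constant-term bookkeeping.

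For the Poincar\'e term one unfolds: on the compact set $\calF_T$ the series defining $P_{m,w,\mu}$ is a finite sum, so the integral unfolds against the $\Gt_\infty\bs\Gt$-translates of $\calF_T$ (taking into account the order-four subgroup acting trivially and the factor $1/4$). Because of the cutoff $\sigma_w$ and the bound $\Im(\gamma\tau)\le 1/\Im(\tau)$ for non-trivial cosets, these translates, intersected with $\{\Im\tau\ge w\}$, fill out exactly the rectangle $\calR_w^T=\{\,u\in[-\frac12,\frac12],\ v\in[w,T]\,\}$ once $T>\max(w,1/w)$, whence $\int_{\calF_T}P_{m,w,\mu}\,f\,d\mu=\int_{\calR_w^T}e^{-2\pi i m\tau}\,f(\tau)(\varphi_\mu)\,d\mu(\tau)$. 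I would then use $\Low(F)=f$, which gives $d\bigl(e^{-2\pi i m\tau}F(\tau)(\varphi_\mu)\,d\tau\bigr)=-e^{-2\pi i m\tau}f(\tau)(\varphi_\mu)\,d\mu(\tau)$, and apply Stokes' theorem on $\calR_w^T$: the two vertical edges cancel by $\tau\mapsto\tau+1$ periodicity, and the horizontal edges extract Fourier coefficients of $F$, producing $c_F(m,T)(\varphi_\mu)-c_F(m,w)(\varphi_\mu)$.

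For $\int_{\calF_T}F_{m,\mu}\,f\,d\mu$ I would again write $f\,d\mu=-d(F\,d\tau)$ and integrate by parts via Stokes on $\calF_T$, producing $\int_{\calF_T}F\,d(F_{m,\mu}\,d\tau)-\int_{\partial\calF_T}F_{m,\mu}F\,d\tau$. As $T\to\infty$ the first term converges to $\langle F,\xi(F_{m,\mu})\rangle_{\mathrm{Pet}}^{\reg}$, and this vanishes: $F_{m,\mu}\in H_{2-\kappa}(\rho_L)$ so $\xi(F_{m,\mu})\in S_\kappa(\rho_L^\vee)$ is cuspidal, while $F=\Lsharp(f)$ has trivial cuspidal holomorphic projection by \Cref{prop:uniquepreimage}(iii). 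The boundary integral reduces by modularity to the top edge of $\calF_T$ and, after inserting the Fourier expansions, equals $\sum_n b_{m,\mu}(n,T)\,c_F(-n,T)$; I would control its $T\to\infty$ behaviour with \Cref{lem:sumneq0coeffs}, replacing it by $\sum_{n\le 0}b^+_{m,\mu}(n)\,c_F(-n,T)+\sum_{n>0}b^+_{m,\mu}(n)\,\kappa_F(-n)+o(1)$.

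It then remains to feed these evaluations back into \Cref{prop:poincarereg} and to collect terms. The constant-term contributions combine, via \eqref{eqn:cFlowering} and \Cref{def:modgrowthforms}(iii), into $\kappa_F(0)$; the normalization $P(F)\in\mathrm{im}(\eta)$ together with \Cref{lem:uniqueHPSeries}(ii) gives $\sum_{n\ge 0}b^+_{m,\mu}(n)\,\kappa_F(-n)=\kappa_F(m)(\varphi_\mu)$; and the prescribed shape of the principal part of $F_{m,\mu}$ from \Cref{lem:uniqueHPSeries}(i), combined with the $\mathbf Z$-invariance of $F$, identifies $\sum_{n<0}b^+_{m,\mu}(n)\,c_F(-n,T)$ with $c_F(m,T)(\varphi_\mu)$ when $m>0$ and with $0$ otherwise. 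Treating $m>0$, $m=0$ and $m<0$ separately, all $T$-dependent terms then cancel and one is left with $-c_F(m,w)(\varphi_\mu)$. I expect the Stokes' theorem computations to be essentially formal once the unfolding onto $\calR_w^T$ is in place; the main obstacle will instead be the careful bookkeeping of the constant-term ($n=0$, $m=0$) contributions and the proof of \Cref{lem:sumneq0coeffs}, i.e.\ obtaining enough uniform control on the Fourier coefficients $b_{m,\mu}(n,T)$ and $c_F(-n,T)$ at the truncation height to justify interchanging the summation over $n$ with the limit $T\to\infty$.
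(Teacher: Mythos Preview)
Your proposal is correct and follows essentially the same approach as the paper: unfold $P_{m,w,\mu}$ onto the rectangle $\calR_w^T$ and apply Stokes' theorem to extract $c_F(m,T)(\varphi_\mu)-c_F(m,w)(\varphi_\mu)$, handle $\int_{\calF_T}F_{m,\mu}\,f\,d\mu$ by integration by parts against $F$ (with the bulk term vanishing via $\langle F,\xi(F_{m,\mu})\rangle_{\mathrm{Pet}}^{\reg}=0$ and the boundary term controlled by \Cref{lem:sumneq0coeffs}), and then feed everything into \Cref{prop:poincarereg}, using the normalizations of \Cref{lem:uniqueHPSeries} and \Cref{prop:uniquepreimage} together with the constant-term identity for $\kappa_F(0)$ to collapse the remaining $T$-dependent pieces. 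Your assessment of where the genuine work lies---the constant-term bookkeeping and the coefficient estimates underlying \Cref{lem:sumneq0coeffs}---matches the paper's as well.
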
 
Extending the theorem to the $S(L)^{\vee} \otimes_{\C} S(L)$-valued Poincar\'e series $P_{m,v}$ and $F_m$ yields:
\begin{corollary}  \label{cor:vectorValLSec}
For any $f \in \ALmod{\kappa - 2}(\rho_L^{\vee})$, the generating series
	\[ 
	 \sum_{  m \in\Q} \, \langle P_{m,v} - F_m , \, f \rangle^{\reg} \, q^m  \]
	with $\tau = u+iv$ and $q = e^{2 \pi i \tau}$, 
	is (the $q$-expansion of) an element of $\AkappaLexp(\rho_L^{\vee})$ . 
\end{corollary}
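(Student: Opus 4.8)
The plan is to deduce the corollary directly from \Cref{thm:MaassSection} by reassembling the scalar-valued identity there into one valued in $S(L)^{\vee}$. Write $\{\varphi_\mu^{\vee}\}$ for the basis of $S(L)^{\vee}$ dual to $\{\varphi_\mu\}$, and recall from \Cref{rmk:intrinsicPoincare} that
\[
  P_{m,v} \ = \ \sum_{\mu\in L'/L} \varphi_\mu^{\vee}\otimes P_{m,v,\mu}, \qquad F_m \ = \ \sum_{\mu\in L'/L}\varphi_\mu^{\vee}\otimes F_{m,\mu},
\]
with each summand vanishing unless $m\in Q(\mu)+\Z$. Set $F:=\Lsharp(f)\in\AkappaLexp(\rho_L^{\vee})$, the unique preimage supplied by \Cref{prop:uniquepreimage}, and write its Fourier expansion as $F(\tau)=\sum_{m} c_F(m,v)\,e^{2\pi i m\tau}$.

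First I would fix $m\in\Q$ and apply \Cref{thm:MaassSection} with the free parameter $w$ specialized to $w=v=\Im(\tau)$; this is legitimate because the identity \eqref{eqn:PRegmPos} holds for every fixed $w\in\R_{>0}$, and \Cref{prop:poincarereg} guarantees the existence of each of the finitely many scalar pairings involved. Contracting the $S(L)$-slot of $P_{m,v}-F_m$ against $f\in\ALmod{\kappa-2}(\rho_L^{\vee})$ and invoking \eqref{eqn:PRegmPos} componentwise then gives
\begin{align*}
  \langle P_{m,v}-F_m,\,f\rangle^{\reg}
  \ &= \ \sum_{\mu}\varphi_\mu^{\vee}\,\langle P_{m,v,\mu}-F_{m,\mu},\,f\rangle^{\reg} \\
  \ &= \ -\sum_{\mu}\varphi_\mu^{\vee}\,c_F(m,v)(\varphi_\mu) \ = \ -c_F(m,v)\ \in\ S(L)^{\vee}.
\end{align*}

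Summing over $m$ then identifies the generating series with $-F$:
\[
  \sum_{m\in\Q}\langle P_{m,v}-F_m,\,f\rangle^{\reg}\,q^m
  \ = \ -\sum_{m\in\Q} c_F(m,v)\,q^m \ = \ -F(\tau),
\]
where the last step is merely the convergence of the Fourier expansion of the smooth function $F$ (for fixed $v$, the map $u\mapsto F(u+iv)$ is a $\calC^{\infty}$ periodic function, hence equal to the sum of its Fourier series). Since $\AkappaLexp(\rho_L^{\vee})$ is a $\C$-vector space and $F=\Lsharp(f)$ lies in it, so does $-F$; this is exactly the assertion, and it moreover makes the modularity and growth properties of the generating series transparent.

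The substantive input is \Cref{thm:MaassSection} itself, so I do not anticipate a genuine obstacle in the present argument. The remaining points --- specializing the fixed parameter $w$ to $w=v$, the bookkeeping of the $S(L)^{\vee}\otimes_{\C}S(L)$-formalism, and matching the formal $q$-series term by term with the Fourier expansion of $\Lsharp(f)$ --- are routine. The one thing worth flagging is that the modular transformation of the generating series is \emph{not} visible coefficient by coefficient (the individual quantities $\langle P_{m,v}-F_m,f\rangle^{\reg}$ depend on $v$ without transforming on their own); it is forced only through the identification with $-\Lsharp(f)$.
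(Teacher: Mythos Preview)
Your argument is correct and is precisely the approach the paper has in mind: the corollary is stated immediately after \Cref{thm:MaassSection} with no separate proof, since it follows by applying \eqref{eqn:PRegmPos} componentwise (via the decomposition \eqref{eqn:PmwAltDef} and its analogue for $F_m$) and recognizing the resulting series as $-\Lsharp(f)$. Your remark that the modularity is not visible coefficient-by-coefficient, but only through the identification with $-\Lsharp(f)$, is also exactly the point.
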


It remains to prove the following lemma:

\begin{lemma}  \label{lem:sumneq0coeffs} Suppose
 \[G(\tau) = \sum_{m} c_G(m,v) \, q^m \in H_{2-\kappa}(\rho_L) \qquad \text{and} \qquad F(\tau) = \sum_m c_F(m,v) \, q^m \in \AkappaLexp(\rho_L^{\vee}) . \]
 Then 
 \[ \sum_{n\in \Q} c_G(n,v) \cdot c_F(-n,v) \ = \ \sum_{n \leq 0} c^+_G(n) \cdot c_F(-n,v) \ +  \ \sum_{n>0} c^+_G(n) \cdot \kappa_F(-n) \ + \ o(1) \qquad \text{ as } v \to \infty, \]
 where $G^+(\tau) = \sum c^+_G(n) q^n$ is the holomorphic part of $G$, cf.\ \Cref{sec:relat-non-holom}. Note that both sums appearing are finite sums.
\begin{proof}
We consider separately the terms $n> 0$ and $n \leq 0$ in the sum on the left hand side. 

 Write $c_G(n,v) = c_G^+(n) + c_G^-(n,v)$ in terms of the decomposition $G= G^+ + G^-$. Recall, cf.\ \eqref{eqn:weakMaassDecomp}, that
\[ c^-_G(n,v) \ = \ 0 \qquad \text{ for } n \geq 0. \]
On one hand, we have the asymptotic $|c_G^+(n)| = O(e^{C \sqrt{n}})$ for $n > 0$, cf.\ \cite[Lemma 3.4]{brfugeom}. On the other hand, for $n > 0$, we have by \Cref{eqn:cFlowering} and the moderate growth of $\Low(F)$ that
\[ 
  c_F(-n, v) \ - \ \kappa_F(-n) \ = \ O(e^{-2 \pi n v } v^{\ell} ). 
\]
for some $\ell$.
Thus
\[
	\sum_{n> 0} c_G(n,v) \cdot c_F(-n,v) \ =\ \sum_{n \geq 0} c^+_G(n,v) \cdot \kappa_F(-n) \ + \ o(1) \qquad \text{as } v \to \infty.
\]

Turning to the non-positive terms, we consider the contributions from  $c^-_G(n,v)$, and write
	\begin{align} 
	\left| \sum_{n\leq 0} c^-_G(n,v) \cdot c_{F}(-n,v) \right|^2  \ =& \  \left| \sum_{n\leq 0}  \left(  e^{-2\pi n v } \,  c^-_G(n,v) \right) \cdot \left(e^{2 \pi n v} \,  c_{F}(-n,v)  \right) \right|^2   \notag \\  
	\leq& \ \left( \sum_{n\leq 0} \left|  c^-_G(n,v) \, e^{-2 \pi n v} \right|^2\right) \cdot \left( \sum_{n \geq 0}  \left| c_{F}(n,v) \, e^{-2 \pi n v} \right|^2\right).    \label{eqn:negTermsParseval}
	\end{align}
Note that the exponential growth condition implies that
\[ 
G(\tau) \ - \ \sum_{n \leq 0} c_G^+(n) \, q^n \ = \ \sum_{n \leq 0} c^-_G(n) \, q^n \ + \ \sum_{n>0} c_G(n) \, q^n \ = \ O(e^{-Cv})
\]
so, by Parseval's identity,
\begin{align*}
 \sum_{n\leq 0} \left|  c^-_G(n,v) \, e^{-2 \pi n v} \right|^2 \ \leq& \  \sum_{n\leq 0} \left|  c^-_G(n,v) \, e^{-2 \pi n v} \right|^2 + \sum_{n>0} \left|  c_G(n,v) \, e^{-2 \pi n v} \right|^2 \\
 =&  \ \int_{-1/2}^{1/2} \left| G(u+iv) - \ \sum_{n \leq 0} c_G^+(n) e^{- 2\pi n v} e^{2 \pi i u} \right|^2 \ du \ = \ O(e^{-Cv}).
\end{align*}
For the second sum in \eqref{eqn:negTermsParseval}, recall
	\[ c_F(n,T) \ = \ c_F(n,1) \ + \ \int_1^T c_f(n,v) \, \frac{dv}{v^2} \]
	where $f = \Low(F)$ has at worst polynomial growth at $\infty$. Observe that for any $T>1$
	\begin{align*}
		\sum_{n \geq 0}  \left| c_{F}(n,1) \, e^{-2 \pi n T} \right|^2  \ \leq \  \sum_{n\geq0 }  \left| c_{F}(n,1) \, e^{-2 \pi n } \right|^2  \	\leq  \ \int_{-1/2}^{1/2} \, \left| F(u+i) \right|^2 \, du \ = \ O(1)
	\end{align*}
	by Parseval's identity, and
	\begin{align*}
			\sum_{n > 0}  \left| \int_1^T c_{f}(n,v) \frac{dv}{v^2}  \cdot  e^{-2 \pi n T} \right|^2  \ \leq& \ \sum_{n > 0}  \left| \int_1^T c_{f}(n,v) \, e^{- 2 \pi n v}  \frac{dv}{v^2} \right|^2 \\
			\leq& \   \int_1^T \,  \int_{-1/2}^{1/2} \, \left| f(u+iv) \right|^2  v^{-2}\, du \, dv  \ = \ O(T^{\ell})
	\end{align*}
	for some $\ell$.
Thus 
\[ \sum_{n\leq 0} c^-_G(n,v) \cdot c_{F}(-n,v) \ = \ o(1) ,\]
proving the lemma.
\end{proof}

 \end{lemma}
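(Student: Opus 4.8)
The plan is to split $\sum_{n\in\Q}c_G(n,v)\cdot c_F(-n,v)$ according to the sign of $n$ and to control the two ranges by rather different means. Observe first that both sums on the right are genuinely finite: $c^+_G(n)=0$ for $n\ll 0$ since the holomorphic part of a harmonic weak Maa\ss{} form has only finitely many negative terms, and $\kappa_F(-n)=0$ for all but finitely many $n>0$ by the exponential-growth condition on $F$.

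For the range $n>0$ one has $c_G(n,v)=c^+_G(n)$, since by \eqref{eqn:weakMaassDecomp} the non-holomorphic part of $G$ contributes only Fourier coefficients of strictly negative index. I would then combine the classical estimate $c^+_G(n)=O(e^{C\sqrt n})$ for the coefficients of the holomorphic part, \cite[Lemma 3.4]{brfugeom}, with the bound $c_F(-n,v)-\kappa_F(-n)=O(e^{-2\pi n v}v^{\ell})$ coming from \eqref{eqn:cFlowering}, \eqref{eqn:cFstrong} and the moderate growth of $f=\Low(F)$. Writing $c_F(-n,v)=\kappa_F(-n)+\bigl(c_F(-n,v)-\kappa_F(-n)\bigr)$ and summing, the first piece produces $\sum_{n>0}c^+_G(n)\kappa_F(-n)$ and the remaining sum is $O\bigl(v^{\ell}\sum_{n>0}e^{C\sqrt n-2\pi n v}\bigr)=o(1)$ as $v\to\infty$.

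For the range $n\le 0$, decompose $c_G(n,v)=c^+_G(n)+c^-_G(n,v)$, noting $c^-_G(0,v)=0$; the $c^+_G$-part reproduces exactly $\sum_{n\le 0}c^+_G(n)c_F(-n,v)$, so it remains to show $\sum_{n\le 0}c^-_G(n,v)\,c_F(-n,v)=o(1)$. Here I would apply Cauchy--Schwarz (using the Hermitian pairing on $S(L)$), pairing $e^{-2\pi n v}c^-_G(n,v)$ against $e^{2\pi n v}c_F(-n,v)$, which reduces matters to bounding the product of $\sum_{n\le 0}\bigl\lvert c^-_G(n,v)e^{-2\pi n v}\bigr\rvert^{2}$ and $\sum_{m\ge 0}\bigl\lvert c_F(m,v)e^{-2\pi m v}\bigr\rvert^{2}$. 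By Parseval on the period interval the first factor is at most $\int_{-1/2}^{1/2}\bigl\lvert G(u+iv)-\sum_{n\le 0}c^+_G(n)q^n\bigr\rvert^{2}\,du=O(e^{-Cv})$, because $G$ minus its principal part decays exponentially (a consequence of the exponential-growth condition together with the Fourier expansion of harmonic weak Maa\ss{} forms described in \Cref{sec:modforms}). For the second factor I would break up $c_F(m,v)=c_F(m,1)+\int_1^{v}c_f(m,t)\,t^{-2}\,dt$ via \eqref{eqn:cFlowering} and estimate on a \emph{fixed} horizontal line: since $e^{-2\pi m v}\le e^{-2\pi m}$ for $m\ge 0$, $v\ge 1$, Parseval bounds the $c_F(m,1)$-contribution by $\int_{-1/2}^{1/2}\lvert F(u+i)\rvert^{2}\,du=O(1)$, while a further Cauchy--Schwarz in $t$ and Parseval bound the integral-term contribution by $\int_1^{v}\int_{-1/2}^{1/2}\lvert f(u+it)\rvert^{2}\,t^{-2}\,du\,dt=O(v^{\ell})$, using the moderate growth of $f$. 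The product is therefore $O(e^{-Cv}v^{\ell})=o(1)$, which gives the claim.

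The main technical point is the allocation of the exponential weights in the Cauchy--Schwarz step: they must be distributed so that each of the two resulting $\ell^2$-sums is controlled by the $L^2$-norm of a concrete modular object on a \emph{bounded} horizontal region -- the tail of $F$ in one case, $G$ minus its principal part in the other -- rather than a sum that grows with $v$. This is why, on the $F$-side, one passes through the line $v=1$ and the representation \eqref{eqn:cFlowering} instead of integrating $\lvert F(u+iv)\rvert^{2}$ directly. Once these bounds are arranged, the two ranges assemble to the asserted asymptotic.
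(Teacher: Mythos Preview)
Your proposal is correct and follows essentially the same route as the paper's own proof: the same $n>0$ versus $n\le 0$ split, the same Cauchy--Schwarz with weights $e^{\mp 2\pi n v}$, Parseval to bound the $G^-$-factor by $O(e^{-Cv})$, and the decomposition $c_F(m,v)=c_F(m,1)+\int_1^v c_f(m,t)\,t^{-2}\,dt$ with Parseval on the line $v=1$ for the first piece and a Cauchy--Schwarz-in-$t$ plus Parseval for the second. Your closing paragraph even articulates the key technical point (the allocation of exponential weights so that both $\ell^2$-sums land on bounded horizontal regions) more explicitly than the paper does.
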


\section{Green functions and Archimedean generating series} \label{sec:arch}

In this section, we specialize to the case where $V$ is a quadratic space of signature $(p,2)$. Fix an even integral lattice $L \subset V$ and let $L'$ denote the dual lattice.

Attached to $V$ is the \emph{symmetric space}
	\[ \domain^o(V) := \{ z \subset V(\R) \text{ an oriented negative definite plane}. \} \]
which is a model for the Hermitian symmetric domain attached to $SO(V)$; we use the superscript `$o$' to emphasize the connection to the orthogonal group. 
The map
	\begin{align*}
	 \{ \zeta \in \mathbb P(V(\C)) \ | \ (\zeta, \zeta) = 0, \, (\zeta, \overline \zeta) < 0 \}  \ \isomto& \ \domain^o(V)  \\
	   \zeta \ \mapsto& \ z = \mathtt{span}_{\R}( \Re(\zeta), \, \Im(\zeta)) 
	 \end{align*}
identifies $\domain^o(V)$ as a submanifold of $\mathbb P(V(\C))$, and hence $\domain^o(V)$ acquires the structure of a complex manifold. 

For a plane $z \in \domain^o(V)$ and a vector $x \in V(\R)$, we define 
\[ 
  R^o(x, z) \ = \  - 2  \, Q( \proj_z(x) ) \  \geq 0 \ ,
\]
where $\proj_z(x)$ is the orthogonal projection of $x$ onto $z$; note that the quadratic form
\[ 
  Q_z(x) \ := \  R^o(x,z) + Q(x)  
\]
is positive definite.  When $x$ has positive norm, let $Z(x)$ denote the complex codimension 1 submanifold
\[ 
  Z(x) := \{ z \in \domain^o(V) \ |  \ z \perp x \} \ = \ \{ z \in \domain^o(V) \ | \ R^o(x,z) = 0 \}. 
\]
For convenience, if the norm of $x$ is non-positive, set $Z(x) = \emptyset$.

If $m \in \Q$ and $\varphi \in S(L)$,  define the \emph{special cycle} $Z(m , \varphi)$ to be the formal sum 
\[ 
  Z(m, \varphi) \ := \  \sum_{\substack{x \in L' \\ Q(x) = m }} \, \varphi(x) \, Z(x). 
\]
This sum is locally finite, in the sense that only finitely many $Z(x)$'s appearing in the sum will intersect a given compact subset of $\domain^o(V)$. For an appropriate arithmetic subgroup $\Gamma \subset O(V)$, the quotient $[ \Gamma \bs Z(m, \varphi)]$ defines a (rational) algebraic cycle on the Shimura variety $[\Gamma \bs \domain^o(V)]$, a point of view that we will take up in the next section. 

We say that a current $[\lie{g}] \in \D^{0,0}(\domain^o(V))$   is  a \emph{Green function} for the cycle $Z(m,\varphi)$ if
\[ 
  \ddc \, [\lie g] \ + \ \delta_{Z(m,\varphi)}  
\]
is represented by a smooth $(1,1)$-form.

\subsection{Kudla's Green function as a regularized theta lift}
\label{sec:kg-theta}
Let 
	\[ \beta(r) \ := \ \int_r^{\infty} \, e^{-t} \frac {dt}{t} \ = \ \int_1^{\infty} e^{-rt} \, \frac{dt}{t} , \qquad \qquad \text{for } r>0, \]
and note that $\beta(r) + \log(r) = O(1)$ as $r \to 0$. 
\begin{definition} 
       For $m \in \Q$ with $m \neq 0$, $\varphi \in S(L)$ and a real parameter $w \in \R_{>0}$,  define  \emph{Kudla's Green function} as
	\[ 
          \KGrO(m, w, \varphi)(z) \ := \ \sum_{ \substack{x \in L' \\ Q(x) = m }} \, \varphi(x) \,  \ \beta \big( 2 \pi  w   R^o (x, z)\big), \qquad \text{ for } z \in \domain^o(V) \setminus Z(m, \varphi) 
        \]
	which is a Green function for $Z(m, \varphi)$, cf.\ \cite{kudla-annals-central-der,kudla-integrals}; when $m<0$, this means that $\KGrO(m, w, \varphi)$ is smooth. 
	We also define
	\[ 
           \KGrO(0, w, \varphi)(z) \ := \ \sum_{ \substack{x \in L' \\ Q(x) = 0 \\ x \neq 0}} \, \varphi(x) \,  \ \beta \big( 2 \pi  w   R^o (x, z)\big). 
       \]
\end{definition}

For convenience,  set 
\[ 
  \KGrO(m, w, \mu) \ := \ \KGrO(m,w,\varphi_{\mu}) \qquad \text{and} \qquad Z(m,\mu)  \ : = \ Z(m, \varphi_{\mu}) 
\]
for any $\mu \in L'/L$.
Our first aim is another construction of this Green function in terms of the \emph{Siegel theta function} 
	\[ 
          \Theta_L \colon \uhp \ \times \  \domain^o(V) \ \to \ S(L)^{\vee} ,
       \]
which is defined by the formula
\begin{align}
  \label{eq:siegeltheta}
  \Theta_L(\tau,z)(\varphi) \ :=&  \  v \, \sum_{\lambda \in L'}  \, \varphi(\lambda)\, e^{2 \pi i  \, \left(Q(\proj_{z^{\perp}}(\lambda)) \, \tau \ + \  Q( \proj_z(\lambda))\, \bar{\tau} \right) }  \\
   =& \ v \, \sum_{m \in \Q} \, \left( \sum_{\substack{\lambda \in L' \\ Q(\lambda) = m}}   \, \varphi(\lambda) \, e^{- 2 \pi v R^o(\lambda, z)}  \right) \, q^m. \notag
\end{align}
For a fixed $z$, it is a standard fact that $\Theta_L(\tau,z)$ transforms as a modular form of weight $p/2-1$ with respect to the Weil representation, and straightforward estimates imply that $\Theta_L(\tau, z)$ is $O(v)$ as $v \to \infty$ and satisfies \Cref{def:modgrowthforms}(iii), which implies $\Theta(\tau,z) \in \ALmod{p/2-1}(\rho_L)$. 

For $\mu \in L'/L$, $m \in Q(\mu) + \Z$, and $w \in \R_{>0}$, consider the regularized pairing  
\begin{equation}
  \label{eq:regtheta-jump}
	\langle P_{m, w, \mu} , \ \Theta_L(\cdot, z) \rangle^{\reg} \  = \  \CT_{s=0} \left( \lim_{T \to \infty}\int_{\calF_T} \,  P_{m, w, \mu}(\tau) \, \Theta_L(\tau,z) \,   v^{-s}\, d\mu(\tau)\right),
\end{equation}
where $P_{m, w, \mu}$ is the truncated Poincar\'{e} series of weight $k = 1-p/2$ and we view \eqref{eq:regtheta-jump} as a function in $z \in \domain^o(V)$.

\begin{lemma}
 \label{lem:xireg}
  The regularized integral \eqref{eq:regtheta-jump} exists for each $ z \in \domain^o(V)$ and
  \begin{equation}
    \label{eq:Xireg}
     \langle P_{m, w, \mu} , \ \Theta_L(\cdot, z) \rangle^{\reg} = \lim_{T \to \infty} \left(\int_{\calF_T} P_{m, w, \mu}(\tau) \, \Theta_L(\tau,z) \, d\mu(\tau)  \ -  \ S_{m,\mu}(z) \, \log(T)\right),
  \end{equation}
where $S_{m,\mu}(z) = \#\{ \lambda \in \mu +L \mid\ Q(\lambda) = m \text{ and } R^o(\lambda,z) = 0\}$.
\begin{proof}
	Let $w_0 = \max(w, 1/w)$ so that 
		\[ 
                    P_{m, w, \mu}(\tau) = q^{-m} \, \tilde\varphi_{\mu} = \frac{1}{2} q^{-m} \, (\varphi_{\mu} +\varphi_{-\mu}) 
                \]
	whenever $\Im(\tau) > w_0$. 
        Recall we assume that the signature of $V$ is $(p,2)$ and that $k = p/2 - 1$, so $\tilde\varphi = \frac12(\varphi_{\mu} + \varphi_{-\mu})$, cf.\ \eqref{eq:tildephi}.
	
	Thus, abbreviating $P = P_{m, w, \mu}$, 
	\begin{align*}
		\CT_{s=0} \,  \lim_{T \to \infty} \,& \int_{\calF_T} \, P(\tau) \,   \Theta_L(\tau,z)  \, v^{-s} \, d \mu(\tau) \\ 
		&= \ 	\CT_{s=0} \, \int_{\calF_{w_0}} \, P(\tau) \,  \Theta_L(\tau,z)  \, v^{-s} \, d \mu(\tau) \ + \ 	\CT_{s=0} \,  \lim_{T \to \infty} \, \int_{\calF_T - \calF_{w_0}}  P(\tau) \,  \Theta_L(\tau,z)  \, v^{-s} \, d \mu(\tau)
	\end{align*}
	
	The first integral defines a holomorphic function in $\C$, and contributes its value at $s=0$ to the regularized integral. For the second integral, observe that $\Theta_L(\tau, z)(\varphi_{\mu}) = \Theta_L(\tau,z)(\varphi_{-\mu})$ and substitute the Fourier expansion of $\Theta(\tau, z)(\varphi_{\mu})$ to obtain
	\begin{align*} 
			\CT_{s=0} \,  \lim_{T \to \infty} \, \int_{\calF_T - \calF_{w_0}} P(\tau)\, & \Theta_L(\tau,z)  \, v^{-s} \, d \mu(\tau) \\ 
			=&  \ \CT_{s=0} \,  \lim_{T \to \infty} \int_{w_0}^T \, \sum_{\substack{ \lambda  \in \mu + L \\ Q(\lambda) = m }} e^{-2 \pi v R^o(\lambda,z)}  \, v^{-s-1} \, d v \\
			=&  \ \CT_{s=0} \,  \lim_{T \to \infty} \int_{w_0}^T \, \left( \sum_{\substack{ \lambda  \in \mu + L \\ Q(\lambda) = m \\ R^o(\lambda,z) \neq 0 }} e^{-2 \pi v R^o(\lambda,z)}  + S_{m,\mu}(z) \right) \, v^{-s-1} \, d v 
	\end{align*}
	
	The sum over $\{R^o(\lambda,z) \neq 0\}$ is $O(e^{-Cv})$ and so the corresponding integral is holomorphic in $s$ for all $s \in \C$; it therefore contributes its value at $s=0$ to the regularized integral. Thus
		\begin{align} 
				\CT_{s=0} \,  \lim_{T \to \infty} \,  & \int_{\calF_T - \calF_{w_0}} P(\tau)\,  \Theta_L(\tau,z)  \, v^{-s} \, d \mu(\tau)  \notag \\ 
				=&  \  \lim_{T \to \infty} \int_{w_0}^T \, \sum_{\substack{ \lambda  \in \mu + L \\ Q(\lambda) = m \\ R^o(\lambda,z) \neq 0  }} e^{-2 \pi v R^o(\lambda,z)}  \, v^{-1} \, d v  \ + \   S_{m,\mu}(z) \cdot 	\CT_{s=0} \,  \lim_{T \to \infty} \int_{w_0}^T v^{-s-1} \, dv .  \label{eqn:PThetaRegAlt}
		\end{align}
		Since
			\[ 	\CT_{s=0} \,  \lim_{T \to \infty} \int_{w_0}^T v^{-s-1} \, dv \ = \  	\CT_{s=0} \,  \lim_{T \to \infty} \frac{T^{-s} - w_0^{-s}}{-s} \ = \ - \log(w_0) , \]
		we may continue
		\begin{align*}
			\eqref{eqn:PThetaRegAlt} \,	=&  \  \lim_{T \to \infty} \int_{w_0}^T \, \sum_{\substack{ \lambda  \in \mu + L \\ Q(\lambda) = m \\ R^o(\lambda,z) \neq 0  }} e^{-2 \pi v R^o(\lambda,z)}  \, v^{-1} \, d v  \ -  S_{m,\mu}(z) \cdot \log w_0 \\ 
				=& \  \lim_{T \to \infty} \int_{w_0}^T \, \left( \sum_{\substack{ \lambda  \in \mu + L \\ Q(\lambda) = m \\ R^o(\lambda,z) \neq 0  }} e^{-2 \pi v R^o(\lambda,z)} + S_{m,\mu}(z) \right)   v^{-1} \, d v  \ -  S_{m,\mu}(z) \cdot \log T \\
				=& \, \lim_{T\to\infty} \, \int_{\calF_T - \calF_{w_0}} \, P(\tau) \, \Theta_L(\tau) d \mu(\tau) \  - \ S_{m,\mu}(z) \, \log T.
		\end{align*}
	The lemma follows immediately.
\end{proof}

\end{lemma}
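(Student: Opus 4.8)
The plan is to use the fact that the truncated Poincar\'e series degenerates completely high in the cusp, which collapses the regularized integral to an elementary one--variable integral whose analytic continuation in $s$ can be written down by hand.

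Set $w_0 := \max(w, 1/w)$. The first observation is that $P_{m, w, \mu}(\tau) = q^{-m}\tilde\varphi_\mu$ whenever $\Im\tau > w_0$, with $\tilde\varphi_\mu = \frac12(\varphi_\mu+\varphi_{-\mu})$ (the sign in \eqref{eq:tildephi} being $+1$ for signature $(p,2)$); indeed, any $\gamma \notin \Gamma_\infty$ satisfies $\Im(\gamma\tau) \le 1/\Im\tau < w$, so the cutoff $\sigma_w$ annihilates every coset except the identity one. I would then split $\calF_T = \calF_{w_0} \sqcup (\calF_T\setminus\calF_{w_0})$. On the compact region $\calF_{w_0}$ the function $P_{m, w, \mu}$ is bounded with discontinuities on a null set, so $\int_{\calF_{w_0}} P_{m, w, \mu}(\tau)\,\Theta_L(\tau,z)\,v^{-s}\,d\mu(\tau)$ is entire in $s$ and contributes its value at $s=0$ to the regularized integral.

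On the rectangle $\calF_T\setminus\calF_{w_0}=\{u\in[-\tfrac12,\tfrac12],\ v\in[w_0,T]\}$ I would substitute $P_{m, w, \mu}=q^{-m}\tilde\varphi_\mu$ and use the symmetry $\Theta_L(\tau,z)(\varphi_\mu)=\Theta_L(\tau,z)(\varphi_{-\mu})$, valid because $\lambda\mapsto-\lambda$ preserves $Q$ and $R^o(\cdot,z)$, so that $\tilde\varphi_\mu\cdot\Theta_L(\tau,z)=\Theta_L(\tau,z)(\varphi_\mu)$. Carrying out the $u$--integral isolates the $m$th Fourier coefficient and reduces this piece to
\[ \int_{w_0}^T\Bigg(\sum_{\substack{\lambda\in\mu+L\\ Q(\lambda)=m}}e^{-2\pi R^o(\lambda,z)v}\Bigg)v^{-s-1}\,dv. \]
I would split the $\lambda$--sum into the $S_{m,\mu}(z)$ terms with $R^o(\lambda,z)=0$ and the rest. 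Since $Q_z=R^o(\cdot,z)+Q$ is positive definite, $R^o(\lambda,z)\to\infty$ inside the level set $\{Q=m\}$, so the tail with $R^o>0$ is $O(e^{-cv})$ uniformly for $v\ge w_0$; hence $\int_{w_0}^\infty(\cdots)v^{-s-1}\,dv$ converges for all $s$, is holomorphic, and contributes its value at $s=0$. The $R^o=0$ part equals $S_{m,\mu}(z)\int_{w_0}^Tv^{-s-1}\,dv$, whose $T\to\infty$ limit is $S_{m,\mu}(z)\,w_0^{-s}/s$ for $\Re s>0$; this is meromorphic on $\C$ with a simple pole at $s=0$ and $\CT_{s=0}$ equal to $-S_{m,\mu}(z)\log w_0$. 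In particular the regularized pairing exists, and it equals the integral over $\calF_{w_0}$, plus $\lim_{T}\int_{w_0}^T(\sum_{R^o>0}e^{-2\pi R^o v})v^{-1}\,dv$, minus $S_{m,\mu}(z)\log w_0$.

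Finally, to match the asserted closed form I would run the decomposition in reverse: $\int_{\calF_T}P_{m, w, \mu}(\tau)\,\Theta_L(\tau,z)\,d\mu(\tau)$ is the integral over $\calF_{w_0}$, plus $\int_{w_0}^T(\sum_{R^o>0}e^{-2\pi R^o v})v^{-1}\,dv$, plus $S_{m,\mu}(z)(\log T-\log w_0)$, so subtracting $S_{m,\mu}(z)\log T$ and letting $T\to\infty$ reproduces exactly the quantity just computed. I do not expect a genuine obstacle here; the only delicate points are the bookkeeping of the regularization (taking $T\to\infty$ first and then $\CT_{s=0}$, and checking holomorphy for $\Re s\gg0$ with meromorphic continuation past $s=0$) and the interchanges of summation with integration, both of which are controlled by the uniform exponential decay coming from positive-definiteness of $Q_z$.
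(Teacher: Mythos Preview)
Your proposal is correct and follows essentially the same approach as the paper: the identical choice of $w_0=\max(w,1/w)$, the same splitting of $\calF_T$ into a compact piece and a cuspidal rectangle, the same use of the symmetry $\Theta_L(\tau,z)(\varphi_\mu)=\Theta_L(\tau,z)(\varphi_{-\mu})$ to reduce to a single Fourier coefficient, the same decomposition of the $\lambda$-sum according to whether $R^o(\lambda,z)=0$, and the same explicit evaluation $\CT_{s=0}\,w_0^{-s}/s=-\log w_0$ followed by the add-and-subtract $S_{m,\mu}(z)\log T$ trick to recover the stated limit. Your justification of the $O(e^{-cv})$ decay via the positive-definiteness of $Q_z$ is a nice touch that the paper leaves implicit.
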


\begin{theorem}
  \label{thm:Xitheta}
	If $z \in \domain^o(V) \setminus Z(m,\varphi_{\mu})$ and $m \neq 0$, then
        \[
          \KGrO(m, w, \mu)(z)  = \langle P_{m, w, \mu} , \ \Theta_L(\cdot, z) \rangle^{\reg}.
        \]
        In particular, $\langle P_{m, w, \mu} , \ \Theta_L(\cdot, z) \rangle^{\reg}$ provides a (discontinuous!) extension of $\KGrO(m, w, \mu)$ to all $ z \in \domain^o(V)$. 
 
 Similarly, 
\[
  \KGrO(0, \mu, w)(z) \  = \langle P_{0,\mu,w}, \ \Theta(\cdot, z) \rangle^{\reg} + \delta_{\mu,0}\, \log(w).
\]
\begin{proof}
	First consider the case $m \neq 0$. Since $z \notin Z(m, {\mu})$, it follows that $S_{m,\mu}(z) = 0$, so 
	\[  
              \langle P_{m, w, \mu}, \ \Theta_L(\cdot, z) \rangle^{\reg} = \lim_{T \to \infty} \int_{\calF_T} P_{m, w, \mu}(\tau) \, \Theta_L(\tau,z) \, d \mu(\tau). 
        \]
	Arguing as in the proof of \Cref{thm:MaassSection} to unfold  the Poincar\'e series, see \eqref{eqn:intPFT}, 
	\begin{align*}  
	\lim_{T \to \infty}	\int_{\calF_T} P_{m, w, \mu}(\tau) \, \Theta(\tau,z) \, d \mu(\tau) \ =& \ \lim_{T \to \infty}  \int_{\calR_w^T} e^{-2 \pi i m \tau} \, \Theta_L(\tau,z)(\varphi_{\mu}) \, d\mu(\tau) \\
	=&  \  \lim_{T \to \infty} \int_w^T \, \sum_{ \substack{\lambda \in \mu+L \\ Q(\lambda) = m} } e^{-2 \pi v R^o(\lambda, z)} \, \frac{dv}v.  \\
	=&  \  \sum_{\substack{\lambda \in L' \\ Q(\lambda) = m}}  \, \varphi_{\mu}(\lambda) \, \beta(2 \pi w R^o(\lambda, z))  \ = \  \KGrO(m, w, \mu )(z).
	\end{align*}
   The case $m=0$ follows from similar considerations. 
\end{proof}
\end{theorem}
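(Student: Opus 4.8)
The plan is to evaluate the regularized pairing by the same unfolding argument used for \Cref{thm:MaassSection}, but with the Stokes' theorem step there replaced by a termwise integration of the explicit Fourier expansion of the Siegel theta function. First I would observe that, since $z\notin Z(m,\varphi_\mu)$, no $\lambda\in\mu+L$ with $Q(\lambda)=m$ satisfies $\lambda\perp z$, hence $S_{m,\mu}(z)=0$; \Cref{lem:xireg} then reduces the regularized pairing to the plain limit $\langle P_{m,w,\mu},\Theta_L(\cdot,z)\rangle^{\reg}=\lim_{T\to\infty}\int_{\calF_T}P_{m,w,\mu}(\tau)\,\Theta_L(\tau,z)\,d\mu(\tau)$.

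Next I would unfold this integral. The key point is that the unfolding identity \eqref{eqn:intPFT} established in the proof of \Cref{thm:MaassSection} used only the $\Gt$-invariance of the product of a weight-$k$ Poincar\'e series with a weight-$(-k)$ form, together with the local finiteness of $P_{m,w,\mu}$ on compact sets; it never used harmonicity of the second factor. Since $\Theta_L(\cdot,z)\in\ALmod{p/2-1}(\rho_L^\vee)$ has weight $-k$ for the relevant value $k=1-p/2$, the same computation applies verbatim and gives $\int_{\calF_T}P_{m,w,\mu}(\tau)\,\Theta_L(\tau,z)\,d\mu(\tau)=\int_{\calR_w^T}e^{-2\pi i m\tau}\,\Theta_L(\tau,z)(\varphi_\mu)\,d\mu(\tau)$, where $\calR_w^T=\{\,u\in[-\tfrac12,\tfrac12],\ v\in[w,T]\,\}$.

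Then I would substitute the Fourier expansion \eqref{eq:siegeltheta}, $\Theta_L(\tau,z)(\varphi_\mu)=v\sum_{\lambda\in\mu+L}q^{Q(\lambda)}e^{-2\pi vR^o(\lambda,z)}$; carrying out the integral over $u$ kills all terms except those with $Q(\lambda)=m$, leaving $\int_w^T\big(\sum_{\lambda\in\mu+L,\,Q(\lambda)=m}e^{-2\pi vR^o(\lambda,z)}\big)\tfrac{dv}{v}$. Because $z\notin Z(m,\varphi_\mu)$ every such $\lambda$ has $R^o(\lambda,z)>0$, and positive-definiteness of $Q_z$ makes each truncated shell $\{\lambda\in\mu+L:Q(\lambda)=m,\ R^o(\lambda,z)\le C\}$ finite, so the sum and the limit $T\to\infty$ may be interchanged; the substitution $t=v/w$ then turns $\int_w^\infty e^{-2\pi vR^o(\lambda,z)}\tfrac{dv}{v}$ into $\beta(2\pi wR^o(\lambda,z))$, and summing yields $\sum_{\lambda\in L',\,Q(\lambda)=m}\varphi_\mu(\lambda)\,\beta(2\pi wR^o(\lambda,z))=\KGrO(m,w,\mu)(z)$. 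The extension claim is then immediate: \Cref{lem:xireg} furnishes the right side for every $z\in\domain^o(V)$, whereas $\KGrO(m,w,\mu)$ is defined only away from $Z(m,\varphi_\mu)$. For $m=0$ the same unfolding applies, but now the shell $\{\lambda\in\mu+L:Q(\lambda)=0\}$ must be split according to whether $R^o(\lambda,z)=0$: the $S_{0,\mu}(z)$ terms with $R^o=0$ (for $z$ at which $\KGrO(0,w,\mu)$ is defined, namely only $\lambda=0$, present exactly when $\mu=0$, so $S_{0,\mu}(z)=\delta_{\mu,0}$) contribute $\int_w^TS_{0,\mu}(z)\tfrac{dv}{v}=S_{0,\mu}(z)\log(T/w)$; after subtracting the $S_{0,\mu}(z)\log T$ prescribed by \Cref{lem:xireg} one is left with the extra term $-S_{0,\mu}(z)\log w=-\delta_{\mu,0}\log w$, while the terms with $R^o>0$ reassemble into $\KGrO(0,w,\mu)(z)$, giving the stated formula.

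The only genuine content, as opposed to routine estimation, lies in two bookkeeping points: confirming that the unfolding identity \eqref{eqn:intPFT} really does transfer to $f=\Theta_L(\cdot,z)$ (one must re-examine the derivation in \Cref{thm:MaassSection} — the splitting off of the central subgroup $\calA$ and the inequality $\Im(\gamma\tau)\le 1/\Im(\tau)$ for $\gamma\notin\calA$ — and check that harmonicity of $f$ played no role), and, in the $m=0$ case, the precise count of isotropic lattice vectors perpendicular to $z$ so that the residual logarithm comes out exactly $\delta_{\mu,0}\log w$. The interchange of summation with the limit and the convergence of the $v$-integrals are immediate from the positive-definiteness of $Q_z$, so I anticipate no analytic difficulty beyond these two points.
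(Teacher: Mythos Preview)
Your proposal is correct and follows essentially the same approach as the paper's proof: reduce to a plain limit via \Cref{lem:xireg}, unfold using \eqref{eqn:intPFT}, insert the Fourier expansion of $\Theta_L$, and integrate termwise to recover the $\beta$-sum. Your treatment of the $m=0$ case is in fact more explicit than the paper's (which merely says ``similar considerations''), and your identification $S_{0,\mu}(z)=\delta_{\mu,0}$ via positive-definiteness of $z^{\perp}$ is exactly the right bookkeeping.
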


The natural $S(L)^{\vee} \otimes S(L)$-valued version holds as well. Define 
\[ 
  \KGrO(m,w) \colon \domain^o(V) \setminus \left( \bigcup_{\mu} \, Z(m,\mu) \right) \ \to \ S(L)^{\vee} , \qquad \KGrO(m,w)(z)(\varphi) \ = \ \KGrO(m, w, \varphi), 
\]
which defines a Green function for the $S(L)^{\vee}$-valued cycle 
\[ 
  Z(m) \colon \varphi \mapsto Z(m,\varphi). 
\]
Upon extending $S_{m,\mu}(z)$ to a functional $S_m(z) \in S(L)^{\vee} $ by linearity,
the regularized theta lift 
\[      
  \langle P_{m,w}, \ \Theta_L(\cdot, z) \rangle^{\reg} = \lim_{T \to \infty} \left(\int_{\calF_T} P_{m,w}(\tau) \, \Theta_L(\tau,z) \, d\mu(\tau)  \ -  \  S_{m}(z) \, \log(T)\right)  
\]
gives an extension of $\KGrO(m,w)$ to all $z \in \domain^o(V)$ when $m \neq 0$; similarly, 
\[ 
  \langle P_{0,w} , \ \Theta_L(\cdot, z) \rangle^{\reg} + \log w \, \varphi_0^{\vee}  
\]
gives an extension of $\KGrO(0,w)$.
\subsection{Bruinier's Green functions and the Archimedean generating series}
\label{sec:relat-borch-lift}
The second family of Green functions arise as regularized theta lifts of the harmonic Maa\ss{} forms $F_{m,\varphi}$ of weight $k = 1-p/2$: 
for $m \in \Q$ and $\varphi\in S(L)$, define \emph{Bruinier's Green function}
\[ 
  \BGrO{m,\varphi}(z) := \ \langle F_{m, \varphi} , \, \Theta(\cdot, z) \rangle^{\reg}. 
\]
By analyzing the singularities of this function, as in \cite[Chapter 2]{brhabil}, one finds that it defines a Green function for the cycle $Z(m,\varphi)$; this construction generalizes that of \cite{boautgra}, where weakly holomorphic forms were used.
Note that we are extending the definition to the case that $m \leq 0$, where $\BGrO{m,\varphi}$ is zero for 
all but finitely many $m \leq 0$ and is moreover  a smooth function in this case, i.e. a Green function for the zero cycle.

As usual, there is an $S(L)^{\vee}$-valued version
\[ 
  \BGrO{m}(z) \colon \varphi \mapsto \BGrO{m, \varphi}(z), 
\]
defining an $S(L)^{\vee}$-valued Green function for $Z(m)$. 

The  statement analogous to \Cref{lem:xireg} holds for $\BGrO{m,\mu}$, and can be proved in much the same manner; 
see also \cite[Lemma 4.5]{bryfaltings} and \cite[Lemma 2.19]{schofer} for the corresponding statement at a CM point.
\begin{lemma}
 \label{lem:Breg}
  The regularized integral $\BGrO{m}(z) = \langle F_m, \Theta_L(\cdot, z) \rangle^{\reg}$ exists for all $z \in \domain^o(V)$, and
  \begin{equation}
    \label{eq:Breg}
     \BGrO{m}(z) = \lim_{T \to \infty} \left(\int_{\calF_T}  F_{m}(\tau)\, \Theta_L(\tau,z) \,  d\mu(\tau) \ - \  [S_{m}(z)-\delta_{m,0}\ \varphi_0^{\vee} \ +  \ \varphi_0^{\vee} \cdot c_{F_m}(0)]  \log(T)\right).
  \end{equation}
  Here $c_{F_m}(0) \in S(L)^{\vee} \otimes_{\C} S(L)$ is the zeroth Fourier coefficient of $F_m$, so that
  \[  \varphi_0^{\vee} \cdot c_{F_m}(0) \in S(L)^{\vee} \qquad \text{ is the functional }  \qquad \varphi \ \mapsto \   c_{F_{m,\varphi}}(0)(0). \] \qed
\end{lemma}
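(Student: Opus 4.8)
The plan is to follow the proof of \Cref{lem:xireg} essentially verbatim; the one new feature is that $F_m$ carries a full Fourier expansion (unlike a truncated Poincar\'e series), so one must keep track of two surviving products of Fourier coefficients instead of one. Fix $z\in\domain^o(V)$ and a real number $A\ge1$, and in the integral $\int_{\calF_T}F_m(\tau)\,\Theta_L(\tau,z)\,v^{-s}\,d\mu(\tau)$ defining the regularized pairing split the domain as $\calF_T=\calF_A\cup(\calF_T-\calF_A)$. The integral over the compact region $\calF_A$ is entire in $s$ and contributes its value at $s=0$. Since $A\ge1$ one has $\calF_T-\calF_A=\{\,|u|\le\tfrac12,\ A\le v\le T\,\}$, so carrying out the integral over $u$ converts the remaining piece into $\int_A^T\tilde c(v)\,v^{-s-2}\,dv$, where
\[
  \tilde c(v)\ :=\ \int_{-1/2}^{1/2}F_m(\tau)\,\Theta_L(\tau,z)\,du\ =\ v\sum_{n\in\Q}b_m(n,v)\cdot\theta_{-n}(v,z),
\]
writing $F_m(\tau)=\sum_n b_m(n,v)\,q^n$ and $\Theta_L(\tau,z)=v\sum_{m'}\theta_{m'}(v,z)\,q^{m'}$ with $\theta_{m'}(v,z)(\varphi)=\sum_{\lambda\in L',\,Q(\lambda)=m'}\varphi(\lambda)\,e^{-2\pi v R^o(\lambda,z)}$ (cf.\ \eqref{eq:siegeltheta}); here $b_m(n,v)\in\End(S(L))$, $\theta_{-n}(v,z)\in S(L)^\vee$, and the $n$-th summand $b_m(n,v)\cdot\theta_{-n}(v,z)\in S(L)^\vee$ is the functional $\varphi\mapsto\theta_{-n}(v,z)\bigl(b_m(n,v)(\varphi)\bigr)$.

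The crux is the asymptotics of $\tilde c(v)$ as $v\to\infty$. Using that $Q_z=R^o(\cdot,z)+Q$ is positive definite, one checks, exactly as in \Cref{lem:xireg}, that $\theta_0(v,z)=\varphi_0^\vee+O(e^{-cv})$, that $\theta_{m'}(v,z)=S_{m'}(z)+O(e^{-cv})$ for $m'>0$, and that $\theta_{m'}(v,z)=O(e^{-cv})$ for $m'<0$. On the other side, $b_m(n,v)=b_m^+(n)$ is constant for $n\ge0$; for $n<0$ its holomorphic part $b_m^+(n)$ vanishes except at $n=-m$ (when $m>0$), where it equals the $q^{-m}$-Fourier coefficient of $F_m^+$, namely the operator $\varphi_\mu\mapsto\tilde\varphi_\mu$ (see \Cref{lem:uniqueHPSeries}), while the non-holomorphic part $b_m(n,v)-b_m^+(n)$ is an incomplete-$\Gamma$ factor that decays exponentially. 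Pairing these expansions term by term, every product $b_m(n,v)\cdot\theta_{-n}(v,z)$ is $O(e^{-cv})$ except the $n=0$ term, which tends to $\varphi_0^\vee\cdot c_{F_m}(0)$, and, when $m>0$, the $n=-m$ term, which tends to $S_m(z)$; here one uses that $S_m(z)$ and $\theta_m(v,z)$ are both invariant under $\varphi\mapsto\varphi(-\cdot)$, so the symmetrization built into $\tilde\varphi_\mu$ is harmless. For $m=0$ the two terms coincide, and since $S_0(z)=\varphi_0^\vee$ this is exactly the bookkeeping encoded by the term $-\delta_{m,0}\varphi_0^\vee$. Altogether,
\[
  \tilde c(v)\ =\ v\bigl[\,S_m(z)-\delta_{m,0}\,\varphi_0^\vee+\varphi_0^\vee\cdot c_{F_m}(0)\,\bigr]\ +\ O(v\,e^{-cv})\qquad(v\to\infty).
\]

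Granting this, $\int_A^T\tilde c(v)\,v^{-s-2}\,dv$ equals $\bigl[S_m(z)-\delta_{m,0}\varphi_0^\vee+\varphi_0^\vee\cdot c_{F_m}(0)\bigr]\int_A^T v^{-s-1}\,dv$ plus an integral that is entire in $s$; since $\lim_{T\to\infty}\int_A^T v^{-s-1}\,dv=A^{-s}/s$ for $\Re(s)>0$, the pairing admits a meromorphic continuation past $s=0$ with at worst a simple pole there, so the regularized integral exists. Taking $\CT_{s=0}$, and observing that the coefficient of $v^{-1}$ in $\tilde c(v)\,v^{-2}$ — equivalently, the $\log T$ divergence of $\int_{\calF_T}F_m(\tau)\,\Theta_L(\tau,z)\,d\mu(\tau)$ — is exactly the bracketed quantity, yields the asserted formula, just as in the conclusion of \Cref{lem:xireg}.

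I expect the only genuine obstacle to be making the error term $O(v\,e^{-cv})$ rigorous, i.e.\ showing that $\sum_n b_m(n,v)\cdot\theta_{-n}(v,z)$ converges absolutely and is exponentially small, uniformly for $v\ge A$. This combines the subexponential bound $|b_m^\pm(n)|=O(e^{C\sqrt{|n|}})$ on the Fourier coefficients of the harmonic Maa\ss\ form $F_m$ (cf.\ \cite[Lemma 3.4]{brfugeom}) with: for $n>0$, the lower bound $R^o(\lambda,z)=Q_z(\lambda)+|n|\gg|n|$ when $Q(\lambda)=-n$, which follows from positive-definiteness of $Q_z$ and forces $\theta_{-n}(v,z)=O(e^{-2\pi|n|v})$ uniformly in $n$; and, for $n<0$, the exponential decay of the incomplete-$\Gamma$ factors in $b_m(n,v)$ against the (polynomially bounded) $\theta_{-n}(v,z)$. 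This is the analogue at a general point $z$ of \cite[Lemma 4.5]{bryfaltings} and \cite[Lemma 2.19]{schofer}, and is if anything easier, since no CM data are involved. The remaining steps are routine bookkeeping, parallel to \Cref{lem:xireg}.
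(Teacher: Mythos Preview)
Your proposal is correct and is precisely what the paper indicates: the paper does not spell out a proof but simply states that the argument is analogous to \Cref{lem:xireg} and points to \cite[Lemma 4.5]{bryfaltings} and \cite[Lemma 2.19]{schofer}, exactly the references you invoke. Your identification of the two surviving Fourier products (the $n=0$ term giving $\varphi_0^\vee\cdot c_{F_m}(0)$ and, for $m>0$, the $n=-m$ term giving $S_m(z)$), together with the $-\delta_{m,0}\varphi_0^\vee$ bookkeeping for $m=0$, is the correct refinement of the single surviving term in \Cref{lem:xireg}.
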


\begin{corollary} \label{cor:ArchDiffRegLift}
 For each $m \in \Q$ and parameter $v\in \R_{>0}$, 
 the function 
\[
  \langle  P_{m,v}, \Theta(\cdot,z) \rangle^{\reg} - \langle F_{m}, \Theta(\cdot,z) \rangle^{\reg}
\]
extends to a smooth function on $\domain^o(V)$, given by
	\begin{align*}
		\langle  &P_{m,v} -   F_{m}, \Theta(\cdot,z) \rangle^{\reg} \\
		&=  \  \lim_{T \to \infty} \left(\int_{\calF_T} \left( P_{m,v}(\tau') -  F_{m}(\tau')\right) \Theta(\tau',z) \,  d\mu(\tau') \ -(\delta_{m,0}\ \varphi_0^{\vee} - \   \varphi_0^{\vee} \cdot c_{F_m}(0)) \, \log(T)\right).
	\end{align*}
	\begin{proof} 
                Up to a constant (when $m=0$), $\langle  P_{m,v}, \Theta(\cdot,z) \rangle^{\reg} - \langle F_{m}, \Theta(\cdot,z) \rangle^{\reg}$ 
                equals the difference $\KGrO(m,v) - \BGrO{m}$ by \Cref{thm:Xitheta},
		which is a Green function for the zero cycle. Hence, it is smooth by regularity for the elliptic differential $\ddc$. 
                The given expression for the difference follows from \Cref{lem:xireg} and \Cref{lem:Breg}.
        \end{proof}
\end{corollary}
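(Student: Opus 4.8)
The plan is to deduce the statement from results already established, together with elliptic regularity. For the smoothness assertion, I would first use \Cref{thm:Xitheta} to identify $\langle P_{m,v},\Theta_L(\cdot,z)\rangle^{\reg}$ with $\KGrO(m,v)(z)$, up to an additive constant (in $z$) equal to $-\log v\,\varphi_0^{\vee}$ when $m=0$, and recall that $\langle F_m,\Theta_L(\cdot,z)\rangle^{\reg}=\BGrO{m}(z)$ by the definition of Bruinier's Green function. Since $\KGrO(m,v)$ and $\BGrO{m}$ are both $\log$-singular Green functions for the \emph{same} cycle $Z(m)$ (the zero cycle when $m\le 0$), their difference $\lie{g}:=\KGrO(m,v)-\BGrO{m}$ is a locally integrable function on all of $\domain^o(V)$ whose associated current satisfies $\ddc[\lie{g}]=\ddc[\KGrO(m,v)]-\ddc[\BGrO{m}]$, and here the currents of integration along $Z(m)$ cancel, so that the right-hand side is represented by a smooth $(1,1)$-form. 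As $\ddc$ is elliptic, elliptic regularity then forces $[\lie{g}]$ to be represented by a smooth function on $\domain^o(V)$; this (plus the harmless constant when $m=0$) is the claimed smooth extension.

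For the explicit formula I would simply subtract the two integral representations already available. \Cref{lem:xireg}, in its $S(L)^{\vee}\otimes_{\C}S(L)$-valued form, gives
\[\langle P_{m,v},\Theta_L(\cdot,z)\rangle^{\reg}=\lim_{T\to\infty}\Bigl(\int_{\calF_T}P_{m,v}(\tau)\,\Theta_L(\tau,z)\,d\mu(\tau)-S_m(z)\log T\Bigr),\]
while \Cref{lem:Breg} gives
\[\langle F_m,\Theta_L(\cdot,z)\rangle^{\reg}=\lim_{T\to\infty}\Bigl(\int_{\calF_T}F_m(\tau)\,\Theta_L(\tau,z)\,d\mu(\tau)-\bigl[S_m(z)-\delta_{m,0}\varphi_0^{\vee}+\varphi_0^{\vee}\cdot c_{F_m}(0)\bigr]\log T\Bigr).\]
Since both limits exist on their own, they may be merged into a single limit; subtracting, the divergent term $S_m(z)\log T$ cancels --- this cancellation is exactly the analytic reflection of the cancellation of the singular cycles used above --- and what remains is precisely the formula in the statement.

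I do not foresee a genuine obstacle, as all of the substantive analysis has been carried out in \Cref{thm:Xitheta}, \Cref{lem:xireg} and \Cref{lem:Breg}; the only new conceptual ingredient is the elementary fact that the difference of two Green functions for a common cycle is smooth. The point requiring mild care is the bookkeeping of the $m=0$ contributions: one must keep the constant $-\log v\,\varphi_0^{\vee}$ coming from \Cref{thm:Xitheta} and the $\delta_{m,0}$ terms appearing in \Cref{lem:xireg} and \Cref{lem:Breg} properly aligned, so that in the final expression the coefficient of $\log T$ is exactly $-(\delta_{m,0}\varphi_0^{\vee}-\varphi_0^{\vee}\cdot c_{F_m}(0))$.
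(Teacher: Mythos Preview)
Your proposal is correct and follows essentially the same approach as the paper: identify the regularized pairings with $\KGrO(m,v)$ and $\BGrO{m}$ via \Cref{thm:Xitheta} and the definition of Bruinier's Green function, invoke elliptic regularity for $\ddc$ to deduce smoothness of their difference, and obtain the explicit formula by subtracting the integral representations of \Cref{lem:xireg} and \Cref{lem:Breg}. Your write-up is slightly more detailed than the paper's, but the argument is the same.
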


Our next aim is to identify the differences of these Green functions as Fourier coefficients of a (non-holomorphic) modular form.
\begin{theorem} \label{thm:OrthModGenSeries}
  For each $z \in \mathbb D^o(V)$, the $q$-series 
\[
 - \log v \, \varphi_0^{\vee} \ + \   \sum_{m } \, (\KGrO(m,v) - \BGrO{m})(z) \, q^m
\]
is the $q$-expansion of an element of $\AkappaLexp(\rho_L^{\vee})$ of weight $\kappa = p/2+1$. Here $q = e^{2 \pi i \tau}$ 
with $\tau = u + iv \in \uhp$. 

\begin{proof}
By \Cref{thm:Xitheta} and the definition of $\BGrO{m}$, this generating series is
\[
	\sum_{m} \langle P_{m,v} - F_m, \Theta_L(\cdot, z) \rangle^{\reg} \, q^m. 
\]
	Since $\Theta_L(\cdot,z) \in \ALmod{p/2-1}(\rho_L^{\vee})$, the result follows immediately from \Cref{cor:ArchDiffRegLift}.
\end{proof}
\end{theorem}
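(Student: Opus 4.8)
The plan is to reduce the statement to \Cref{cor:vectorValLSec} by replacing each Green function with the corresponding regularized theta lift. Set $\kappa := p/2 + 1$, so that $\kappa - 2 = p/2 - 1$ is exactly the weight of the Siegel theta function. First I would record that $\Theta_L(\cdot, z) \in \ALmod{p/2-1}(\rho_L^\vee) = \ALmod{\kappa-2}(\rho_L^\vee)$; this was already noted just after \eqref{eq:siegeltheta}, the essential points being that $\Theta_L(\tau,z) = O(v)$ as $v \to \infty$ and that its zeroth Fourier coefficient has the shape demanded by \Cref{def:modgrowthforms}(iii) (indeed the non-constant part of that coefficient decays exponentially).

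Next I would rewrite the generating series coefficient by coefficient. By the $S(L)^\vee \otimes_\C S(L)$-valued form of \Cref{thm:Xitheta} recorded at the end of \Cref{sec:kg-theta}, one has $\KGrO(m,v)(z) = \langle P_{m,v}, \Theta_L(\cdot,z)\rangle^{\reg}$ for every $m \neq 0$, while $\KGrO(0,v)(z) = \langle P_{0,v}, \Theta_L(\cdot,z)\rangle^{\reg} + \log(v)\,\varphi_0^\vee$; and by definition $\BGrO{m}(z) = \langle F_m, \Theta_L(\cdot,z)\rangle^{\reg}$ for all $m$. Subtracting, the solitary $\log(v)$ term occurs only in the coefficient of $q^0$, so it is cancelled exactly by the $-\log v\,\varphi_0^\vee$ term present in the statement, and the whole series collapses to $\sum_m \langle P_{m,v} - F_m, \Theta_L(\cdot,z)\rangle^{\reg}\, q^m$.

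Finally I would invoke \Cref{cor:vectorValLSec} with $f = \Theta_L(\cdot,z) \in \ALmod{\kappa-2}(\rho_L^\vee)$: this is precisely the assertion that $\sum_m \langle P_{m,v} - F_m, \Theta_L(\cdot,z)\rangle^{\reg}\, q^m$ converges to, and is the $q$-expansion of, an element of $\AkappaLexp(\rho_L^\vee)$, a space consisting of forms of weight $\kappa = p/2+1$; this gives the theorem.

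Because all of the analytic content --- existence of the regularized pairings, convergence of the $q$-series to a smooth function, the transformation law, and the exponential growth bound --- is already packaged into \Cref{lem:xireg}, \Cref{lem:Breg}, \Cref{thm:MaassSection}, and \Cref{cor:vectorValLSec}, there is no genuine obstacle remaining. The one place to be careful is simply matching conventions: one must check that $\langle P_{m,v}, \Theta_L(\cdot,z)\rangle^{\reg}$ --- in which one contracts the $S(L)$-factor of $P_{m,v}(\tau) \in S(L)^\vee \otimes_\C S(L)$ against $\Theta_L(\tau,z) \in S(L)^\vee$, leaving an $S(L)^\vee$-valued quantity --- really is the object $\KGrO(m,v)(z)$, and that the $m=0$ discrepancy in \Cref{thm:Xitheta} is tracked faithfully. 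That last point is exactly why the explicit $-\log v\,\varphi_0^\vee$ correction appears in the statement.
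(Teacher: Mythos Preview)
Your proof is correct and follows essentially the same route as the paper's: rewrite the generating series as $\sum_m \langle P_{m,v} - F_m, \Theta_L(\cdot,z)\rangle^{\reg}\, q^m$ using \Cref{thm:Xitheta} and the definition of $\BGrO{m}$, then appeal to the fact that $\Theta_L(\cdot,z) \in \ALmod{p/2-1}(\rho_L^\vee)$. The only difference is that the paper cites \Cref{cor:ArchDiffRegLift} for the final step, whereas you invoke \Cref{cor:vectorValLSec}; your citation is in fact the one that directly delivers the modularity statement, while \Cref{cor:ArchDiffRegLift} is what justifies evaluating the difference at \emph{every} $z \in \domain^o(V)$, including points on the special divisors.
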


\section{Arithmetic theta functions on unitary Shimura varieties} \label{sec:arithUnitary}
In this section, we consider applications of our analytic calculations to the arithmetic geometry of unitary Shimura varieties. 
The two families of Green functions considered in the previous section
turn out to be Green functions for special cycles on unitary Shimura varieties, which we will consider in this section.
We then define \emph{arithmetic theta series} attached to each of the two families of Green functions, 
and show that the difference of these two series transforms as a modular form. 
We also prove that this latter form is orthogonal to cusp forms under the Petersson inner product; this can be viewed as a holomorphic projection formula. Finally, we show that pairing the difference against a certain `arithmetic volume form' gives the Fourier coefficients of derivatives of Eisenstein series, providing further evidence for Kudla's conjecture in this context. We also describe a refinement of a theorem of Bruinier-Howard-Yang on intersection numbers with CM cycles. 

Throughout, we fix an imaginary quadratic field $\kb$ with ring of integers $o_\kb$, and assume that the discriminant $d_\kb$ is odd. Let $\partial_\kb$ be the different, and fix a generator $\delta_\kb$. The non-trivial Galois automorphism of $\kb$ is denoted by $a \mapsto a'$. Fix once and for all an embedding $o_\kb \hookrightarrow \C$, which allows us in particular to view $\Spec(\C)$ as an object in the category of $o_\kb$-schemes; on those rare occasions in which we need to distinguish the two choices, we write $\C_{\sigma}$ for the field of complex numbers viewed as an $o_\kb$-algebra by an embedding $\sigma \colon o_\kb \to \C$. 

\subsection{Unitary Shimura varieties} \label{sec:UModProb}

For any integer $n\geq 1$, consider the functor $\M^{\Kra} (n-1,1)$ over $\Spec o_\kb$ defined by the following moduli problem: for any scheme $S$  over $o_\kb$ with  structure map $\tau_S \colon o_\kb \to \mathcal O_S$, the $S$-points $\M^{\Kra}(n-1,1)(S)$  comprise the category
\[ \mathcal M^{\Kra}(n-1,1) (S) \ = \ \{ \underline A = (A, i, \lambda, \lie{F}) \}, \]
where \begin{enumerate}
\item $A$ is an abelian scheme over $S$ of relative dimension $n$;
\item $i \colon o_\kb \to \End(A)$ is an $o_\kb$-action;
\item $\lambda$ is a principal polarization such that $\lambda \circ i(a) \ = \ i(a')^{\vee} \circ \lambda$ for all $a \in o_\kb$; and
\item (\emph{Kr\"amer's condition} \cite{kramer}) $\lie{F}$ is a locally free subsheaf of $\mathsf{Lie}(A) $ of rank $n-1$ such that for all $a \in o_\kb$, the induced map $\mathsf{Lie}(i(a))$ agrees with $\tau_S(a) $ on $\lie{F}$, and with $\tau_S(a')$ on $\mathsf{Lie}(A) / \lie{F}$.
\end{enumerate}
This moduli problem is represented by a Deligne-Mumford stack, which we also denote by ${\mathcal{M}^{\Kra}(n-1,1)}$, which is regular, flat over $\Spec(o_\kb)$, and smooth over $\Spec o_\kb[1/d_\kb]$.

\begin{remark} Replacing the condition (iv) above with the perhaps more familiar \emph{determinant condition} as in e.g.\ \cite[\S 2.1]{KRunn2} yields a space $\mathcal M^{\mathrm{naive}}(n-1,1)$ that is in general neither flat nor regular at primes dividing $d_\kb$; thus, for the purposes of arithmetic intersection theory, Kr\"amer's model $\mathcal M^{\Kra}$ is more suitable.	
\end{remark}

Similarly, we may consider the moduli space $\M(1,0)$ parametrizing principally polarized elliptic curves $E$ with complex multiplication by $o_\kb$, where the action is normalized to coincide with the structural morphism on $\mathsf{Lie}(E)$. This stack is smooth and proper of relative dimension 0 over $\Spec(o_\kb)$ \cite[Proposition 2.1.2]{howard-unitary-2}.

Finally, let
\[ \M \ = \ \M(1,0) \times_{\Spec(o_\kb)} \M^{\Kra}(n-1, 1). \]

One of the main results of \cite{howard-unitary-2} is the construction of a canonical toroidal compactification of $\M$, obtained by extending the moduli problem to generalized abelian varieties; its  properties are summarized in the following proposition. 

\begin{proposition}[{\cite[Theorem A]{howard-unitary-2}}] The canonical toroidal compactification of $\mathcal M$ is a DM stack $\mathcal M^*$ that is regular of dimension $n$, proper and flat over $\Spec o_\kb$, and which contains $\M$ as an open dense substack. Moreover, the boundary $\partial \M^* = \M^* \setminus \M$ is a divisor on $\M^*$.
\end{proposition}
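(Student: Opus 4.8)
The statement is precisely \cite[Theorem~A]{howard-unitary-2}; here is the shape of the argument. Since $\M(1,0)$ is already proper over $\Spec o_\kb$ (of relative dimension zero), it suffices to construct a compactification of the factor $\M^{\Kra}(n-1,1)$ and then take the product. The plan is to adapt the theory of arithmetic toroidal compactifications of PEL-type Shimura varieties --- due to Faltings--Chai in the Siegel case and to Lan in general --- to Kr\"amer's integral model. First I would describe the boundary via \emph{degeneration data}: a point of $\M^{\Kra}$ ``at the cusp'' attached to a $\kb$-rational isotropic line $\ell \subset \calV$ corresponds to a semi-abelian scheme equipped with an $o_\kb$-action, a compatible polarization, and a degenerate analogue of Kr\"amer's subsheaf $\lie F$, and the possible degenerations along $\ell$ are parametrized by a rational polyhedral cone acted on by the stabilizer of $\ell$. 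Because $\calV$ has signature $(n-1,1)$, each such cone is one-dimensional (a ray), which is exactly why a \emph{canonical} choice of cone decomposition exists and produces a canonical compactification.

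Second, for each cusp one takes the (essentially unique, smooth, complete) decomposition of the cone into rays, and uses Mumford's construction to build, over the corresponding mixed Shimura variety, a formal model of a degenerating family of abelian schemes carrying the extra $o_\kb$-action, polarization, and Kr\"amer datum; algebraizing and gluing these boundary charts to $\M^{\Kra}$ along its interior yields the DM stack $\M^*$, with $\M^{\Kra}$ (hence $\M$) an open dense substack. The assertion that $\partial\M^* = \M^*\setminus\M$ is a divisor then follows from the \'etale-local structure of the boundary charts: each is, up to \'etale localization, a torus embedding over a smaller base, and since the cone decomposition is smooth with one-dimensional cones, the boundary is cut out in each such chart by a single coordinate, so it is an (even normal crossings) divisor.

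Third, one checks the global properties. Properness over $\Spec o_\kb$ follows from the valuative criterion together with the completeness of the chosen cone decompositions: by the semistable reduction theorem for abelian varieties and the uniqueness of the degeneration datum, every $\M$-point over the fraction field of a discrete valuation ring extends uniquely to an $\M^*$-point; flatness over $\Spec o_\kb$ is then immediate from regularity of $\M^*$ and dominance. The genuinely hard step, and the technical core of \cite{howard-unitary-2}, is \textbf{regularity of $\M^*$ along the boundary at the primes dividing $d_\kb$}. Away from $d_\kb$ the interior $\M^{\Kra}$ is smooth and the toroidal charts inherit regularity from the smoothness of the cones; but at ramified primes Kr\"amer's condition (iv) is precisely what resolves the singularities of the naive model $\mathcal M^{\mathrm{naive}}$, and one must show that this resolution propagates to the boundary charts. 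This amounts to a careful local study of the deformation theory of the degenerating $o_\kb$-abelian schemes --- checking that the subsheaf $\lie F$ extends to the generalized abelian varieties compatibly with the torus embedding and that the resulting local rings are regular --- and it is here that Howard's analysis, building on \cite{kramer} and on Lan's work, does the real work.
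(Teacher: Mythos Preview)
The paper does not give a proof of this proposition at all: it is stated purely as a citation of \cite[Theorem~A]{howard-unitary-2} and used as a black box. So there is no ``paper's own proof'' to compare your proposal against.

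Your sketch is a plausible outline of the argument carried out in \cite{howard-unitary-2}, and the key structural points you identify --- that signature $(n-1,1)$ forces one-dimensional cones (hence a canonical cone decomposition and a boundary that is a divisor), and that the delicate issue is regularity at primes dividing $d_\kb$ where Kr\"amer's condition must be made compatible with the degeneration data --- are accurate. But for the purposes of this paper, the appropriate ``proof'' is simply the citation; your write-up would be out of scope here.
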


The stack $\M$ admits a decomposition
	\begin{equation}  \label{eqn:MDecomp}
		\M \ = \ \coprod_{[\calV]}  \ \M_\calV 
	\end{equation}
where $[\calV]$ runs over the (finite) set of isomorphism classes of Hermitian vector spaces over $\kb$ of signature $(n-1,1)$ that contain a self-dual lattice, see \cite{KRunn2, bruinier-howard-yang-unitary}. More precisely, suppose $z \in \M(\C)$ corresponds to a pair of complex abelian varieties $(\underline E, \underline A)$, equipped with $o_\kb$-actions and polarizations. These additional structures endow the homology groups $H_1(E(\C), \Q)$ and $H_1(A(\C),\Q)$ with $\kb$-Hermitian forms. 
The component $\M_\calV$ is then characterized by the property that for any complex point $z \in \M_\calV(\C)$ corresponding to  $(\underline E, \underline A)$, there is an isomorphism
\[ \Hom_k \big( H_1(E(\C), \Q) , \, H_1(A(\C), \Q)\big)  \   \simeq  \ \calV  \]
of Hermitian vector spaces.

As a consequence of \eqref{eqn:MDecomp}, there is a decomposition
	\[ \M^* \ =  \ \coprod_{\calV} \M_\calV^*, \]
where  $\M^*_\calV $ is the Zariski closure of $\M_\calV $ in $\M^*$.

Next, we recall the complex uniformizations of $\M_\calV$ and $\M_\calV^*$. For any Hermitian space $\calV$ of signature $(n-1,1)$, let
\[ \mathbb D (\calV) \ := \ \{ z \subset \calV \otimes_{\Q} \R \ \text{ a negative definite } \kb \otimes_{\Q} \R \text{-line}\}. \]
This is a complex manifold of dimension $n-1$, and is a model for the locally symmetric space attached to $GU(\calV)$.

If $\calL_0$ and $\calL_1$ are self-dual Hermitian $o_\kb$-lattices of signature $(1,0)$ and $(n-1,1)$ respectively, and 
\[ \calV_{\calL_0, \calL_1} \ := \ \Hom_{o_\kb}(\calL_0, \calL_1)\otimes_{\Z} \Q, \]
then the group 
\[ \Gamma_{\calL_0,\calL_1} \ := \ \Aut(\calL_0) \times \Aut(\calL_1) \]
 acts on $\calV_{\calL_0, \calL_1}$ by unitary transformations, and hence on $\mathbb D(\calV_{\calL_0,\calL_1})$. We obtain a \emph{complex uniformization} as in \cite[\S 3.2]{bruinier-howard-yang-unitary}, see also \cite[\S 3]{KRunn2},
\begin{equation}	\label{eqn:MCpxUnif}
\M_\calV(\C) \ \simeq \ \coprod_{[\calL_0, \calL_1]} \left[ \Gamma_{\calL_0,\calL_1} \big\backslash \mathbb D(\calV) \right]
\end{equation}
where the disjoint union is taken over isomorphism classes of pairs of lattices $\calL_0$ and $\calL_1$ such that $\calV_{\calL_0,\calL_1} \simeq \calV$.
Implicitly in this statement, we have fixed a set of representatives $\{ (\calL_0, \calL_1) \}$ and for each such pair, we view 
\[
	\Hom_{o_{\kb}}(\calL_0, \calL_1) \ \subset \ \calV 
\]
as a lattice in $\calV$ via a fixed isomorphism $\calV_{\calL_0,\calL_1} \simeq \calV$; in particular, the group $\Gamma_{\calL_0,\calL_1}$ acts on $\domain(\calV)$ via this fixed isomorphism.

\subsection{Kudla-Rapoport divisors}

We now turn to the definition of the Kudla-Rapoport divisors, following \cite{KRunn2, bruinier-howard-yang-unitary}.
Suppose $(\underline E, \underline A) \in \mathcal M(S)$ for some base scheme $S$; then the $o_\kb$-module
\[ L(E,A) \ := \ \Hom_{o_\kb,S}(E,A) \]
admits an $o_\kb$-Hermitian form, defined by the formula
\begin{equation}  \label{eqn:defHomHerm}
(y_1,y_2) \ := \ \ \lambda_E^{-1} \circ y_2^{\vee} \circ \lambda_A \circ y_1 \ \in \ \End_{o_\kb}(E) \ \stackrel{i_E^{-1}}{\simeq} o_\kb. 
\end{equation}

For each $m \in \Q_{>0}$ and ideal $\lie{a} \subset o_\kb $ dividing $\partial_\kb$, let $\Zed(m, \lie a)$ be the Deligne-Mumford stack over $\Spec(o_\kb)$ representing the following moduli problem:  for a scheme $S / \Spec(o_\kb)$, the points of $\Zed(m, \lie a)$ comprise the category
\[ \Zed(m, \lie a)(S) \ = \ \left\{ (\underline E, \underline A, y) \right\}, \]
where
\begin{itemize}
	\item $\underline E = (E, i_E, \lambda_E) \in \mathcal M(1,0)(S)$ and $\underline A = (A, i_A, \lambda_A, \lie{F}) \in \mathcal M^{\mathrm{ Kra}}(n-1,1)(S) $
	\item $y \in \lie a^{-1} \, L(E, A)$ with $(y,y) = m$, and such that
    \[ \Lie( i(\delta_k) \circ y ) \colon \Lie(E) \ \to \ \Lie(A) \]
    induces the trivial map $\Lie(E) \to \Lie(A) / \lie{F} $.
\end{itemize}
The forgetful map $\Zed(m, \lie a) \to \M$ defines a divisor on $\M$, cf.\ \cite[\S 3.1]{bruinier-howard-yang-unitary}, which we denote by the same symbol $\Zed(m, \lie a)$ in a hopefully mild act of violence against notation.

Setting $\Zed_\calV(m, \lie a) = \Zed(m, \lie a) \times_\M \M_\calV$, there is a complex uniformization
\begin{equation}  \label{eqn:ZmCpxUnif}
	\Zed_\calV(m,\lie a)(\C) \ \simeq \ \coprod_{[\calL_0], [\calL_1] } \, \left[ \Gamma_{\calL_0,\calL_1} \Bigg\backslash 
	\coprod_{ \substack{y \in \lie a^{-1} \Hom(\calL_0,\calL_1) \\ (y,y)=m }} \mathbb D_y  \right];
\end{equation}
here
\[
	\mathbb D_y := \{z \in \mathbb D( \calV)  \ |\   z \perp y \}.
\]	
 This uniformization is compatible with \eqref{eqn:MCpxUnif}, in the sense that the map $\Zed_\calV(m,\lie a)(\C) \to \M_\calV(\C)$ is induced by the inclusions $\mathbb D_y \hookrightarrow \mathbb D(\calV)$.

\begin{definition}
Let $\Zed^*(m, \lie{a})$ be the Zariski closure of $\Zed(m, \lie{a})$ in $\M^*$, and 
\[ \Zed_\calV^*(m,\lie{a}) \ := \ \Zed^*(m, \lie{a}) \times_{\M^*} \M^*_\calV .\]
\end{definition}

It will be useful to take a `vector-valued' approach, as follows.
Suppose $\calL \subset \calV$ is a Hermitian self-dual lattice, 
and $\calL_{[\Z]}$ the $\Z$-lattice of signature $(2n-2,2)$ obtained by taking the trace of the Hermitian form. Then the $\Z$-dual $\calL_{[\Z]}^{\vee}$ satisfies 
\[ 
  \mathcal L_{[\Z]}^{\vee} = \ \diffinv{\kb} \mathcal L 
\]
and in particular there is an action of $\SL_2(\Z)$ on 	
\[ 
 S(\calL) := S(\calL_{[\Z]}) \ \cong \ \C[ \diffinv{\kb} \mathcal L / \mathcal L] 
\]
via the Weil representation, as in  \Cref{sec:WeilRep}.

For each $\ov{m} \in \Q / \Z$ and $ \lie a | \partial_k$, define
\[ 
  \varphi_{\ov{m},\lie a} \ 
   = \ \text{characteristic function of } \left\{ x \in \fraka^{-1} \calL / \calL , \, (x,x) \equiv \ov{m} \bmod \Z \right\} \in S(\calL);
\]
note that $\varphi_{\ov{m}, \lie a} = 0$ if $m \notin d_\kb^{-1} \Z / \Z$.

By \cite[Remark 3.9]{bruinier-howard-yang-unitary}, the set $\{\varphi_{\ov{m},\lie a}  \ |  \ \delta_k \subset \lie a, \, \ov{m} \in N(\lie a)^{-1} \Z / \Z \}$ forms a basis for the space 
\begin{equation} \label{eqn:SDefn}
\calS \ := \ S(\calL)^{ \Aut(\diffinv{\kb} \mathcal L / \mathcal L)} 
\end{equation}
of $\Aut(\diffinv{\kb} \calL / \calL)$ invariant functions.
The Weil representation commutes with $\Aut(\diffinv{\kb} \mathcal L / \mathcal L)$, and so $\mathcal S$ inherits an action of $\SL_2(\Z)$; furthermore, this action depends only on $\calV$ and not the choice of lattice $\calL$, since any two self-dual lattices in $\calV$ are in the same genus.  

We define \emph{vector-valued special cycles} 
\[ \Zed_\calV(m) \in   \Div_{\C} \M_\calV \otimes_{\C}\mathcal S^{\vee}\qquad \text{and} \qquad \Zed^*_\calV(m) \in  \Div_{\C} \M^*_\calV \otimes_{\C} \mathcal S^{\vee}   \]
by the formulas
\begin{equation} \label{eqn:vectorValDivs}
 \Zed_\calV(m) \ = \ \sum_{\lie a | \partial_k} \, \Zed_\calV(m, \lie a) \otimes \varphi_{\ov{m},\lie a}^{\vee} 
\qquad
\text{and}
\qquad
 \Zed_\calV^*(m) \ = \ \sum_{\lie a | \partial_k} \, \Zed_\calV^*(m, \lie a) \otimes \varphi_{\ov{m}, \lie a}^{\vee}.
 \end{equation}
For future use, we also define $\Zed_\calV(m) = \Zed_\calV^*(m) = 0$ whenever $m \leq 0$.  

 \begin{remark} \label{rem:cycleorth}The complex cycles $\Zed(m)(\C)$ are closely related to the cycles introduced in \Cref{sec:arch}. Let $\calV_{[\Q]}$ denote the quadratic space of signature $(2n-2,2)$ over $\Q$ obtained by viewing $\calV$ as a vector space over $\Q$ with quadratic form $Q(x) = (x,x)$. Then, the set
\[
	\mathbb D^o(\calV_{[\Q]}) \ := \ \left\{ \zeta \subset \calV \otimes_{\Q} \R \text{ a negative definite } \R \text{-plane} \right\}
\]
is a model for the symmetric space attached to $O(\calV_{[\Q]})$. There is a natural embedding
\[ 
	\mathbb D(\calV) \ \hookrightarrow \ \mathbb D^o(\calV_{[\Q]}).
\]
given by viewing a negative definite $\kb_{\R}$-line $z \in \domain(\calV)$ as a real plane. Let $ \calL = \Hom(\calL_0, \calL_1)$ and assume $\calV = \calL_{\Q}$.  Recall that for $m \in \Q$, we had defined $S(\calL)^{\vee}$-valued cycles $Z(m)$ on $\domain^o(\calV_{[\Q]})$ by the formula
\[ Z(m)(\varphi) \ = \ \sum_{\substack{x \in \diffinv{\kb}\calL \\ Q(x) = m}} \, \varphi(x) \, Z(x), \]
where $Z(x) := \{ z \in \domain^o(\calV_{[\Q]}) \, | \, z \perp x \} $. Note by construction, if $\varphi \in \calS$, then this sum is invariant under automorphisms of $\calL$, and hence under $\Gamma_{\calL_0, \calL_1}$. 

Thus, the complex uniformization \eqref{eqn:ZmCpxUnif} can be rephrased as saying that the restriction of the cycle $\Zed_{\calV}(m)(\C)$ to the component $[ \Gamma_{\calL_0, \calL_1} \bs \domain(\calV)]$ is given by: first restricting $Z(m)$ to $\calS^{\vee}$, then pulling back to the unitary symmetric space $\domain(\calV)$, and finally taking the image in the quotient  $[\Gamma_{\calL_0, \calL_1}\bs \domain(\calV)]$.
\end{remark}
\subsection{Classes in arithmetic Chow groups} \label{sec:arithChow}
In this section, we recall two ways of equipping the divisors $\Zed_\calV^*(m,\lie a)$ with Green functions to obtain classes in the arithmetic Chow group $\ChowHatC(\M_\calV^*)$. Here and throughout we work with the $\log\text{-}\log$ singular version due to Burgos Gil-Kramer-K\"uhn \cite{burgos-kramer-kuhn}, or more precisely, the `stacky' extension described in \cite[\S 3.1]{howard-unitary-2}.

Roughly speaking, the extended Chow group  $\ChowHatC(\M_\calV^*)$ is a complex vector space spanned by elements of the form
\[ (Z, \, g_Z) \]
where $Z$ is a $\C$-divisor on $\M^*_\calV$ (i.e.\ a formal $\C$-linear combination of closed substacks, each of which is \'etale locally cut out by a single non-zero equation), and $g_Z$ is a current on $\M^*_\calV(\C)$ that is 
\begin{enumerate}
\item smooth outside the support of $Z(\C) \cap \M_{\calV}(\C)$ with logarithmic singularities along this support;
\item has, along with its derivatives, at worst `$\log$-$\log$' singularities along the boundary $\partial \M^*_{\calV}(\C)$, cf.\ \cite[Definition 7.1]{burgos-kramer-kuhn}; and 
\item satisfies Green's equation
  \[ 
     \ddc[g_Z] \ + \ \delta_{Z(\C)} \ = \ [\omega] 
  \]
for some $(1,1)$ differential form $\omega$ that is smooth on $\M_{\calV}(\C)$.
\end{enumerate}
\begin{remark}
Strictly speaking, the analytic aspects of \cite{burgos-kramer-kuhn} are applicable to manifolds, and not orbifolds like $\calM_{\calV}^*(\C)$. To circumvent this technicality, note that by fixing additional level structure in the moduli problem, one can find a manifold  $M$ and a finite group $K$ acting on it, such that $\calM^*_{\calV}(\C) = [K \backslash M]$. By pulling back the cycles $\calZ_{\calV}(m)(\C)$ along the projection
\[ M \ \to \ [K\backslash M ] \ = \ \calM_{\calV}^*(\C) \]
we may interpret $(i)-(iii)$ in terms of $K$-invariant currents on $M$. 
 In the sequel we will gloss over this technicality, and refer the reader to \cite[Section 3.1]{howard-unitary-2} for a more careful treatment of this issue.
\end{remark}
 As with ordinary Chow groups, the principal divisors are deemed equivalent to zero: more precisely, if $f \in \Q(\M_{\calV}^*)^{\times}$ is a global rational function, then
the divisor
\[ 
   \divhat(f) \ := \ (\mathrm{div} f, \,  - \log|f|^2)
\]
is called principal. The group $\ChowHatC(\M_{\calV}^*)$ is then defined to be the quotient of the space of arithmetic divisors by the subspace spanned by principal divisors.

We begin with the construction of Kudla's Green functions $\KGr(m,v)$. In light of the complex uniformization \eqref{eqn:MCpxUnif}, it suffices to specify $\KGr(m,v)$ on each component $[\Gamma_{\calL_0, \calL_1} \backslash \mathbb D(\calV)]$ where, as before, 
\[ \calL = \Hom(\calL_0, \calL_1) \hookrightarrow \calV . \]
In \Cref{sec:kg-theta}, we defined an $S(\calL)^{\vee}$-valued current $\KGrO(m,v)$ on $\domain^o(\calV_{[\Q]})$, with singularities along the cycle $Z(m)$; here $\calV_{[\Q]}$ is the space $\calV$ viewed as a quadratic space over $\Q$ of signature $(2n-2,2)$.  In parallel with \Cref{rem:cycleorth}, we may restrict $\KGrO(m,v)$ to $\calS^{\vee}$, and pull back to the unitary Grassmannian $\domain(\calV)$ to obtain a $\Gamma_{\calL_0, \calL_1}$-invariant function on $\domain(\calV)$; descending to the quotient $[\Gamma_{\calL_0, \calL_1} \bs \domain(\calV)]$ gives a function $\KGr(m,v)$ with singularities along the pullback of $\Zed_{\calV}(m)$ to $[\Gamma_{\calL_0, \calL_1} \bs \domain(\calV)]$. 

Putting the components together in the complex uniformization \eqref{eqn:MCpxUnif}, we obtain a function  $\KGr(m,v)$  on $\M_{\calV}(\C)$ with logarithmic singularities along $\Zed_{\calV}(m)(\C)$.
 
Note that if $m\leq0$, then $\KGr(m,v)$ is smooth. 

As we wish to work with the toroidal compactification $\M_{\calV}^*$, we also need to understand the behaviour of $\KGr(m,v)$ at the boundary. 
By the discussion in \cite[\S 2.6]{howard-unitary-2}, the  components of the boundary $ \partial \M^*_{\calV} = \M_{\calV}^* - \M_{\calV}$ are parametrized by the set of isomorphism classes of triples
\[ \mathcal B_{\calV}  \ : = \  \left\{  \ ( \calL_0, \, \lie{m} \subset \calL_1) \ \Big| \
\begin{matrix}
 \lie{m} \text{ is an isotropic rank-1 direct summand of } \calL_1 \text{ and }   \\ \calV_{\calL_0,\calL_1} \simeq \calV  
 \end{matrix} \right\} /_{\simeq}\]
If $B=(\calL_0, \lie{m}\subset \calL_1) \in \mathcal B_{\calV}$, then
\[ 
  \lie{n} := \Hom_{o_\kb} ( \calL_0, \lie{m})  
\]
is an isotropic rank-1 submodule of $\mathcal L = \Hom_{o_\kb}(\calL_0, \calL_1)$, and
\[ 
  \Lambda_B \ := \ \lie{n}^{\perp} / \lie{n} 
\]
is a self-dual Hermitian $o_\kb$-lattice of signature $(n-2,0)$. 
There is a contraction map 
\[ 
  \mathbf c_{B} \colon S(\mathcal L) \to S(\Lambda_B)
\] 
defined by the formula
\begin{equation}  \label{eqn:ContractionMap}
\mathbf c_{B} (\varphi)(\nu) \ = \ \sum_{\substack{\mu \in \diffinv{\kb} \mathcal L/ \mathcal L \\ \mu|_{\lie{n}^{\perp}} = \nu } } \, \varphi(\mu), \qquad \varphi \in S(\mathcal L) ,
\end{equation}
and is equivariant for the action of the Weil representation on $S(\mathcal L)$ and $S(\Lambda_B)$, cf.\ \cite[\S5]{boautgra}.  In the above sum, the equality $\mu|_{\lie n^{\perp}} = \nu$ is to be interpreted as follows: a coset $ \mu \in \diffinv{\kb} \calL / \calL$ determines a map
\begin{equation} \label{eqn:contractionMapDef} {\mu}|_{\lie n^{\perp}} \in \Hom_{o_\kb} \left( \lie n^{\perp}, \ \diffinv{\kb} / o_\kb \right) \end{equation}
by choosing any representative $\tilde \mu \in \diffinv{\kb} \calL$ and sending $x \in \lie n^{\perp}$ to the image of $\langle x, \tilde\mu \rangle$ in $ \diffinv{\kb}  / o_\kb$. This is clearly independent of the choice of $\tilde \mu$. 
On the other hand, a coset $\nu \in \diffinv{\kb} \Lambda_B / \Lambda_B$ determines an element of $\Hom(\Lambda_B, \diffinv{\kb}/o_\kb)$  by sending $\lambda \in \Lambda_B$ to the image of $\langle \lambda, \tilde \nu \rangle$ in $\diffinv{\kb}/o_\kb$, for any representative $\tilde \nu$ of $\nu$. Pulling back along the projection $\lie n^{\perp} \to \Lambda_B$ defines an element of $\Hom(\lie n^{\perp}, \diffinv{\kb} / o_\kb)$, and the equation $\mu|_{\lie n^{\perp}} = \nu$ is interpreted as the equality of this element with the map \eqref{eqn:contractionMapDef}.

Let $\vartheta_{\Lambda_B} \in M_{n-2}(S(\Lambda_B)^{\vee})$ denote the theta function attached to $\Lambda_B$; it
is a vector-valued modular form of weight $n-2$ defined by the formula
\[ \vartheta_{\Lambda_B}(\tau)(\varphi) \ = \ \sum_{\lambda \in \diffinv{\kb} \Lambda_B } \varphi (\lambda)  \, q^{(\lambda, \lambda)}.
\]
Pulling back along the map $\mathbf c_{B} \colon S(\mathcal L) \to S(\Lambda_B)$ and restricting to $\calS \subset S(\mathcal L)$, yields a theta function
\[  \Theta_{\Lambda_B}(\tau) \ \in \ M_{n-2}(\mathcal S^{\vee}), \]
whose Fourier expansion we write as
\begin{equation} \label{eqn:bdyThetaFn}
 \Theta_{\Lambda_B}(\tau) \ = \ \sum_{m \in \Q_{\geq 0}} \, \mu_B(m) \, q^m 
\end{equation}
with coefficients $\mu_B(m) \in \mathcal S^{\vee}$. Concretely, a straightforward computation
yields
\[ \mu_B(m) \ = \ \sum_{\lie a} \, N(\lie a) \cdot  \# \{ \lambda \in \lie{a}^{-1} \Lambda \ | \ (\lambda,\lambda) = m  \} \cdot \varphi_{\ov{m},\lie a}^{\vee}. \]

\begin{remark} \label{rmk:nEquals2}
Suppose $n = 2$, so that $\calV$ is either anisotropic or has Witt rank equal to one (i.e. is split). In the first case $\mathcal M_{\calV} = \M_{\calV}^*$ is proper over $\Spec o_\kb$, and in particular there are no boundary components ($\mathcal B_{\calV} = \emptyset$). In the second case, the lattice $ \Lambda_B$ attached to a boundary component $B \in \mathcal B_{\calV}$ is trivial, and so concretely
\[ \mu_B(m) \ = \ 
	\begin{cases} 0 & m \neq 0 \\ 
				 \sum_{\lie a} N(\lie a) \cdot \varphi_{0,\lie a}^{\vee} & m = 0
	\end{cases}
\]
in this case.
\end{remark}

\begin{proposition} 
	 For any $v \in \R_{>0}$, $m \in \Q$, the current $\KGr(m, v)$ extends to an $\mathcal S^{\vee}$-valued current -- also denoted by $\KGr(m,v)$ -- with logarithmic singularities on $\mathcal M_{\calV}^*$, and is a Green current 
	 for the cycle
	\[ \Zed_{\calV}^*(m) \ + \ \frac{1}{4 \pi  v} \ \sum_{B \in \mathcal B_V} \ \mu_B(m) \ [B], \]
	where for $B \in \mathcal B_{\calV}$, we write $[B]$ for the corresponding boundary component.
	
	Recall that $\Zed_{\calV}^*(m) = 0$ whenever $m \leq 0$ by definition; in particular, if $m<0$ then $\KGr(m,v)$ is a Green function for the zero cycle on $\M_{\calV}^*$.
\begin{proof}
	It suffices to check the claim after evaluating at each of the basis elements $\{\varphi_{\ov{m}, \lie a} \}$ of $\mathcal S$. The case where $\ov{m} \equiv 0$ and $\lie a = o_\kb$ is proved in \cite[Theorem 3.4.7]{howard-unitary-2}, and the same proof works with only minor modifications in general.
	 \end{proof}
\end{proposition}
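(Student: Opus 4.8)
The plan is to reduce the statement to a computation with scalar-valued Green functions and then to invoke Howard's analysis of the cuspidal behaviour of Kudla's Green function. Since every object in sight is $\C$-linear in the $\calS$-variable, it suffices to prove the assertion after pairing with each element of the basis $\{\varphi_{\ov{m},\lie a}\}$ of $\calS$ from \cite[Remark 3.9]{bruinier-howard-yang-unitary}. Fix such a $\varphi_{\ov{m},\lie a}$, and fix a representative pair of self-dual lattices $(\calL_0,\calL_1)$ with $\calV_{\calL_0,\calL_1}\simeq\calV$; write $\calL=\Hom_{o_\kb}(\calL_0,\calL_1)\subset\calV$. Recall from the construction preceding the proposition that on the component $[\Gamma_{\calL_0,\calL_1}\backslash\domain(\calV)]$ of the complex uniformization \eqref{eqn:MCpxUnif}, the function $\KGr(m,v)(\varphi_{\ov{m},\lie a})$ is the restriction of the $\domain^o(\calV_{[\Q]})$-current $\KGrO(m,v,\varphi_{\ov{m},\lie a})$ along the embedding $\domain(\calV)\hookrightarrow\domain^o(\calV_{[\Q]})$ of \Cref{rem:cycleorth} (a complex submanifold), descended to the quotient.

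On the open part $\M_\calV(\C)$ the desired properties follow from what has already been established in \Cref{sec:kg-theta}: $\KGrO(m,v,\varphi_{\ov{m},\lie a})$ is a Green function for $Z(m,\varphi_{\ov{m},\lie a})$ on $\domain^o(\calV_{[\Q]})$ with at worst logarithmic singularities along that cycle (and smooth when $m\le 0$), and the embedding $\domain(\calV)\hookrightarrow\domain^o(\calV_{[\Q]})$ meets each divisor $Z(x)$ properly --- the intersection is either $\domain_x=\{z\in\domain(\calV)\mid z\perp x\}$ or empty --- so the pullback is again a Green function, now for the restriction of $Z(m)$ to $\domain(\calV)$, which by \Cref{rem:cycleorth} descends to $\Zed_\calV(m)$ on $[\Gamma_{\calL_0,\calL_1}\backslash\domain(\calV)]$. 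Thus over $\M_\calV(\C)$ the pair $(\Zed_\calV(m),\KGr(m,v))$ already satisfies Green's equation with smooth right-hand side, and $\KGr(m,v)$ has the required logarithmic singularities there.

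The substance of the argument is the analysis near the boundary divisor $\partial\M^*_\calV$. Using the description of the boundary charts of the canonical toroidal compactification in \cite[\S 2.6]{howard-unitary-2}, a neighbourhood of the component indexed by $B=(\calL_0,\lie m\subset\calL_1)\in\mathcal B_{\calV}$ is uniformized by a chart in which $[B]$ is cut out by a coordinate $q_B$ and in which the approach to the boundary corresponds to a tube-domain degeneration governed by the isotropic line $\lie n=\Hom_{o_\kb}(\calL_0,\lie m)\subset\calL$ and the self-dual lattice $\Lambda_B=\lie n^{\perp}/\lie n$. In this chart one expands the majorant quantity $R^o(x,z)$ for $x\in\lie a^{-1}\calL$: the vectors whose image in $\lie n^{\perp}/\lie n$ is nonzero, or which fail to lie in $\lie n^{\perp}$, contribute terms $\beta(2\pi v R^o(x,z))$ that decay exponentially as $z\to[B]$, so the surviving contributions are organized by the image $\lambda$ of $x$ in $\diffinv{\kb}\Lambda_B$, exactly as encoded by the contraction map $\mathbf c_B\colon S(\calL)\to S(\Lambda_B)$. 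Feeding the Laurent expansion of $\beta$ near $0$ into this sum shows that, modulo a function with at worst $\log$-$\log$ growth along $[B]$ and smooth elsewhere, $\KGr(m,v)(\varphi_{\ov{m},\lie a})$ differs from a Green function for $\Zed_\calV(m)(\varphi_{\ov{m},\lie a})$ by $\tfrac{1}{4\pi v}\,\mu_B(m)(\varphi_{\ov{m},\lie a})\cdot(-\log|q_B|^2)$ near $[B]$, where $\mu_B(m)$ is the $m$-th Fourier coefficient of the boundary theta function $\Theta_{\Lambda_B}$ of \eqref{eqn:bdyThetaFn}; here the contraction $\mathbf c_B$, the factor $N(\lie a)$, and the explicit shape of $\mu_B(m)$ are precisely what appears. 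In particular $\KGr(m,v)(\varphi_{\ov{m},\lie a})$, together with all its derivatives, has $\log$-$\log$ growth along $[B]$, so it is a $\log$-$\log$ singular current in the sense of \cite{burgos-kramer-kuhn}; and since, as currents on the chart, $\ddc(-\log|q_B|^2)=-\delta_{[B]}$ plus a smooth form, computing $\ddc\KGr(m,v)$ globally on $\M_\calV^*(\C)$ produces exactly the extra term $\tfrac{1}{4\pi v}\sum_{B\in\mathcal B_{\calV}}\mu_B(m)\,[B]$ beyond $\Zed^*_\calV(m)$, which is Green's equation for the stated cycle.

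For $\ov{m}\equiv 0\bmod\Z$ and $\lie a=o_\kb$ this is \cite[Theorem 3.4.7]{howard-unitary-2} verbatim; for general $\lie a\mid\partial_\kb$ and $\ov m$ the same argument applies once one replaces the sum over $\calL$ by the sum over $\lie a^{-1}\calL$ restricted to the congruence class $\ov m$, and the only change is the bookkeeping of the $N(\lie a)$-factors, which is exactly what is recorded in the formula $\mu_B(m)=\sum_{\lie a}N(\lie a)\,\#\{\lambda\in\lie a^{-1}\Lambda_B\mid(\lambda,\lambda)=m\}\,\varphi_{\ov{m},\lie a}^{\vee}$. When $n=2$ one invokes \Cref{rmk:nEquals2} instead: either $\mathcal B_{\calV}=\emptyset$ and there is nothing to check at the boundary, or $\Lambda_B$ is trivial and the boundary contribution collapses to the $m=0$ term. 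The main obstacle is the boundary asymptotic of the previous paragraph: pinning down the constant $\tfrac{1}{4\pi v}$ and identifying the leading coefficient with $\Theta_{\Lambda_B}$ requires carefully unwinding the tube-domain degeneration at the cusp together with the behaviour of $\beta$ near $0$ and $\infty$. Since this is carried out in \emph{loc.\ cit.} for the principal case, for us it reduces to checking that none of those estimates are disturbed by the harmless substitution $\calL\rightsquigarrow\lie a^{-1}\calL$.
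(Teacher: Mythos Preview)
Your proposal is correct and follows exactly the paper's approach: reduce to the basis elements $\varphi_{\ov m,\lie a}$ of $\calS$ and then invoke \cite[Theorem 3.4.7]{howard-unitary-2} for the principal case, noting that the general case requires only replacing $\calL$ by $\lie a^{-1}\calL$ together with the corresponding bookkeeping. The additional paragraphs you wrote are a sketch of Howard's boundary analysis rather than an alternative argument; one small imprecision there is the sentence about which vectors ``decay exponentially'' --- the logarithmic boundary term actually arises from summing over the $\lie n$-fibre above each $\lambda\in\Lambda_B$ with $(\lambda,\lambda)=m$, so vectors with nonzero image in $\Lambda_B$ are precisely the ones that contribute, not the ones that vanish --- but this does not affect the validity of the reduction, which is all the paper's proof claims.
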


As a consequence, for $v \in \R_{>0}$ and $ m \in \Q_{\neq 0}$ we obtain classes
\[ \ZedVKud(m, v) \ = \left( \Zed_{\calV}^*(m) \ + \ \frac1{4 \pi v } \sum_{B \in \mathcal{B}_{\calV}}  \, \mu_B(m)\, [B], \, \KGr(m, v) \right) \ \in \ \ChowHatC(\mathcal M_{\calV}^*) \otimes_{\C} \mathcal S^{\vee}. \]
Here the superscript $\mathsf{K}$ serves as a reminder that Kudla's Green functions are being used.
Note if $m \notin d_\kb^{-1} \Z$ then $\ZedVKud(m,v) = 0$.
It remains to define the `constant term' $\ZedVKud(0,v)$, a task we will return to shortly.

Turning to Bruinier's automorphic Green functions, suppose $m \in \Q_{\neq 0}$. Recall that in \Cref{sec:relat-borch-lift}, we had considered the $S(\calL)^{\vee}$-valued current $\BGrO{m}$ on $\mathbb D^o(\calV_{[\Q]})$, defined via the regularized theta lift against the Siegel theta function.
\[ \BGrO{m}(z) \ = \ \langle F_m, \, \Theta_L(\cdot, z) \rangle^{\mathrm{reg}}; \]
here $F_m(\tau)$ is the unique weak harmonic Maa\ss\, form of weight $2-n$ specified in \Cref{sec:relat-non-holom}. 

Restricting $\BGrO{m}$ to $\calS^{\vee} \subset S(\calL)^{\vee}$, and then to the unitary Grassmannian 
\[ \mathbb D(\calV) \subset \mathbb D^o(\calV_{[\Q]}), \]
yields a $\Gamma_{\calL_0,\calL}$-invariant current on $\mathbb D(\calV)$, which can be viewed as living on the component $[\Gamma_{\calL_0,\calL} \backslash \mathbb D(\calV)]$; repeating this construction for all the components appearing in \eqref{eqn:MCpxUnif} yields an $\mathcal S^{\vee}$-valued current on $\M_{\calV}(\C)$ that we denote by $\BGr{m}$, with logarithmic singularities along $\Zed_{\calV}(m)(\C)$. Included in the following proposition is a description of the behaviour of $\BGr{m}$ at the boundary.

\begin{proposition}[{\cite[Theorem 4.10]{bruinier-howard-yang-unitary}}] \label{prop:BHYbdy}
 Suppose $m \in \Q_{\neq 0}$. Then $\BGr{m}$ is a Green function for the cycle 
		\[ \Zed^*_{\calV}(m) \ + \ \sum_{B \in \mathcal B_{\calV}} \eta_B(m) \, [B] \]
		where 
		\[ \eta_B(m) \ := \ \frac{1}{4 \pi} \, \langle F_m, \, \Theta_{\Lambda_B} \rangle^{\mathrm {reg}} . \]
\qed
\end{proposition}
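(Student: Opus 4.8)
The plan is to follow Borcherds' analysis of the behaviour of a regularized theta lift near a cusp \cite[\S 5--6]{boautgra}, adapted to harmonic Maa\ss{} forms as in \cite[Chapter 2]{brhabil} and transported to the unitary setting. On the interior $\M_\calV(\C)$ one already knows, by the singularity analysis of \cite[Chapter 2]{brhabil}, that $\BGrO{m}(z) = \langle F_m, \Theta_L(\cdot,z)\rangle^{\reg}$ satisfies Green's equation $\ddc[\BGrO{m}] + \delta_{Z(m)} = [\omega_m]$ for a smooth form $\omega_m$; since the principal part of $F_m$ is $q^{-m}\tilde\varphi_\mu$ and $\xi(F_m)$ is cuspidal, the divisor is exactly $Z(m)$, and via \Cref{rem:cycleorth} this descends to a Green function for $\Zed_\calV(m)(\C)$ whose analytic closure in $\M^*_\calV(\C)$ is $\Zed^*_\calV(m)(\C)$. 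So the only thing to establish is the behaviour of $\BGr{m}$ along each boundary component $[B]$, $B \in \mathcal B_\calV$, together with the $\log$-$\log$ regularity demanded by \cite[Definition 7.1]{burgos-kramer-kuhn}.

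Fix $B = (\calL_0, \lie m \subset \calL_1) \in \mathcal B_\calV$, let $\lie n = \Hom_{o_\kb}(\calL_0,\lie m) \subset \calL$ be the associated primitive isotropic line, and $\Lambda_B = \lie n^\perp/\lie n$. Using the complex uniformization \eqref{eqn:MCpxUnif}, realize a punctured neighbourhood of $[B]$ by a Siegel/tube-domain coordinate in which approaching the boundary means sending the ``$\lie n$-direction'' to $i\infty$; let $q_B$ be the corresponding local equation of $[B]$. The key input is the Fourier--Jacobi type expansion of the Siegel theta function along the filtration $\lie n \subset \lie n^\perp \subset \calL$: Poisson summation in the $\lie n$-direction decomposes $\Theta_\calL(\tau,z)$ as the theta function of $\Lambda_B$ pulled back along the contraction map \eqref{eqn:ContractionMap}, i.e.\ $\Theta_{\Lambda_B}(\tau)$, times an explicit Jacobi/automorphy factor, plus terms decaying exponentially as $z \to [B]$. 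This is precisely the mechanism of \cite[\S 5]{boautgra}, and it is where the contraction map enters.

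Substituting this decomposition into the regularized pairing, and using that $F_m$ has at worst exponential growth while its non-holomorphic part decays exponentially and $\xi(F_m) \in S_{\kappa}(\rho_L^\vee)$ (so the regularization of \Cref{def:regpairing} behaves uniformly in $z$), one obtains an asymptotic expansion of $\BGrO{m}(z)$ as $z \to [B]$ whose leading term is the logarithmic singularity $-\tfrac{1}{2\pi}\langle F_m, \Theta_{\Lambda_B}\rangle^{\reg}\log|q_B|$, up to normalization, i.e.\ exactly a Green-function singularity for $\eta_B(m)[B]$ with $\eta_B(m) = \tfrac{1}{4\pi}\langle F_m, \Theta_{\Lambda_B}\rangle^{\reg}$. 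After subtracting this leading singularity, the remainder is controlled by powers of $\log\log(1/|q_B|)$ together with all of its derivatives, since the subleading contributions involve only a ``Weyl vector'' linear term and exponentially small corrections; this is the $\log$-$\log$ growth of \cite{burgos-kramer-kuhn}. Combining the interior Green's equation with the Poincar\'e--Lelong identity $\ddc[-\log|q_B|^2] = \delta_{[B]} - [\text{smooth}]$ on the resolution then shows that $\ddc[\BGr{m}] + \delta_{\Zed^*_\calV(m) + \sum_B \eta_B(m)[B]}$ is represented by a smooth form on $\M_\calV^*(\C)$, proving the assertion (and, when $n=2$ is split, reducing to the trivial $\Lambda_B$ as in \Cref{rmk:nEquals2}).

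The main obstacle is the precise boundary analysis: extracting the exact leading coefficient, and more delicately verifying that the error term and all of its derivatives obey the $\log$-$\log$ estimates of \cite{burgos-kramer-kuhn} uniformly near every boundary stratum. In the orthogonal case over $\Q$ this is essentially \cite[\S 6]{boautgra}; the work here is to confirm that replacing a weakly holomorphic input by the harmonic Maa\ss{} form $F_m$ does not spoil the estimates (it does not, by the exponential decay of $F_m^-$ and cuspidality of $\xi(F_m)$), and to push everything through the embedding $\domain(\calV) \hookrightarrow \domain^o(\calV_{[\Q]})$ and the restriction to the invariant subspace $\calS \subset S(\calL)$ into the $\calS^\vee$-valued unitary formulation.
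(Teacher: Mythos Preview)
The paper does not prove this proposition at all: it is stated with a terminal \qed and attributed to \cite[Theorem 4.10]{bruinier-howard-yang-unitary}, so there is no ``paper's own proof'' to compare against. Your outline is a fair sketch of the argument actually carried out in that reference, which does proceed via the Borcherds--Bruinier cusp expansion of the Siegel theta function along the isotropic filtration $\lie n \subset \lie n^{\perp} \subset \calL$, identifying the leading logarithmic term with the regularized pairing against $\Theta_{\Lambda_B}$ and controlling the remainder in the $\log\text{-}\log$ sense.

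One cautionary remark: your sketch glosses over the most delicate part, namely the uniform $\log\text{-}\log$ bounds on all derivatives of the remainder near the boundary. In the unitary case the boundary strata are zero-dimensional, which simplifies matters considerably relative to the general orthogonal case; still, the estimates require explicit control of the Weyl-vector and Jacobi-theta terms in the expansion, and this is where most of the work in \cite[\S 4]{bruinier-howard-yang-unitary} lies. Your proposal correctly identifies this as the main obstacle but does not supply the estimates themselves, so as written it is an accurate roadmap rather than a proof.
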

\begin{remark}
Via \cite[Remark 4.11]{bruinier-howard-yang-unitary}, we may express $\eta_B(m)$ more explicitly: if $n>2$, then 
	\[  \eta_B(m) =  \frac{ m}{n-2} \, \mu_B(m). \]
	If instead $n=2$ and $\calV$ is isotropic, then
	\[ \eta_B(m) \ = \ -2 \left( \sum_{\lie a | \partial_k}  N(\lie a) \,  \sigma_1(m)   \, \varphi_{\overline m, \lie a}^{\vee}  \ + \ \mathbf c_{B}\left( c_{F_m}^+(0) \right) \right)
	\]
	If $n=2$ and $\calV$ is anisotropic, then $\mathcal B_{\calV} = \emptyset$ and $\BGr{m}$ is a Green function for the cycle $\Zed^*(m) = \Zed(m)$.
\end{remark}

Thus for each $m \in \Q$, we may define an $\mathcal S^{\vee}$-valued arithmetic cycle
\[ \ZedVBru(m) \ = \ \left( \Zed^*_{\calV}(m) \, + \, \sum_{B} \eta_B(m) \, [B], \, \BGr{m}  \right) \ \in \ \ChowHatC(\M_V^*) \otimes_{\C} \mathcal S^{\vee} \]
where the superscript $\mathsf{B}$ reminds us that we are using Bruinier's automorphic Green functions.

Finally, we turn to the constant terms. Let $\otauthat$ denote the \emph{tautological bundle} on $\calM_{\calV}^*$ viewed as an element of $\ChowHatC(\calM_{\calV})$. As this bundle plays only a marginal role in our present work, we refer the reader to \cite[\S6]{bruinier-howard-yang-unitary} for its construction, and content ourselves with the remark that when restricted to the open part $\calM_{\calV}(\C)$, the first Chern form $c_1({\otauthat}|_{\calM_{\calV}(\C)})$ is a K\"ahler form. 

Set
\[ 
  \ZedVBru(0) \ := \ - \otauthat \otimes \varphi_{\ov{0}, o_\kb}^{\vee} + \, \left(\sum_{B} \eta_B(0) \, [B], \, \BGr{0}\right),
\]
and similarly, for $v \in \R_{>0}$, let
\[ \ZedVKud(0,v) \ := \  - \otauthat \otimes \varphi_{\ov{0}, o_\kb}^{\vee} \ + \ \left( \frac{1}{4 \pi v} \sum_{B \in \mathcal B_{\calV}} \mu_B(0) \, [B], \, \KGr(0,v) \right) \ -\ (0, \log v) \otimes \varphi_{\ov{0}, o_\kb}^{\vee}.
\]

\begin{definition} For $\tau = u + i v \in \uhp$ and $q = e^{2 \pi i \tau}$, define formal $q$-expansions
\[ \ThetaVKud(\tau) \ := \ \sum_{m \in \Q} \, \ZedVKud(m,v) \, q^m 
 \qquad \text{and} \qquad 
  \ThetaVBru(\tau) \ := \ \sum_{m \in \Q} \, \ZedVBru(m) \, q^m, \]
  with coefficients valued in $\ChowHatC(\M_{\calV}^*) \otimes_{\C} \mathcal S^{\vee}$; these are the Kudla and Bruinier \emph{arithmetic theta functions}, respectively.  
\end{definition}

\subsection{Modularity results}
Our main results involve viewing the generating series $\ThetaVKud$ and $\ThetaVBru$ as $\ChowHatC(\M_{\calV}^*)\otimes_{\C} \mathcal S^{\vee}$-valued functions; it may be helpful at this juncture to elaborate on what this notion might mean
and what it means for such a function to be modular.

Let $(W, \rho)$ be a finite-dimensional complex representation of $\SL_2(\Z)$. Recall that a smooth function $f \colon \uhp \to W$ is said to \emph{transform as a modular form} of weight $k$ and representation $(W, \rho)$ if it is invariant under the weight $k$ slash operator
\[ f\mid_{k}[\gamma](\tau)  \ := \ (c \tau + d)^{-k} \cdot \rho(\gamma^{-1}) f( \gamma \tau)  \]
for each $\gamma = (\begin{smallmatrix} a&b\\c&d \end{smallmatrix}) \in \SL_2(\Z)$. Denote the space of such functions by $A_k(\rho)$, which is typically infinite-dimensional. 

Now suppose $\{ \widehat\phi_m(\tau) \}_{m \in \Q}$ is a collection of functions 
\[ \widehat{\phi}_m \colon \uhp \ \to  \ \ChowHatC(M^*_{\calV}) \otimes_{\C} W \]
and consider the formal series
\[ \widehat{\phi}(\tau) \  = \  \sum_{m \in \Q} \, \widehat{\phi}_m(\tau) \, q^m. \]

\begin{definition} \label{def:ModularityDef}
We say that $\widehat{\phi}(\tau)$ \emph{transforms as a modular form} of weight $k$ and representation $(W, \rho)$ if for each $m$, there is a decomposition
\[ \widehat{\phi}_m(\tau) \ = \ \widehat{\phi}_m^{\natural}(\tau) \ + \ (0, g_m(\tau) ) \]
such that the following two conditions hold.
\begin{enumerate}
	\item Roughly speaking, the formal generating series $\sum_{m} \widehat{\phi}_m^{\natural} q^m$ lies in $\ChowHatC(\M_{\calV}^*) \otimes_{\C} A_k(\rho)$. More precisely, there are finitely many elements
	\[ \widehat\Zed_1, \dots, \widehat\Zed_r \in \ChowHatC(\M_{\calV}^*)  \]
	and functions 
	\[ c_{i,m}(\tau) \colon \uhp \to W, \qquad \text{for all }i =1, \dots, r \ \text{and} \ m \in \Q, \]
	such that
	\[ \widehat{\phi}_m (\tau) \ = \ \widehat\Zed_1 \otimes c_{1,m} (\tau)  \ + \ \cdots \ + \ \widehat\Zed_r \otimes c_{r,m}(\tau) \]
	and for each $i = 1, \dots, r$, the formal series
	\[ f_i(\tau) := \sum_m c_{i,m}(\tau) \, q^m \]
	converges to a function $f_i \in A_k(\rho)$. For convenience, we abuse notation and write
	\[ \sum_{m} \widehat{\phi}_m^{\natural} q^m \ = \ \widehat \Zed_1 \otimes f_1(\tau) \ + \ \cdots \ + \ \widehat \Zed_r \otimes f_r(\tau). \]
	
	\item For each $m$ and $\tau$, the function $g_m(\tau,z)$ is a $W$-valued Green function for the zero cycle; i.e.\ it is a smooth function
	\[ g_m(\tau,z) \colon \uhp \times \M_{\calV}(\C) \ \to \ W \]
	that extends, pointwise in $\tau$, to a function with at worst $\log\text{-}\log$ singularities along the boundary of $\M_{\calV}^*$.
	
	We then require that there is a function $s(\tau,z)$ on $\uhp \times \M_{\calV}(\C)$ such that:
		\begin{itemize}
			\item for each fixed $\tau$, the function $s(\tau,z)$ and its derivatives in $z$ are smooth on $\M_{\calV}(\C)$ and have at worst $\log-\log$ singularities at the boundary;
			\item for any smooth differential form $\eta \in A^{2n-2}(\M_{\calV}^*(\C))$, the value of the current 
			\[ [s(\tau, z)](\eta) \ := \ \int_{\calM_{\calV}^*(\C)} s(\tau, z) \wedge \eta  \]
			defines a function $[s(\tau, z)](\eta) \in  A_k(\rho)$. 
			\item for each fixed $\tau$, the sum $\sum_{m} [g_m(\tau,z)] q^m$ converges weakly to $[s(\tau, z)]$, i.e.\ 
			\[
				\lim_{N \to \infty}	\sum_{|m| \leq N} \, [g_m(\tau,z)](\eta) \ q^m \ = \ [s(\tau,z)](\eta)
			\]
			for every $\eta \in A^{2n-2}(\M_{\calV}^*(\C))$ as above.
		\end{itemize}
\end{enumerate}
\end{definition}

\begin{remark} (i) If we so desire, we may impose further analytic conditions in the previous definition (e.g.\ holomorphicity, real analyticity, etc.) by replacing $A_k(\rho)$ with the corresponding spaces of modular forms, which we then call the type of $\widehat{\phi}$; thus we may, for example, speak of spaces of holomorphic $W \otimes \ChowHatC(\M_V^*)$-valued modular forms
(i.e.\ of type $M_k(\rho)$), or with a type given by one of the spaces considered in Section 2.

(ii) If a generating  series $\widehat{\phi}(\tau)$ is modular as in the above definition, it defines a map (denoted by the same symbol)
\[ \widehat{\phi} \colon \uhp \to \ChowHatC(\M_{\calV}^*) \otimes_{\C} W, \qquad \widehat{\phi}(\tau) := \sum_{i=1}^r  \widehat \Zed_i  \otimes f_i(\tau)  \ + \ (0, s(\tau,z))  \]
which satisfies
\[ \widehat{\phi}(\tau)\mid_{k}[\gamma] \ = \ (c \tau + d)^{-k}  \ \left(1 \otimes \rho(\gamma^{-1})\right) \widehat{\phi}(\tau) \]
for all $\gamma = (\begin{smallmatrix} a & b \\ c & d \end{smallmatrix})$, and is independent of the choices of $\widehat\Zed_i, f_{m,i}$ and $g_m$ above. 

The upshot of this definition is that given a ``reasonable" pairing 
\[ \ChowHatC(\M_{\calV}^*) \times {\widehat{\mathsf{CH}}{}^{n-1}_{\C}}(\M_{\calV}^*) \to \C, \qquad [\widehat\Zed_1 : \widehat\Zed_2] \ := \ \widehat\deg \, \left(\widehat\Zed_1 \cdot \widehat\Zed_2 \right) \]
cf.\ \Cref{hyp:arithInt} below, and any fixed class $\widehat\calY \in{\widehat{\mathsf{CH}}{}^{n-1}_{\C}}(\M_{\calV}^*)$, the expression $[\widehat{\phi}(\tau) \colon \widehat \calY]$ defines an element of $A_{k}(\rho)$ with $q$-expansion
\[
	[\widehat{\phi}(\tau) : \widehat \calY] \ = \ \sum_{m} \, [ \widehat \phi_m(\tau) : \widehat \calY] \, q^m. 
\]

\end{remark}

We now come to the main theorem of this section.
\begin{theorem} \label{thm:ArithModularity} The difference
$\ThetaVKud(\tau) - \ThetaVBru(\tau)$ transforms as a $\ChowHatC(\M_{\calV}^*)$-valued  form of type $\calA_n^!(\calS^{\vee})$. 
\end{theorem}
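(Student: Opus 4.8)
The plan is to compare the two arithmetic theta functions coefficient by coefficient and to reduce modularity to the analytic results of Sections~2 and~3 together with the explicit formulas for the boundary coefficients $\mu_B(m)$ and $\eta_B(m)$.

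First I would record the cancellation of the genuinely geometric part. For $m\neq 0$ the divisor $\Zed_\calV^*(m)$ occurs with the same multiplicity in $\ZedVKud(m,v)$ and in $\ZedVBru(m)$, and the tautological bundle $\otauthat\otimes\varphi_{\ov{0},o_\kb}^{\vee}$ occurs identically in the two constant terms; hence the difference of the $m$-th coefficients is a combination of boundary divisors $[B]$, with real coefficient $\tfrac{\mu_B(m)}{4\pi v}-\eta_B(m)$ on $[B]$, together with the archimedean datum $\bigl(0,\ \KGr(m,v)-\BGr{m}\bigr)$, corrected at $m=0$ by $-(0,\log v)\otimes\varphi_{\ov{0},o_\kb}^{\vee}$. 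I would then fix, once and for all and independently of $m$ and $v$, an arithmetic class $\widehat{[B]}=([B],\mathfrak g_B)\in\ChowHatC(\M_\calV^*)$ for each $B\in\mathcal{B}_\calV$, which lets me rewrite the boundary part as $\sum_B\bigl(\tfrac{\mu_B(m)}{4\pi v}-\eta_B(m)\bigr)\widehat{[B]}$ at the price of adding the \emph{smooth} correction $-\sum_B\bigl(\tfrac{\mu_B(m)}{4\pi v}-\eta_B(m)\bigr)\mathfrak g_B$ to the archimedean slot. This puts the difference in exactly the shape prescribed by \Cref{def:ModularityDef}, and it remains to check modularity of the two resulting pieces.

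For the boundary piece, fix $B$. By \Cref{prop:BHYbdy}, $\eta_B(m)=\tfrac1{4\pi}\langle F_m,\Theta_{\Lambda_B}\rangle^\reg$, and the same unfolding as in the proofs of \Cref{lem:xireg} and \Cref{thm:Xitheta}, but with the sum over lattice vectors replaced by a single Fourier coefficient since $\Theta_{\Lambda_B}$ is holomorphic, gives $\langle P_{m,v},\Theta_{\Lambda_B}\rangle^\reg = v^{-1}\mu_B(m)$ for all $m$ (the pairing picks out the $m$-th Fourier coefficient of $\Theta_{\Lambda_B}$ times $\int_v^{\infty}t^{-2}\,dt=v^{-1}$). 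Hence
\[
\sum_m\Bigl(\tfrac{\mu_B(m)}{4\pi v}-\eta_B(m)\Bigr)q^m \ = \ \tfrac1{4\pi}\sum_m\bigl\langle P_{m,v}-F_m,\ \Theta_{\Lambda_B}\bigr\rangle^\reg\,q^m,
\]
and since $\Theta_{\Lambda_B}$ is a holomorphic modular form of weight $n-2$ whose constant term is a constant, it lies in $\ALmod{n-2}(\calS^{\vee})$, so \Cref{cor:vectorValLSec} with $\kappa=n$ shows this series is the $q$-expansion of a form $f_B\in\calA_n^!(\calS^{\vee})$. For the archimedean piece, I would invoke \Cref{thm:OrthModGenSeries} for the quadratic space $\calV_{[\Q]}$ of signature $(2n-2,2)$ (so the weight is $p/2+1=n$): restricting the resulting element of $\AkappaLexp$ to $\calS^{\vee}$, pulling back along $\domain(\calV)\hookrightarrow\domain^o(\calV_{[\Q]})$, and descending via \eqref{eqn:MCpxUnif}, I get that for each fixed $z$ the series $-\log v\,\varphi_{\ov{0},o_\kb}^{\vee}+\sum_m(\KGr(m,v)-\BGr{m})(z)q^m$ is a $q$-expansion of type $\calA_n^!(\calS^{\vee})$. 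Subtracting the corrections $\sum_B f_B(\tau)\mathfrak g_B(z)$ preserves modularity in $\tau$ (the $f_B$ already being modular of weight $n$) and, crucially, cancels the logarithmic singularities of $\KGr(m,v)-\BGr{m}$ along $\partial\M_\calV^*$, whose boundary behaviour is governed precisely by $\tfrac{\mu_B(m)}{4\pi v}$ and $\eta_B(m)$; this leaves a current $(0,g(\tau,z))$ that for each $\tau$ is smooth on $\M_\calV(\C)$ with at worst $\log$-$\log$ singularities along the boundary. Assembling the pieces gives $\ThetaVKud(\tau)-\ThetaVBru(\tau)=\sum_B\widehat{[B]}\otimes f_B(\tau)+(0,g(\tau,z))$ with $f_B\in\calA_n^!(\calS^{\vee})$.

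The step I expect to be the main obstacle is not the modularity itself, which after this bookkeeping falls out of \Cref{cor:vectorValLSec} and \Cref{thm:OrthModGenSeries}, but verifying that the family $(0,g(\tau,z))$ genuinely defines a $\ChowHatC(\M_\calV^*)$-valued modular form in the technical sense of \Cref{def:ModularityDef}(ii): one must produce a function $s(\tau,z)$ with the stated smoothness and $\log$-$\log$ boundary behaviour such that, for every test form $\eta\in A^{2n-2}(\M_\calV^*(\C))$, the current $[s(\tau,z)](\eta)=\int_{\M_\calV^*(\C)}s(\tau,z)\wedge\eta$ is a modular form in $\tau$ and $\sum_m[g_m(\tau,z)](\eta)\,q^m$ converges weakly to it. This amounts to interchanging the integral over $\M_\calV^*(\C)$ with the regularized integral over $\tau$ and the sum over $m$, which I would justify using the exponential decay of Fourier coefficients away from the constant term established in Section~2 (cf.\ \Cref{cor:vectorValLSec}, \Cref{cor:ArchDiffRegLift}); along the way the $m=0$ and $m<0$ coefficients, and the case $n=2$ in its split and anisotropic flavours (via \Cref{rmk:nEquals2}), must be accounted for separately, but the argument is otherwise unchanged.
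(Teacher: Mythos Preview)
Your decomposition into boundary and archimedean pieces, the computation of the boundary generating series via $\langle P_{m,v},\Theta_{\Lambda_B}\rangle^{\reg}=v^{-1}\mu_B(m)$, and the appeal to \Cref{cor:vectorValLSec} and \Cref{thm:OrthModGenSeries} for pointwise-in-$z$ modularity are all exactly what the paper does (this is \Cref{lem:bdyFourierMod} and \Cref{cor:ArithBdyMod}). You also correctly isolate the genuine difficulty: verifying \Cref{def:ModularityDef}(ii) for the archimedean part.

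The gap is in your proposed resolution of that difficulty. You suggest it ``amounts to interchanging the integral over $\M_\calV^*(\C)$ with the regularized integral'' and appeal to exponential decay from Section~2. But \Cref{def:ModularityDef}(ii) demands more than convergence of $\sum_m[g_m](\eta)\,q^m$ for each test form $\eta$: it requires a function $s(\tau,z)$ that is, for each fixed $\tau$, \emph{smooth in $z$} on $\M_\calV(\C)$ with $\log$-$\log$ boundary behaviour, and whose associated current is the weak limit. The results of Section~2 give no control over the behaviour in $z$; indeed the paper says so explicitly and therefore takes an indirect route. The missing idea is to pass through the Kudla--Millson theta function $\Theta_{KM}(\tau)$, which is already a smooth closed $(1,1)$-form on $\M_\calV(\C)$ and modular in $\tau$, satisfying $\Low(\Theta_{KM})=\ddc\Theta_L$. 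One checks that $\ddc g_m(v,z)$ is the $m$-th Fourier coefficient of a modification $H^*(\tau)$ of $\Theta_{KM}$, which is smooth in $z$ and $\log$-$\log$ at the boundary. Since each $\ddc g_m$ is exact, $[H^*(\tau)]$ is an exact current for each $\tau$, and the $\ddc$-lemma (in the $\log$-$\log$ setting of \cite{bkk}) produces a smooth $S_0(\tau,z)$ with $\ddc[S_0]=[H^*]$. The discrepancy $g_m-c_{S_0}(m,v)$ is then locally constant in $z$, and one shows separately (by testing against a single compactly supported form on each component and using the interchange-of-integrals argument you had in mind) that its generating series is modular. Your interchange argument does enter, but only at this last step for the locally constant part; it cannot by itself produce the smooth $s(\tau,z)$.
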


For the proof we first set
\[  \ThetaVKud(\tau) - \ThetaVBru(\tau)  \ =: \ \widehat\phi(\tau)\ = \ \sum_{m} \widehat\phi_m(\tau) \, q^m  \]
where, for $m \neq 0$,
\begin{align*}
 \widehat\phi_m(\tau) \ = \  \left( \sum_{B \in \mathcal B_V} \left( \frac{1}{4 \pi v} \mu_B(m) - \eta_B(m) \right) \cdot [B],
  \ \KGr(m,v) - \BGr{m}  \right).
 \end{align*}
Since $\M_\calV^*$ is projective, there is a Green function $\lie g_B$ for each boundary component $[B$] that has $C^{\infty}$ regularity on $\M_\calV(\C)$, and may be normalized so that
\[ \int\limits_{\M_\calV^*(\C)} \lie g_B  \, \mathrm d\Omega \ = \ 0 \]
for the measure $\mathrm d \Omega = \wedge^{n-1} c_1(\widehat\omega_{/ \C} ) $ induced by top wedge power of the Chern form of the tautological bundle. These Green functions define classes
\[ \widehat{[ B ]} \ := \ (B, \lie g_B) \in \ChowHatC(\M_{\calV}^*). \]
Thus for each $m \neq 0$, we may write
\begin{equation}
  \label{eq:PhiHatDecomp}
  \widehat\phi_m(\tau) \ = \ \widehat\phi_{m}^{\natural}(\tau) \ + \ (0,  g_{m}(\tau,z) ),
\end{equation}
where
\[ \widehat\phi_{m}^{\natural}(\tau) \ = \   \sum_{B \in \mathcal B_{\calV}} \ \widehat{[ B]} \otimes   \left( \frac{\mu_B(m)}{4 \pi v} - \eta_B(m) \right) \ \in \   \ChowHatC(\M_{\calV}^*) \otimes \mathcal S^{\vee} \]
and
\begin{equation} \label{eqn:gmDef}
 g_{m}(\tau,z) \ = \ \KGr(m, v)(z) -  \BGr{m}(z) - \sum_{B \in \mathcal B_{\calV}}  \left( \frac{\mu_B(m)}{4 \pi v} - \eta_B(m) \right) \lie g_B(z) 
\end{equation} 

Similarly, when $m=0$,
\begin{align}
\notag \widehat\phi_0(\tau) \ =&  \ \left( \sum_{B \in \mathcal B_V} \left(\frac{\mu_B(0)}{4 \pi v}-\eta_B(0)\right) \, [B]\ , \ \KGr(0,v)-\BGr{0} \right) \ - \  (0, \log v) \otimes \varphi_{\ov{0}, o_\kb}^{\vee}   \\
 \notag =&  \  \sum_B  \, \widehat{[B]}  \otimes \left( \frac{\mu_B(0)}{4 \pi v} -\eta_B(0)\right)\  \\
&\quad + \ \left( 0, \, \KGr(0,v)(z) - \BGr{0} \ - \ \sum_B \left( \frac{\mu_B(0)}{4\pi v} - \eta_B(0) \right) \frakg_B(z)  \ - \  \log v \cdot \varphi_{\ov{0}, o_\kb}^{\vee} \right) \notag\\
 \label{eqn:g0def} =&: \ \widehat\phi^{\natural}_0(v) \ + \ (0, g_0(v,z)) .
 \end{align}
We first show the coefficients of the boundary components are already the coefficients of modular forms.
\begin{lemma}  \label{lem:bdyFourierMod}
For any boundary component $B$,  
\[  
    \sum_{m \geq 0} \left( \frac{\mu_B(m)}{4 \pi v }  \ -  \ \eta_B(m) \right) \, q^m  \ 
     = \ - \frac{1}{4 \pi} \Lsharp\left(\Theta_{\Lambda_B} \right), 
\]
where $\Lsharp\left(\Theta_{\Lambda_B} \right)$ denotes the normalized preimage under the lowering operator
as defined in \Cref{prop:uniquepreimage}.
In particular, the left hand side defines a  form in $\calA_n^{\mathrm{mod}}(\calS^{\vee}) \subset\calA_n^!(\calS^{\vee}) $.
\begin{proof}
We apply \Cref{thm:MaassSection} to $\Theta_{\Lambda_B}(\tau)$, so that the $m$'th Fourier coefficient of $-\Lsharp(\Theta_{\Lambda_B})$ is given by
\[ \langle P_{m,v} - F_m , \, \Theta_{\Lambda_B} \rangle^{\mathrm{reg}} \]
Since $\eta_B(m) = \frac{1}{4 \pi} \langle F_m, \, \Theta_{\Lambda_B} \rangle^{\mathrm{reg}}$, it will suffice to show that 
\[ \langle P_{m,v}, \, \Theta_{\Lambda_B} \rangle^{\mathrm{reg}} \ \stackrel{?}{=} \  \frac{\mu_B(m)}{v} \]
for all $m \in \Q$; note that $\mu_B(m) = 0$ when $m < 0$. 

For convenience, set 
\[P_{m,v,\lie a}(\tau') = P_{m,v}(\tau')(\varphi_{\overline m, \lie a}) \]
for a  (non-zero)  element $\varphi_{\overline m, \lie a}$ and $\tau' \in \mathbb H$. Recall that if $ v_0 := \max(v,1/v)$, then
\[ P_{m,v,\lie a}(\tau') \ = \ (q')^{-m} \cdot \varphi_{\ov{m}, \lie a} \qquad \ \text{ whenever } \Im(\tau') > v_0. \]
Therefore, on the set $v' > v_0$, the integral
\[ \int_{-1/2}^{1/2} P_{m,v,\lie a}(u' + i v') \cdot \Theta_{\Lambda_B}(u'+iv') d u' \ = \ \mu_B(m)(\varphi_{\ov{m}, \lie a})  \]
is bounded uniformly.
It follows that the meromorphic function in $s$ whose constant term at $s=0$ defines the regularized integral
	\[
		 \langle P_{m,v,\lie a}, \, \Theta_{\Lambda_B} \rangle^{\mathrm{reg}} \ = \ \CT_{s=0}  \, \lim_{T \to \infty} \, \int_{\calF_T} P_{m,v,\lie a}(\tau') \cdot \Theta_{\Lambda_B}(\tau') \, \frac{ d \mu(\tau')}{(v')^s} 
	\]
is holomorphic at $s=0$, i.e.\ 
	\[
		\langle P_{m,v,\lie a}, \, \Theta_{\Lambda_B} \rangle^{\mathrm{reg}} \ = \ \lim_{T \to \infty} \int_{\calF_T} P_{m,v,\lie a}(\tau') \cdot \Theta_{\Lambda_B}(\tau') \,  d \mu(\tau') .
	\]
Unfolding the Poincar\'e series, as in the proof of \Cref{thm:MaassSection}, then gives
	\begin{align*}
		\lim_{T\to\infty} \int_{\calF_T} P_{m,v,\lie a}(\tau') \cdot \Theta_{\Lambda_B}(\tau') \,  d \mu(\tau') \ =& \ \int_{0}^{\infty} \int_{-1/2}^{1/2} \  \sigma_v(\tau') \ (q')^{-m}  \ \Theta_{\Lambda_B}(\tau')(\varphi_{\ov{m}, \lie a})  \ \frac{du' \, dv' }{(v')^2} \\
		=& \ \int_v^{\infty} \int_{-1/2}^{1/2} \ (q')^{-m}  \ \Theta_{\Lambda_B}(\tau')(\varphi_{\ov{m}, \lie a})  \ \frac{du' \, dv' }{(v')^2} \\
		=& \ \mu_{B}(m)(\varphi_{\ov{m}, \lie a}) \cdot \int_v^{\infty} \frac{dv'}{(v')^2} \\
		=& \ \frac{\mu_{B}(m) (\varphi_{\ov{m}, \lie a}) }{v}
	\end{align*}
as required.
The fact that $\Lsharp(\Theta_{\Lambda_B})$ has moderate growth can be easily deduced from the Fourier expansion.
\end{proof}
\end{lemma}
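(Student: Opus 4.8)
The plan is to obtain the identity as a direct consequence of \Cref{thm:MaassSection}, applied to the holomorphic theta form $\Theta_{\Lambda_B}$, once a single Poincar\'e unfolding has been carried out. First I would check that $\Theta_{\Lambda_B}$ is an admissible input: since $\Lambda_B$ is positive definite of signature $(n-2,0)$, the form $\Theta_{\Lambda_B}\in M_{n-2}(\calS^\vee)$ is holomorphic and bounded on $\uhp$ with a constant-in-$v$ constant term, so it lies in $\ALmod{n-2}(\calS^\vee) = \ALmod{\kappa-2}(\calS^\vee)$ for $\kappa = n$. Applying \Cref{thm:MaassSection} and its $\calS^\vee$-valued extension \Cref{cor:vectorValLSec}, the generating series $\sum_m \langle P_{m,v}-F_m,\Theta_{\Lambda_B}\rangle^\reg\, q^m$ is precisely the $q$-expansion of $-\Lsharp(\Theta_{\Lambda_B})$. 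Because $\eta_B(m) = \tfrac{1}{4\pi}\langle F_m,\Theta_{\Lambda_B}\rangle^\reg$ by definition, the asserted identity is equivalent, coefficient by coefficient, to
\[ \langle P_{m,v},\Theta_{\Lambda_B}\rangle^\reg \ = \ \frac{\mu_B(m)}{v}\qquad\text{for every }m. \]

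The core of the argument is this last identity, which I would prove by unfolding the truncated Poincar\'e series. Restricting to a basis vector and writing $P_{m,v,\lie a} = P_{m,v}(\varphi_{\overline m,\lie a})$, recall that $P_{m,v,\lie a}(\tau') = (q')^{-m}\varphi_{\overline m,\lie a}$ as soon as $\Im(\tau') > v_0 := \max(v,1/v)$. Hence the $u'$-integral of $P_{m,v,\lie a}\cdot\Theta_{\Lambda_B}$ over the range $v' > v_0$ simply extracts the (bounded) $m$-th Fourier coefficient of $\Theta_{\Lambda_B}$; this is what shows that the Mellin integral defining the regularized pairing is already holomorphic at $s = 0$, so that the operation $\CT_{s=0}$ collapses to ordinary evaluation. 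Unfolding as in the proof of \Cref{thm:MaassSection} then rewrites the pairing as
\[ \int_v^{\infty}\int_{-1/2}^{1/2} (q')^{-m}\,\Theta_{\Lambda_B}(\tau')(\varphi_{\overline m,\lie a})\, du'\,\frac{dv'}{(v')^2}. \]
Here the holomorphy of $\Theta_{\Lambda_B}$ is decisive: its $m$-th coefficient $\mu_B(m)(\varphi_{\overline m,\lie a})$ is independent of $v'$, so the inner integral is constant and the remaining integral $\int_v^\infty (v')^{-2}\,dv' = 1/v$ produces $\mu_B(m)(\varphi_{\overline m,\lie a})/v$, as desired. The fact that the sum on the left begins at $m\geq 0$ is then consistent: $\mu_B(m) = 0$ for $m<0$ since $\Lambda_B$ is positive definite, and the corresponding negative-index Maa\ss{} forms $F_m$ vanish (for $n>2$ the weight $2-n$ is negative, so $F_m = 0$ for $m\leq 0$ and thus $\eta_B(m)=0$), with the degenerate case $n=2$ handled separately via \Cref{rmk:nEquals2}.

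Finally, the moderate-growth claim --- that the left-hand side lies in $\calA_n^{\mathrm{mod}}(\calS^\vee)$ and not merely in $\calA_n^!(\calS^\vee)$ --- I would read off from the Fourier expansion of $\Lsharp(\Theta_{\Lambda_B})$: by \eqref{eqn:cFlowering} its coefficients are integrals of the (bounded) coefficients of $\Theta_{\Lambda_B}$ against $v^{-2}\,dv$, hence grow at most polynomially, and the constant term has exactly the shape demanded by \Cref{def:modgrowthforms}(iii). I expect the one genuinely delicate point to be the justification that the regularized pairing has no pole at $s=0$, i.e.\ that the $\CT_{s=0}$ may be dropped; this rests on the exact cutoff structure of the truncated Poincar\'e series together with the holomorphy (and boundedness) of $\Theta_{\Lambda_B}$, and everything else is the same Stokes-and-unfolding bookkeeping already developed for \Cref{thm:MaassSection}.
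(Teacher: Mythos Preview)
Your proposal is correct and follows essentially the same approach as the paper's own proof: reduce via \Cref{thm:MaassSection} to the identity $\langle P_{m,v},\Theta_{\Lambda_B}\rangle^{\reg} = \mu_B(m)/v$, then justify dropping $\CT_{s=0}$ using boundedness of the $u'$-integral for $v'>v_0$, unfold the Poincar\'e series, and evaluate $\int_v^\infty (v')^{-2}\,dv'$. Your write-up is slightly more explicit about verifying $\Theta_{\Lambda_B}\in\ALmod{n-2}(\calS^\vee)$ and about the vanishing of the $m<0$ terms, but there is no substantive difference in method.
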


\begin{remark}
  Observe that we can also identify the sum on the left-hand side in Lemma \ref{lem:bdyFourierMod}
  as (up to a non-zero constant) the image of $\Theta_{\Lambda_B}$ under the weight $n-2$ raising operator.
\end{remark}

\begin{corollary} \label{cor:ArithBdyMod} The formal generating series $ \widehat\phi^{\natural}(\tau)$ is an element of  $ \ChowHatC(\M_{\calV}^*) \otimes_{\C} \calA^{\mathrm{mod}}_n(\mathcal S^{\vee})$ in the sense of \Cref{def:ModularityDef}(i). 
\begin{proof} This follows immediately from the previous lemma, since
\[ \widehat\phi^{\natural}(\tau) \ = \ - \frac1{4 \pi} \sum_{B} \, [\widehat{B}] \otimes \Lsharp(\Theta_{\Lambda_B})  . \]
\end{proof}
\end{corollary}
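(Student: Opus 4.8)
The plan is to read the statement off from \Cref{lem:bdyFourierMod}, so the argument is essentially bookkeeping. First I would recall the decompositions fixed in the run-up to this corollary: by \eqref{eq:PhiHatDecomp} (and \eqref{eqn:g0def} when $m=0$) one has, for every $m \in \Q$,
\[
  \widehat\phi^{\natural}_m(\tau) \ = \ \sum_{B \in \mathcal B_{\calV}} \widehat{[B]} \otimes \Big( \frac{\mu_B(m)}{4\pi v} - \eta_B(m) \Big),
\]
and the index set $\mathcal B_{\calV}$ is \emph{finite}. Hence $\widehat\phi^{\natural}(\tau) = \sum_m \widehat\phi^{\natural}_m(\tau)\, q^m$ is already a finite $\ChowHatC(\M_{\calV}^*)$-linear combination of formal $\calS^{\vee}$-valued $q$-series, and one takes the classes $\widehat\Zed_1,\dots,\widehat\Zed_r$ of \Cref{def:ModularityDef}(i) to be $\{\widehat{[B]}\}_{B \in \mathcal B_{\calV}}$. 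The only thing left to verify is that, for each $B$, the coefficient series $\sum_m \big(\tfrac{\mu_B(m)}{4\pi v} - \eta_B(m)\big) q^m$ converges to an element of the prescribed space of modular forms.

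This is exactly the content of \Cref{lem:bdyFourierMod}, which identifies this series with $-\tfrac{1}{4\pi}\,\Lsharp(\Theta_{\Lambda_B})$. To make sense of the right-hand side I would first note that $\Theta_{\Lambda_B} \in M_{n-2}(\calS^{\vee})$ is holomorphic, hence $C^{\infty}$, of bounded growth at $\infty$, and with constant constant term, so it lies in $\ALmod{n-2}(\calS^{\vee})$; therefore \Cref{prop:uniquepreimage} produces a well-defined preimage $\Lsharp(\Theta_{\Lambda_B}) \in \calA_n^{!}(\calS^{\vee})$. Assembling over $B$ gives
\[
  \widehat\phi^{\natural}(\tau) \ = \ -\frac{1}{4\pi} \sum_{B \in \mathcal B_{\calV}} \widehat{[B]} \otimes \Lsharp(\Theta_{\Lambda_B}),
\]
which is precisely the shape demanded by \Cref{def:ModularityDef}(i). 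To upgrade the type from $\calA_n^{!}$ to $\calA_n^{\mathrm{mod}}$, I would inspect the Fourier expansion of $F := \Lsharp(\Theta_{\Lambda_B})$: by \Cref{thm:MaassSection} its $m$-th Fourier coefficient equals $-\big(\tfrac{\mu_B(m)}{v} - 4\pi\,\eta_B(m)\big)$, and using $\mu_B(m) = O(m^{n-2})$ together with the explicit description of $\eta_B(m)$ recorded after \Cref{prop:BHYbdy} (for $n>2$, $\eta_B(m) = \tfrac{m}{n-2}\mu_B(m)$), every term with $m \neq 0$ decays exponentially and the $m=0$ term is $O(1/v)$; summing, $F$ and all its derivatives are $O(v^{\ell})$ as $v \to \infty$ and the constant term of $F$ has the form required by \Cref{def:modgrowthforms}(iii), so $F \in \calA_n^{\mathrm{mod}}(\calS^{\vee})$.

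I do not expect any genuine obstacle: the substance of the argument is \Cref{lem:bdyFourierMod} (itself resting on \Cref{thm:MaassSection} and the unfolding of the truncated Poincar\'e series), and everything downstream is formal manipulation plus the growth estimate of the previous paragraph, which is of exactly the same flavour as the bounds already used in \Cref{sec:invert-xi}, e.g.\ \eqref{eqn:cFstrong} and \Cref{lem:sumneq0coeffs}. The one place warranting a little care is the low-rank case $n=2$ (where either $\calV$ is anisotropic, so $\mathcal B_{\calV} = \emptyset$ and there is nothing to prove, or $\calV$ is split and $\Lambda_B$ is the trivial lattice, in which case one falls back on the explicit formula for $\eta_B$ in the remark following \Cref{prop:BHYbdy}), but this affects only the verification of the growth condition, not the structure of the proof.
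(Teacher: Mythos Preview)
Your proposal is correct and follows essentially the same route as the paper: both invoke \Cref{lem:bdyFourierMod} to identify the coefficient series attached to each boundary component $B$ with $-\tfrac{1}{4\pi}\Lsharp(\Theta_{\Lambda_B})$, and then read off the statement of \Cref{def:ModularityDef}(i) from the finiteness of $\mathcal B_{\calV}$. The moderate growth assertion is already recorded as part of \Cref{lem:bdyFourierMod} in the paper, so your extra paragraph verifying it directly from the Fourier expansion is redundant but not wrong.
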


\begin{proof}[Proof of \Cref{thm:ArithModularity}]
In view of \Cref{cor:ArithBdyMod} and the decomposition \eqref{eq:PhiHatDecomp}, it remains to prove the modularity, in the sense of \Cref{def:ModularityDef}(ii), of the ``archimedean part"
\[
	\widehat{\phi}(\tau) \ - \ \widehat\phi^{\natural}(\tau) \ = \ \sum_m ( 0, g_m(v,z)) \, q^m
\]
where
\[
		g_m(v,z) \ =\  \KGr(m, v)(z) -  \BGr{m}(z) - \sum_{B \in \mathcal B_{\calV}}  \left( \frac{\mu_B(m)}{4 \pi v} - \eta_B(m) \right) \lie g_B(z)  - \delta_{m,0} \log v \cdot \varphi_{\ov{0}, o_\kb}^{\vee}.
\]
Note that for fixed $z$, we may consider the $q$-series
\begin{align*}
	g(\tau,z) \ := \ \sum_m \, g_m(v,z) q^m \ = \ - \log(v) \, \varphi_{\bar0, o_\kb}^{\vee} & \ + \ \sum_{m} \left( \KGr(m,v) - \BGr{m} \right)(z) \, q^m \\
	&  - \frac{1}{4 \pi} \sum_B  \Lsharp(\Theta_{\Lambda_B})(\tau) \cdot \lie g_B(z);
\end{align*}
since the Green functions $\KGr(m,v)$ and $\BGr{m}$ are obtained by restricting their orthogonal counterparts along $\domain(\calV) \hookrightarrow \domain^o([\calV]_{\Q})$, the pointwise-in-$z$ modularity of $g(\tau,z)$ follows immediately from \Cref{thm:OrthModGenSeries}, which in turn relies on our abstract characterization of $\Lsharp$. Unfortunately, the methods of \Cref{sec:invert-xi} do not give us any information for its behaviour as $z$ varies, while  \Cref{def:ModularityDef}(ii) requires more control in this aspect; we will therefore need to be somewhat indirect in our approach.

Consider the Kudla-Millson theta function \cite{kudlamillsonintersection}
\[ 
	\Theta_{KM}(\tau) \colon \uhp \ \to \ Z^{(1,1)}(\calM_{\calV}(\C)) \otimes_{\C} \calS^{\vee}
\]
which we view as a non-holomorphic form of weight $n$ in $\tau$, valued in the tensor product of the space of closed (smooth) differential forms of degree $(1,1)$ on $\calM_{\calV}(\C)$ and $\calS^{\vee}$. Its Fourier expansion is
\begin{equation} \label{eqn:KMThetaFourier}
	\Theta_{KM}(\tau) \ = \ - \ \varphi_0^\vee \cdot c_1(\omega^{\mathrm{taut}}) \ + \  \sum_{m \in \Q} \, \omega_{\KGr(m,v)} \, q^m,
\end{equation}
where 
\[ 
	[\omega_{\KGr(m,v)}] \ = \ \ddc [\KGr(m,v)] \ + \ \delta_{\calZ(m)(\C)} \qquad \text{on }  \calM_{\calV}(\C).
\]
Moreover, 
\begin{equation} \label{eqn:LowerThetaKM}
	\Low(\Theta_{KM})(\tau) \ = \ \ddc \Theta_L(\tau)
\end{equation}
where $\Theta_L(\tau)$ is the Siegel theta function, as in \eqref{eq:siegeltheta}; this relation can be extracted from the Fock model construction of $\Theta_{KM}(\tau)$ in \cite{kudlamillsonintersection}, see also \cite[\S 7]{brfugeom}. 
More precisely, if $\mathbf z = ( z_1, \dots, z_{n-1})$ is a local  coordinate on some small open set $U \subset \calM_{\calV}(\C)$, then we may write
\[ 
	\Theta_{KM}(\tau) \ = \ \sum_{i,j} f_{ij}(\tau, \mathbf z) \, d z_i \wedge d \overline{z_j} 
\]
for a family of forms $f_{ij}(\cdot, \mathbf z) \in A_n^{!}(\calS^{\vee})$ varying smoothly in $\mathbf z$. 
Then \eqref{eqn:LowerThetaKM} asserts that at a given fixed point $\mathbf z_0 \in U$,  
\[ 
	\Low(\Theta_{KM})(\tau)|_{\mathbf z_0} \ = \  \sum_{i,j}  \  \Low  \left( f_{ij}(\cdot, \mathbf z_0) \right)(\tau)   \ d z_i \wedge d \overline{z_j}  \ = \  \ddc \Theta_L(\tau)|_{\mathbf z_0}.
\]
Moreover, it follows easily from \eqref{eqn:KMThetaFourier} that
\[
	\kappa_{\Theta_{KM}}(m) \ = \lim_{T \to \infty} c_{\Theta_{KM}}(m, T) \ = \ \begin{cases} - \ \varphi_0^\vee \cdot \Omega  & \text{ if } m = 0 \\ 0 & \text{ if } m < 0, \end{cases}
\]
where we have abbreviated $\Omega  = c_1(\widehat \omega^{\mathrm{ taut}})$.
Thus, there is a holomorphic modular form $G(\tau) \in M_{n}(\calS^{\vee})$  and an orthonormal basis of cusp forms $f_1, \dots f_t \in S_n(\calS^{\vee})$ such that
\begin{align*}
	H(\tau) \ :=& \ \Theta_{KM}(\tau) \  - \ G(\tau) \cdot \Omega \ - \ \sum_i \langle \Theta_{KM} \, - \, G \cdot \Omega, \ f_i  \rangle_{\mathrm{Pet}}^{\reg} \ f_i(\tau)  \\
	=&  \ \Low^{\sharp} \left( \ddc \Theta_L \right).
\end{align*}
Writing the Fourier expansion 
\[ 
	H(\tau) = \sum_{m} c_H(m,v) \, q^m, \qquad \text{  where } c_H(m,v) \in Z^{(1,1)}(\calM_{\calV}(\C)) \otimes_{\C} \calS^{\vee},
\]
\Cref{thm:MaassSection} implies that
\[
		c_H(m,v) \ = \ \langle P_{m,v} \, - \, F_m, \ \ddc \Theta_L \rangle^{\reg}.
\]
On the other hand, by using \Cref{cor:ArchDiffRegLift}, it can be easily shown that the $\ddc$ operator commutes with taking the regularized integral, so
\[
	c_H(m,v) \ = \ \ddc  \, \langle P_{m,v} \, - \, F_m, \ \Theta_L \rangle^{\reg} \ = \ \ddc \left( \KGr(m,v) \ - \BGr{m}) \right),
\]
so 
\[
	H(\tau) \ = \ \sum_m \ddc \left(  \KGr(m,v) \ - \ \BGr{m} \right) \ q^m
\]
on $\calM_{\calV}(\C)$; the key point here is that for  a fixed $\tau$, the generating series on the right defines a smooth $(1,1)$ form on $\calM_{\calV}(\C)$.

Now consider
\[
	H^*(\tau)  \ := \ H(\tau)  \ - \ \frac{1}{4 \pi} \sum_{B} \, \Lsharp(\Theta_{\Lambda_B})(\tau) \cdot \omega_{B}
\]
where $\omega_{B} = \ddc [\lie g_B] \ - \ \delta_{B(\C)}$ is a smooth $(1,1)$ form on the compactification $\calM^*_{\calV}(\C)$. Note that $H^*(\tau)$ is a smooth closed form on $\calM_{\calV}(\C)$. 

A rather tedious but straightforward calculation reveals that $\Theta_{KM}(\tau)$, and hence $H^*(\tau)$, is $\log\text{-}\log$ singular at the boundary, and so defines a closed current
\[
	[H^*(\tau)] \ \in \ \calD^{(1,1)}(\calM^*_{\calV}(\C))
\]
cf.\ \cite[Proposition 2.26]{bkk}; moreover, for any smooth differential form $\phi$ on $\calM_{\calV}^*(\C)$ of degree $(n-2, n-2)$, the integral 
\[
  [H^*(\tau)](\phi) = 	\int_{\calM_{\calV}^*(\C)} H^*(\tau) \wedge \phi
\]
defines a moderate growth form $[H^*(\tau)](\phi) \in \ALmod{n}(\calS^{\vee})$. 

By construction, the Fourier coefficients of $[H^*(\tau)]$ are given by
\[
		c_{H^*}(m,v) \ = \  \left[ \ddc \left( \KGr(m,v)  \  - \ \BGr{m}\ - \  \sum_B \left( \frac{\mu_B(m)}{4 \pi v} - \eta_B(m)  \right) \lie g_B \right) \right] \ = \ [\ddc g_m(v)].
\]
Since $\ddc g_m(v,z)$ is cohomologically trivial (i.e.\ $g_m(v,z)$ is a Green function for the zero cycle), it follows that for any closed differential form $\phi$ on $\calM^*_{\calV}(\C)$,
\begin{align*}
 	[H^*(\tau)](\phi) \ =  \ \int_{\calM^*_{\calV}(\C)} H^*(\tau) \wedge \phi \ =& \ \sum_m \ \int_{\calM^*_{\calV}(\C)} \ddc g_m(v,z) \wedge \phi  \ q^m \\ 
 	=& \ \sum_m \int_{\calM^*_{\calV}(\C)} g_m(v,z) \wedge \ddc \phi \ \ q^m  \\ =& \ 0.
\end{align*}

Thus, for each fixed $\tau$, the current $[H^*(\tau)]$ is closed and exact. Hence, by the $\ddc$ lemma and \cite[Theorem 2.23]{bkk}, for each fixed $\tau$ there exists a smooth function $S_0(\tau,z)$ on $\calM_{\calV}(\C)$ with $\log\text{-}\log$ singularities at the boundary, such that
\[
 \ddc [S_0(\tau,z)] \ = \ 	[\ddc S_0(\tau,z)] \ = \ [H^*(\tau)].
\]
For each connected component $X_i \subset \calM^*_{\calV}(\C)$, fix a K\"ahler form $\omega_i$ such that $\vol(X_i, \omega_i^{n-1})= 1$; if we further impose the normalization
\begin{equation} \label{eqn:StauNormalize}
	\int_{ X_i} \, S_0(\tau,z) \ \omega_i^{ n-1}  \ = \ 0 
\end{equation}
for all $i$, then $S_0(\tau,z)$ is unique. 

This normalization also forces the current $[S_0(\tau,z)]$ to transform as a modular form in $\tau$. More precisely, given a smooth form $\eta \in A^{n-1, n-1}(\calM_{\calV}^*(\C))$, use the Hodge decomposition and the $\ddc$-lemma to write $\eta = \sum a_i \omega_i^{n-1} + \ddc \phi$ for some scalars $a_i$ and a smooth form $\phi$, so that 
\begin{equation} \label{eqn:SisModGrowth}
[S_0(\tau,z)](\eta) \ = \ [H^*(\tau)](\phi) \ \in \ \ALmod{n}(\calS^{\vee}).
\end{equation}

In particular, taking the Fourier expansion $[S_0(\tau,z)] = \sum_m [c_{S_0}(m,v)] q^m$, we find
\[
	\ddc [c_{S_0}(m,v)] \ = \ [c_{H^*}(m,v)] \ = \ \ddc [g_m(v,z)] ,
\]
so for each fixed $\tau$, there is  a locally constant function  $a_m(\tau)$ on $\calM_{\calV}^*(\C)$, valued in $ \calS^{\vee}$,  such that
\[
	 [g_m(v,z)] \ = \ [c_{S_0}(m,v)]  \  + \ [ a_m(\tau)].
\]

We now show that for fixed $\tau$, the sum $ \sum_m a_m(\tau) q^m $ converges to a form of at worst exponential growth (viewed as a locally constant function). Fix a component $X_i \subset \calM_{\calV}^*(\C)$, and a smooth differential form $\eta_0$ on $X_i$ of degree $(n-1,n-1)$. For convenience, we may choose $\eta_0$ to have compact support contained in the interior $\calM_{\calV}(\C) \cap X_i$, and such that
\[
	\int_{X_i} \eta_0 \neq 0.
\]
Therefore
\[
a_m(\tau)|_{X_i} \cdot \int_{X_i} \eta_0 \ = \ [g_m(v,z)](\eta_0) - [c_{S_0}(m,v)](\eta_0).
\]
We may write (using \Cref{thm:Xitheta})
\begin{align*}
 [g_m(v,z)](\eta_0 ) =  \int_{X_i}    \langle P_{m,v} - F_m  , \Theta_L \rangle^{\reg}\, \eta_0 \ - \ \sum_B \left( \frac{\mu_B(m)}{4 \pi v} - \eta_B(m) \right) \int_{X_i} \lie g_B \, \eta_0;
\end{align*}
recall that the second factor $\frac{\mu_B(m)}{4 \pi v} - \eta_B(m)$ is the $m$'th coefficient of $\Lsharp(\Theta_{B_{\Lambda}})$, so the corresponding sum on $m$ of these terms converges.

On the other hand, as $\eta_0$ is compactly supported on $\calM_{\calV}(\C)$, an easy estimate using \Cref{cor:ArchDiffRegLift} allows us to interchange the regularization with the integral over $X_i$. 
As a consequence,  
\[
\int_{X_i}  \langle P_{m,v} - F_m , \Theta_L \rangle^{\reg}  \, \eta_0 \ = \  \langle P_{m,v} - F_m, \calI_{\eta_0}(\tau) \rangle^{\reg} \ = \ m\text{'th coefficient of } \Lsharp\left(\calI_{\eta_0}(\tau) \right),
\]
where 
\[
\calI_{\eta_0} (\tau) :=	\int_{X_i} \Theta_L(\tau,z) \, \eta_0 \ \in \ \ALmod{n}(\calS^{\vee}).
\] 
Finally, by \eqref{eqn:SisModGrowth}, we have 
\[ \sum [c_{S_0}(m,v)](\eta_0) q^m = [S_0(\tau,z)](\eta_0) \in \ALmod{n}(\calS^{\vee}), \]
and so, for our choice of $\eta_0$ as above, we may write
\[
	\sum_m \, a_m(\tau)|_{X_i} \, q^m \ = \ \left( \int_{X_i} \eta_0 \right)^{-1} \cdot \left[ \Lsharp(\calI_{\eta_0})(\tau) - \sum_{B} \Lsharp(\Theta_{B_{\Lambda}}) \, \int_{X_i} \lie g_B \, \eta_0 \ - \ [S_0(\tau,z)](\eta_0) \right].
\]
In particular, $a(\tau) = \sum_m a_m(\tau) q^m \in \ALexp{n}(\calS^{\vee})$, viewed as a locally constant function on $\calM^*_{\calV}(\C)$. 

Thus, setting
\[ S(\tau,z) \ :=\ S_0(\tau,z) + a(\tau), \]
it follows that for any smooth differential form $\eta$ on $\calM_{\calV}^*(\C)$,
\[
	\lim_{N \to \infty} \  \sum_{|m| \leq N} [g_m(\tau, v)](\eta) = [S(\tau,z)](\eta),
\]
and $S(\tau,z)$ satisfies the conditions in \Cref{def:ModularityDef}(ii).
This  concludes the proof of modularity.
\end{proof}

\begin{remark} \label{rmk:FormsOnSL2}
	
	(i) In very recent work, Bruinier-Howard-Kudla-Rapoport-Yang \cite{BHKRY1}  establish the modularity of $\ThetaVBru(\tau)$ when $n>2$; in conjunction with \Cref{thm:ArithModularity}, this implies the modularity of $\ThetaVKud(\tau)$ (see Theorem 7.4.1 of \cite{BHKRY1}). 
	
	An analogous modularity statement for divisors on (the interior of) orthogonal Shimura varieties, equipped with Bruinier's Green functions, has been established by Howard-Madapusi Pera \cite{HMP}. We expect, but have not checked the details, that an analogue of \Cref{thm:ArithModularity} holds on the open Shimura variety. A major obstacle in formulating an extension to the compactification is that the analytic behaviour of both families of Green functions near the boundary is substantially more delicate than in the unitary setting; these complications are already evident in work by Berndt-K\"uhn \cite{kuehn-berndt-split}, which studied Kudla's Green functions on the Shimura variety attached to the quadratic space $(M_2(\Q),\det)$.
	
	(ii) Strictly speaking, we should view $\ThetaVKud(\tau) - \ThetaVBru(\tau)$ as a  form for the group $ U(1,1)$, in order to be consistent with the philosophy that it defines an arithmetic analogue of the theta correspondence for the dual pair $(U(1,1), U(V))$. 
	
	Let $V_0$ be the standard split Hermitian space over $k$ of signature (1,1) and let $\mathbf G = U(V_0)$ viewed as a group over $\Q$. Upon choosing a basis so that $V_0$ has the Hermitian form $(\begin{smallmatrix} & \delta_\kb \\ - \delta_\kb & \end{smallmatrix})$, it is easily seen that 
	\[ 1 \ \to \ \SL_2 \ \to \ \mathbf G \ \to \ U(1) \ \to \ 1 \]
	is an exact sequence of algebraic groups over $\Q$, where $U(1)(R) \ = \ \{ x \in R \otimes_\Q \kb \, | \, x \bar x = 1 \}$ for any $\Q$-algebra $R$. As we now explain, modular forms on $\mathbf G$ of the type we are considering are determined by their restriction to $\SL_2$.
	
	The choice of an idele class character $\eta \colon \mathbb A_k^{\times} / \kb^{\times} \ \to \ \C^{\times}$ 
	induces  a splitting of the metaplectic cover of $\mathbf G$, and in particular determines a unitary Weil representation $\rho_{\mathbf G} = \rho_{\mathbf G, \eta}$ of $\mathbf G(\mathbb A)$ on $S(V(\mathbb A))$, see \cite{Harris-Kudla-Sweet}. If we further assume that $\eta|_{\mathbb A_{\Q}^{\times}} = (\chi_k)^{\dim V}$ where $\chi_k$ is the character attached to $\kb/\Q$, then the restriction of $\rho_{\mathbf G}$ to $\SL_2(\mathbb A)$ coincides with the Weil representation for the dual pair $(\SL_2, O(V))$.
	Also note that the choice of basis for $V_0$ above gives an integral structure for $\mathbf G$, and $\mathcal S \subset S(V(\mathbb A_f))$ is stable under the action of $\mathbf G(\widehat\Z)$.

	Consider the maximal compact subgroup
	\[ K_{\infty} \ = \  \left( U(1) \times U(1) \right) (\R) \ \hookrightarrow \ \mathbf G(\R), \qquad \ (e^{i \theta}, e^{i \varphi}) \ \mapsto \ e^{i\theta} \begin{pmatrix} \cos \varphi & - \sin \varphi \\ \sin \varphi & \cos \varphi \end{pmatrix}; \]
	there is a bijection $\uhp \isomto \mathbf G(\R) / K_{\infty}$ that identifies $\tau = u + i v$ with 
	\[ g_{\tau}\ : = \ \begin{pmatrix} v^{1/2} & v^{-1/2} u \\ & v^{-1/2} \end{pmatrix}. \]
	
	Now suppose $F$ is an automorphic form for $\mathbf G$ of weight $n$, whose $K_f$-type is $(\rho_{\mathbf G}^{\vee}, \mathcal S^{\vee})$,  and with central character $\eta$. In other words, $F \colon \mathbf G(\Q) \backslash \mathbf G(\mathbb A) \to \mathcal S^{\vee}$ is a function satisfying
	\begin{enumerate}
		\item $ F(z \cdot g)  \ = \ \eta(z) F(g)$ for all $z \in Z(\mathbb A)$;
		\item  $F (g \cdot k_{\infty} )  \ =\ \eta_{\infty}(e^{i\theta})  \ e^{-i n \varphi} \ F(g)$ for all $k_{\infty} = (e^{i \theta}, e^{i \varphi}) \in K_{\infty}$; and
		\item $ F(g\cdot k_f) \ = \ \rho_{\mathbf G}^{\vee}(k_f^{-1}) F(g) $ for all $k_f \in \mathbf G(\widehat\Z)$,
	\end{enumerate}
	along with certain analytic conditions that we ignore. 
	If we define a map $f \colon \uhp \to \mathcal S^{\vee}$ by the formula 
	\[ f(\tau) \ =  v^{-n/2} F(g_{\tau}), \]
	then the conditions (i)--(iii) imply that $f$ satisfies the usual transformation law
	\begin{equation} \label{eqn:SL2translaw} f(\gamma \tau) \ = (c \tau + d)^n \, \rho^{\vee}(\gamma) \, f (\tau) 
	\qquad \text{for all } \gamma \in \mathbf G(\widehat \Z) \cap \mathbf G(\Q) \ = \ \SL_2(\Z).
	\end{equation}
	In other words, $f$ transforms as  a vector-valued modular form for $\SL_2(\Z)$ in the usual sense.
	
	Conversely, the fact that there is a single genus of self-dual lattices in $V_0$ implies that
	\[ \mathbf G(\mathbb A) \ =  \ \mathbf G(\Q) \cdot \left( \mathbf G(\R) \times \mathbf G(\widehat\Z)\right); \]
	a straightforward, though somewhat tedious, verification then implies that any function $f$ for $\SL_2$ satisfying \eqref{eqn:SL2translaw} determines an automorphic function $F$ on $\mathbf G$ satisfying (i) - (iii) above.

\end{remark}

\subsection{Holomorphic projection}
\label{sec:holom-proj}
Suppose $\widehat\phi(\tau)$ is modular of weight $k$ in the sense of \Cref{def:ModularityDef}, so that we may write
\[ \widehat\phi(\tau) \ = \ \sum_i f_i(\tau) \otimes \widehat\Zed_i  \ + \ (0, g(\tau, z)) \]
for some  forms $f_i(\tau)$ and $g(\tau, z)$ and arithmetic classes $\widehat\Zed_i$. If $G \in S_k( \mathcal S^{\vee})$, we define a Petersson pairing
\[ \langle \widehat\phi(\tau), G \rangle_{\mathrm{Pet}} \ := \  \sum_i \langle f_i, G \rangle_{\mathrm{Pet}}  \cdot \widehat\Zed_i \ + \ \left(0, \, \langle g(\cdot, z), G \rangle_{\mathrm{Pet}}  \right) \ \in \ \ChowHatC(\M_V^*) \]
where on the right hand side, we have the usual Petersson pairing between modular forms, and provided that all these latter pairings exist. We may relax this definition to the case where the Petersson pairings exist in the regularized sense, as in \Cref{lem:regPet}. 
\begin{theorem} \label{thm:arithHolProj} For every $G \in S_n(\mathcal S^{\vee})$, 
\[ \langle \ThetaVKud - \ThetaVBru, \, G \rangle^{\reg}_{\mathrm{Pet}} \ = \ 0 .\]
In particular, the cuspidal holomorphic projections of $\ThetaVKud(\tau)$ and $\ThetaVBru(\tau)$ coincide.
\begin{proof}
Write 
\[ \ThetaVKud(\tau) - \ThetaVBru(\tau) \ = \ - \frac{1}{4\pi} \sum_B \, [\widehat B] \otimes  \Lsharp( \Theta_{\Lambda_B})(\tau)     \ + \ (0, g(\tau,z)). \]
By construction of the section $\Lsharp$, the terms $\Lsharp(\Theta_{\Lambda_B})$ are orthogonal to cusp forms, cf. \Cref{prop:uniquepreimage}.
On the other hand, for a fixed $z \in \domain(\calV)$, we have already seen that 
\[ g(\tau,z) =  - \Lsharp\left(\Theta_L(\cdot,z)\right) (\tau) \]
for the Siegel theta function $\Theta_L(\tau,z)$, and is therefore also orthogonal to cusp forms.
\end{proof}

\end{theorem}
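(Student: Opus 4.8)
The plan is to read the answer off the structural decomposition of $\ThetaVKud(\tau)-\ThetaVBru(\tau)$ produced in the proof of \Cref{thm:ArithModularity}, and then to invoke the defining property (iii) of the section $\Lsharp$ in \Cref{prop:uniquepreimage}, namely that $\Lsharp(h)$ is always orthogonal to cusp forms. First I would recall that, by that proof,
\[
  \ThetaVKud(\tau)-\ThetaVBru(\tau)\ =\ -\frac{1}{4\pi}\sum_{B\in\mathcal B_{\calV}}\widehat{[B]}\otimes\Lsharp(\Theta_{\Lambda_B})(\tau)\ +\ \bigl(0,\,g(\tau,z)\bigr),
\]
and that, in the notation of \Cref{sec:holom-proj}, the (regularized) Petersson pairing against $G\in S_n(\calS^{\vee})$ is evaluated component by component:
\[
  \langle \ThetaVKud-\ThetaVBru,\,G\rangle^{\reg}_{\mathrm{Pet}}\ =\ -\frac{1}{4\pi}\sum_{B}\langle\Lsharp(\Theta_{\Lambda_B}),\,G\rangle^{\reg}_{\mathrm{Pet}}\,\widehat{[B]}\ +\ \bigl(0,\,\langle g(\cdot,z),\,G\rangle^{\reg}_{\mathrm{Pet}}\bigr),
\]
provided the scalar-valued and archimedean pairings on the right exist. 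Existence I would deduce from \Cref{lem:regPet}: we have $\Lsharp(\Theta_{\Lambda_B})\in\ALexp{n}(\calS^{\vee})$ by \Cref{prop:uniquepreimage}, while for each fixed $z$ the function $g(\cdot,z)$ lies in $\ALexp{n}(\calS^{\vee})$ by \Cref{thm:OrthModGenSeries}, transported from the orthogonal symmetric space to $\domain(\calV)$ using that $\KGr(m,v)$ and $\BGr{m}$ are pullbacks of their orthogonal counterparts.

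The boundary contribution is then immediate: for each $B$, $\Lsharp(\Theta_{\Lambda_B})$ has trivial cuspidal holomorphic projection by \Cref{prop:uniquepreimage}(iii), so $\langle\Lsharp(\Theta_{\Lambda_B}),\,G\rangle^{\reg}_{\mathrm{Pet}}=0$. For the archimedean part, the key step is the pointwise-in-$z$ identification
\[
  g(\tau,z)\ =\ -\,\Lsharp\!\bigl(\Theta_L(\cdot,z)\bigr)(\tau)\ -\ \frac{1}{4\pi}\sum_{B}\lie g_B(z)\,\Lsharp(\Theta_{\Lambda_B})(\tau),
\]
which follows from the definition of $g(\tau,z)$ in the proof of \Cref{thm:ArithModularity} together with \Cref{thm:OrthModGenSeries} and the uniqueness characterization of \Cref{thm:MaassSection} (equivalently \Cref{thm:introLsec}). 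Thus, for every fixed $z$, $g(\cdot,z)$ is a $\C$-linear combination of forms of the shape $\Lsharp(h)$ with $h\in\ALmod{n-2}(\calS^{\vee})$, and each of these is orthogonal to $S_n(\calS^{\vee})$, again by \Cref{prop:uniquepreimage}(iii); hence $\langle g(\cdot,z),\,G\rangle^{\reg}_{\mathrm{Pet}}=0$ for all $z$. Combining the two vanishings gives $\langle\ThetaVKud-\ThetaVBru,\,G\rangle^{\reg}_{\mathrm{Pet}}=0$, and the final assertion is then immediate, since the cuspidal holomorphic projection of a series modular in the sense of \Cref{def:ModularityDef} is by definition the cusp form representing the functional $G\mapsto\langle\,\cdot\,,G\rangle^{\reg}_{\mathrm{Pet}}$ on $S_n(\calS^{\vee})$.

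I expect the only genuine work to be bookkeeping around the regularized pairings: verifying that $\langle g(\cdot,z),G\rangle^{\reg}_{\mathrm{Pet}}$ really exists and may be computed term-by-term in the boundary/archimedean splitting, and checking that the notion of ``orthogonal to cusp forms'' hard-wired into \Cref{prop:uniquepreimage}(iii) is literally the regularized Petersson pairing used to define cuspidal holomorphic projection in \Cref{sec:holom-proj}. Once these compatibilities are pinned down the argument is essentially formal, as all the genuinely analytic input has already been absorbed into \Cref{thm:MaassSection} and \Cref{thm:ArithModularity}.
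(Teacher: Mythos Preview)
Your proposal is correct and follows the same route as the paper: decompose $\ThetaVKud-\ThetaVBru$ into the boundary sum and the archimedean class $(0,g(\tau,z))$, then note that every piece is a scalar multiple of some $\Lsharp(h)$ and hence orthogonal to cusp forms by \Cref{prop:uniquepreimage}(iii). In fact your bookkeeping is slightly more careful than the paper's terse proof: the paper simply asserts $g(\tau,z)=-\Lsharp(\Theta_L(\cdot,z))(\tau)$, whereas you correctly keep track of the additional $\lie g_B(z)\,\Lsharp(\Theta_{\Lambda_B})$ summands coming from \eqref{eqn:gmDef}; since these are again $\Lsharp$-images the conclusion is unaffected, but your formula is the precise one.
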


\subsection{The arithmetic height generating series} \label{sec:arithHeights}
A particularly striking aspect of Kudla's programme is a set of conjectures relating certain  height pairings involving  arithmetic special cycles and special values of derivatives of Eisenstein series, see \cite{kudla-special-eisenstein-imrn} for an overview; these conjectures are generalizations of the Gross-Zagier theorem \cite{grosszagier} to higher dimensional Shimura varieties.

Suppose $\calX^*$ is an $n$-dimensional arithmetic variety over $o_{\kb}$, i.e. a regular scheme,  flat and proper over $o_{\kb}$, and suppose furthermore that $\calX \subset \calX^*$ is a dense open subvariety such that the boundary $\partial \calX^* = \calX^* - \calX$ is a divisor. As described in \cite{burgos-kramer-kuhn}, its arithmetic Chow groups (with $\log\text{-}\log$ singular Green objects along $\partial \calX^*$) are equipped with an intersection product
\[  {\widehat{\mathsf{CH}}{}^{p}_{\C}}(\calX^*)  \ \times  \ {\widehat{\mathsf{CH}}{}^{q}_{\C}}(\calX^*) \ \to \  {\widehat{\mathsf{CH}}{}^{p+q}_{\C}(\calX^*)} \]
and a pairing
\[ \ChowHatC(\calX^*) \times {\widehat{\mathsf{CH}}{}^{n-1}_{\C}}(\calX^*) \to \C, \qquad [\widehat\Zed_1 : \widehat\Zed_2] \ := \ \widehat\deg \, \left(\widehat\Zed_1 \cdot \widehat\Zed_2 \right) \]
that generalize the structures constructed by Gillet-Soul\'e \cite{gillet-soule} to the $\log\text{-}\log$ singular setting. As a special case, if $(0,g) \in \ChowHatC(\calX^*)$ and $\widehat\calL$ is a metrized line bundle, then
\begin{equation} \label{eqn:arithIntGreenHeight}
	\left[(0,g) : \widehat\calL^{n-1}\right] \ = \ \frac12 \sum_{\sigma \colon k \to \C} \ \int\limits_{\calX^*(\C_{\sigma})}  g \cdot c_1(\widehat\calL)^{\wedge n-1} .
\end{equation}
The authors are unaware of an arithmetic intersection theory applicable to Deligne-Mumford stacks, such as $\calM_{\calV}^*$, that also allows for $\log\text{-}\log$ singular Green functions. However, as this technicality is tangential to the main thrust of the present work, we will carry on assuming that such a generalization exists:

\begin{hypothesis} \label{hyp:arithInt}
There are well-defined products $ {\widehat{\mathsf{CH}}{}^{p}_{\C}}(\M_V^*)  \ \times  \ {\widehat{\mathsf{CH}}{}^{q}_{\C}}(\M_V^*) \ \to \  {\widehat{\mathsf{CH}}{}^{p+q}_{\C}(\M_V^*)}$ and a pairing
\[ \ChowHatC(\M_{\calV}^*) \times {\widehat{\mathsf{CH}}{}^{n-1}_{\C}}(\M_{\calV}^*) \to \C, \qquad [\widehat\Zed_1 : \widehat\Zed_2] \ := \ \widehat\deg \, \left(\widehat\Zed_1 \cdot \widehat\Zed_2 \right) \]
such that \eqref{eqn:arithIntGreenHeight} continues to hold.
\end{hypothesis}

With this assumption in place, we describe the content of Kudla's conjecture. Consider the generating series
\[ [\ThetaVKud(\tau): \widehat\omega^{n-1}] \ := \ \sum_{m}  \, [\ZedVKud(m,v) :  \widehat\omega^{n-1}] \,  q^m 
\] 
whose terms are obtained by  pairing the special cycles against a power of the tautological bundle $\widehat\omega = \widehat\omega^{\mathsf{taut}}$. Form a generating series $[ \ThetaVBru(\tau) : \widehat\omega^{n-1}]$ in the same way. 

On the other hand, consider the family of  Eisenstein series  $E_k(\tau,s)$, which are $\mathcal S^{\vee}$-valued  forms defined as follows: for a weight $k \in \Z$ and a complex parameter $s \in \C$, define
\[ E_k(\tau,s) \ = \ \sum_{\gamma \in \Gamma_{\infty} \bs \SL_2(\Z) } \left( v^{\frac{s+1 - k}{2} } \ \varphi_{\ov{0}, o_\kb}^{\vee} \right)\mid_k [\gamma]. \]
 This sum defines a holomorphic function for $\Re(s)>1$, and admits a meromorphic extension to $\C$. 
Strictly speaking, in what follows we should view $E_k(\tau,s)$ as a  form for $U(1,1)$, but \Cref{rmk:FormsOnSL2} applies here as well.

Of particular interest is the special value at $s = n-1$ of the derivative
	\[
		E'_{n}(\tau, n-1) \ := \ \left[ \frac{\partial}{\partial s} E_n(\tau, s) \right]_{s = n-1} 
	\]
of the weight $n$ Eisenstein series.

\begin{conjecture}[Kudla] \label{conj:KudlaArVol}
Up to some correction terms,
	\[ 
		[\ThetaVKud(\tau): \widehat\omega^{n-1}] \ \stackrel{?}{=}  \  2 \, \kappa \,   E_n'(\tau, n-1) \ + \ \left( \begin{matrix} \text{`vertical'}\\ \text{correction terms} \end{matrix}\right) \ + \ \left( \begin{matrix} \text{'boundary'} \\ \text{correction terms} \end{matrix} \right) 
	\]
where 
	\[ \kappa \ =\ \vol(\M_{\calV}^*(\C_{\sigma}), \, \mathrm d \Omega)  \ := \ \int_{\M_{\calV}^*(\C)} \, \mathrm d \Omega 
		\]
	is the `stacky' volume\footnote{The reader is cautioned that by our definition, $\kappa$ may be positive or negative according to the parity of $n$.} of $\M_{\calV}^*(\C_{\sigma})$ with respect to the measure $\mathrm d \Omega = c_1(\widehat \omega)^{\wedge n-1}$ determined by $\widehat \omega$.   

\end{conjecture}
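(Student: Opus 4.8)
The statement as written cannot be established outright, since it presupposes the arithmetic intersection theory of \Cref{hyp:arithInt} and carries genuinely arithmetic (``vertical'') contributions; the plan is therefore to isolate and prove its \emph{archimedean main term}. Granting \Cref{hyp:arithInt}, formula \eqref{eqn:arithIntGreenHeight} reduces \Cref{conj:KudlaArVol}, modulo boundary and vertical classes, to the single identity
\[
  \int_{\calM_{\calV}^*(\C)} g(\tau,z)\, c_1(\widehat\omega)^{n-1} \;=\; 2\,\kappa\, E'_n(\tau,n-1),
\]
where $g(\tau,z)$ denotes the archimedean component of $\ThetaVKud(\tau) - \ThetaVBru(\tau)$ constructed in the proof of \Cref{thm:ArithModularity}; every constituent of $\ThetaVKud(\tau)$ other than $(0,g(\tau,z))$ is a combination of boundary components, which become the ``boundary correction terms'', while the ``vertical correction terms'' lie outside the reach of this argument.

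The first step is to insert the explicit shape of $g$: for each fixed $z$,
\[
  g(\tau,z) \;=\; -\,\Lsharp\!\big(\Theta_L(\cdot,z)\big)(\tau) \;-\; \frac{1}{4\pi}\sum_{B\in\mathcal B_{\calV}} \Lsharp\!\big(\Theta_{\Lambda_B}\big)(\tau)\;\lie g_B(z).
\]
Since the boundary Green functions $\lie g_B$ were normalized so that $\int_{\calM_{\calV}^*(\C)} \lie g_B\, d\Omega = 0$, integrating against $d\Omega = c_1(\widehat\omega)^{n-1}$ kills the second sum, leaving $\int g(\tau,z)\, d\Omega(z) = -\int \Lsharp(\Theta_L(\cdot,z))(\tau)\, d\Omega(z)$. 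The crucial point is then to interchange the $z$-integration with the regularized theta integral defining $\Lsharp$, i.e.\ to prove
\[
  \int_{\calM_{\calV}^*(\C)} \Lsharp\!\big(\Theta_L(\cdot,z)\big)(\tau)\, d\Omega(z) \;=\; \Lsharp\!\Big(\int_{\calM_{\calV}^*(\C)} \Theta_L(\cdot,z)\, d\Omega(z)\Big)(\tau).
\]
Because $\calM_{\calV}^*(\C)$ is compact and $\Theta_L$ is smooth in $z$ with moderate growth in $\tau$ uniform in $z$, the inner integral $\widehat E(\tau) := \int_{\calM_{\calV}^*(\C)} \Theta_L(\tau,z)\, d\Omega(z)$ again belongs to $\ALmod{n-2}(\rho_L^{\vee})$; one then verifies, using the Fourier-coefficient formula $c_{\Lsharp(f)}(m,w) = -\langle P_{m,w} - F_m,\, f\rangle^{\reg}$ of \Cref{thm:MaassSection} together with the $s$-free expression for the regularized pairing in \Cref{cor:ArchDiffRegLift}, that Fubini applies on each truncation $\calF_T \times \calM_{\calV}^*(\C)$ and that the limits $T\to\infty$ converge uniformly in $z$; linearity of $\Lsharp$ then yields the interchange.

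Next one applies the Siegel--Weil formula to $\widehat E$: for the quadratic space $\calV_{[\Q]}$ of signature $(2n-2,2)$, integrating the Siegel theta function over $\calM_{\calV}^*(\C)$ against the invariant volume form produces a constant multiple of the special value $E_{n-2}(\tau,n-1)$ of the weight $n-2$ Eisenstein series attached to $\calL$, the constant being $\pm\kappa$ by a volume computation at the constant term. The argument then closes via the uniqueness of $\Lsharp$: the elementary identity $\Low E_n(\tau,s) = \tfrac{s+1-n}{2}\,E_{n-2}(\tau,s)$, differentiated at $s=n-1$, gives $\Low\big(2\,E'_n(\cdot,n-1)\big) = E_{n-2}(\cdot,n-1)$, and $E'_n(\tau,n-1)$ has at most logarithmic growth, trivial principal part (its leading constant-term coefficient is identically $1$, so its $s$-derivative vanishes), and is orthogonal to cusp forms, being a derivative of an Eisenstein series. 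By \Cref{prop:uniquepreimage} --- equivalently the uniqueness in \Cref{thm:introLsec} --- this forces $\Lsharp\big(E_{n-2}(\cdot,n-1)\big) = 2\,E'_n(\cdot,n-1)$, and after tracking the remaining constants and sign conventions one obtains $\int_{\calM_{\calV}^*(\C)} g(\tau,z)\, c_1(\widehat\omega)^{n-1} = 2\,\kappa\, E'_n(\tau,n-1)$; feeding this into \eqref{eqn:arithIntGreenHeight} produces the main term of \Cref{conj:KudlaArVol}.

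The main obstacle is the interchange step, entangled with the precise normalization in Siegel--Weil: one must pin down the correct special point $s=n-1$ and the exact constant in the Siegel--Weil identity, check that $E'_n(\tau,n-1)$ genuinely satisfies the ``trivial principal part'' and ``trivial cuspidal holomorphic projection'' conditions in the exact sense demanded by \Cref{prop:uniquepreimage}, and control the regularized integral uniformly in $z$ so that Fubini is legitimate near the supports of the special cycles --- where $\Theta_L$ itself is smooth but the truncation parameter $T$ and the point-counting term $S_m(z)$ of \Cref{lem:xireg} interact delicately with integration in $z$. The remaining arithmetic content of the conjecture --- the vertical terms and the pairing of \Cref{hyp:arithInt} --- is not addressed by this approach.
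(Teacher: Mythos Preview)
Your proposal is correct and follows essentially the same route as the paper: the statement is a conjecture, and what is actually proved (as \Cref{thm:GreenInt}) is precisely the archimedean identity you isolate, via the same chain of reasoning --- vanishing of $\int \lie g_B\, d\Omega$, interchange of the $z$-integral with the regularized pairing (handled in the paper by a separate lemma, \Cref{lem:IntGFTheta}), Siegel--Weil in the form $I(\Theta_L) = \kappa\, E_{n-2}(\tau,n-1)$, and the uniqueness of $\Lsharp$ applied to $E'_n(\tau,n-1)$.

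Two small remarks. First, the paper explicitly restricts to $n>2$, since the Siegel--Weil formula invoked is the unregularized one; you should note this hypothesis. Second, your lowering identity $\Low E_n(\tau,s) = \tfrac{s+1-n}{2}\,E_{n-2}(\tau,s)$ has the opposite sign from the paper's \eqref{eqn:LEisenstein}, which reads $\Low(E_k(\tau,s)) = -\tfrac12(s-(k-1))\,E_{k-2}(\tau,s)$; this propagates to a sign in $\Lsharp(E_{n-2}(\cdot,n-1))$, but as you anticipated, it is absorbed once the constants are tracked (the paper arrives at $2\kappa E'_n = -\Lsharp(I(\Theta_L))$, matching your $\int g\, d\Omega = -\int \Lsharp(\Theta_L)\, d\Omega$).
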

Though these correction terms have not been explicitly spelled out in the literature, experience from low-dimensional settings, e.g.\ \cite{kry-special} suggest that  holomorphic Eisenstein series  encoding contributions arising from primes of bad reduction should appear in the formula, and \Cref{thm:DiffHeight} below suggests that contributions from the  boundary  also play a role.

The main results of this section concern the difference  $\ThetaVKud(\tau) - \ThetaVBru(\tau)$, which we write as
\begin{equation}\label{eqn:arithDiff}
\ThetaVKud(\tau) - \ThetaVBru(\tau) \ = \ - \frac{1}{4 \pi} \sum_{B \in \mathcal B_{\calV}} \, \Lsharp \Theta_{\Lambda_B}(\tau) \otimes \widehat{[B]} \ + \ (0, g(\tau, z)) 
\end{equation}
where $g(\tau,z)$ is as in the proof of \Cref{thm:ArithModularity}.
The following theorem, which would form the archimedean component of a putative intersection pairing, shows that the integral of $g(\tau,z)$ already contributes the `main term' in \Cref{conj:KudlaArVol}. We exclude the case $n=2$  because the proof invokes the Siegel-Weil formula; we expect, though we have not checked the details, that a regularized version of the Siegel-Weil formula can be employed to prove the $n=2$ case.
\begin{theorem} \label{thm:GreenInt}
Suppose $n>2$. Then
	\[ 
	 	\frac12 \sum_{\sigma \colon k \to \C} \ \int\limits_{\M_{\calV}^*(\C_{\sigma})} g(\tau, z) \cdot c_1(\widehat\omega)^{n-1} \ = \    2 \, \kappa \,   E'_n(\tau, s_0) .
	\]
\end{theorem}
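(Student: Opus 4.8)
Throughout write $\mathrm d\Omega = c_1(\widehat\omega)^{n-1}$. The plan is to reduce the statement, via the pointwise identity $g(\tau,z) = -\Lsharp\!\big(\Theta_L(\cdot,z)\big)(\tau)$ already used in the proof of \Cref{thm:ArithModularity}, to an evaluation of $\Lsharp$ applied to the \emph{theta integral} $\int_{\M_{\calV}^*(\C)}\Theta_L(\cdot,z)\,\mathrm d\Omega$, which we then compute by combining the Siegel--Weil formula with the uniqueness characterization of $\Lsharp$ (\Cref{prop:uniquepreimage}, equivalently \Cref{thm:introLsec}). First I would simplify the integrand: by \eqref{eqn:gmDef} and \eqref{eqn:g0def},
\[
  g(\tau,z) \ = \ -\log v\cdot\varphi_{\ov 0,o_\kb}^{\vee} \ + \ \sum_{m}\big(\KGr(m,v)-\BGr{m}\big)(z)\,q^m \ - \ \frac{1}{4\pi}\sum_{B}\Lsharp(\Theta_{\Lambda_B})(\tau)\,\lie g_B(z).
\]
The functions $\lie g_B$ were normalized so that $\int_{\M_{\calV}^*(\C)}\lie g_B\,\mathrm d\Omega = 0$, so the last sum integrates to zero; by \Cref{thm:OrthModGenSeries} (descended to $\calS^{\vee}$) the remaining part is exactly $-\Lsharp(\Theta_L(\cdot,z))(\tau)$; and since the two archimedean places of $\kb$ contribute equally, $\tfrac12\sum_{\sigma}\int_{\M_{\calV}^*(\C_\sigma)} = \int_{\M_{\calV}^*(\C)}$. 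Thus the assertion becomes
\[
  \int_{\M_{\calV}^*(\C)}\Lsharp\big(\Theta_L(\cdot,z)\big)(\tau)\,\mathrm d\Omega \ = \ -\,2\,\kappa\;E_n'(\tau,s_0).
\]

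The main obstacle is to justify the interchange
\[
  \int_{\M_{\calV}^*(\C)}\Lsharp\big(\Theta_L(\cdot,z)\big)(\tau)\,\mathrm d\Omega \ = \ \Lsharp\Big(\int_{\M_{\calV}^*(\C)}\Theta_L(\cdot,z)\,\mathrm d\Omega\Big)(\tau).
\]
The cleanest route is through uniqueness: writing $\varphi(\tau)$ for the left-hand side, one checks that $\varphi\in\AkappaLexp(\calS^{\vee})$ with $\Low(\varphi)=\int_{\M_{\calV}^*(\C)}\Theta_L(\cdot,z)\,\mathrm d\Omega\in\ALmod{n-2}(\calS^{\vee})$ (commuting $\Low$ past the $z$-integral is elementary), that $\varphi$ has trivial principal part (its constant-term data is inherited coefficientwise from that of $\Lsharp(\Theta_L(\cdot,z))$), and that $\langle\varphi,G\rangle^{\reg}_{\Pet} = \int_{\M_{\calV}^*(\C)}\langle\Lsharp(\Theta_L(\cdot,z)),G\rangle^{\reg}_{\Pet}\,\mathrm d\Omega = 0$ for every $G\in S_n(\calS^{\vee})$; by \Cref{prop:uniquepreimage} these three properties force $\varphi=\Lsharp(\int\Theta_L\,\mathrm d\Omega)$. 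Making this rigorous requires exchanging the $z$-integral with the $T\to\infty$ limit and the constant-term-in-$s$ operation defining $\langle\,\cdot\,,\,\cdot\,\rangle^{\reg}$; here I would invoke \Cref{cor:ArchDiffRegLift}, which expresses the relevant Fourier coefficients as honest limits of integrals over $\calF_T$ up to a $\log T$ term \emph{independent of $z$}, together with the absolute and locally uniform (in $\tau$) convergence of the theta integral $\int_{\M_{\calV}^*(\C)}\Theta_L(\tau,z)\,\mathrm d\Omega$, which holds precisely because $n>2$ (the convergent range of Siegel--Weil). The contributions of the currents $\delta_{\Zed_{\calV}(m)(\C)}$ produce only the finite degrees $\deg_{\widehat\omega}\Zed_{\calV}(m)(\C)$ and cause no trouble.

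It then remains to identify $\Lsharp\big(\int_{\M_{\calV}^*(\C)}\Theta_L(\cdot,z)\,\mathrm d\Omega\big)$. By the Siegel--Weil formula (available without regularization since $n>2$), $\int_{\M_{\calV}^*(\C)}\Theta_L(\tau,z)\,\mathrm d\Omega$ equals $-\kappa\cdot E_{n-2}(\tau,n-1)$, up to the bookkeeping of the constants relating $\mathrm d\Omega$ to the invariant measure, the stacky volume $\kappa$, and the Siegel--Weil normalization --- this constant-chasing is the ``follows very easily'' step. On the other hand, a direct computation with the seed $v^{(s+1-n)/2}$ gives $\Low E_n(\tau,s) = \tfrac{s+1-n}{2}\,E_{n-2}(\tau,s)$, which vanishes at $s=n-1$ (reflecting the holomorphy of $E_n(\tau,n-1)$); differentiating in $s$ yields $\Low\big(E_n'(\tau,n-1)\big)=\tfrac12 E_{n-2}(\tau,n-1)$. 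Since $E_n'(\tau,n-1)$ is orthogonal to cusp forms and, as one checks from the Fourier expansion of $E_n(\tau,s)$ at $s=n-1$, has trivial principal part, \Cref{prop:uniquepreimage} gives $\Lsharp\big(E_{n-2}(\cdot,n-1)\big)=2\,E_n'(\tau,n-1)$. Combining the last displays yields $\int_{\M_{\calV}^*(\C)}\Lsharp(\Theta_L(\cdot,z))\,\mathrm d\Omega = -2\kappa\,E_n'(\tau,n-1)$, hence the first assertion.

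Finally, the stated identity for $[\ThetaVKud(\tau)-\ThetaVBru(\tau):\widehat\omega^{n-1}]$ follows from \eqref{eqn:arithDiff}, \Cref{hyp:arithInt} and \eqref{eqn:arithIntGreenHeight}: the archimedean part of $\big(0,g(\tau,z)\big)$ contributes $2\kappa\,E_n'(\tau,n-1)$ by what was just proved, while the boundary part $-\tfrac1{4\pi}\sum_B \Lsharp(\Theta_{\Lambda_B})(\tau)\otimes\widehat{[B]}$ contributes $-\tfrac1{4\pi}\sum_B\big[\widehat{[B]}:\widehat\omega^{n-1}\big]\,\Lsharp(\Theta_{\Lambda_B})(\tau)$, which are the advertised explicit boundary terms.
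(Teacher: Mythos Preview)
Your proposal is correct and follows essentially the same route as the paper: reduce to computing $\Lsharp$ of the theta integral, invoke the Siegel--Weil formula to identify this integral as $\kappa\,E_{n-2}(\tau,n-1)$, and then use the uniqueness characterization of $\Lsharp$ together with the lowering identity for Eisenstein series to conclude. Two minor points of difference are worth noting. First, you organize the interchange $\int_{\M_{\calV}^*(\C)}\Lsharp(\Theta_L(\cdot,z))\,\mathrm d\Omega=\Lsharp\big(\int\Theta_L\,\mathrm d\Omega\big)$ as a global uniqueness argument, whereas the paper proceeds Fourier-coefficientwise via a dedicated lemma (\Cref{lem:IntGFTheta}) that directly justifies swapping the $z$-integral with the limit $T\to\infty$ defining each $\langle P_{m,v}-F_m,\Theta_L(\cdot,z)\rangle^{\reg}$; your acknowledgement that ``making this rigorous requires exchanging the $z$-integral with the $T\to\infty$ limit'' is precisely where the paper's lemma does the work, and your sketch would ultimately need the same estimate. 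Second, your signs in the Siegel--Weil identity and in $\Low E_n(\tau,s)$ both differ from the paper's (which has $I(\Theta_L)=+\kappa\,E_{n-2}$ and $\Low E_k(\tau,s)=-\tfrac12(s-(k-1))E_{k-2}(\tau,s)$), but the two discrepancies cancel, and you already flagged the constant-chasing. Also, $\kb$ is imaginary quadratic, so it has a single (complex) archimedean \emph{place}; what you are summing over are the two \emph{embeddings} $\sigma\colon\kb\hookrightarrow\C$.
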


\begin{proof}
Write $g(\tau,z) = \sum g_m(\tau,z) q^m$, where
\[
	g_m(\tau,z) \ = \ \KGr(m,v)(z) \ - \ \BGr{m}(z)  \ - \ \delta_{m,0} \varphi^{\vee}_{\ov{0}, o_\kb} \, \log v \ - \ \sum_{B}\, c(m,v) \, \lie g_B(z),
\]
and $c(m,v)$ is the $m$'th Fourier coefficient of $\frac{-1}{4 \pi} \Lsharp\Theta_{\Lambda_B}(\tau)$. 

Moreover, by assumption,
\[ \int_{\M_{\calV}^*(\C_{\sigma})}  \lie g_B \, d \Omega = 0\]
for each embedding $\sigma \colon \kb \to \C$, and so we only need to compute the integrals of $\KGr(m,v)$ and $\BGr{m}$; here we abbreviated $d \Omega = c_1(\widehat\omega)^{n-1}$. 

We begin by recalling that the Green functions were obtained by the regularized integral
\[ 
\KGr(m,v) - \BGr{m} - \delta_{m,0}\, \varphi^{\vee}_{\ov{0}, o_\kb} \log v = \langle P_{m,v} - F_m, \, \Theta_L \rangle^{\reg}.
\]
By \Cref{lem:IntGFTheta} below, whose proof we defer momentarily, we may interchange the integral on $\M^*_{\calV}(\C_{\sigma})$ with the regularized pairing above to obtain
\[ 
  \int_{\M^*_{\calV}(\C_{\sigma}) } \KGr(m,v) - \BGr{m} \ d \Omega 
  - \vol(\M_{\calV}^*(\C_{\sigma}))\, \log(v)\, \delta_{m,0}\, \varphi_{0,o_\kb}^{\vee}  \ = \ \langle P_{m,v} \ - \ F_m  , \, I(\Theta_L) \rangle^{\reg} 
\]
where 
\[ 
  I(\Theta_L)(\tau) \ := \  \int_{\M^*_{\calV}(\C_{\sigma}) } \Theta_L(\tau,z) \, d \Omega. 
\]
Thus
\[ 
 I(g)(\tau) :=	\int_{\M^*_{\calV}(\C_{\sigma}) } g(\tau,z) \ d \Omega \ = \ \sum_{m} \langle P_{m,v}-  F_m  , \, I(\Theta_L) \rangle^{\reg} \, q^m.
\]
Then, by \Cref{cor:vectorValLSec}, 
\[ I(g)(\tau) \ = \ -  \Lsharp \, I(\Theta_L) ; \]
to conclude the proof, we need to identify $\Lsharp I(\Theta_L)$ as the derivative of the Eisenstein series. 

The \emph{unitary Siegel-Weil formula}  \cite{ichino-SW} gives
	\begin{equation} \label{eqn:SeigelWeil}
			I(\Theta_L)(\tau) \ = \ \kappa  \  E_{n-2}(\tau, n-1). 
	\end{equation} 

A direct computation, most easily carried out in the ad\`elic framework as in \cite[(2.17)]{kudla-integrals}, implies 
	\begin{equation} \label{eqn:LEisenstein}
		\Low( E_k(\tau, s) ) \ = \ - \frac12 (s - (k-1)) \,  E_{k-2}(\tau, s);
	\end{equation}
the Eisenstein series $E_{n-2}(\tau,s)$ is holomorphic at $s=n-1$, and so
	\[
		\Low(E'_k(\tau, n-1)) \ = \ - \frac12 \, E_{n-2}(\tau, n-1) .
	\]
Moreover, by \cite[Theorem 2.11]{kudla-integrals}, the principal part of $E'_n(\tau, n-1)$ vanishes, and $E'_n(\tau,n-1)$ can easily be seen to be orthogonal to cusp forms, by unfolding. 
Therefore, by the uniqueness statement in \Cref{prop:uniquepreimage}, we have identified
\[ 
	2 \, \kappa \, E'_n(\tau, n-1) \ = \ - \Lsharp(I(\Theta_L)) \ =  \ I(g)(\tau).
\]
Since our original formula involves half the sum over both embeddings $\kb \hookrightarrow \C$, the theorem follows.
\end{proof}

As a consequence of the previous theorem and \eqref{eqn:arithDiff}, we obtain the following (putative) intersection formula: 
\begin{corollary} \label{thm:DiffHeight} Suppose $n>2$. Assuming the existence of a suitable intersection theory on $\calM_{\calV}^*$, as in \Cref{hyp:arithInt}, we have
	\[ 
		[\ThetaVKud(\tau) - \ThetaVBru(\tau): \widehat{\omega}^{n-1}] \ = \ 2 \, \kappa \,  E_n'(\tau, n-1) \  - \frac{1}{4 \pi} \ \sum_{B \in \mathcal B_{\calV}} \Lsharp \Theta_{\Lambda_B}(\tau) \cdot \left[ \widehat{[B]} :\widehat \omega^{n-1} \right].
	\]
	\qed
\end{corollary}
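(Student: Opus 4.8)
The plan is to derive the corollary as a formal consequence of the decomposition \eqref{eqn:arithDiff} together with \Cref{thm:GreenInt}, by pairing everything against $\widehat{\omega}^{n-1}$ and splitting into a ``boundary/algebraic'' contribution and an ``archimedean'' contribution. Recall from \eqref{eqn:arithDiff} that
\[ \ThetaVKud(\tau) - \ThetaVBru(\tau) \ = \ - \frac{1}{4 \pi} \sum_{B \in \mathcal B_{\calV}} \, \Lsharp \Theta_{\Lambda_B}(\tau) \otimes \widehat{[B]} \ + \ (0, g(\tau, z)), \]
with $g(\tau,z)$ the archimedean class constructed in the proof of \Cref{thm:ArithModularity}. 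Under \Cref{hyp:arithInt}, the class $\widehat{\omega}^{n-1} \in {\widehat{\mathsf{CH}}{}^{n-1}_{\C}}(\M_{\calV}^*)$ makes sense and the pairing $[\,\cdot : \widehat{\omega}^{n-1}\,]$ is $\C$-linear and (by the remark following \Cref{def:ModularityDef}) may be applied coefficient-by-coefficient to the modular generating series $\ThetaVKud(\tau)-\ThetaVBru(\tau)$, producing an element of $A_n(\calS^{\vee})$.

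First I would treat the algebraic part: each summand $\Lsharp \Theta_{\Lambda_B}(\tau) \otimes \widehat{[B]}$ pairs to $\Lsharp \Theta_{\Lambda_B}(\tau) \cdot \big[\widehat{[B]} : \widehat{\omega}^{n-1}\big]$, which is precisely the last term on the right-hand side of the corollary (with the overall sign $-\tfrac{1}{4\pi}$). Next, for the archimedean part, the compatibility required in \Cref{hyp:arithInt} — namely that \eqref{eqn:arithIntGreenHeight} holds on $\M_{\calV}^*$ — gives
\[ \big[(0,g(\tau,z)) : \widehat{\omega}^{n-1}\big] \ = \ \frac12 \sum_{\sigma \colon \kb \to \C} \ \int_{\M_{\calV}^*(\C_{\sigma})} g(\tau,z)\cdot c_1(\widehat{\omega})^{n-1}. \]
Since $n>2$, \Cref{thm:GreenInt} identifies the right-hand side with $2\,\kappa\, E_n'(\tau, n-1)$. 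Adding the two contributions yields the stated identity, as an equality of $q$-expansions of $\calS^{\vee}$-valued forms.

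The only genuinely delicate point is the legitimacy of interchanging the coefficientwise pairing against $\widehat\omega^{n-1}$ with the infinite summation over $m$ that defines the generating series; this is exactly what the remark following \Cref{def:ModularityDef} provides, resting in turn on the modularity established in \Cref{thm:ArithModularity} and on the weak convergence built into the definition of the archimedean class $g(\tau,z)$. Granting that, there is no further obstacle: all of the analytic substance — the interchange of the integral over $\M_{\calV}^*(\C_\sigma)$ with the regularized theta integral, the invocation of the unitary Siegel--Weil formula, and the uniqueness characterization of $\Lsharp$ that pins down $2\kappa E_n'(\tau,n-1)$ — has already been carried out in \Cref{thm:GreenInt} (and is also the source of the hypothesis $n>2$). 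Thus the corollary is obtained by merely assembling \eqref{eqn:arithDiff}, \Cref{hyp:arithInt}, and \Cref{thm:GreenInt}.
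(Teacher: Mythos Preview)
Your proposal is correct and follows essentially the same approach as the paper. The paper presents this corollary as an immediate consequence of \Cref{thm:GreenInt} and \eqref{eqn:arithDiff} (it is stated with a \qed and preceded by ``As a consequence of the previous theorem and \eqref{eqn:arithDiff}, we obtain the following (putative) intersection formula''), and you have simply filled in the routine verification that pairing the decomposition \eqref{eqn:arithDiff} against $\widehat\omega^{n-1}$ termwise, using \Cref{hyp:arithInt} for the archimedean piece, reproduces the stated identity.
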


It remains to prove the following lemma: 
\begin{lemma} \label{lem:IntGFTheta}
Suppose  $n>2$. 
For any $m \in \Q$, we have
	\[ 
       \int_{\M^*_{\calV}(\C)} (\KGr(m,w) - \BGr{m})\, d\Omega 
       - \vol(\M_{\calV}^*(\C))\, \log(w)\, \delta_{m,0}\, \varphi_{0,o_\kb}^{\vee} = \langle P_{m,v}-F_{m}, I(\Theta_L) \rangle^{\reg}.
       \]
\end{lemma}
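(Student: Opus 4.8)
\textbf{Proof strategy for Lemma \ref{lem:IntGFTheta}.} The plan is to justify the interchange of the integral over $\M_{\calV}^*(\C)$ with the regularized pairing $\langle P_{m,v} - F_m, \Theta_L(\cdot,z)\rangle^{\reg}$ that defines the difference of Green functions. The starting point is the identity established in \Cref{thm:Xitheta} and the definition of $\BGrO{m}$, namely that on $\M_{\calV}(\C)$ one has
\[
  \KGr(m,v)(z) - \BGr{m}(z) - \delta_{m,0}\, \varphi_{\ov 0, o_\kb}^{\vee}\,\log v \ = \ \langle P_{m,v} - F_m, \Theta_L(\cdot, z)\rangle^{\reg},
\]
where the right-hand side is interpreted via the concrete formula of \Cref{cor:ArchDiffRegLift}: the regularized integral equals an honest limit $\lim_{T\to\infty}$ of integrals over $\calF_T$, with an explicit logarithmic counterterm $(\delta_{m,0}\varphi_0^{\vee} - \varphi_0^{\vee}\cdot c_{F_m}(0))\log T$ that is \emph{independent of $z$}. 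This last point is crucial: because the counterterm does not depend on $z$, integrating it against $d\Omega$ simply multiplies it by $\vol(\M_{\calV}^*(\C))$, and one can hope to pull $\int_{\M_{\calV}^*(\C)}(\cdot)\,d\Omega$ inside the limit in $T$.

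First I would fix $T$ and interchange $\int_{\M_{\calV}^*(\C)}(\cdot)\,d\Omega$ with $\int_{\calF_T}(\cdot)\,d\mu(\tau')$: since $\calF_T$ is compact, $d\Omega$ is a smooth top-degree form on a compact complex manifold (via the finite cover $M\to[K\backslash M]=\M_{\calV}^*(\C)$ as in the remark following the construction of $\ChowHatC$), and the integrand $(P_{m,v}(\tau') - F_m(\tau'))\Theta_L(\tau',z)$ is jointly continuous in $(\tau',z)$ away from the cycle $\calZ(m)(\C)$ — and the singularity along $\calZ(m)(\C)$ is only logarithmic, hence locally $d\Omega$-integrable — so Fubini applies on $\calF_T \times \M_{\calV}^*(\C)$. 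This gives
\[
  \int_{\M_{\calV}^*(\C)}\!\! \int_{\calF_T}(P_{m,v} - F_m)(\tau')\,\Theta_L(\tau',z)\,d\mu(\tau')\,d\Omega(z) \ = \ \int_{\calF_T}(P_{m,v} - F_m)(\tau')\, I(\Theta_L)(\tau')\,d\mu(\tau'),
\]
where $I(\Theta_L)(\tau') = \int_{\M_{\calV}^*(\C)}\Theta_L(\tau',z)\,d\Omega$; one must check here that $I(\Theta_L) \in \ALmod{n-2}(\rho_L^{\vee})$, which follows since $\Theta_L(\cdot,z)\in\ALmod{p/2-1}$ uniformly in $z$ on the compact $\M_{\calV}^*(\C)$ (with $p/2-1 = n-2$ in this setting), and moderate-growth bounds are preserved under integration against a finite measure. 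The subtlety is that $S_m(z)$ (the pointwise count of lattice vectors) is not constant in $z$, but $\int_{\M_{\calV}^*(\C)}S_m(z)\,d\Omega$ is finite — indeed it is $\deg$ of the divisor, integrable because the logarithmic singularity is mild — so the $S_m(z)\log T$ terms in \Cref{lem:xireg} and \Cref{lem:Breg} integrate to a finite multiple of $\log T$, and by \Cref{cor:ArchDiffRegLift} these cancel in the difference $\KGr - \BGr$ up to the single $z$-independent term $(\delta_{m,0}\varphi_0^\vee - \varphi_0^\vee\cdot c_{F_m}(0))\log T$.

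Taking $T\to\infty$ on both sides then yields
\[
  \int_{\M_{\calV}^*(\C)}\langle P_{m,v} - F_m, \Theta_L(\cdot,z)\rangle^{\reg}\,d\Omega \ = \ \langle P_{m,v} - F_m, I(\Theta_L)\rangle^{\reg},
\]
because the left side equals $\lim_T\big[\int_{\calF_T}(P_{m,v}-F_m)I(\Theta_L)\,d\mu - (\text{$z$-independent})\log T\big]$, which is precisely the regularized pairing on the right by \Cref{cor:ArchDiffRegLift} applied with $\Theta_L$ replaced by $I(\Theta_L)$ (legitimate since $I(\Theta_L)\in\ALmod{n-2}(\rho_L^\vee)$). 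Finally, substituting the Green-function identity above and using $\int_{\M_{\calV}^*(\C)}\varphi_{\ov 0, o_\kb}^\vee \log v\,d\Omega = \vol(\M_{\calV}^*(\C))\log(v)\,\varphi_{0,o_\kb}^\vee$ gives the stated formula (with $w$ in place of $v$).

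\textbf{The main obstacle.} The genuinely delicate point is the interchange of $\int_{\M_{\calV}^*(\C)}$ with $\lim_{T\to\infty}$, i.e.\ the \emph{uniformity in $z$} of the convergence as $T\to\infty$. Pointwise in $z$ the tail $\int_{\calF_T\setminus\calF_{w_0}}$ is controlled by \Cref{lem:xireg} and \Cref{lem:Breg}, but to pass the limit under the integral sign one needs a dominated-convergence argument uniform over $\M_{\calV}^*(\C)$. This is where the hypothesis $n>2$ enters: the exponential decay of the non-constant Fourier modes of $\Theta_L(\tau,z)$ — i.e.\ the terms $\sum_{Q(\lambda)=m, R^o(\lambda,z)\neq 0}e^{-2\pi v R^o(\lambda,z)}$ — degenerates as $z$ approaches the boundary $\partial\M_{\calV}^*(\C)$, and one must check that the $\log\text{-}\log$ growth of the Green functions near the boundary (from the behaviour of $\KGr, \BGr$ established in the cited results of Howard and Bruinier–Howard–Yang) is integrable against the smooth form $d\Omega$ and does not obstruct the interchange. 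I would handle this by splitting $\M_{\calV}^*(\C)$ into a compact interior piece, where uniform convergence is immediate, and a neighbourhood of the boundary, where the explicit $\log\text{-}\log$ estimates of \cite{bkk,howard-unitary-2,bruinier-howard-yang-unitary} give an integrable majorant independent of $T$.
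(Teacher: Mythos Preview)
Your overall architecture matches the paper's: start from \Cref{cor:ArchDiffRegLift}, interchange $\int_{\M_{\calV}^*(\C)}$ with $\int_{\calF_T}$ for fixed $T$ via Fubini, then justify passing the limit $T\to\infty$ under the outer integral by producing a $T$-uniform integrable majorant. The difference lies entirely in how that majorant is produced.

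The paper does \emph{not} split $\M_{\calV}^*(\C)$ into interior and boundary neighbourhoods. Instead it splits the $\tau$-integrand: writing $\calF_{m,w}(\tau) = P_{m,w}(\tau) - F_m(\tau)$, it subtracts off the constant term to get a function $\calG_{m,w}$ of exponential decay in $v = \Im\tau$, plus a remaining constant-in-$\tau$ piece. For the exponentially decaying piece, the majorant is $\int_{\calF} e^{-Cv}\Theta_L(iv,z)\,d\mu(\tau)$, and its integrability over $\M_{\calV}^*(\C)$ is checked by integrating $\Theta_L$ against $d\Omega$ \emph{first} and invoking the Siegel--Weil formula: $\int_{\M_{\calV}^*(\C)}\Theta_L(iv,z)\,d\Omega = \kappa\,E_{n-2}(iv,n-1)$, which is bounded on $\calF$. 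For the constant-in-$\tau$ piece, the tail integral is identified explicitly with a value of $\KGr(0,w_0)$ (at a suitable Schwartz function), which is integrable against $d\Omega$ because it is a Green function for the zero cycle.

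This has two consequences for your write-up. First, your attribution of the hypothesis $n>2$ to boundary degeneration is off: in the paper's argument $n>2$ enters through the Siegel--Weil formula (Weil's convergence range), not through log-log boundary analysis. Second, your proposed boundary-neighbourhood argument is plausible but would require you to actually produce the uniform-in-$T$ log-log estimate near $\partial\M_{\calV}^*$, which is more laborious than the paper's route; the Siegel--Weil trick bypasses all boundary analysis by reducing the integrability question to a statement about an Eisenstein series on the modular curve side. If you want a self-contained proof, the paper's decomposition is both shorter and more in keeping with the spirit of the surrounding section.
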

\begin{proof}
We have by \Cref{cor:ArchDiffRegLift} that
\begin{equation*}
  		\KGr(m,w)(z) -\BGr{m} - \log(w)\, \delta_{m,0}\, \varphi_0^{\vee}\ 
                = \  \lim_{T \to \infty} \left(\int_{\calF_T} \calF_{m,w}(\tau) \, \Theta_L(\tau,z) \,  d\mu(\tau) \ -A(T) \right),
\end{equation*}
with $A(T) = (\delta_{m,0}\, \varphi_0^{\vee} - \varphi_0^{\vee} \cdot c_{F_m}(0)) \, \log(T)$, and $\calF_{m,w}(\tau) = P_{m,w}(\tau) - F_m(\tau)$.
Our goal is to prove
\begin{align} 
	&\int_{\M_{\calV}^*(\C) } \ (\KGr(m,w) -\BGr{m})\ d \, \Omega \ - \vol(\M_{\calV}^*(\C))\, \log(w)\, \delta_{m,0}\, \varphi_0^{\vee}\ \notag\\
      =\ &\int_{\M_{\calV}^*(\C) } \left(\lim_{T \to \infty} \, \int_{\calF_T} \calF_{m,w}(\tau) \, \Theta_L(\tau,z)    \,  d\mu(\tau)-A(T)\right) \ d \Omega  \notag \\
		\label{eqn:interchangeLemma}
	\stackrel{?}{=} \  &\lim_{T \to \infty}\Bigg( \, \int_{\calF_T}\calF_{m,w}(\tau) \,  \Big( \int _{\M_{\calV}^*(\C)}\Theta_L(\tau,z) d \Omega \Big)  \,d\mu(\tau)
-A(T)\vol(\M_\calV^*(\C)) \Bigg), \notag
\end{align}
i.e., we need to justify the interchange of integral and limit in the third line.
Write
\begin{align*}
	\int_{\calF_T} \calF_{m,w}(\tau) \,  & \Theta_L(\tau,z)    \,  d\mu(\tau) \\	
      =& \ \int_{\calF_{w_0}} \, \calF_{m,w}(\tau) \, \Theta_L(\tau,z) \, d \mu(\tau)  \ + \ \int_{\calF_T - \calF_{v_0}} \calF_{m,w}(\tau) \, \Theta_L(\tau,z)   \,  d\mu(\tau), 
\end{align*}
where $w_0 := \max(w, 1/w)$; since $\Theta_L(\tau,z)$ is absolutely integrable on $M_\calV(\C)$ and $\calF_{w_0}$ is compact,
\begin{align*}
   \int_{\M_{\calV}^*(\C)} \int_{\calF_{w_0}} \, \calF_{m,w}(\tau) \, \Theta_L(\tau,z) \, d \mu(\tau) \, d \Omega \ 
  =& \ \int_{\calF_{w_0}} \int_{\M_{\calV}^*(\C)} \, \calF_{m,w}(\tau) \, \Theta_L(\tau,z) \,  d \Omega \, d \mu(\tau) \\ 
  =&  \  \int_{\calF_{w_0}} \,\calF_{m,w}(\tau) \, I( \Theta_L)(\tau) \, d \mu(\tau).
\end{align*}
Therefore, in order to justify interchanging integral and limit, we need to prove that
\begin{equation*}
  \abs{\int_{\calF_T - \calF_{w_0}} \, \calF_{m,w}(\tau) \cdot \Theta_L(\tau,z) \, d \mu(\tau) -A(T)},
\end{equation*}
evaluated at any basis element $\varphi_{\ov{m},\fraka}$,
is bounded by an integrable function, uniformly in $T$.
We let 
\[
  \calG_{m,w}(\tau) = \calF_{m,w}(\tau) - \delta_{m,0}\sum_{\fraka \mid \diffinv{\kb}} \varphi_{0,\fraka}^\vee \otimes \varphi_{0,\fraka} + c_{F_m}(0),
\]
which is of exponential decay; i.e. there is a constant $C>0$, s.t. $\abs{\calG_{m,w}(\tau)(\varphi_{\ov{m},\fraka})} \leq e^{-Cv}$ for all $\tau \in \calF$.
We obtain that 
\begin{align}
  \abs{\int_{\calF_T - \calF_{w_0}} \, \calG_{m,w}(\tau) \Theta_L(\tau,z) \, d \mu(\tau)} 
    &\leq  \int_{\calF_T - \calF_{w_0}} \, e^{-Cv}\abs{\Theta_L(\tau,z)} \, d \mu(\tau) \notag\\
    &\leq \int_{\calF} \, e^{-Cv}\Theta_L(iv,z) \, d \mu(\tau), \label{eq:expTheta_Liv}
\end{align}
where we understand the inequalities again as valid after evaluating at any basis element $\varphi_{\ov{m},\fraka}$.
The function in \Cref{eq:expTheta_Liv} is integrable since, by the Siegel-Weil formula
\[
      \int_{\calF} \, e^{-Cv}\int_{\M_{\calV}^*(\C)}\Theta_L(iv,z)\, d \Omega \, d \mu(\tau) 
  = \kappa\, \int_{\calF} \, e^{-Cv}E_{n-2}(iv, n-1) \, d \mu(\tau),
\]
which is finite because the integrand is bounded on $\calF$ and $\calF$ has finite volume.

The remaining term gives for $\varphi = \varphi_{\ov{m},\fraka}$,
\begin{gather*}
  \abs{\int_{\calF_T - \calF_{w_0}} \, (\delta_{m,0}\varphi_{0,\fraka} - c_{F_{m,\varphi}}(0)) \cdot \Theta_L(\tau,z) \, d \mu(\tau) -A(T)} \\ 
\\ = \abs{\int_{w_0}^T \, \sum_{\substack{\lambda \in \fraka^{-1}\calL \\ Q(\lambda)=0}} (\delta_{m,0}\varphi_{0,\fraka}(\lambda) - c_{F_{m, \varphi}}(0)(\lambda))e^{-4\pi R(\lambda,z)v} \frac{dv}{v} -A(T)} \\
\leq \abs{\KGr(0,w_0)(\delta_{m,0}\varphi_{0,\fraka} - c_{F_{m, \varphi}}(0))} \,  + \delta_{\fraka, o_\kb}\abs{\delta_{m,0} - c_{F_{m,\varphi}}(0)(0)} \log(w_0).
\end{gather*}
Since $\KGr(0,w_0)$ is a Green function (for the zero-cycle), it is integrable and this finishes the proof of the lemma.
\end{proof}

\subsection{A refined Bruinier-Howard-Yang theorem} \label{sec:refBHY}
As a final application of our results, we describe a refined version of the main theorem of \cite{bruinier-howard-yang-unitary}; we briefly recall the setup.

For an integer $m>0$, let $\mathcal M_{(m,0)}$ denote the moduli stack over $\Spec(o_\kb)$ whose $S$-points comprise the category of triples $\underline A = (A,i, \lambda)$, where:
 \begin{enumerate}
 \item $A$ is an abelian scheme of dimension $m$ over $S$
 \item $i \colon o_\kb \to \End(A)$ is an $o_\kb$-action such that the induced action on $\Lie(A)$ coincides with the structural morphism $o_\kb \to \calO_S$;
 \item $\lambda$ is a principal polarization such that the induced Rosati involution coincides with Galois conjugation on the image $i(o_\kb)$.
 \end{enumerate}
Similarly, we define the moduli stack $\calM_{(0,m)}$ to be the moduli space of triples $\underline A = (A, i, \lambda)$ as above, except that the $o_\kb$ action on $\Lie(A)$ is required to coincide with the conjugate of the structural map. 

By \cite[Proposition 2.1.2]{howard-unitary-2}, the spaces $\calM_{(m,0)}$ and $\calM_{(0,m)}$ are proper and \'etale over $\Spec(o_\kb)$, and therefore the same is true for
\[ \mathcal Y \ := \ \calM_{(1,0)} \ \times_{o_\kb} \ \calM_{(0,1)} \ \times_{o_\kb} \ \calM_{(n-1,0)}. \]
For a fixed  self-dual Hermitian $o_\kb$-lattice $\Lambda$ of signature $(n-1,0)$, consider the substack $\calY_{\Lambda} \subset \calY$ defined as the locus of triples $(\underline E_0, \underline E_1, \underline B)$ such that 
\[ \Hom_{o_\kb}(E_0, B) \ \simeq \ \Lambda ; \]
here we view $\Hom_{o_\kb}(E_0, B)$ as a Hermitian lattice via the formula \eqref{eqn:defHomHerm}. Note that this substack is merely a union of connected components of $\mathcal Y$, cf.\ \cite[Proposition 5.2]{bruinier-howard-yang-unitary}

There is a morphism (the \emph{small CM cycle} in the terminology of \emph{op.~cit.})
\[  \mathcal Y_{\Lambda}  \ \to   \ \mathcal M = \calM_{(1,0)} \times \calM^{\text{Kr\"a}}_{(n-1,1)}    \]
defined, at the level of moduli, by sending a point $ ( \underline A_0,  \underline A_1, \underline B) $ of $\calY$ 
 to the tuple 
 \[ (\underline A_0, \underline{A_1 \times B}, \Lie(B)); \] 
 here the $n$-dimensional abelian variety $A_1 \times B$ is equipped with the product $o_\kb$-action and polarization, and we view $\Lie(B) \subset \Lie(A_1 \times B)$ as a subsheaf satisfying Kr\"amer's condition as in \Cref{sec:UModProb}.

Consider the restriction
\[ \mathbf y  \colon \ \calY_{\calV,\Lambda} \ \to \ \calM_{\calV}, \qquad \calY_{\calV, \Lambda} := \ \calY_{\Lambda} \times_{\calM} \calM_{\calV}, \]
where $\calM_{\calV}$ is a component in the decomposition \eqref{eqn:MDecomp} indexed by a Hermitian vector space $\calV$ of signature $(n-1,1)$; this restriction is non-empty precisely when there exists an isometric embedding $\Lambda \subset \calV$. 

For the remainder of this section,  fix self-dual Hermitian lattices $\Lambda_0$ and $\Lambda$ of signature $(0,1)$ and $(n-1,0)$ respectively, and set 
\[ \calL \ := \ \Lambda_0 \oplus \Lambda \qquad \text{and} \qquad  \calV := \calL \otimes_{\Z} \Q. \]
Thus, by taking the pullback of an arithmetic divisor along the composition 
\[ \calY_{\calV, \Lambda} \to \calM_{\calV} \subset \calM_{\calV}^*\] 
and then applying the arithmetic degree map $\deghat \colon \ChowHatC(\calY_{\calV, \Lambda}) \to \C$, we obtain the linear functional
\[ [ \bullet : \calY_{\calV, \Lambda} ] \colon \ChowHatC(\calM^*_{\calV})  \to \C, \qquad \qquad [\widehat\calZ:\calY_{\calV,\Lambda}] \ = \ \deghat \,  (\mathbf y^* \widehat\calZ). \]
Note that since $\calY$ is proper, the subtleties involving Green functions that are $\log\text{-}\log$-singular at the boundary do not play a significant role, and this height pairing is indeed well-defined; see \cite[\S 3.1]{howard-unitary-2} for a more careful treatment of this point.

The main result of \cite{bruinier-howard-yang-unitary} is a formula, originally conjectured by Bruinier-Yang \cite{bryfaltings}, relating the quantities $[\ZedVBru(m):\calY_{\calV,\Lambda}]$ to the special values of the derivative of a \emph{Rankin-Selberg convolution $L$-function}, defined as follows. Consider the $S(\Lambda(\bbA_f))^{\vee}$-valued theta function
\[ \Theta_{\Lambda}(\tau) \colon \varphi \ \mapsto  \ \sum_{\lambda \in \Lambda_{\Q}} \varphi(\lambda) \, q^{(\lambda, \lambda)} \qquad \text{for } \varphi \in S(\Lambda(\bbA_f)). \]

For $g \in S_{n}(\mathcal S^{\vee})$ with Fourier coefficients $ c_g(m) \in \calS^{\vee}$, recall that we may view $\overline{c_g(m)} \in \calS$ as in \Cref{eqn:WeilRepDualConj}, and consider the Rankin-Selberg $L$-function
\[ L(s, g, \Theta_{\Lambda}) \ := \ \frac{\Gamma(s/2 + n -1)}{(4 \pi )^{s/2 + n-1}} \, \sum_{m>0} \frac{ \left(\varphi_0^\vee \otimes c_{\Theta_{\Lambda}}(m) \right) \cdot \overline{c_g(m)} }{m^{s/2 + n -1}}. \]

In the following theorem, we restrict to the case $n>2$ for convenience. As a consequence, the forms $F_m$ that were constructed in \Cref{sec:relat-non-holom}, and were used to define the cycles $\ZedVBru(m)$, coincide with the weak Maa\ss\ forms described in \cite[Lemma 3.10]{bruinier-howard-yang-unitary}. Note in particular  $F_m = 0$ for $m \leq 0$.
\begin{theorem}[{Bruinier-Howard-Yang, \cite[Theorem A]{bruinier-howard-yang-unitary}}] \label{thm:BHY} Suppose $n>2$. 
Let $f \in H_{2-n}(\calS)$. Then
\[ 
	\sum_{m \geq 0} c_f^+(-m) \cdot [\ZedVBru(m) :  \calY_{\calV, \Lambda}]  \ = \  - \deg_{\C}(\calY_{ \calV, \Lambda}(\C)) \cdot L'(0, \xi(f), \Theta_{\Lambda}).
\]
where $c_f^+(-m) \in \calS $ are the coefficients of the holomorphic part of $f$, cf.\  \eqref{eqn:weakMaassDecomp}.

\end{theorem}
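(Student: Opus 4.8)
The plan is to compute the individual intersection numbers $[\ZedVBru(m):\calY_{\calV,\Lambda}]$ by splitting off their archimedean and non-archimedean contributions, and to recognize the $c_f^+(-m)$-weighted generating series as the central derivative of a Rankin--Selberg convolution via arithmetic Siegel--Weil at CM points. The first point is that $\calY_{\calV,\Lambda}$ is proper and \'etale over $\Spec o_\kb$ and that its structure morphism factors through the \emph{open} substack $\calM_\calV \subset \calM_\calV^*$; hence the pullback $\mathbf y^\ast$ annihilates every boundary component $[B]$, and $\mathbf y^\ast\Zed_\calV^\ast(m)$ is independent of the chosen model. Thus $[\ZedVBru(m):\calY_{\calV,\Lambda}]$ decomposes as a \emph{finite part} --- a sum of local intersection lengths along the $0$-dimensional cycle $\mathbf y^\ast\Zed_\calV^\ast(m)$, weighted by the logarithms of the corresponding residue cardinalities --- plus an \emph{archimedean part} --- the (suitably weighted) CM values of Bruinier's Green function $\BGr{m}$. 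The goal is then to match $\sum_{m\geq0}c_f^+(-m)\cdot(\text{finite part})$ and $\sum_{m\geq0}c_f^+(-m)\cdot(\text{archimedean part})$ with, respectively, the non-archimedean and archimedean halves of $-\deg_\C(\calY_{\calV,\Lambda}(\C))\,L'(0,\xi(f),\Theta_\Lambda)$.

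For the archimedean part I would use that $\BGr{m}$ is the regularized theta lift $\langle F_m,\Theta_L(\cdot,z)\rangle^{\reg}$ of \Cref{sec:relat-borch-lift} and evaluate it at a CM point $y$. Such a point corresponds to an orthogonal decomposition $\calV = (\Lambda_0)_\Q \oplus \Lambda_\Q$ into the negative $\kb_\R$-line defining $y$ and its positive-definite complement, under which $\Theta_L(\tau,z)$ factors as the theta function of the negative line times $\Theta_\Lambda(\tau)$; the regularized integral then unfolds into a Rankin--Selberg integral of $F_m$ against an Eisenstein series times $\Theta_\Lambda$, exactly as in Schofer and Bruinier--Yang \cite{schofer,bryfaltings}. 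Summing over the finitely many CM points of $\calY_{\calV,\Lambda}(\C)$ and using the Bruinier--Funke/holomorphic-projection identity that replaces the sum $\sum_m c_f^+(-m)\,F_m$ by $\xi(f)$ (cf.\ \Cref{thm:MaassSection} and \cite{brfugeom}) converts the archimedean sum into $-\deg_\C(\calY_{\calV,\Lambda}(\C))$ times the archimedean contribution to the Petersson pairing $\langle \calE_1'(\tau)\otimes\Theta_\Lambda(\tau),\,\xi(f)\rangle^{\reg}_{\Pet}$, where $\calE_1'$ is the central derivative of the weight-one incoherent Eisenstein series attached to $\calY$.

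For the non-archimedean part I would compute the length of the local ring at each point of $\mathbf y^\ast\Zed_\calV^\ast(m)$ by deformation theory of the relevant $p$-divisible groups: at primes of good reduction this rests on Gross's theory of quasi-canonical liftings, while at primes ramified in $\kb$ one must work inside Kr\"amer's integral model as set up in \Cref{sec:UModProb}. One then matches these lengths, prime by prime, with the derivatives of the local Whittaker functions appearing in the Fourier expansion of $\calE_1'$, via the Kudla--Yang local density identities --- this is precisely the arithmetic Siegel--Weil input of \cite{bruinier-howard-yang-unitary}. Assembling the two parts identifies $\sum_{m\geq0}c_f^+(-m)\cdot[\ZedVBru(m):\calY_{\calV,\Lambda}]$ with $-\deg_\C(\calY_{\calV,\Lambda}(\C))\,\langle\calE_1'(\tau)\otimes\Theta_\Lambda(\tau),\,\xi(f)\rangle^{\reg}_{\Pet}$, and a standard Rankin--Selberg unfolding of this pairing --- contracting $\Theta_\Lambda$ against $\xi(f)$ and unfolding $\calE_1(\tau,s)$ against $\Gamma_\infty\backslash\uhp$, with the $u$-integral extracting Fourier coefficients --- reproduces the Dirichlet series defining $L(s,\xi(f),\Theta_\Lambda)$, so that differentiating at the centre yields $L'(0,\xi(f),\Theta_\Lambda)$.

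The genuine obstacle is the non-archimedean step: matching the deformation lengths of the special $0$-cycles with derivatives of local Whittaker functions, especially at the ramified primes where Kr\"amer's model intervenes, is the technical heart of \cite{bruinier-howard-yang-unitary} and I would not expect to shorten it. A secondary nuisance is pinning down the normalization of the incoherent Eisenstein series $\calE_1$ and reconciling the Siegel--Weil and Rankin--Selberg constants, together with justifying the interchange of the regularized Petersson integral with the unfolding (absolute convergence for $\Re(s)\gg0$, meromorphic continuation, and the facts that $\calE_1'$ has trivial principal part and is orthogonal to cusp forms, both obtainable by unfolding). Finally, within the framework of the present paper there is a shortcut worth recording: once \Cref{thm:refBHY} is available, the holomorphic-projection statement \Cref{thm:arithHolProj} shows that $[\ThetaVKud(\tau):\calY] - [\ThetaVBru(\tau):\calY]$ is orthogonal to $S_n(\calS^\vee)$, so the $\xi(f)$-pairings of the two arithmetic theta series agree and \Cref{thm:BHY} follows from \Cref{thm:refBHY} by the same Rankin--Selberg unfolding --- with the caveat that the proof of \Cref{thm:refBHY} itself uses \Cref{thm:BHY} as input, so this route re-derives rather than re-proves the statement.
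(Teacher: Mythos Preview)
Your main proposal correctly sketches the original Bruinier--Howard--Yang proof: split $[\ZedVBru(m):\calY]$ into finite and archimedean parts, compute the archimedean part via the regularized theta lift at CM points \`a la Schofer and Bruinier--Yang, and match the finite part with derivatives of local Whittaker functions via deformation theory. This is accurate as an outline of \cite{bruinier-howard-yang-unitary}, but it is not what the present paper does --- the theorem is simply cited here.

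What the paper actually contributes is precisely the ``shortcut'' you record at the end: the remark following \Cref{thm:refBHY} derives \Cref{thm:BHY} from \Cref{thm:refBHY} via the modularity of $\ThetaVKud-\ThetaVBru$, the trivial cuspidal holomorphic projection (\Cref{thm:arithHolProj}), the Bruinier--Funke pairing identity $\langle H,\xi(f)\rangle_{\Pet}^{\reg}=\sum_{m\geq0}c_H(m)\cdot c_f^+(-m)$ for $H\in M_n(\calS^\vee)$, and Rankin--Selberg unfolding.

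Your circularity caveat is slightly off, though. The proof of \Cref{thm:refBHY} does not invoke \Cref{thm:BHY} itself; it uses only the \emph{finite intersection computation} $[\Zed(m):\calY]_{\mathsf{fin}}$ from \cite{bruinier-howard-yang-unitary} (together with $[\widehat\omega:\calY]$ via Chowla--Selberg). So the logical flow is: (finite intersection from BHY) $\Rightarrow$ \Cref{thm:refBHY} $\Rightarrow$ \Cref{thm:BHY}. This is not circular, and it genuinely replaces the archimedean CM-value computation of \cite{bryfaltings} with the paper's $\Lsharp$-framework and the comparison of the two Green functions. It does, as you note, still rest on the same hard deformation-theoretic input you flagged as the real obstacle.
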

This formula follows from combining the geometric contributions (whose determination forms the bulk of \cite{bruinier-howard-yang-unitary}) with the results of \cite{bryfaltings} detailing the contributions arising from the Green functions $\BGr{m}$; if we instead consider Kudla's Green functions $\KGr(m,v)$, we arrive at the following refinement.

For each weight $k$, there is an Eisenstein series
\[ 
  E_{k, \Lambda_{0}}(\tau,s) \ := \ \sum_{\gamma \in \Gamma_{\infty} \backslash \SL_2(\Z) }  \left( v^{s/2+k-1} \varphi_0^\vee \right) \Big\vert_{k} [\gamma] \ 
                                                = \ \sum_{\gamma \in \Gamma_{\infty} \backslash \SL_2(\Z) } \frac{\Im(\gamma \tau)^{s/2+k-1}}{(c\tau + d)^k}  \, \rho_{\Lambda_0}^{\vee}(\gamma)^{-1} \cdot \varphi_0^\vee  
\]
valued in $S(\Lambda_{0,\bbA_f})^{\vee}$. 
We may then view
\[ 
  E_{k,\Lambda_0}(\tau,s) \otimes \Theta_{\Lambda}(\tau) 
\]
as an $\calS^{\vee}$-valued modular form, of weight $k+n-1$, by pulling back via
\[ \calS \ \subset \  S(\calL) \ \simeq \ S(\Lambda_0)  \otimes_{\C}  S(\Lambda)\  \subset \ S(\Lambda_{0,\bbA_f}) \otimes_{\C} S(\Lambda(\bbA_f)). \]

\begin{theorem} \label{thm:refBHY} For any $n \geq 1$, consider the $\calS^{\vee}$-valued generating series 
\[ 
  [\ThetaVKud(\tau): \calY]  \ := \ \sum_m \  [\ZedVKud(m,v):\calY] \ q^m, 
\]
where we have abbreviated $\calY = \calY_{\calV, \Lambda}$.  Then
\[ 
    [\ThetaVKud(\tau) : \calY] = - \deg_{\C}(\calY(\C)) \ E_{1,\Lambda_0}'(\tau,0) \otimes \Theta_{\Lambda}(\tau). 
\]
\begin{proof}
	First suppose $m \in \Q$ with $m \neq 0$, and decompose
	\[ 
		[ \ZedVKud(m,v) : \calY ] \  = \  	[ \Zed(m) : \calY]_{\mathsf{fin}} \ + \ \KGr(m,v)(\calY(\C)) 
	\]
	into the geometric and archimedean contributions, the latter given by the weighted sum of the values of $\KGr(m,v)$ at the points comprising  $\calY(\C)\subset\calM_{\calV}(\C)$. 
	Note that as $\calY $ is disjoint from the boundary $\partial \calM^*_{\calV}$, the boundary components of $\ZedVKud(m,v)$ play no role. 
	
	Recall the complex uniformization
	\[ 
		\calM_{\calV}(\C) \  = \ \coprod_{[\calL_0, \calL_1]} \ \left[  \Gamma_{[\calL_0, \calL_1]} \ \big\backslash \ \domain(\calV) \right] 
	\]
	as in \eqref{eqn:MCpxUnif}, where the union is taken over isomorphism classes of pairs $(\calL_0, \calL_1)$ of self-dual Hermitian lattices of signature $(1,0)$ and $(n-1,1)$ respectively, and such that $\calV \simeq \Hom(\calL_0, \calL_1)_{\Q}$. This can also be expressed as follows: note that for a fixed $L_0$, the collection of lattices $\{ \Hom(\calL_0, \calL_1)\}$ obtained by varying $\calL_1$ is simply the set of isomorphism classes of self-dual lattices $\{ \calL' \subset \calV \}$. Since $d_\kb$ is odd, there is a single genus of such lattices \cite{jacobowitz}, and so  \eqref{eqn:MCpxUnif} may be rewritten as 
	\begin{equation} \label{eqn:MNewCpxUnif}
		\calM_{\calV}(\C) \ = \ \coprod_{[\calL_0]} \ \left[ \left( o_{k}^{\times} \times U_{\calV}(\Q) \right) \ \Big\backslash \  \domain(\calV) \times U_{\calV}(\bbA_f) \ \Big/ \  K_{\calL} \right] , 
	\end{equation}
	where $\calL = \Lambda_0 \oplus \Lambda \subset \calV$ is our previously fixed lattice,  $K_{\calL} = \mathrm{Stab}(\calL \otimes_{\Z} \widehat\Z) \subset U_{\calV}(\bbA_f)$, and the factor $o_\kb^{\times}$ acts trivially.
	
	On the other hand, the complex points $\calY(\C)$ can be written as a disjoint union of $\#\{ [L_0]\}$ many copies
	\[ 
				\calY(\C)  \ = \ \coprod_{[L_0]} \left[ \left(  o_\kb^{\times} \times \Aut(\Lambda) \times U_{\Lambda_0}(\Q) \right)  \ \Big\backslash \ \{ \mathtt z_0 \} \times U_{\Lambda_0}(\bbA_f)   \ \Big/ \ K_{\Lambda_0}     \right]
	\]
	where $\mathtt z_0$ is the negative definite line
	\[ \mathtt z_0  \ = \ \Lambda_0 \otimes_{\Z} \R  \ = \ \Lambda^{\perp} \  \in \ \domain(\calV), \]
	and 
	\[ 
		K_{\Lambda_0} \ = \ \mathrm{Stab}(\Lambda_0 \otimes \widehat \Z) \ \subset \ U_{\Lambda_0}(\bbA_f),
	\]
	 and $o_\kb^{\times} \times \Aut(\Lambda)$ acts trivially. In these terms, the map $\calY(\C) \to \calM_{\calV}(\C)$ is given, on each component indexed by an $[L_0]$, by the map
	\[ [\mathtt z_0, h_0] \ \mapsto \ \left[ \mathtt z_0, (\sm{ h_0 & \\ & \mathsf{Id}_{\Lambda}}) \right] ;   \]
	all these facts may be inferred from the discussion in \cite[\S 5.3]{bruinier-howard-yang-unitary}. In particular, noting that the cardinality of the automorphism group of each point of $\calY(\C)$ is the same, the (stacky) degree is given by
	\begin{align*}
		\deg(\calY(\C)) \ =& \ \sum_{[\calL_0]}  \ \frac{1}{|o_\kb^{\times}| \cdot | \Aut(\Lambda)|} \cdot \frac{1}{\# U_{\Lambda_0}(\Q) \cap K_{\Lambda_0}}  \# \left( U_{\Lambda_0}(\Q) \backslash U_{\Lambda_0}(\bbA_f) / K_{\Lambda_0} \right) \\
		=& \frac{\# \{[\calL_0]\}}{|o_\kb^{\times}|^2 \cdot |\Aut(\Lambda)|} \cdot  \# \left( U_{\Lambda_0}(\Q) \backslash U_{\Lambda_0}(\bbA_f) / K_{\Lambda_0} \right).
	\end{align*}
	In fact,  this  quantity is equal to $\frac{2^{1- o(d_\kb)} h_k^2}{|o_\kb^{\times}|^2 |\Aut(\Lambda)|}$, but we will not need this fact.
	
	The values of $\KGr(m,v)$ may be re-expressed in terms of the uniformization \eqref{eqn:MNewCpxUnif} as follows. Extend the Siegel theta function, as in \eqref{eq:siegeltheta}, to an $ S(\calV(\bbA_f))^{\vee}$-valued function, by setting 
	\[
		\Theta_{\calV}(\tau', [z,h])(\varphi) \ :=  \  v' \, \sum_{x \in \calV} \varphi(h^{-1} x) \, e^{-2 \pi v' R(x, z)} \, q'^{(x,x)} 
	\]
	for a Schwartz function $\varphi \in S(\calV(\bbA_f))$ and a pair $(z,h) \in \domain(\calV) \times U_{\calV}(\bbA_f)$. If $\varphi$ is $K_{\calL}$-invariant, then this construction yields a well-defined function on (each component of) the right hand side of \eqref{eqn:MNewCpxUnif}. 
	
	Now suppose $\varphi \ \in \   \calS$,
	which we may view as a Schwartz function in $ S(\calV(\bbA_f))^{K_{\calL}} $. Then the truncated Poincar\'e series $P_{m,v, \varphi}$ takes values in $ S(\calV(\bbA_f))^{K_{\calL}} $ as well, and, by tracing through the definition of $\KGr(m,v)$ in \Cref{sec:arithChow} and applying \Cref{lem:xireg} and \Cref{thm:Xitheta} gives
	\begin{align*}
		\KGr(m,v)([z,h])(\varphi) \ &= \ \left\langle P_{m,v, \varphi} , \  \Theta_{\calV}(\cdot, [z,h]) \right\rangle^{\reg}  \\
		&= \ \lim_{T \to \infty} \int_{\calF_T} \, P_{m,v,\varphi}(\tau') \cdot \Theta_{\calV}(\tau', [z,h]) \, d \mu(\tau') \ - \ S_m([z,h])(\varphi) \cdot \log T 
	\end{align*}
	on each component in \eqref{eqn:MNewCpxUnif}; here 
	\[ 
		S_m([z,h])(\varphi) \ := \ \sum_{\substack{ x \in z^{\perp} \cap \calV \\(x,x)=m}} \varphi( h^{-1} x) .
	\]
	
	We now consider evaluating at a point in $\calY(\C)$.
	Supposing
	\[ \varphi = \varphi_0 \otimes \varphi_1 \in  S(\calV(\bbA_f)) \simeq S(\Lambda_0( \bbA_f)) \otimes S(\Lambda(\bbA_f)) , \]
	it follows immediately from definitions that upon evaluating at a point in $\calY(\C)$, the Siegel theta function decomposes as
	\[ 
		\Theta_{\calV}(\tau', [\mathtt z_0, (\sm{h_0&\\& 1})])(\varphi)  \ = \ \Theta_{\Lambda_0}(\tau')(\varphi_0\circ h_0^{-1}) \ \cdot \ \Theta_{\Lambda}(\tau')( \varphi_1  )
	\]
	where $\Theta_{\Lambda}(\tau')$ is the weight $n-1$ theta function attached to $\Lambda$, and 
		\[
			\Theta_{\Lambda_0}(\tau')(\varphi_0 \circ h_0^{-1}) \ = \ v'  \sum_{a \in \Lambda_{0,\Q} } \ \varphi_0(h_0^{-1}a) \,   e^{2 \pi v' Q(a) } \, e^{2 \pi i \tau' Q(a) } \ = \ v' \sum_a  \, \varphi_0( h_0^{-1}a) \, e^{2 \pi i Q(a) \overline{\tau'}}  
		\]
		is the (non-holomorphic) Siegel theta function of weight $-1$ attached to the signature $(0,1)$ Hermitian space $\Lambda_{0,\Q}$. 
	The \emph{Siegel-Weil formula} for $\Lambda_{0,\Q}$, cf.\ \cite[Proposition 6.2]{ichinoSWreg}, then implies 
	\[ 
		\int_{U_{\Lambda_0}(\Q) \backslash U_{\Lambda_0}(\bbA_f)} \, \Theta_{\Lambda_0}(\tau')( \varphi_0 \circ h_0^{-1}) \, d h_0 \ = \ \frac12 \  E_{-1, \Lambda_0}(\tau', 0) (\varphi_0)
	\]
	where $d h_0$ is the Haar measure normalized so that $\mathrm{vol}(U_{\Lambda_0}(\Q) \backslash U_{\Lambda_0}(\bbA_f))=1$. If we further assume that $\varphi_0$ is $K_{\Lambda_0}$-invariant, then summing over a set of representatives 
	\[ 
		h_{0,1}, \dots, h_{0,t} \ \in \ U_{\Lambda_0}(\Q) \backslash U_{\Lambda_0}(\bbA_f) / K_{\Lambda_0} 
	\]
	gives
	\begin{align*}
		\sum_{i} \Theta_{\Lambda_0}(\tau')(\varphi_0 \circ h_{0,i})  \ =& \ \frac{\#(U_{\Lambda_0}(\Q) \cap K_{\Lambda_0})}{\vol(K_{\Lambda_0})}  \cdot 	\int\limits_{U_{\Lambda_0}(\Q) \backslash U_{\Lambda_0}(\bbA_f)} \, \Theta_{\Lambda_0}(\tau')( \varphi_0 \circ h_0^{-1}) \, d h_0  \\
		=& \  \# \left( U_{\Lambda_0}(\Q) \backslash U_{\Lambda_0}(\bbA_f) / K_{\Lambda_0} \right)  \ \cdot \ \frac12 \  E_{-1, \Lambda_0}(\tau', 0) (\varphi_0).
	\end{align*}
	Putting everything together, and keeping track of automorphisms,
	\begin{align*}
	\Theta_{\calV}(\calY(\C)) \ =& \ \frac{\#{[\calL_0]}}{|o_\kb^{\times}| \cdot |\Aut(\Lambda)|} \, \frac1{\#(U_{\Lambda_0}(\Q) \cap K_{\Lambda_0})} 	
			\sum_{i} \Theta_{\calV}(\tau',[\mathtt z_0, (\sm{h_{0,i} & \\ & 1 })]) \\
		=& \ \frac{\deg(\calY(\C))}{2} \ \left( E_{-1,\Lambda_0}(\tau', 0) \otimes \Theta_{\Lambda}(\tau') \right)
	\end{align*}
	as linear functionals on $(S(\Lambda_0(\bbA_f)) \otimes S(\Lambda(\bbA_f))^{K_{\Lambda_0} \times K_{\Lambda}} $.
	
	Note that if $\varphi = \varphi_0 \otimes \varphi_1$, then 
	\[ 
		S_m([\mathtt z_0, (\sm{h_0 & \\ & 1})])(\varphi_0\otimes\varphi_1)  \ = \  \varphi_0(0) \cdot \sum_{\substack{x \in \Lambda_{\Q} \\ (x,x) = m}} \, \varphi_1(x)  \ = \ \varphi_0(0) \cdot c_{\Theta_{\Lambda}}(m)(\varphi_1).
	\]
	Therefore we obtain
        \begin{align*}
          	\frac{\KGr(m,v)(\calY(\C))}{ \deg(\calY(\C))} =  \lim_{T\to \infty} 
                \bigg( &\frac{1}{2} \int_{\calF_T}P_{m,v}(\tau')  \cdot \left(E_{-1,\Lambda_0}(\tau',0) \otimes \Theta_{\Lambda}(\tau')\right) d \mu(\tau') \\
                            &- \left(\varphi_0^\vee \otimes c_{\Theta_{\Lambda}}(m) \right)\log T \bigg).
        \end{align*}
	In order to evaluate this integral, note that the relation \eqref{eqn:LEisenstein} implies, in the same way as in \Cref{thm:DiffHeight}, that
	\[
		L\left( E'_{1, \Lambda_0}(\tau',0) \right) \ =  - \frac12 \,E_{-1, \Lambda_0}(\tau',0)
	\]
	and so, since $\Theta_{\Lambda}$ is holomorphic,
	\[ 
		L\left( E'_{1, \Lambda_0}(\tau',0) \otimes \Theta_{\Lambda}(\tau') \right) \ =  - \frac12 \,E_{-1, \Lambda_0}(\tau',0) \otimes \Theta_{\Lambda}(\tau').
	\]
	This in turn implies that
	\[ 
		d(  E'_{1, \Lambda_0}(\tau',0) \otimes \Theta_{\Lambda}(\tau') \, d\tau') \ = \ \frac12 \, 	E_{-1, \Lambda_0}(\tau',0) \otimes \Theta_{\Lambda}(\tau') \ d\mu(\tau'),
	\]
	and so, unfolding the Poincar\'e series as in the proof of \Cref{thm:MaassSection}, cf.\ \eqref{eqn:TPunfoldfinal}, we find that for $T \gg 0$,
	\[
		\int_{\calF_T} P_{m,v}(\tau') \left( E_{-1,\Lambda_0}(\tau',0) \otimes \Theta_{\Lambda}(\tau') \right) d \mu(\tau') \ = \ 2 \left( c_{E'\otimes\Theta_{\Lambda}}(m,T) -  c_{E'\otimes\Theta_{\Lambda}}(m,v) \right),
	\]
	where we abbreviated $E'_{1, \Lambda_0}(\tau',0) \otimes \Theta_{\Lambda} = E' \otimes \Theta_{\Lambda}$. Thus
	\[ 
		\frac{\KGr(m,v)(\calY(\C))}{\deg(\calY(\C))}\ = \ - c_{E' \otimes \Theta_{\Lambda}}(m,v) \ + \ \lim_{T \to \infty} \left( c_{E' \otimes \Theta_{\Lambda}} (m, T) - \varphi_0^\vee \otimes c_{\Theta_{\Lambda}}(m)  \cdot \log T \right).
	\]
	On the other hand, the essence of the main theorem of \cite{bruinier-howard-yang-unitary} is the computation of the finite intersection $[\Zed(m): \calY]_{\mathsf{fin}}$, which, after a straightforward translation, can be expressed as
	\[
			[ \Zed(m): \calY]_{\mathsf{fin}} \ = \ - \deg(\calY)(\C) \cdot 	 \lim_{T \to \infty} \left( c_{E' \otimes \Theta_{\Lambda}} (m, T) - \varphi_0^\vee \otimes c_{\Theta_{\Lambda}}(m)  \cdot \log T \right) .
	\]
	Thus,  
	\[
		[\ZedVKud(m,v): \calY] \  = \ \KGr(m,v)(\calY(\C)) \ + \ 	[\Zed(m): \calY]_{\mathsf{fin}} \ = \ - \deg(\calY(\C)) \, c_{E' \otimes \Theta_{\Lambda}}(m,v) 
	\]
	as required.
	
	Turning to the $m=0$ term, by definition
	\[
		[ \ZedVKud(0,v) : \calY ]  \ = \ - [ \widehat\omega : \calY]\, \varphi_0^{\vee} \ + \ \left( \KGr(0,v) - \log v \otimes \varphi_0^{\vee}  \right) (\calY(\C)).
	\]
	A similar argument as above yields
	\begin{align*}
			\frac{\left( \KGr(0,v) - \log v \otimes \varphi_0^{\vee}  \right) (\calY(\C))}{\deg(\calY(\C))} \ = \ - & c_{E' \otimes \Theta_{\Lambda}} (0,v) \\ 
			&+ \ \lim_{T \to \infty} \left( c_{E' \otimes \Theta_{\Lambda}} (0, T) - \varphi_0^\vee \otimes c_{\Theta_{\Lambda}}(0)  \cdot \log T \right).
	\end{align*}
	On the other hand, translating \cite[Theorem 6.4]{bruinier-howard-yang-unitary} into our notation gives 
	\[
		[ \widehat \omega : \calY] \, \varphi_0^{\vee} \ = \ \deg_{\C}(\calY(\C)) \cdot  \lim_{T \to \infty} \left( c_{E' \otimes \Theta_{\Lambda}} (0, T) - \varphi_0^\vee \otimes c_{\Theta_{\Lambda}}(0)  \cdot \log T \right);
	\]	
	this relation follows from the Chowla-Selberg formula and the choice of the metric on $\widehat \omega$. Thus
	\[
		[ \ZedVKud(0,v):\calY ] \ = \ c_{E'\otimes \Theta_{\Lambda}}(0,v)
	\]
	as required.
\end{proof}
\end{theorem}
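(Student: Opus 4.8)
The plan is to prove the identity coefficient by coefficient, splitting
\[
  [\ZedVKud(m,v):\calY] \ = \ [\Zed(m):\calY]_{\mathsf{fin}} \ + \ \KGr(m,v)(\calY(\C))
\]
into a geometric (non-archimedean, ``finite'') contribution and the archimedean contribution given by the weighted sum of the values of Kudla's Green function over the points of $\calY(\C) \subset \calM_{\calV}(\C)$; since $\calY$ is proper and disjoint from $\partial \calM_{\calV}^*$, the boundary components of $\ZedVKud(m,v)$ play no role. The geometric part is, up to translation of notation, exactly what is computed in \cite{bruinier-howard-yang-unitary}; the archimedean part is where the regularized theta-lift description of $\KGr(m,v)$ in \Cref{thm:Xitheta} and the unfolding argument of \Cref{thm:MaassSection} are brought to bear. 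The goal in both cases is to recognize the answer as a Fourier coefficient of $E'_{1,\Lambda_0}(\tau,0) \otimes \Theta_{\Lambda}(\tau)$.

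For the archimedean term I would begin from the complex uniformization \eqref{eqn:MCpxUnif} of $\calM_{\calV}(\C)$, rephrasing it (using that $d_\kb$ is odd, so there is a single genus of self-dual lattices \cite{jacobowitz}) as an adelic double-coset space for $U_{\calV}$, with $\calY(\C)$ the analogous double-coset space for $U_{\Lambda_0}$, embedded via $h_0 \mapsto \left(\begin{smallmatrix} h_0 & \\ & 1 \end{smallmatrix}\right)$. By the $S(\calL)^{\vee}$-valued form of \Cref{thm:Xitheta}, the restriction of $\KGr(m,v)$ to each component equals $\langle P_{m,v}, \Theta_{\calV}(\cdot, [z,h]) \rangle^{\reg}$ for the adelically extended Siegel theta function $\Theta_{\calV}$. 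Evaluated at a point of $\calY(\C)$, this theta kernel factors as $\Theta_{\Lambda_0}(\tau') \cdot \Theta_{\Lambda}(\tau')$, the weight $-1$ (non-holomorphic) Siegel theta function of the anisotropic line $\Lambda_{0,\Q}$ times the holomorphic weight $n-1$ theta function of $\Lambda$. Summing over the double cosets of $U_{\Lambda_0}$ and invoking the Siegel--Weil formula for $\Lambda_{0,\Q}$ (cf.\ \cite{ichinoSWreg}), with the normalization $\vol(U_{\Lambda_0}(\Q)\backslash U_{\Lambda_0}(\bbA_f)) = 1$, converts the sum of theta kernels into $\frac12 E_{-1,\Lambda_0}(\tau',0) \otimes \Theta_{\Lambda}(\tau')$ up to an explicit count of double cosets; bookkeeping the automorphism factors then yields $\Theta_{\calV}(\calY(\C)) = \frac12 \deg_{\C}(\calY(\C)) \big( E_{-1,\Lambda_0}(\tau',0) \otimes \Theta_{\Lambda}(\tau') \big)$, while the ``jump'' functional $S_m$ of \Cref{thm:Xitheta} evaluates to $\varphi_0(0)\cdot c_{\Theta_{\Lambda}}(m)$ at these CM points.

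I would then unfold $P_{m,v}$ exactly as in the proof of \Cref{thm:MaassSection} and use the lowering-operator relation \eqref{eqn:LEisenstein}, which shows (since $\Theta_{\Lambda}$ is holomorphic) that $E'_{1,\Lambda_0}(\tau,0) \otimes \Theta_{\Lambda}$ is a primitive of $\frac12 E_{-1,\Lambda_0}(\tau,0) \otimes \Theta_{\Lambda}$; abbreviating $E'\otimes\Theta_{\Lambda} := E'_{1,\Lambda_0}(\cdot,0)\otimes\Theta_{\Lambda}$ with Fourier coefficients $c_{E'\otimes\Theta_{\Lambda}}(m,v)$, this produces, for $m \neq 0$,
\[
  \frac{\KGr(m,v)(\calY(\C))}{\deg_{\C}(\calY(\C))} \ = \ -\,c_{E'\otimes\Theta_{\Lambda}}(m,v)\ +\ \lim_{T\to\infty}\Big( c_{E'\otimes\Theta_{\Lambda}}(m,T) - \big(\varphi_0^{\vee}\otimes c_{\Theta_{\Lambda}}(m)\big)\log T \Big).
\]
On the other hand, the main computation of \cite{bruinier-howard-yang-unitary}, once its Rankin--Selberg $L$-function is rewritten through the Fourier coefficients of $E'_{1,\Lambda_0}(\tau,0)\otimes\Theta_{\Lambda}$, gives $[\Zed(m):\calY]_{\mathsf{fin}} = -\deg_{\C}(\calY(\C))$ times precisely the limit term above; adding the two contributions, the limit terms cancel and $[\ZedVKud(m,v):\calY] = -\deg_{\C}(\calY(\C))\,c_{E'\otimes\Theta_{\Lambda}}(m,v)$. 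The $m=0$ coefficient must be handled separately: $\ZedVKud(0,v)$ contains the term $-[\widehat\omega:\calY]\,\varphi_0^{\vee}$, and I would invoke \cite[Theorem 6.4]{bruinier-howard-yang-unitary} --- which rests on the Chowla--Selberg formula and the chosen metric on $\widehat\omega$ --- to see that $[\widehat\omega:\calY]\,\varphi_0^{\vee}$ equals $\deg_{\C}(\calY(\C))$ times the same limit term, so that it again cancels the limit in the archimedean piece, leaving $c_{E'\otimes\Theta_{\Lambda}}(0,v)$.

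The main obstacle is not conceptual but one of careful translation and normalization: reconciling the formulation of \cite{bruinier-howard-yang-unitary} --- phrased with a Rankin--Selberg convolution $L$-function and a harmonic weak Maa\ss\ input --- with the present vector-valued truncated-Poincar\'e-series formalism, and confirming that the ``limit term'' arising from unfolding the archimedean integral is literally the quantity appearing in the BHY geometric formula, with the correct treatment of the $S_m$ jump contributions at the CM points where $z \perp x$. Secondary technical points to settle are the precise constant ($\frac12$) and measure normalization in the Siegel--Weil formula for the anisotropic space $\Lambda_{0,\Q}$, the verification that properness of $\calY$ really does remove all boundary contributions, and checking that the argument remains valid for the small cases $n \le 2$, where the forms $F_m$ constructed in \Cref{sec:relat-non-holom} require the mild extra care noted before \Cref{thm:BHY}.
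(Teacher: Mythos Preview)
Your proposal is correct and follows essentially the same approach as the paper's proof: the same decomposition into finite and archimedean parts, the same adelic rewriting of the uniformization and of $\calY(\C)$, the factorization of the Siegel theta kernel at CM points, the Siegel--Weil identification with $\tfrac12 E_{-1,\Lambda_0}(\tau',0)\otimes\Theta_\Lambda$, the unfolding of $P_{m,v}$ via the lowering-operator primitive $E'_{1,\Lambda_0}\otimes\Theta_\Lambda$, and the cancellation of the limit term against the BHY finite-intersection formula (with the $m=0$ case handled via \cite[Theorem~6.4]{bruinier-howard-yang-unitary}). The concerns you flag about normalizations and the $S_m$ jump are exactly the bookkeeping the paper carries out explicitly.
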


\begin{remark}{\ }

	\emph{(i)} We explain how \Cref{thm:BHY} follows from \Cref{thm:refBHY}. Suppose that $n>2$, and recall that the difference $\ThetaVKud(\tau) - \ThetaVBru(\tau)$ is modular of weight $n$, valued in $\ChowHatC(\calM_{\calV}^*) \otimes_{\C} \calS^{\vee}$. As \Cref{thm:refBHY} shows that $[\ThetaVKud(\tau):\calY_{\calV, \Lambda}]$ is modular, it follows that
	\[ 
		[\ThetaVBru(\tau) : \calY_{\calV, \Lambda}] \ := \ \sum_{m \geq 0} [ \ZedVBru(m) : \calY_{\calV, \Lambda}] \, q^m 
	\]
	is a (holomorphic) modular form as well. Moreover, since $\ThetaVKud(\tau) - \ThetaVBru(\tau)$ has trivial cuspidal holomorphic projection, cf.\ \Cref{thm:arithHolProj}, 
	\[
	\left\langle [\ThetaVKud(\tau):\calY_{\calV, \Lambda}] ,  \ g \right\rangle_{\Pet}^{\reg} \ = \ 	\left\langle [\ThetaVBru(\tau):\calY_{\calV, \Lambda}] , \  g \right\rangle_{\Pet}^{\reg}
	\]
	for every cusp form $g \in S_n(\calS^{\vee})$. 
	
	By \Cref{thm:refBHY},  the left hand side is 
	\begin{align*}
	\left\langle [\ThetaVKud(\tau):\calY_{\calV, \Lambda}] ,  \ g \right\rangle_{\Pet}^{\reg} \ =& \ - \deg(\calY_{\calV, \Lambda}(\C)) \int_{\SL_2(\Z) \backslash \uhp} \left( E'_{1,\Lambda_0}(\tau,0) \otimes \Theta_{\Lambda}(\tau) \right)\cdot  \overline{g(\tau)}  \ v^{n} \  d\mu(\tau) .
	\end{align*}
	Applying the standard unfolding argument for the Eisenstein series $E_{1,\Lambda_0}(\tau, s)$ for $\Re(s) \gg 0$ gives
	\begin{align*}
	\int_{\SL_2(\Z) \backslash \uhp} \left( E_{1,\Lambda_0}(\tau,s) \otimes \Theta_{\Lambda}(\tau) \right)\cdot  \overline{g(\tau)}  \ v^{n} \  d\mu(\tau)  \ = \ L(s, g, \Theta_{\Lambda}),
	\end{align*}
	so  
	\begin{equation} \label{eqn:refBYKudla}
		\left\langle [\ThetaVKud(\tau):\calY_{\calV,\Lambda}] ,  \ g \right\rangle_{\Pet}^{\reg} \ = \ - \deg(\calY_{\calV,\Lambda}(\C)) \, L'(0, g, \Theta_{\Lambda}).
	\end{equation}
	On the other hand, if 
	\[ H(\tau) \  = \ \sum_{m \geq 0} \ c_H(m) q^m \  \in \ M_n(\mathcal S^{\vee}) \]
	is any holomorphic modular form and $g = \xi(f) \in S_n(\mathcal S^{\vee})$ for some $f \in H_{2-n}(\mathcal S)$, then \cite[Proposition 3.5]{brfu06} gives
\[
		\langle H, \, g \rangle^{\reg}_{\Pet} \ = \  \sum_{m \geq 0} c_H(m) \cdot c^+_f(-m) \]
	where $c^+_f(-m)$ are the coefficients of non-positive index of the holomorphic part of $f$, cf.\ \Cref{sec:modforms}. Applying this observation to the form $H(\tau) = [\ThetaVBru(\tau):\calY_{\calV, \Lambda}]$ gives
	\[
		\left\langle [\ThetaVBru(\tau):\calY_{\calV,\Lambda}] ,  \ g \right\rangle_{\Pet}^{\reg} \ = \ \sum_{m \geq 0} c_f^+(-m)  \cdot [\ZedVBru(m):\calY_{\calV,\Lambda}] .
	\]
	Equating this expression with \eqref{eqn:refBYKudla} then yields \Cref{thm:BHY}. 
	
	\emph{(ii)} The analogue of \Cref{thm:BHY} for orthogonal Shimura varieties of signature $(n,2)$ has been formulated and proven by Andreatta-Goren-Howard-Madapusi Pera \cite{AGHMP-orthogonal}. The corresponding version of \Cref{thm:refBHY} is also true, and can be proven in exactly the same way by employing the finite intersection calculation of \emph{op.~cit.}

	A version of this result can also be formulated, and proved in the same manner, for \emph{big CM cycles}, using the finite intersection calculations appearing in \cite{AGHMP-bigCM}.
\end{remark}

\printbibliography
\end{document}